\definecolor{darkgreen}{rgb}{0,0.6,0}
\newcommand{\uri}[1]{{\color{black}{#1}}}
\numberwithin{equation}{section}
\theoremstyle{plain}
    \newtheorem{theorem}[equation]{Theorem}
    \newtheorem{lemma}[equation]{Lemma}
    \newtheorem{lemma-definition}[equation]{Lemma-Definition}
    \newtheorem{corollary}[equation]{Corollary}
    \newtheorem{proposition}[equation]{Proposition}
    \newtheorem{conjecture}[equation]{Conjecture}
\theoremstyle{definition}
    \newtheorem{definition}[equation]{Definition}
    \newtheorem{remark}[equation]{Remark}
   \newtheorem{thmABC}{Theorem}
\newcommand{\C}{\mathbb{C}}
\newcommand{\N}{\mathbb{N}}
\newcommand{\Z}{\mathbb{Z}}
\newcommand{\F}{\mathbb{F}}
\renewcommand{\phi}{\varphi}
\renewcommand{\epsilon}{\varepsilon}
\newcommand{\dd}{d}  
\newcommand{\restrict}{\big{\vert}}
\newcommand{\argument}{\hspace{2pt}\underbar{\phantom{g}}\hspace{2pt}}
\newcommand{\germ}{\mathfrak}
\newcommand{\ol}{\overline}
\newcommand{\ot}{\otimes}
\newcommand{\ti}{\widetilde}
\DeclareMathOperator{\Irr}{Irr}
\DeclareMathOperator{\Hom}{Hom}
\DeclareMathOperator{\End}{End}
\DeclareMathOperator{\Ker}{Ker}
\DeclareMathOperator{\Ima}{Im}
\DeclareMathOperator{\Ad}{Ad}
\DeclareMathOperator{\ad}{ad}
\DeclareMathOperator{\Ind}{Ind}
\DeclareMathOperator{\ind}{ind}
\DeclareMathOperator{\GL}{GL}
\DeclareMathOperator{\Sp}{Sp}
\DeclareMathOperator{\Rep}{\mathrm{Rep}}
\DeclareMathOperator{\infl}{inf}
\DeclareMathOperator{\Span}{Span}
\DeclareMathOperator{\pind}{i}
\DeclareMathOperator{\pres}{r}
\DeclareMathOperator{\pInd}{I}
\DeclareMathOperator{\pRes}{R}
\DeclareMathOperator{\diag}{diag}
\DeclareMathOperator{\Gal}{Gal}
\newcommand{\Mat}{\mathrm{M}}     
\newcommand{\cA}{\mathcal{A}}  	
\newcommand{\gA}{A}                       
\newcommand{\cZ}{\mathcal{Z}}  	
\newcommand{\gZ}{Z}                        
\newcommand{\cL}{\mathcal{L}}  	
\newcommand{\charpoly}{\operatorname{char}} 
\renewcommand{\aa}{a}  	
\newcommand{\bb}{b}  	
\newcommand{\cc}{c}  	
\newcommand{\xx}{x}  	
\newcommand{\yy}{y}
\newcommand{\zz}{z}  	
\newcommand{\MM}{\xi}  	
\renewcommand{\o}{\mathfrak o}	   
\renewcommand{\O}{\mathfrak O} 	   
\newcommand{\p}{\mathfrak p}	           
\renewcommand{\l}{\ell}                         
 \newcommand{\kk}{\mathbf{k}}  	
\newcommand{\nn}{n}  	
\newcommand{\mm}{m}  	
\DeclareMathOperator{\s}{\sigma}  	
\DeclareMathOperator{\n}{\nu}
\DeclareMathOperator{\tr}{Tr}  	 
\DeclareMathOperator{\GR}{\mathcal{R}}  	
\DeclareMathOperator{\GS}{\mathcal{S}}  	
\DeclareMathOperator{\GN}{\mathcal{N}}
\newcommand{\modd}{\, \mathrm{mod} \, } 
\newcommand{\nmult}{\circ}
\newcommand{\Ow}{\mathfrak o}	  
\newcommand{\T}{\mathfrak O} 	   
\newcommand{\cF}{\mathcal{F}}     
\newcommand{\LL}{S}  	
\newcommand{\ffbar}[1]{\ol{\phi}({#1})}     
\newcommand{\wt}{\widetilde}
\newcommand{\TL}{Q}
\keywords{{Harish-Chandra induction, parabolic induction, compact $p$-adic groups, PSH-algebra}}
\subjclass[2010]{Primary 22E50; Secondary 20G25, 20C33, 20C15, 20C07.}
\begin{document}

\begin{abstract}  In his seminal Lecture Notes in Mathematics published in 1981, Andrey Zelevinsky introduced a new family of Hopf algebras which he called {\em PSH-algebras}. These algebras were designed to capture the representation theory of the symmetric groups and of classical groups over finite fields. The gist of this construction is to translate representation-theoretic operations such as induction and restriction and their parabolic variants to algebra and coalgebra operations such as multiplication and comultiplication. The Mackey formula, for example, is then reincarnated as the Hopf axiom on the algebra side. In this paper we take substantial steps to adapt these ideas for general linear groups over compact discrete valuation rings. We construct an analogous \uri{bialgebra} that contains a large PSH-algebra that extends Zelevinsky's algebra for the case of general linear groups over finite fields. We prove several base change results relating algebras over extensions of discrete valuation rings.           
\end{abstract}

\title[An inductive approach to representations of general linear groups]{An inductive approach to representations of general linear groups \\ over compact discrete valuation rings}

\author{Tyrone Crisp}
\address{Department of Mathematics and Statistics, University of Maine, Orono, ME 04469-5752, USA}
  \email{tyrone.crisp@maine.edu}

\author{Ehud Meir}
\address{Institute of Mathematics, University of Aberdeen, Fraser Noble Building, Aberdeen AB24 3UE, UK}
  \email{meirehud@gmail.com}

\author{Uri Onn}
\address{Mathematical Sciences Institute, The Australian National University, Canberra, Australia}
  \email{uri.onn@anu.edu.au}

\date{\today}
\maketitle

\setcounter{tocdepth}{1}
\tableofcontents
\thispagestyle{empty}


\section{Introduction}\label{sec:intro}

\subsection{Overview} Studying the evolution of representations across families of groups under induction and restriction functors has been a recurrent theme in representation theory.  Such an approach proved to be extremely fruitful for studying representations of the symmetric groups, for example, and many other natural infinite families of groups and algebras. Harish-\uri{C}handra functors form a variation on this theme adapted to reductive groups over finite or local fields. While usual induction and restriction functors are inefficient for keeping track of representations of the latter, Harish-Chandra functors appear to be powerful for organising and studying these categories. More precisely, let $G$ be a finite group and let $\Rep(G)$ denote the category of its finite dimensional complex representations. Let $L < G$ be a subgroup and let $\C L \subset \C G$ be the corresponding complex group algebras. Induction from $L$ to $G$ is realised by tensoring with $\C G$ viewed as a  $\C G$-$\C L$ bimomodule, and similarly for restriction with the roles of $L$ and $G$ interchanged. Harish-Chandra induction in this abstract context involves another subgroup $U < G$, which is normalised by $L$, that is used to \lq slow down\rq~ and therefore gain a better control on the effect of these functors. Concretely it is defined by tensoring with the $\C G$-$\C L$ bimodule $\C G e_U$ where $e_U=|U|^{-1}\sum_{u \in U} u$, the idempotent associated with $U$. The adjoint functor is given by tensoring with the $\C L$-$\C G$ bimodule $e_U\C G$.  Applying these functors to categories of representations of reductive groups over finite fields gives rise to an elegant description of the relations between the functors (Mackey formula) and of those representations which cannot be obtained by induction (cuspidal representations). 

A variant of these functors was introduced in \cite{CMO1} with a view toward reductive groups over finite rings. The idea is to consider another subgroup $V < G$ which is normalised by $L$ and consider the functors of tensoring with the $\C G$-$\C L$ bimodule $\C G e_U e_V$ and $e_U e_V \C G$. See \cite{CMO1} for general properties of these functors and \cite{CMO2} for analysis of principal series representations of $\GL_n$ over finite rings. The present paper utilises these functors to give a deeper insight to general linear groups over local rings, as we now describe.     

Suppose that $\{G_n \mid \nn \in \N\}$ is a sequence of finite groups. Given functors between the categories of complex representations of these groups that can be realised by tensoring with bimodues 
\[
\begin{split}
&\pind_{\mm,\nn}: \Rep(G_{\mm}) \times \Rep(G_{\nn}) \to \Rep(G_{\mm+\nn}), 
\\ &\pres_{\mm,\nn} : \Rep(G_{\mm+\nn}) \to \Rep(G_{\mm}) \times \Rep(G_{\nn}), 
\end{split}
\]
one can define a ring and coring structure on $\GR=\oplus_{\nn \ge 0} \GR_\nn$, where $\GR_\nn$ is the Grothendieck group of $\Rep(G_{\nn})$ and $R_0:=\Z$. The multiplication $\nmult:\GR_\mm \otimes \GR_{\nn} \to \GR_{\mm+\nn}$ is defined on representatives by $\rho \otimes \sigma \mapsto \pind_{\mm,\nn}(\rho \times \sigma)$, and the comultiplication $\Delta:  \GR_\nn \rightarrow \oplus \GR_{\mm} \otimes \GR_{\nn-\mm}$ by $\Delta(\chi)=\oplus \pres_{\mm,\nn-\mm}(\chi)$, where the sum ranges over all $0 \le \mm \le \nn$; both maps are then extended linearly.  In~\cite{Zelevinsky}, Andrei Zelevinsky shows that when the groups~$G_\nn$ are the symmetric groups and the functors $\pind$ and $\pres$ are the usual induction and restriction with respect to the coherent embeddings of the symmetric groups in the natural way, one gets a Hopf $\Z$-algebra with a very rigid structure which he called {\em PSH-algebra}, axiomatised in \cite[\S1.4]{Zelevinsky}. The specific PSH-algebra $\mathcal{S}$ arising from representations of the symmetric groups contains a unique primitive irreducible element and it turns out that there is a unique PSH-algebra with this property up to two isomorphisms; see \cite[\S 3.1(g)]{Zelevinsky}. We call this algebra the {\em elementary} PSH-algebra. It transpires that the elementary PSH-algebra completely captures the representation theory of the symmetric groups; see~\cite[\S 6]{Zelevinsky}. 

Zelevinsky also considers the groups $G_\nn=\GL_\nn(\F_q)$, with $\F_q$ a finite field. In that case the functors~$\pind$ and~$\pres$ are interpreted as Harish-Chandra induction and restriction, respectively, with respect to the natural coherent embeddings of the groups $L=G_{\mm} \times G_{\nn}$ in $G=G_{\mm+\nn}$,  and appropriate unipotent subgroups $U_{\mm,\nn}$. He shows that the resulting structure is a PSH-algebra as well. Moreover, this algebra is an infinite tensor product of copies of the elementary PSH-algebra, indexed by cuspidal representations of the~$G_\nn$'s. The latter are in bijection with Frobenius orbits in the direct limit $\lim_n{\left(\F_{q^n}^\times\right)^\vee}$, the limit taken over the Pontryagin duals of the multiplicative groups of finite field extensions of $\F_q$ with respect to the transpose of the norm maps~ \cite[\S1]{Macdonald}.    

The main goal  of the present paper is to generalise this construction for the groups $G_\nn=\GL_\nn(\o)$, where $\o$ is a compact discrete valuation ring, with respect to the above-mentioned variant of Harish-Chandra functors, which we now explain in detail.  

\subsection{Main results}\label{subsec:main.results} 

Let $\o$ be a complete discrete valuation ring with maximal ideal $\p=(\pi)$ and finite residue field~$\kk$, and let $\o_\l$ stand for the finite quotient $\o/\p^\l$. For every composition $(\nn_1,\ldots,\nn_r)$ of a natural number $\nn$ we fix an embedding $\GL_{n_1} \times \cdots \times \GL_{\nn_r} \hookrightarrow \GL_\nn$ as the subgroup of invertible block-diagonal matrices in a natural way. Let $U_{\nn_1,\ldots,\nn_r}$ denote the upper uni-triangular subgroup with block sizes $\nn_i \times \nn_j$ and let $V_{\nn_1,\ldots,\nn_r}$ be its transpose. 
Let $\Rep\left(\GL_n(\o)\right)$ denote the category of finite dimensional smooth complex representations of $\GL_n(\o)$. The natural quotient maps induce inclusions $\Rep\left(\GL_n(\o_\l)\right) \hookrightarrow \Rep\left(\GL_n(\o_{\l+1})\right)$ and 
$\Rep\left(\GL_n(\o)\right)$ is the direct limit of this directed system.
For every $\nn \ge 1$ let $\GR_\nn^\l=\GR_n^{\o,\l}$ be the Grothendieck group of $\Rep(\GL_\nn(\o_\l))$ and set $\GR_0^{\l}=\Z$. Let  
\[
\GR^\o= \bigoplus_{\nn \ge 0} \GR_n^\o= \lim_{\substack{\longrightarrow \\ \l}} \bigoplus_{\nn \ge 0}  \GR_\nn^{\o,\l}.  
\]

We define $\nmult: \GR^\o \otimes \GR^\o \rightarrow \GR^\o$ and $\Delta: \GR^\o \rightarrow \GR^\o \otimes \GR^\o$ as follows. Let $G_n^\l=\GL_\nn(\o_\l)$. For the graded summands of level $\l$ let 
\[
\begin{split}
\nmult_\l: &\GR^{\o,\l}_{\mm} \otimes \GR^{\o,\l}_{\nn} \longrightarrow \GR^{\o,\l}_{\mm+\nn}, \qquad \rho \nmult_\l \sigma=
\C G^\l_{\mm+\nn} e_{U^\l_{\mm,\nn}}e_{V^\l_{\mm,\nn}}\otimes_{\C (G^\l_{\mm} \times G^\l_{\nn})}(\rho\times \sigma) \\
\Delta_\l: &\GR^{\o,\l}_{\nn}  \longrightarrow  \GR^{\o,\l}_{\mm} \otimes \GR^{\o,\l}_{\nn-\mm}, \qquad \Delta_\l(\chi)=\bigoplus_{0 \le \mm \le \nn} e_{U^\l_{\mm,\nn}}e_{V^\l_{\mm,\nn}}\C G^\l_{\mm+\nn} \otimes_{\C G^\l_{\mm+\nn}} (\chi).
\end{split}
\] 
By Proposition~\ref{prop:GR-changing-l} the multiplications $\nmult_\l$ for $\l \in \N$ are compatible with the reduction maps (and similarly for the comultiplications $\Delta_\l$), and therefore give rise to a well defined associative (and coassociative) structures on~$\GR^\o$. {These operations satisfy all of the PSH-algebra axioms except, possibly, the compatibility of the multiplication and the comultiplication (i.e. the Hopf axiom). We make the following conjecture.
\begin{conjecture}\label{conjecture:main}
The above operations equip $\GR^{\o}$ with a structure of a PSH-algebra.
\end{conjecture}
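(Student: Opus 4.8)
Towards Conjecture~\ref{conjecture:main} all of the PSH-algebra axioms for $\GR^\o$ other than the Hopf (bialgebra) compatibility have been established above, so the plan is to prove the remaining identity
\[
\Delta\circ\nmult \;=\; (\nmult\otimes\nmult)\circ(\id\otimes\t\otimes\id)\circ(\Delta\otimes\Delta),
\]
which for the functors of Subsection~\ref{subsec:main.results} is exactly a Mackey-type formula: modified Harish--Chandra restriction applied to a modified Harish--Chandra induction. The first step would be to reduce to one finite level. Every element of $\GR^\o$ comes from some $\GR^{\o,\l}$, the maps $\GR^{\o,\l}\to\GR^\o$ are injective, and by Proposition~\ref{prop:GR-changing-l} they intertwine $\nmult_\l,\Delta_\l$ with $\nmult,\Delta$; hence the displayed identity holds on $\GR^\o$ if and only if it holds on $(\GR^{\o,\l},\nmult_\l,\Delta_\l)$ for every $\l\ge1$. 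For $\l=1$ the group is $\GL_n(\kk)$, where the modified functors recover the usual Harish--Chandra operations on the Grothendieck group (cf.\ \cite{CMO1}) and the identity is Zelevinsky's theorem \cite{Zelevinsky}; the content thus lies in the levels $\l\ge2$, and one may hope to proceed by induction on $\l$.

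Fix $\l$ and $N=m+n$, write $P_{m,n}=L_{m,n}U_{m,n}$ for the block-upper-triangular parabolic of $\GL_N(\o_\l)$ with Levi $L_{m,n}=\GL_m(\o_\l)\times\GL_n(\o_\l)$, and likewise $P_{k,N-k}=L_{k,N-k}U_{k,N-k}$ for $0\le k\le N$. Unwinding the definitions, the $k$-th graded component of $\Delta_\l\circ\nmult_\l$ is computed by the $\C L_{k,N-k}$--$\C L_{m,n}$-bimodule
\[
B_k \;=\; e_{U_{k,N-k}}\,e_{V_{k,N-k}}\;\C\,\GL_N(\o_\l)\;e_{U_{m,n}}\,e_{V_{m,n}},
\]
and the Hopf identity amounts to the assertion that $B_k$ decomposes as the direct sum, over the nonnegative integer matrices $\left(\begin{smallmatrix} i & k-i\\ m-i & N-k-m+i\end{smallmatrix}\right)$, of the bimodules arising from the corresponding compositions of modified restriction and induction through the common Levi refinement $\GL_i\times\GL_{m-i}\times\GL_{k-i}\times\GL_{N-k-m+i}$ --- the permutation relabelling these four factors being precisely what the switch $\id\otimes\t\otimes\id$ records. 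The natural route to this is to split $\C\,\GL_N(\o_\l)=\bigoplus_\gamma\C[P_{k,N-k}\,\gamma\,P_{m,n}]$ over the $(P_{k,N-k},P_{m,n})$-double cosets, to show that each summand $e_{U_{k,N-k}}e_{V_{k,N-k}}\,\C[P_{k,N-k}\gamma P_{m,n}]\,e_{U_{m,n}}e_{V_{m,n}}$ is either zero or induced from the intersection Levi attached to $\gamma$, and to match the nonzero ones with the terms above.

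Over a finite field the parabolic double cosets are exactly the finitely many matrix-indexed ``generic'' ones and what remains is essentially Zelevinsky's computation; over $\o_\l$ with $\l\ge2$ there are many more double cosets, since the block parabolics of $\GL_N(\o_\l)$ are not controlled by the residue field alone, and everything then comes down to the \emph{vanishing statement}: $e_{U_{k,N-k}}e_{V_{k,N-k}}\,\C[P_{k,N-k}\gamma P_{m,n}]\,e_{U_{m,n}}e_{V_{m,n}}=0$ for every $\gamma$ outside the generic cosets. This is the crux, and the reason the statement is only a conjecture. Because $e_Ue_V$ is \emph{not} idempotent, the vanishing is not a formal consequence of $\gamma$ failing to normalise a Levi: it requires analysing the orbits of $U_{k,N-k}\times U_{m,n}$, and then of $V_{k,N-k}\times V_{m,n}$, on the cosets comprising $P_{k,N-k}\gamma P_{m,n}$ --- equivalently, the $\o_\l$-linear combinatorics of the relative position of two free submodules of $\o_\l^N$.

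A natural way to attack the vanishing is to filter $\GL_N(\o_\l)$ by its congruence subgroups and to reduce the statement at level $\l$ to the one at level $\l-1$ by Clifford theory, the abelian quotient $\GL_N^{(\l-1)}/\GL_N^{(\l)}$ contributing a Gauss-sum-type averaging; partial Mackey results of this shape, together with the worked-out principal-series case, are already available in \cite{CMO1,CMO2}, and the base-change theorems proved in the paper can be used to transport the identity between $\o$ and its extensions. The bookkeeping of the generic terms should be routine once the field case is in hand; it is the congruence-filtration analysis of the non-generic terms that I expect to be the main obstacle, and it is the sole missing ingredient for Conjecture~\ref{conjecture:main}.
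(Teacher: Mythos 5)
What you have written is a programme, not a proof, and the statement in question is precisely the point the paper leaves open: Conjecture~\ref{conjecture:main} is stated as a conjecture, the paper proves all PSH axioms \emph{except} the Hopf compatibility, and it explicitly lists the existence of a Mackey formula for these functors as a fundamental open problem. Your reductions are sound and consistent with what the paper itself establishes --- passing to a fixed finite level via Proposition~\ref{prop:GR-changing-l}, identifying the level-one case with Zelevinsky's theorem, and recognising that the Hopf identity is a Mackey-type statement; the paper's own route to partial results is parallel (Theorem~\ref{thm:primary} and Corollary~\ref{cor:hopf.axiom.primary} reduce the conjecture to $\GR^{\O,\l}(t)$ for unramified extensions $\O$, and the Hopf axiom is then verified only on subalgebras: the strongly cuspidal/principal-series part in Theorems~\ref{thm:atom.psh.subalgebra} and~\ref{thm:large.psh.subalgebra}, and the nilpotent one-Jordan-block case in \S\ref{sec:nilpotent}). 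But your crux --- the vanishing of the contributions of non-generic $(P_{k,N-k},P_{m,n})$-double cosets at levels $\l\ge 2$ --- is exactly the missing Mackey formula, and you leave it unproven. So there is a genuine gap: the proposal establishes nothing beyond what the paper already proves, and does not settle the conjecture.

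One technical point in your outline also needs repair before it could even serve as a framework. You assert that $B_k=e_{U_{k,N-k}}e_{V_{k,N-k}}\,\C\,\GL_N(\o_\l)\,e_{U_{m,n}}e_{V_{m,n}}$ decomposes as a \emph{direct sum} of pieces indexed by $(P_{k,N-k},P_{m,n})$-double cosets. The span $\C[P_{k,N-k}\gamma P_{m,n}]$ is a $\C P_{k,N-k}$--$\C P_{m,n}$ sub-bimodule of the group algebra, but $V_{k,N-k}\not\subseteq P_{k,N-k}$ (and $V_{m,n}\not\subseteq P_{m,n}$), so left multiplication by $e_{V_{k,N-k}}$ and right multiplication by $e_{V_{m,n}}$ do not preserve these summands; the images of the double-coset pieces can interfere, and the sum need not be direct. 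This is one concrete manifestation of why the two-idempotent functors of \cite{CMO1} resist the classical double-coset argument, and why the paper instead works through Clifford-theoretic and congruence-filtration tools (as in \S\ref{subsec:clifford} and \S\ref{sec:base.change}) rather than a parabolic Bruhat decomposition. Any serious attack on the conjecture has to address this structural obstacle, not just the bookkeeping of generic cosets.
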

If this conjecture holds it would impose a very rigid structure on $\GR^{\o}$, thereby reducing the study of smooth representations of $\GL_n(\o)$ to study of the cuspidal representations (with respect to these induction functors) and of the (well-understood) unique elementary PSH-algebra. When $\l=1$ the local ring $\o_1$ is a finite field and these induction and restriction functors are isomorphic to the usual Harish-Chandra functors \cite[Theorem 2.4]{Howlett-Lehrer_HC}. So in this case the subalgebra $\GR^{\o,1}$ coincides with the PSH-algebra defined by Zelevinsky. }

Let $K^\l_n=\Ker \left(\GL_\nn(\o) \to \GL_\nn(\o_\l)\right)$ denote the $\l$-th principal congruence subgroup. We write $K^{\l}$ for~$K^{\l}_n$ whenever the index $n$ is clear from the context. 
A continuous (or smooth) irreducible representation of $\GL_\nn(\o)$ always factors through a finite quotient $\GL_n(\o)/K^{\l+1}$ for some $\l \geq 0$, the minimal such $\l$ is the {\em level}  of the representation.  
For every $\ell \ge 1$ the quotient $K^{\l}/K^{\l+1}$ is isomorphic to the additive group \uri{$\Mat_\nn(\kk)$. The latter is self dual, namely, $\Mat_\nn(\kk) \cong \Mat_\nn(\kk)^\vee=\Hom_\Z(\Mat_{\nn}(\kk),\C^\times)$ and this isomorphism is equivariant; see \S\ref{subsec:Pontryagin.Dual}. We therefore identify $\Mat_\nn(\kk)$ with its Pontryagin dual.}
Using these identifications we write $\GR^{\o,\l}_{[\ol{\MM}]}$ for the subgroup of $\GR^{\o,\l}$ generated by those irreducible representations whose restriction to \uri{$K^{\l}/K^{\l+1}$} is an orbit of characters that correspond to the similarity class of~$\ol{\MM} \in \Mat_n(\kk)$. 

\smallskip

Let $\ol{f}\in \o_1[t]$ be an irreducible polynomial of degree $d$. 
We write 
$$\GR^{\o,\l}(\ol{f}) = \bigoplus_{m\geq 0}\bigoplus_{\mathrm{char}[\ol{\MM}]=\ol{f}^m}\GR^{\o,\l}_{[\ol{\MM}]},$$ 
where $\mathrm{char}$ denotes the characteristic polynomial.
\begin{thmABC}[Primary decomposition] \label{thm:primary}
The multiplication $\nmult$ induces an isomorphism of algebras and coalgebras
\[
\bigotimes_{\ol{f}  \in \o_1[t]~\text{irreducible}} \GR^{\o,\l}(\ol{f})\to \GR^{\o,\l}.
\]
This isomorphism gives a one-to-one correspondence between tensor products of
irreducible elements on the left and irreducible elements in $\GR^{\o,\l}$ on the right.
\end{thmABC}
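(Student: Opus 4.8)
The plan is to exploit the grading of $\GR^{\o,\l}$ by similarity classes of matrices over the residue field, match it with the primary decomposition of $\kk[t]$-modules, and prove the statement first for two tensor factors with coprime characteristic polynomials, from which the general case follows formally. For $\l\ge 2$, Clifford theory for the abelian normal subgroup $K^{\l-1}/K^{\l}\cong(\Mat_n(\kk),+)$ yields $\GR^{\o,\l}=\bigoplus_{[\ol{\MM}]}\GR^{\o,\l}_{[\ol{\MM}]}$ with each irreducible in a single summand; for $\l=1$ one has $K^{0}/K^{1}=\GL_n(\kk)$ and the corresponding statement is Green's decomposition of Zelevinsky's algebra by the semisimple parameter, which I would simply quote. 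The similarity classes of matrices over $\kk$ of all sizes, with block-diagonal sum, form a free commutative monoid which, by the primary decomposition of finitely generated $\kk[t]$-modules, is $\bigoplus_{\ol{f}}$ (over monic irreducibles) of the submonoids of $\ol{f}$-primary classes: every $[\ol{\MM}]$ is uniquely $\sum_{\ol{f}}[\ol{\MM}_{\ol{f}}]$ with $\charpoly[\ol{\MM}_{\ol{f}}]$ a power of $\ol{f}$, and $\GR^{\o,\l}(\ol{f})$ is spanned by the $\GR^{\o,\l}_{[\ol{\MM}']}$ with $[\ol{\MM}']$ $\ol{f}$-primary. The next step is to show that $\nmult$ sends $\GR^{\o,\l}_{[\ol{\MM}_1]}\otimes\GR^{\o,\l}_{[\ol{\MM}_2]}$ into $\GR^{\o,\l}_{[\ol{\MM}_1\oplus\ol{\MM}_2]}$ and dually that $\Delta$ sends $\GR^{\o,\l}_{[\ol{\MM}]}$ into $\bigoplus_{[\ol{\MM}_1]\oplus[\ol{\MM}_2]=[\ol{\MM}]}\GR^{\o,\l}_{[\ol{\MM}_1]}\otimes\GR^{\o,\l}_{[\ol{\MM}_2]}$; I would do this by computing the restriction to $K^{\l-1}/K^{\l}$ of $\C G e_{U}e_{V}\otimes_{\C L}(\rho\times\sigma)$ (with $L=\GL_m(\o_\l)\times\GL_n(\o_\l)\subset G=\GL_{m+n}(\o_\l)$) by Mackey theory relative to the parabolic: $K^{\l-1}/K^{\l}$ meets the Levi in the block-diagonal $\Mat_m(\kk)\oplus\Mat_n(\kk)$ and the unipotent radicals in the off-diagonal blocks, and the point of the second idempotent $e_{V}$, in contrast with ordinary Harish-Chandra induction, is that the surviving characters are exactly those whose matrix is conjugate to a block-diagonal one with blocks of types $[\ol{\MM}_1]$ and $[\ol{\MM}_2]$. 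Granting this, iterated multiplication is a well-defined graded map $\Phi\colon\bigotimes_{\ol{f}}\GR^{\o,\l}(\ol{f})\to\GR^{\o,\l}$ (a restricted tensor product), and since the operations on $\GR^{\o,\l}$ satisfy all PSH axioms except possibly the Hopf axiom — in particular $\nmult$ is commutative, $\Delta$ cocommutative, and the two are mutually self-adjoint for the inner products making the irreducibles orthonormal — $\Phi$ is a morphism of algebras and of coalgebras.

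\emph{The two-factor case and bootstrapping.} The heart of the argument is a lemma: if $\rho,\sigma$ are irreducibles of $\GL_m(\o_\l),\GL_n(\o_\l)$ of types $[\ol{\MM}_1],[\ol{\MM}_2]$ with coprime characteristic polynomials, then $\rho\nmult\sigma$ is irreducible, the assignment $(\rho,\sigma)\mapsto\rho\nmult\sigma$ is a bijection from such pairs onto the irreducibles of $\GL_{m+n}(\o_\l)$ of type $[\ol{\MM}_1\oplus\ol{\MM}_2]$, and the $(m,n)$-component of $\Delta(\rho\nmult\sigma)$ equals $\rho\otimes\sigma$. I would prove this by identifying, in the coprime case, $\C G e_{U}e_{V}\otimes_{\C L}(\rho\times\sigma)$ with the Clifford induction to $\GL_{m+n}(\o_\l)$ from the stabilizer of the character of $K^{\l-1}/K^{\l}$ with block-diagonal matrix representing $[\ol{\MM}_1\oplus\ol{\MM}_2]$ — whose reductive quotient is, because the spectra are disjoint, the product $C_{\GL_m(\kk)}(\ol{\MM}_1)\times C_{\GL_n(\kk)}(\ol{\MM}_2)$ — so that both sides are parametrized by the same pairs of irreducibles via Clifford theory applied in each block; the assertion about $\Delta$ then comes from the fact that the only double coset surviving in the Mackey computation of $\Delta\circ\nmult$ under this hypothesis is the identity one. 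Self-adjointness now gives $\langle\rho\nmult\sigma,\rho'\nmult\sigma'\rangle=\delta_{\rho\rho'}\delta_{\sigma\sigma'}$ and, by bilinearity, $\langle a\nmult b,a'\nmult b'\rangle=\langle a,a'\rangle\langle b,b'\rangle$ whenever $a,a'$ and $b,b'$ have coprime characteristic polynomials. Applying this with $a=\rho_1\nmult\cdots\nmult\rho_{r-1}$ and $b=\rho_r$, and inducting on $r$, shows $\Phi$ is an isometry for the tensor-product inner products, hence injective; it therefore carries the orthonormal basis of tensor products of irreducibles to an orthonormal family of classes of genuine representations, i.e.\ to irreducibles, injectively. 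Surjectivity in matrix-degree $[\ol{\MM}]=\sum_{\ol{f}}[\ol{\MM}_{\ol{f}}]$ then follows by a count: via Clifford theory the number of irreducibles of type $[\ol{\MM}]$ equals the number lying over a fixed block-diagonal character, coprimality decouples the blocks, so this count is multiplicative over the primary parts and matches the rank of $\bigotimes_{\ol{f}}\GR^{\o,\l}_{[\ol{\MM}_{\ol{f}}]}$. Injectivity together with matching finite ranks gives the isomorphism, and the fact that $\Phi$ carries each tensor product of irreducibles to an irreducible, bijectively, is then read off from the isometry.

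\emph{Main obstacle.} The crux is the two-factor coprime statement and the closely related multiplicativity of the functors for the matrix grading: the full Mackey formula for these modified Harish-Chandra functors is precisely the compatibility of $\nmult$ and $\Delta$, i.e.\ the Hopf axiom, which is only conjectured here, so one must isolate and prove the part of that formula which is unconditionally valid — the contributions of the double cosets that are forced to be empty once the relevant characteristic polynomials are coprime — and carry out the identification of the $e_{U}e_{V}$-induction with Clifford induction from the stabilizer of a character of $K^{\l-1}/K^{\l}$. A further delicate point is that this stabilizer is a congruence-type subgroup rather than an honest Levi, so the comparison between Clifford induction and parabolic induction, and the decoupling of blocks in the count, must be established directly rather than quoted from the finite-field theory (which handles only the case $\l=1$).
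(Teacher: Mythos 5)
Your overall skeleton — grade $\GR^{\o,\l}$ by similarity classes, prove a two-factor statement for coprime types, then bootstrap formally using self-adjointness of $\nmult$ and $\Delta$ and the free-monoid structure on classes — is exactly the paper's reduction of Theorem~\ref{thm:primary} to Theorem~\ref{thm:primary.decomposition.of.reps}, and that formal part of your argument is sound. The gap is in the heart of the matter, the two-factor coprime lemma, where what you offer is a description of what must be proved rather than a proof. First, you invoke "the Mackey computation of $\Delta\circ\nmult$" in which "only the identity double coset survives"; but no Mackey formula is available for these two-idempotent functors — its validity is precisely Conjecture~\ref{conjecture:main}, the main open problem of the paper — and you give no independent mechanism for isolating an "unconditionally valid part" of it. Second, your claim that the stabilizer of the character $\phi^\star_{\ol{\MM}_1\oplus\ol{\MM}_2}$ decouples so that "both sides are parametrized by the same pairs of irreducibles via Clifford theory applied in each block" is not true at the level where it is needed: the stabilizer is the full preimage in $\GL_{n_1+n_2}(\o_\l)$ of $C_{\GL_{n_1}(\kk)}(\ol{\MM}_1)\times C_{\GL_{n_2}(\kk)}(\ol{\MM}_2)$, hence contains the congruence unipotents $U\cap K^1$ and $V\cap K^1$ and is not a product of block stabilizers; consequently neither the parametrization of its irreducibles over $\phi^\star_{\ol\MM}$ by pairs, nor the multiplicativity of the count you use for surjectivity, is automatic (in odd level even the relevant Clifford cocycles are delicate, cf.\ \S\ref{subsec:odd}).

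The paper closes exactly this gap without any Mackey input: by the Clifford compatibility of the functors (\cite[Theorem 3.6]{CMO1}) one reduces to the stabilizer groups; coprimality, via Lemma~\ref{lem:coprime}, forces $U(\ol\MM)=U\cap K^1$, $V(\ol\MM)=V\cap K^1$ and turns the virtual Iwahori decomposition of the stabilizer into an honest one; then \cite[Theorem 2.20]{CMO1} gives full faithfulness of $\pind_{U^1,V^1}$, and essential surjectivity is proved by showing $e_{V^1}e_{U^1}X\neq 0$ for every irreducible $X$ over $[\ol\MM]$, using the idempotent-reduction Lemma~\ref{reduction} together with the perfect pairing between the congruence quotients of $U$ and $V$ from Lemma~\ref{lem:duality.between.U.and.V.over.F}. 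This yields a categorical equivalence, which simultaneously gives your irreducibility, bijectivity, and counting statements; some substitute for this argument (or for the Hecke-algebra isomorphisms of Howe--Moy/Hill) is indispensable, and your proposal as written does not supply one. The grading statement you want (your Mackey-free version of Proposition~\ref{prop:GR-Mat-grading}) is also obtained in the paper from Clifford compatibility rather than from a parabolic restriction computation, though that step of yours could likely be made to work by a direct computation of the restriction to $K^{\l-1}$.
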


Since the tensor product of PSH-algebras is again a PSH-algebra, the above theorem has the following corollary.
\begin{corollary}\label{cor:hopf.axiom.primary} 
Let $\ol{f_1},\cdots, \ol{f_n}$ be distinct irreducible polynomials in $\o_1[t]$. Let $\nu_i\in \GR^{\o,\l}(\ol{f_i})$ for $i=1,\ldots,n$. Then $$\Delta(\nu_1\circ\cdots\circ\nu_n) = \Delta(\nu_1)\circ\cdots\circ \Delta(\nu_n).$$ In particular, Conjecture \ref{conjecture:main} is valid in $\GR^{\o}$ if and only if the Hopf axiom is valid in $\GR^{\o,\l}(\ol{f})$ for every $\l$ and $\ol{f}$. 
\end{corollary}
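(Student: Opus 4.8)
The plan is to obtain both statements as formal consequences of Theorem~\ref{thm:primary} and the level-compatibility of Proposition~\ref{prop:GR-changing-l}; no further representation theory is needed. Write $m\colon\bigotimes_{\ol f}\GR^{\o,\l}(\ol f)\xrightarrow{\sim}\GR^{\o,\l}$ for the isomorphism of Theorem~\ref{thm:primary}. The mere existence of this isomorphism already forces each summand $\GR^{\o,\l}(\ol f)$ to be closed under $\nmult$ \emph{and} under $\Delta$, hence to be simultaneously an algebra and a coalgebra (inheriting from $\GR^{\o,\l}$ every PSH-type property except possibly the Hopf axiom), and it endows the domain of $m$ with the standard tensor-product multiplication and comultiplication, the latter built using the flip on the relevant pairs of tensor factors. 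For the first assertion, given $\nu_i\in\GR^{\o,\l}(\ol f_i)$ with the $\ol f_i$ pairwise distinct, let $\mathbf{v}$ be the pure tensor in the domain of $m$ with $\nu_i$ in the $\ol f_i$-slot and $1$ in every other slot, so that $\nu_1\nmult\cdots\nmult\nu_n=m(\mathbf{v})$. Since $m$ is an isomorphism of coalgebras, $\Delta(\nu_1\nmult\cdots\nmult\nu_n)=(m\ot m)\bigl(\Delta_{\otimes}(\mathbf{v})\bigr)$; computing the tensor-product coproduct $\Delta_{\otimes}(\mathbf{v})$ and then applying $m\ot m$ yields, in Sweedler notation, $\sum(\nu_1'\nmult\cdots\nmult\nu_n')\ot(\nu_1''\nmult\cdots\nmult\nu_n'')$, which is precisely $\Delta(\nu_1)\nmult\cdots\nmult\Delta(\nu_n)$ evaluated in the tensor square $\GR^{\o,\l}\ot\GR^{\o,\l}$ with its twisted product; here one uses that $\Delta(\nu_i)\in\GR^{\o,\l}(\ol f_i)\ot\GR^{\o,\l}(\ol f_i)$ and that $\GR^{\o,\l}$ is commutative, so products of elements from distinct primary components may be reordered freely. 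Finally, every finite family of elements of $\GR^{\o}$ already lies in some $\GR^{\o,\l}$, and the inclusions $\GR^{\o,\l}\into\GR^{\o,\l+1}$ intertwine $\nmult$ and $\Delta$ by Proposition~\ref{prop:GR-changing-l}, so the identity descends to $\GR^{\o}$ as stated.

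For the ``in particular'' clause, recall that $\GR^{\o}$ is already known to satisfy every PSH-algebra axiom except possibly the Hopf axiom, so Conjecture~\ref{conjecture:main} is precisely the assertion that the Hopf axiom holds in $\GR^{\o}$; it therefore suffices to establish the chain
\[
(\text{Hopf in }\GR^{\o})\iff(\text{Hopf in }\GR^{\o,\l}\text{ for every }\l)\iff(\text{Hopf in }\GR^{\o,\l}(\ol f)\text{ for every }\l,\ol f).
\]
The first equivalence is a direct-limit formality: $\GR^{\o}$ is the increasing union of the sub-bialgebras $\GR^{\o,\l}$ (Proposition~\ref{prop:GR-changing-l}), the transition maps of the associated tensor squares are injective (being tensor products of injections of free $\Z$-modules), and the Hopf identity $\Delta(xy)=\Delta(x)\Delta(y)$ constrains only finitely many elements at a time. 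For the second, fix $\l$ and transport the Hopf identity through the bialgebra isomorphism $m$, reducing it to the Hopf axiom for $\bigotimes_{\ol f}\GR^{\o,\l}(\ol f)$. If the Hopf axiom holds in every factor $\GR^{\o,\l}(\ol f)$ then it holds in this tensor product, since a degreewise-finite tensor product of bialgebras is again a bialgebra (equivalently, as observed just before the corollary, a tensor product of PSH-algebras is a PSH-algebra). Conversely, each $\GR^{\o,\l}(\ol f)$ is a retract of $\bigotimes_{\ol g}\GR^{\o,\l}(\ol g)$ compatibly with multiplication and comultiplication --- include it as the $\ol f$-slot, and retract by applying the counits on the remaining slots --- and the Hopf axiom passes to such retracts, so it descends from the tensor product to each $\GR^{\o,\l}(\ol f)$.

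I do not anticipate a genuine obstacle: the whole argument is bookkeeping layered on top of Theorem~\ref{thm:primary}. The one point that truly requires care is the treatment of the flip maps in the tensor-product coalgebra structure, which must be made fully explicit so that ``$\Delta$ is multiplicative'' transports unambiguously across $m$; and one must use, as part of $m$ being an isomorphism of \emph{coalgebras} rather than merely of algebras, that each $\GR^{\o,\l}(\ol f)$ is stable under $\Delta$ and that $\Delta$ respects the primary decomposition --- this is exactly what realises $\GR^{\o,\l}(\ol f)$ as a genuine sub-bialgebra of $\GR^{\o,\l}$ and makes the reductions above legitimate.
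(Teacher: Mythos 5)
Your argument is correct and is essentially the intended one: the paper treats the corollary as an immediate formal consequence of Theorem~\ref{thm:primary} (transporting $\Delta$ through the primary-decomposition isomorphism, using that a tensor product of such algebra--coalgebras again satisfies the Hopf compatibility) together with the level compatibility of Proposition~\ref{prop:GR-changing-l}, which is exactly the bookkeeping you carry out. The one nuance worth noting is that closure of $\GR^{\o,\l}(\ol{f})$ under $\Delta$ is not deduced from the existence of the isomorphism but is a prerequisite for its statement, established beforehand via Proposition~\ref{prop:GR-Mat-grading}.
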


Our second theorem is a \lq base change\rq~result. Lift $\ol{f}$ to a monic polynomial $f\in \o[t]$. Let $x\in \Mat_d(\o)$ be a root of $f$, and define $\O=\o[x]$. Then $\O$ is a discrete valuation ring which is an unramified extension of $\o$.
\begin{thmABC}[Base change]  \label{thm:base.change.iso}
We have an (explicit) isomorphism of rings and corings $$\GR^{\o,\l}(\ol{f})\cong \GR^{\O,\l}(t-\ol{x}).$$
\end{thmABC}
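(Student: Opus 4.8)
The plan is to realise both bialgebras inside the representation theory of the groups $\GL_{dm}(\o_\l)$, using the embedding of $\GL_m(\O)$ as a centraliser together with Clifford theory along the congruence filtration.

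First note that the case $d=1$ is vacuous: then $\ol f=t-a$, one may take $f=t-\tilde a$ with $\tilde a\in\o$ a lift of $a$, so $\O=\o[\tilde a]=\o$ and $\ol x=a$, and both sides are literally $\GR^{\o,\l}(\ol f)$. So assume $d\geq 2$; in particular $\ol f\neq t$, so $\ol f(0)\neq 0$ and the companion matrix $C_f\in\Mat_d(\o)$ of $f$ is invertible. Put $c=\diag(C_f,\dots,C_f)\in\GL_{dm}(\o)$, with $m$ diagonal blocks. Since the centraliser of $C_f$ in $\Mat_d(\o)$ is $\o[C_f]=\O$, the centraliser of $c$ in $\Mat_{dm}(\o)=\Mat_m(\Mat_d(\o))$ is $\Mat_m(\O)$, so there is a closed embedding $\iota\colon\GL_m(\O)\hookrightarrow\GL_{dm}(\o)$ with image the centraliser of $c$. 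Because $\O/\o$ is unramified, $\pi$ generates the maximal ideal of $\O$ and $\GL_m(\O)\cap K^{j}_{dm}$ is the $j$-th congruence subgroup $K^{\O,j}_m$ of $\GL_m(\O)$; hence $\iota$ descends to $\iota_\l\colon\GL_m(\O_\l)\hookrightarrow\GL_{dm}(\o_\l)$. A direct check shows $\iota$ respects the combinatorics behind $\nmult$ and $\Delta$: for a composition $(m_1,m_2)$ of $m$ it carries the block embedding $\GL_{m_1}(\O)\times\GL_{m_2}(\O)\hookrightarrow\GL_m(\O)$ into $\GL_{dm_1}(\o)\times\GL_{dm_2}(\o)\hookrightarrow\GL_{dm}(\o)$, and it carries $U_{m_1,m_2}$ and $V_{m_1,m_2}$ over $\O$ onto the subgroups of $\O$-linear matrices inside $U_{dm_1,dm_2}$ and $V_{dm_1,dm_2}$ over $\o$. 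These last inclusions are proper for $d\geq 2$, and this is the source of the main difficulty below.

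Next I would construct the explicit isomorphism $\Phi\colon\GR^{\O,\l}(t-\ol x)\to\GR^{\o,\l}(\ol f)$ on irreducibles. Since $\ol x\neq 0$, every irreducible in $\GR^{\O,\l}(t-\ol x)$ has level exactly $\l-1$, and likewise for $\GR^{\o,\l}(\ol f)$. Using the self-duality of the congruence quotients, an irreducible of level $\l-1$ of $\GL_{dm}(\o_\l)$ restricts on $K^{\l-1}_{dm}/K^{\l}_{dm}\cong\Mat_{dm}(\kk)$ to an orbit of characters $\psi_{\ol\MM}$, and $\iota_\l$ identifies $K^{\O,\l-1}_m/K^{\O,\l}_m\cong\Mat_m(\kk_d)$ (with $\kk_d$ the residue field of $\O$) with the centraliser of $\ol c$ in $\Mat_{dm}(\kk)$. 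A Jordan--Chevalley computation then shows: a matrix over $\kk_d$ with characteristic polynomial $(t-\ol x)^{m'}$ has, regarded over $\kk$, characteristic polynomial $\ol f^{m'}$; every $\GL_{dm}(\kk)$-class over $\kk$ with characteristic polynomial a power of $\ol f$ meets $\Mat_m(\kk_d)$ in a single $\GL_m(\kk_d)$-class (both families being indexed by partitions of $m$, via elementary divisors); and $\iota_\l$ identifies the relevant $\kk$-centralisers exactly (concretely $\End_{\kk[t]}=\End_{\kk_d[s]}$ with $s=t-\ol x$, since the semisimple part of the datum is a polynomial in it). I would then set $\Phi(\sigma)$, for $\sigma$ irreducible in $\GR^{\O,\l}(t-\ol x)$, to be the Clifford induction to $\GL_{dm}(\o_\l)$, from the stabiliser of the transported datum character, of the extension of the $\iota_\l$-transport of $\sigma$ provided by Stone--von Neumann uniqueness over the intermediate congruence quotients; and extend $\Phi$ $\Z$-linearly. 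That $\Phi$ is a bijection on irreducibles follows because the $\o$-side stabiliser is the $\iota_\l$-image of the $\O$-side stabiliser times $K^1_{dm}$, and the Heisenberg data on both sides are governed by the same alternating form.

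It then remains to prove that $\Phi$ is a morphism of rings and of corings, and this is where I expect the real obstacle to lie. The task is to compare, after transport through $\iota$ and $\Phi$, the Harish--Chandra bimodule $\C\,\GL_{dm}(\o_\l)\,e_{U_{dm_1,dm_2}}e_{V_{dm_1,dm_2}}$ over $\o$ with $\C\,\GL_m(\O_\l)\,e_{U_{m_1,m_2}}e_{V_{m_1,m_2}}$ over $\O$ (and, for corings, the transposed bimodules). By the first paragraph the $\O$-side unipotent radicals sit inside the $\o$-side ones as their $\O$-linear parts, so a priori Harish--Chandra induction over $\o$ averages over strictly more. The claim to prove is that, on the $\ol f$-primary blocks, this extra averaging is already forced: a vector in the $\psi$-isotypic subspace for the transported datum is automatically invariant under the unipotent directions present over $\o$ but not over $\O$, because the datum already encodes the $\O$-module structure on $\o^{dm}$ that those directions would otherwise impose. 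Concretely this should be a Mackey/double-coset calculation: decompose the $\o$-side bimodule along the double cosets relevant to the Clifford stabilisers, match the surviving cosets with those occurring in the $\O$-side bimodule, and verify that $e_{U_{dm_1,dm_2}}e_{V_{dm_1,dm_2}}$ and $e_{U_{m_1,m_2}}e_{V_{m_1,m_2}}$ act in the same way after projection to the primary component. Since $\Phi$ visibly respects the gradings (degree $m$ over $\O$ goes to degree $dm$ over $\o$), units and counits, this completes the isomorphism. The other ingredients — the centraliser embedding, the translation of datum characters, the bijection on irreducibles — are comparatively routine given the corresponding facts over finite fields (Zelevinsky's and Green's parametrisations, which handle the case $\l=1$ directly) and the primary decomposition of Theorem A; the genuine difficulty is the Harish--Chandra compatibility of the mismatched unipotent radicals just described.
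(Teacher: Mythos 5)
Your setup (the centraliser embedding $\GL_m(\O)\hookrightarrow\GL_{dm}(\o)$, the matching of $\ol{f}$-primary similarity classes over $\kk$ with $(t-\ol{x})$-primary classes over the residue field of $\O$, and the observation that the $\o$-side unipotent radicals strictly contain the $\O$-side ones) agrees with the paper's, and your heuristic that the extra unipotent directions are ``already forced'' on the primary block is essentially how the paper disposes of that issue (at even level via inflation coherence for the normal subgroup $\gZ^{k}=1+\cZ^{k}$, at odd level via the lemma showing that the canonical elements $\TL^{\MM}_u$ fix the idempotents $e_{12},e_{21}$). But the step you compress into ``the extension of the $\iota_\l$-transport of $\sigma$ provided by Stone--von Neumann uniqueness'' is precisely where the substance lies, and as stated it fails for odd $\l$. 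Writing $\l=2k+1$, the kernel $K^{k}$ is non-abelian and $1+\cZ^{k}$ is not a subgroup; Clifford theory identifies $\Rep(\GL_{md}(\o_\l)(\MM))_{\MM}$ with projective representations $\Rep^{\beta}(GN)$, where $N\cong(\ol{\cZ},+)$ carries a non-degenerate cocycle. Stone--von Neumann gives a unique irreducible of $\C^{\beta}N$, but the action of $G=\GL_m(\O_{k+1})(\MM)$ on it is a priori only projective: the Skolem--Noether intertwiners define a $2$-cocycle $\gamma$ on $G$, and one must prove $\gamma$ is cohomologically trivial. This is a genuine theorem in the paper (its order is a $p$-power, and it is then trivialised on a $p$-Sylow subgroup by exhibiting a Sylow-stable Lagrangian in $\ol{\cZ}$); nothing in your sketch addresses it, and ``the Heisenberg data on both sides are governed by the same alternating form'' is not a substitute, since the question is exactly whether the Weil-type factor splits off as a genuine, not merely projective, $G$-representation.

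Second, even once $\gamma$ is trivial, the trivialisation --- equivalently your extension, hence $\Phi$ on each block $\GR_{[\MM]}$ --- is only determined up to a linear character of the stabiliser, so a block-by-block bijection on irreducibles need not be multiplicative. To make $\Phi$ a ring homomorphism one must choose trivialisations for all classes $[\MM]$ simultaneously so that whenever $\MM=\MM_1\oplus\MM_2$ the choices for $\MM$, $\MM_1$, $\MM_2$ are compatible; the paper proves a coherence lemma relating $\TL^{\MM}_{\diag(g_1,g_2)}$ to $\TL^{\MM_1}_{g_1}\TL^{\MM_2}_{g_2}$, verifies multiplicativity against the explicit bimodules, and then glues the choices over the countably many classes by a compactness (Tychonoff) argument. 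Your proposed Mackey/double-coset comparison of the two Harish-Chandra bimodules is not carried out, and it cannot succeed until this normalisation ambiguity is fixed coherently; so the multiplicativity of $\Phi$, which you treat as a verification, is the second substantive gap. (At even level, where $K^{k}$ is abelian, your outline is sound and matches the paper's argument, which reduces everything to Clifford theory plus inflation coherence.)
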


Since our operations are compatible with translation by one dimensional characters (see the discussion at the beginning of Section \ref{sec:base.change}), we also have an isomorphism $\GR^{\O,\l}(t-\ol{x})\cong \GR^{\O,\l}(t)$ of algebras and coalgebras. Theorems  \ref{thm:primary} and \ref{thm:base.change.iso} thus have the following corollary.
\begin{corollary}
The Hopf axiom is valid in $\GR^{\o}$ if and only if it is valid in $\GR^{\O,\l}(t)$ for every unramified extension~$\O$ of $\o$. 
The algebra and colagebra $\GR^{\O,\l}(t)$ is spanned by irreducible representations whose associated matrix is nilpotent.
\end{corollary}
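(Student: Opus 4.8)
The plan is to obtain the first assertion by composing three isomorphisms that are already at our disposal, and to obtain the second by unwinding the definition of $\GR^{\O,\l}(t)$.

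For the first assertion I would argue as follows. By the remarks preceding Conjecture~\ref{conjecture:main}, every PSH-axiom for $\GR^{\o}$ other than the Hopf axiom is known to hold, so ``the Hopf axiom holds in $\GR^{\o}$'' is the same statement as ``$\GR^{\o}$ is a PSH-algebra,'' which by Corollary~\ref{cor:hopf.axiom.primary} is equivalent to ``the Hopf axiom holds in $\GR^{\o,\l}(\ol f)$ for every $\l \ge 1$ and every irreducible $\ol f \in \o_1[t]$.'' Fix such a pair $(\l,\ol f)$ and let $\O=\o[x]$ be the unramified extension attached to $\ol f$ as in the statement of Theorem~\ref{thm:base.change.iso}; since $\O$ is again a complete discrete valuation ring with finite residue field and uniformiser $\pi$, the object $\GR^{\O,\l}(t)$ is defined. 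Theorem~\ref{thm:base.change.iso} together with the translation isomorphism recorded immediately after it yields
\[
\GR^{\o,\l}(\ol f)\;\cong\;\GR^{\O,\l}(t-\ol x)\;\cong\;\GR^{\O,\l}(t)
\]
as rings and corings. The key point is that the Hopf axiom is an identity phrased solely in terms of the multiplication $\nmult$, the comultiplication $\Delta$, and the symmetry $\tau$ of the category of graded $\Z$-modules, namely $\Delta\circ\nmult=(\nmult\ot\nmult)\circ(\id\ot\tau\ot\id)\circ(\Delta\ot\Delta)$; it is therefore transported verbatim along any isomorphism of graded rings-and-corings, and so it holds in $\GR^{\o,\l}(\ol f)$ if and only if it holds in $\GR^{\O,\l}(t)$.

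To conclude the first assertion I would observe that, as $\ol f$ runs over the irreducible polynomials in $\o_1[t]$ of all degrees $d\ge 1$, the associated ring $\O=\o[x]$ runs over a full set of representatives of the isomorphism classes of unramified extensions of $\o$: indeed $\O$ is the unramified extension of residue degree $d$ (with residue field $\o_1[t]/(\ol f)$), and every finite extension of the finite field $\o_1$ has the form $\o_1[t]/(\ol f)$ for a suitable irreducible $\ol f$, the case $\ol f=t$ recovering $\O=\o$ itself. Combining this with the previous paragraph shows that the Hopf axiom holds in $\GR^{\o}$ if and only if it holds in $\GR^{\O,\l}(t)$ for every unramified extension $\O$ of $\o$ (and every $\l \ge 1$), as claimed. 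For the second assertion there is nothing to do beyond unwinding definitions: $\GR^{\O,\l}(t)=\bigoplus_{m\ge0}\bigoplus_{\mathrm{char}[\ol{\MM}]=t^{m}}\GR^{\O,\l}_{[\ol{\MM}]}$, where $\ol{\MM}$ ranges over similarity classes of matrices over the residue field of $\O$ and $\GR^{\O,\l}_{[\ol{\MM}]}$ is by definition the subgroup generated by those irreducible representations whose restriction to $K^{\l-1}/K^{\l}$ is the orbit of characters attached to $[\ol{\MM}]$; since a square matrix over a field has characteristic polynomial a power of $t$ precisely when it is nilpotent, $\GR^{\O,\l}(t)$ is exactly the $\Z$-span of the irreducible representations whose associated similarity class is nilpotent.

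The argument involves no serious obstacle; the only points requiring care are bookkeeping. First, one must check that ``the Hopf axiom holds in $\GR^{\O,\l}(t)$'' is insensitive to the non-canonical choices entering Theorem~\ref{thm:base.change.iso} (a monic lift $f$ of $\ol f$, and a root $x$ of $f$); this is immediate because different choices produce isomorphic rings $\O$ and hence isomorphic $\GR^{\O,\l}(t)$. Second, one must verify that the isomorphisms of Theorem~\ref{thm:base.change.iso} and of Section~\ref{sec:base.change} respect the $\N$-grading by the size of the general linear group, so that the symmetry $\tau$ is genuinely intertwined and the Hopf identity really does transport; this is part of the content of those statements.
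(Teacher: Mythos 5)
Your argument is correct and is essentially the paper's own: the corollary is stated there as an immediate consequence of Corollary~\ref{cor:hopf.axiom.primary} (primary reduction), Theorem~\ref{thm:base.change.iso}, and the translation isomorphism $\GR^{\O,\l}(t-\ol{x})\cong\GR^{\O,\l}(t)$, with the nilpotency statement following from the definition of $\GR^{\O,\l}(t)$ exactly as you say. Your added bookkeeping (every unramified extension arises as $\o[x]$ for some irreducible $\ol f$, and the isomorphisms respect the grading so the Hopf identity transports) is consistent with what the paper leaves implicit.
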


Theorem~\ref{thm:primary} and Theorem~\ref{thm:base.change.iso}, in conjunction with the complete analysis of the principal series representations of~$\GL_n(\o)$ given in~\cite{CMO2}, give rise to a large PSH-subalgebra inside $\GR^{\o}$ that contains Zelevinsky's PSH algebra~$\GR^{\o,1}$.  To describe this PSH-algebra we require a couple of definitions.

\begin{definition}
For a smooth irreducible representation $\rho$ of  $\GL_\nn(\o)$ we let $\GS(\rho)$ denote the $\Z$-submodule of $\GR^\o$ spanned by all the irreducible representations that are contained in $\rho^{i}=\underbrace{\rho \nmult \rho \nmult \cdots \nmult \rho}_{i~\text{times}}$ for all $i \ge 0$.  
\end{definition}
Following Carayol \cite[Section 4]{Carayol} we give the following definition. 
\begin{definition} A continuous irreducible representation of  $\GL_\nn(\o)$ of level $\l$ is called {\em strongly cuspidal} 
if either 
\begin{enumerate}
\item $\l=0$ and $\rho$ is inflated from a cuspidal representation of $\GL_n(\kk)$; or 
\item $\l \ge 1$ and the restriction of $\rho$ to $K^{\l}/K^{\l+1} \cong \Mat_n(\kk)$ consists of \uri{characters represented by irreducible matrices}.    
\end{enumerate}
\end{definition}

\begin{thmABC}[Local PSH]\label{thm:atom.psh.subalgebra} Let $\rho$ be a strongly cuspidal representation of $\GL_n(\o)$ for some $n \in \N$. Then up to a grading shift $\GS(\rho)$ is isomorphic to the unique universal PSH-algebra. {In particular, it satisfies the Hopf axiom.} 
\end{thmABC}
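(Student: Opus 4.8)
The plan is to prove Theorem~\ref{thm:atom.psh.subalgebra} in three stages. First, reduce via Theorems~\ref{thm:primary} and~\ref{thm:base.change.iso} to a statement about principal series representations. Second, read off, from the analysis of those representations in~\cite{CMO2}, that $\GS(\rho)$ together with the restrictions of $\nmult$ and $\Delta$ satisfies all of Zelevinsky's PSH-algebra axioms and has a unique primitive irreducible element. Third, invoke Zelevinsky's characterization of the universal PSH-algebra~\cite[\S 3.1(g)]{Zelevinsky}; the grading shift asserted in the statement will be $\deg\mapsto\deg/n$. Before starting I would record two remarks: since $\rho$ is cuspidal one has $\Delta(\rho)=\rho\otimes 1+1\otimes\rho$; and since every irreducible summand of every element of $\GS(\rho)$ occurs in some $\rho^i\in\GR^{\o}_{ni}$, the submodule $\GS(\rho)$ sits inside a single finite level $\GR^{\o,\l}$ and is concentrated in degrees divisible by $n$.

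The reduction splits according to the level of $\rho$. If $\rho$ has level $0$, so that it is inflated from a cuspidal representation $\ol\rho$ of $\GL_n(\kk)$, then by Proposition~\ref{prop:GR-changing-l} every $\rho^i$ is inflated from $\GL_{ni}(\kk)$ and agrees with the product computed in $\GR^{\o,1}$; since the level-one functors coincide with the ordinary Harish-Chandra functors~\cite[Theorem 2.4]{Howlett-Lehrer_HC}, $\GS(\rho)$ is precisely the sub-PSH-algebra attached to $\ol\rho$ inside Zelevinsky's algebra $\GR^{\o,1}$, and the theorem in this case is contained in~\cite{Zelevinsky}. If $\rho$ has positive level, then its associated matrix is irreducible, so its characteristic polynomial $\ol f$ is irreducible, necessarily of degree $n$, and by Theorem~\ref{thm:primary} this places $\rho$ --- and hence all of $\GS(\rho)$ --- in a single primary block $\GR^{\o,\l}(\ol f)$. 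By Theorem~\ref{thm:base.change.iso} this block is isomorphic, as a ring and a coring, to $\GR^{\O,\l}(t-\ol x)$, where $\O=\o[x]$ is the unramified extension of $\o$ of degree $n$; since this isomorphism rescales the grading by $1/n$, it carries $\GR^{\o,\l}_{n}(\ol f)$ onto $\GR^{\O,\l}_{1}(t-\ol x)$ and so carries $\rho$ to a character $\theta$ of $\GL_1(\O)=\O^\times$ sitting in degree one. It thus suffices to prove that $\GS(\theta)\subseteq\GR^{\O}$ is the universal PSH-algebra.

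Now $\theta^i=\theta\nmult\cdots\nmult\theta$ is, by associativity and the definition of $\nmult$, the principal series representation of $\GL_i(\O)$ attached to the character $\theta$ of the diagonal torus $(\O^\times)^i$, and its decomposition into irreducibles, together with the effect of $\pres$ on the constituents, is described in~\cite{CMO2}. From that description I would read off the PSH-axioms for $\GS(\theta)$, which is by construction the free $\Z$-module on the set $\Omega$ of irreducibles occurring in the $\theta^i$: closure under $\nmult$ (immediate, since $\theta^i\nmult\theta^j=\theta^{i+j}$) and under $\Delta$; that the inner product of $\GR^{\O}$ restricts to the one making $\Omega$ orthonormal, with $\nmult$ and $\Delta$ mutually adjoint for it, by the adjunction between $\pind$ and $\pres$~\cite{CMO1}; that the positivity axioms $\Omega\nmult\Omega\subseteq\N[\Omega]$ and $\Delta(\Omega)\subseteq\N[\Omega\otimes\Omega]$ hold because $\pind$ and $\pres$ send representations to representations; and --- the decisive point --- that the Hopf (Mackey) axiom holds on $\GS(\theta)$. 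Finally $\theta$ is the unique primitive irreducible element of $\GS(\theta)$: it is primitive as the generator of the degree-one component, while an irreducible constituent $\sigma$ of $\theta^i$ with $i\geq 2$ is not, because $\langle\Delta(\sigma),\theta\otimes\theta^{i-1}\rangle=\langle\sigma,\theta^i\rangle>0$ by the adjunction. Zelevinsky's characterization then identifies $\GS(\theta)$ with the universal PSH-algebra, which in particular supplies the Hopf axiom inside $\GS(\rho)$.

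I expect the main obstacle to be exactly the Hopf/Mackey axiom on $\GS(\rho)$: this is precisely the compatibility left open by Conjecture~\ref{conjecture:main}, and it is not resolved even by Theorem~\ref{thm:primary}, which only reduces it to the individual primary blocks --- so it has to be extracted by hand from the explicit principal-series computations of~\cite{CMO2}, equivalently by identifying $\GS(\theta)$, as a bialgebra, with $\bigoplus_{i\geq 0}R(S_i)$. A further, more clerical, difficulty is the behaviour of levels and gradings under the reductions: one must keep track of the level of the character $\theta$ produced by Theorem~\ref{thm:base.change.iso}, confirm that it genuinely lands in $\GL_1(\O)$ rather than in a larger group, and verify that the rescaling of degrees by $1/n$ is exactly the grading shift claimed in the theorem.
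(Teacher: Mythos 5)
Your first reduction is sound and coincides with the paper's opening move: the level-zero case via Proposition \ref{prop:GR-changing-l} and \cite{Zelevinsky}, and, for positive level, the observation that $\GS(\rho)$ lies in a single primary block $\GR^{\o,\l}(\ol{f})$ with $\ol{f}$ irreducible of degree $n$, followed by Theorem \ref{thm:base.change.iso}, which transports $\rho$ to a degree-one character $\theta$ of $\GL_1(\O_\l)$. The gap is in what you do next. You propose to establish the PSH axioms for $\GS(\theta)$ --- above all the Hopf axiom --- by ``reading them off'' from the principal-series analysis of \cite{CMO2}, and you yourself flag this as the main obstacle. That step is never carried out, and it does not follow from the results you have in hand: the principal-series theorem quoted in \S\ref{sec:principal.series} gives $\pres\,\pind \cong \sum_{\sigma\in S_n}\Ad(\sigma)$ for the full diagonal torus together with the Hecke-algebra isomorphism $\End_G(\pind \chi)\cong \C\,\mathrm{Stab}_{S_n}(\chi)$, whereas the Hopf axiom on $\GS(\theta)$ requires Mackey-type control of $\pres_{a,b}\circ\pind_{c,d}$ for intermediate Levi subgroups applied to all irreducible constituents of all $\theta^i$ --- exactly the compatibility that is left open in Conjecture \ref{conjecture:main}. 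So, as written, the decisive point of your argument is an unproved assertion rather than a proof.

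The idea you are missing is that after base change no principal-series computation is needed at all, because one can keep reducing the \emph{level}. Since the polynomial attached to $\theta$ has degree one, tensoring with the one-dimensional character of each $\GL_m(\O_\l)$ determined by (a lift of) $\ol{x}$ gives a ring-and-coring isomorphism $\GR^{\O,\l}(t-\ol{x})\cong \GR^{\O,\l}(t)$ under which $\theta$ becomes trivial on the top congruence subgroup, hence is inflated from level $\l-1$ (Proposition \ref{prop:GR-changing-l}); both operations preserve irreducibles, the inner product, $\nmult$ and $\Delta$, so they carry $\GS(\rho)$ onto $\GS$ of the new character. Iterating this twist-and-inflate step strips the level down to $0$, where one is looking at $\GS(\chi)$ for a character $\chi$ of $\GL_1(\O_1)$ inside Zelevinsky's algebra $\GR^{\O,1}$; every such character is cuspidal, so $\GS(\chi)$ is the universal PSH-algebra by \cite{Zelevinsky}, and the Hopf axiom on $\GS(\rho)$ follows by transport of structure, the only grading change being the rescaling by $1/n$ from Theorem \ref{thm:base.change.iso}. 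This is the paper's proof, and it also settles your clerical worries about where $\theta$ lives and how the levels behave, without any appeal to the explicit decomposition of the principal series.
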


Cuspidal representations of $\GL_n(\kk)$ are parameterised by Galois orbits of {\em norm-primitive} characters of the multiplicative group of a degree $n$-extension $\kk_n$ of $\kk$. That is, $\Gal(\kk_n/\kk)$-orbits of characters $\theta: \kk_n^\times \to \C^\times$ which cannot be written as $\theta' \circ N_{\kk_n^\times \mid \kk_m^\times}$ where $N_{\kk_n^\times \mid \kk_m^\times}$ is the norm map and $\theta'$ a character of~$\kk_m^\times$.    
An analogous description of the strongly cuspidal representations of $\GL_\nn(\o)$ of level $\l > 0$ is given in \cite[Theorem B]{AOPS}. It is shown that there exists a bijection between strongly cuspidal representations of $\GL_\nn(\o_\l)$ and Galois orbits of strongly primitive characters of $(\O_\l^{\mathrm{ur}, n})^\times$, with $\O^{\mathrm{ur}, n}$ an unramified extension of $\o$ of degree $\nn$.  A character $\theta: (\O_\l^{\mathrm{ur}, n})^\times \to \C^\times$ is called {\em strongly primitive} if its restriction to the smallest congruence subgroup $1+\pi^{\l-1}\O^{\mathrm{ur}, n}_\l \cong (\kk_n,+)$ does not factor through the trace map $\tr_{\kk_n \mid \kk_m}$. Let $\Pi(\O^{\mathrm{un},\times})$ denote the disjoint union of the strongly primitive characters of  $(\O_\l^{\mathrm{ur}, n})^\times$ going over all $n,\l \in \N$.   

\begin{thmABC}[Global PSH]\label{thm:large.psh.subalgebra} Let $\GS^\o$ be the product of all the subrings $\GS(\rho)$ inside $\GR^{\o}$, where $\rho$ ranges over all strongly cuspidal representations of $\GL_n(\o_{\l})$ for all $n,\l \in \N$.
Then the multiplication and comultiplication on $\GR^{\o}$ induce a PSH-algebra structure on $\GS^{\o}$. 
\end{thmABC}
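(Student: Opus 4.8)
The plan is to build $\GS^{\o}$ as an infinite tensor product of the "local" PSH-algebras $\GS(\rho)$ from Theorem~D, indexed by strongly cuspidal representations, and then to verify that this tensor product sits compatibly inside $\GR^{\o}$ as a sub-bialgebra on which the Hopf axiom holds. First I would make precise the indexing: by the classification recalled above (cuspidal representations of $\GL_n(\kk)$ via norm-primitive characters, and \cite[Theorem B]{AOPS} for level $\l>0$ via strongly primitive characters of $(\O_\l^{\mathrm{ur},n})^\times$), the set of isomorphism classes of strongly cuspidal representations, modulo the translation action of one-dimensional characters, is in bijection with a countable set; fix a set $\mathcal{C}$ of representatives. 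For each $\rho\in\mathcal{C}$ Theorem~D gives $\GS(\rho)\cong\GS$ (the universal PSH-algebra) up to grading shift, so $\GS(\rho)$ is itself a PSH-algebra with a single primitive irreducible, namely $\rho$ (or rather the class of $\rho$ in degree equal to $n\l$ in the appropriate grading). I would then set $\GS^{\o}:=\bigotimes_{\rho\in\mathcal{C}}\GS(\rho)$, a restricted tensor product in the graded sense (only finitely many tensor factors differ from the unit in each graded piece), which is automatically a PSH-algebra because the tensor product of PSH-algebras is a PSH-algebra.

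The substantive point is that the multiplication map $\bigotimes_{\rho}\GS(\rho)\to\GR^{\o}$ is an injective homomorphism of bialgebras whose image is precisely the product of the $\GS(\rho)$'s inside $\GR^{\o}$, and that this image is closed under the comultiplication $\Delta$. For injectivity and the identification of the image I would argue via the primary decomposition of Theorem~A together with the base-change isomorphism of Theorem~B: each strongly cuspidal $\rho$ has an associated similarity class $[\ol{\MM}]$ with $\charpoly[\ol{\MM}]=\ol{f}_\rho$ a power of a single irreducible polynomial (by the definition of strongly cuspidal, $\ol{\MM}$ is irreducible as a matrix, hence its characteristic polynomial is irreducible of degree $n$), so $\GS(\rho)\subseteq\GR^{\o,\l}(\ol{f}_\rho)$ for the relevant $\l$. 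Since distinct $\rho\in\mathcal{C}$ lying over the same $\ol{f}$ correspond, after the base change $\GR^{\o,\l}(\ol{f})\cong\GR^{\O,\l}(t-\ol{x})\cong\GR^{\O,\l}(t)$, to strongly cuspidal representations of $\GL_n(\O)$ with \emph{nilpotent} associated matrix of \emph{different} levels or degrees, I would show their $\GS(\rho)$'s multiply freely; and representations over distinct irreducibles $\ol{f}$ live in distinct tensor factors of the primary decomposition, where Theorem~A already guarantees the multiplication is an isomorphism onto its image carrying tensor products of irreducibles to irreducibles. Combining these two layers of "independence" (across $\ol{f}$ via Theorem~A, and within a fixed $\ol{f}$ via Theorem~B and a direct analysis of strongly cuspidal representations of $\GL_n(\O)$ with nilpotent matrix) yields that the multiplication $\bigotimes_\rho\GS(\rho)\to\GR^{\o}$ is injective with image the internal product.

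For stability under $\Delta$: because each $\GS(\rho)$ is a sub-coalgebra of $\GR^{\o,\l}(\ol{f}_\rho)$ by Theorem~D (the Hopf axiom holds there, and $\Delta(\GS(\rho))\subseteq\GS(\rho)\otimes\GS(\rho)$ since every irreducible constituent of $\Delta$ applied to something in $\rho^{i}$ is, by coassociativity and the Hopf-type compatibility on $\GS(\rho)$, a tensor of constituents of powers of $\rho$), it suffices to check that $\Delta$ respects the internal product structure across the different factors. Here Corollary~\ref{cor:hopf.axiom.primary} does the cross-$\ol{f}$ work for free: for elements lying over distinct irreducible polynomials, $\Delta$ of a product is the product of the $\Delta$'s. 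Within a fixed $\ol{f}$, the same statement has to be established for the finitely many strongly cuspidal $\rho$'s over that $\ol{f}$ appearing in a given product; after base change this reduces to showing, inside $\GR^{\O,\l}(t)$, that $\Delta$ of a product of elements coming from distinct strongly cuspidal representations with nilpotent matrix factors through the tensor product of $\Delta$'s. I would prove this by a Mackey-formula / double-coset computation for the bimodule $e_{U}e_{V}\C G$: the relevant double cosets split according to how a parabolic subgroup interacts with the congruence filtration, and the cross terms vanish because two strongly cuspidal representations over the same $\ol{f}$ but of different level/degree have incompatible restrictions to the principal congruence subgroups $K^{\l}/K^{\l+1}$ (their associated matrices are nilpotent of different sizes, hence not conjugate into a common block form compatible with a proper parabolic). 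Once $\Delta(\GS^{\o})\subseteq\GS^{\o}\otimes\GS^{\o}$ is known, $\GS^{\o}$ is a sub-bialgebra of $\GR^{\o}$; and being abstractly isomorphic to $\bigotimes_\rho\GS(\rho)$, a tensor product of (shifted) copies of the universal PSH-algebra, it is a PSH-algebra. The main obstacle I anticipate is precisely this last cross-term vanishing for distinct strongly cuspidal representations over a \emph{common} irreducible polynomial: Theorem~A handles different polynomials and Theorem~D handles a single $\rho$, but the interaction of several strongly cuspidal representations of different levels over the same residual polynomial is not covered by either and will require the explicit double-coset analysis together with the classification in \cite[Theorem B]{AOPS} and the principal-series computations of \cite{CMO2}.
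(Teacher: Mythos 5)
Your reduction of the cross-$\ol{f}$ interaction to Corollary~\ref{cor:hopf.axiom.primary} and your use of Theorem~\ref{thm:base.change.iso} to pass to $\GR^{\O,\l}(t)$ match the paper, but the step you yourself flag as the main obstacle — the cross terms for several strongly cuspidal representations over a \emph{common} irreducible polynomial — is where the proposal breaks down, for two reasons. First, the vanishing mechanism you invoke rests on a false premise: two distinct strongly cuspidal representations lying in the same primary component $\GR^{\o,\l}(\ol{f})$ necessarily act on $\GL_n$ for the \emph{same} $n=\deg\ol{f}$ and have the \emph{same} level (a lower-level representation restricts trivially to $K^{\l-1}$ and hence lands in the $t$-primary component, already separated by Corollary~\ref{cor:hopf.axiom.primary}); they differ only in the strongly primitive character orbit of \cite[Theorem B]{AOPS} and may even share the same similarity class $[\ol{\MM}]$. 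So there are no ``nilpotent matrices of different sizes'' and no ``incompatible restrictions to $K^{\l}/K^{\l+1}$'' to make cross terms vanish. Second, the tool you propose for this step — a Mackey-formula/double-coset decomposition of $e_Ue_V\C G$ — is exactly what is not available here: its existence is the paper's open Conjecture~\ref{conjecture:main}, and no such formula is proved in \cite{CMO1} or \cite{CMO2} beyond the torus case, so the argument cannot be completed along these lines. (Your external-tensor-product packaging also requires an injectivity/freeness statement for the multiplication map $\bigotimes_\rho\GS(\rho)\to\GR^{\o}$ that the theorem does not need and that your ``different levels or degrees'' argument does not establish.)

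What the paper does instead is an induction on the level that avoids any Mackey formula. After using Corollary~\ref{cor:hopf.axiom.primary} to separate distinct primary components at the top level, the representations sharing a component $\GR^{\o,\l}(\ol{f})$ (same level, same $\ol{f}$) are moved by the base-change isomorphism of Theorem~\ref{thm:base.change.iso} into $\GR^{\O,\l}(t-\ol{x})$, then twisted by a single one-dimensional character into $\GR^{\O,\l}(t)$; since the twisted representations are now trivial on the top congruence subgroup, Proposition~\ref{prop:GR-changing-l} identifies them with elements of $\GR^{\O,\l-1}$, where they are again governed by (lower-level, rank-one) strongly cuspidal data. Iterating this — primary decomposition, base change, twist, drop the level — one terminates at level $0$, where the Hopf axiom is Zelevinsky's theorem for $\GL_n$ over a finite field. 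Reworking your proposal means replacing the double-coset analysis by this recursive level reduction; the base-change and twisting compatibilities you already cite are exactly the inputs needed, and no new vanishing statement is required.
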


\begin{remark} In fact, $\GR^\o$ contains a much larger PSH-algebra. As mentioned in and after Theorem \ref{thm:base.change.iso}, if $f$ is an irreducible polynomial, then we have an isomorphism of rings and corings $$\GR^{\o,\l}(\ol{f})\cong \GR^{\O,\l}(t-\ol{x})\cong \GR^{\O,\l}(t),$$ where $\O = \o[x]$. The ring and coring $\GR^{\O,\l}(t)$ contains $\GR^{\O,\l-1}$ in a natural way, by considering representations of level $\l-1$ as representations of level $\l$. 
Using the isomorphism above iteratively, we see that for every chain of unramified ring extensions $\o\subseteq \O^{(1)}\subseteq\cdots\subseteq \O^{(\l)}$ we get a chain of inclusion\uri{s} of rings and corings $$\GR^{\O^{(\l)},0}\subseteq \GR^{\O^{\uri{(\l-1)}},1}\subseteq\cdots\subseteq \GR^{\o,\l}.$$ We define a \textit{strongly semisimple} irreducible representation to be one that is contained in the \uri{union of all such chains}. The subgroup generated by the strongly semisimple representations is then a subring and a subcoring of $\GR^{\o,\l}$ which is also a PSH algebra. This can be seen by similar methods to those we use in the proof of Theorem~\ref{thm:large.psh.subalgebra}. The algebra of strongly semisimple representations properly contains $\GS^\o$.
\end{remark}

\subsection{Related and future work} 

In a manuscript by Howe and Moy \cite{Howe-Moy} and a paper by Hill \cite{Hill_Jord}, the authors prove isomorphisms between certain Hecke algebras closely related to this paper. The motivation of the former is to study representations of general linear groups over local fields, while that of the latter is to study a Jordan decomposition for representations of general linear groups over their ring of integers. In the language of the present paper Theorem 2.1 in \cite[Chapter 1]{Howe-Moy} translates to the conjunction of the categorical equivalences given in Theorem \ref{thm:primary.decomposition.of.reps} and in~\S\ref{subsec:even} and~\S\ref{subsec:odd} below. The approach taken in the present paper is not based on the study of Hecke algebras, and is different in tools and scope. Notably, the thrust of this paper is to establish the compatibility between the above equivalences and the operations $\circ$ and $\Delta$. 
 
 \smallskip
 
There are two fundamental open problems emerging from this paper. The first is to clarify whether $\GR$ is a PSH-algebra. This is directly related to the existence of a Mackey formula of the type seen for reductive groups over finite fields \cite[Theorem 5.1]{DM} or for reductive groups over local fields \cite[Theorem 5.2]{BZ_reductive}. While there are several positive indications in this direction, for instance the principal series for general linear groups (see \S\ref{sec:principal.series}) or $\Sp_4(\o_2)$ and its Siegel Levi subgroup (see \cite[\S5]{CMO2}), the general case is wide open.   The second problem is to describe the cuspidal representations with respect to the induction and restriction functors of the present paper.

\subsection{Organisation of the paper} The paper is organised as follows. In \S\ref{sec:preliminaries} we give the necessary preliminaries. In particular we review the construction of the induction functors in \S\ref{subsec:VHC} and in \S\ref{subsec:clifford} we describe the main tools developed in \cite{CMO1} for working with these functors. In \S\ref{fieldext} we gather results from linear algebra over fields and local rings that are constantly used throughout. In lack of appropriate reference we also give proofs.    

In \S\ref{sec:GR} we describe the ring and coring structure of $\GR^{\o}$.  In \S\ref{sec:primary.decomposition} we prove Theorem~\ref{thm:primary} and in \S\ref{sec:base.change} we prove Theorem~\ref{thm:base.change.iso} where the results from \S\ref{sec:preliminaries} and   \S\ref{sec:GR} are heavily used. Section \S\ref{sec:base.change} has two parts: \S\ref{subsec:even} deals with even level and \S\ref{subsec:odd}  deals with odd level and is far more involved. In particular, we establish a coherence of trivialisations of certain two-cocycles across all linear groups simultaneously.   

In \S\ref{sec:principal.series} we explain the analysis of the principal series representations as developed in \cite{CMO2} and prove Theorems~\ref{thm:atom.psh.subalgebra} and~\ref{thm:large.psh.subalgebra}. Finally, in \S\ref{sec:nilpotent} we study primary subalgebra of $\GR^{\o,2}$ associated to certain nilpotent matrices and show that it is a PSH-subalgebra as well. 

\subsection{Notation}

Throughout this paper $\o$ denotes a complete discrete valuation ring with finite residue field $\kk$ and $\pi$ is a uniformiser. We let $\O$ denote  an unramified extension, and let $\o_\l$ and $\O_\l$ denote finite quotients modulo~$\pi^\l$. The reduction modulo $\pi$ is denoted $x \mapsto \ol{x}$. The letters $m,n,i,j,\l$ represent natural numbers and $a,b,c,x,y,z$ are elements in algebras or modules over local rings. The ring of matrices is denoted $\Mat_n$. The stabiliser in a group $G$ of an element $x$ under a group action is denoted $G(x)$. \uri{For a finite group $G$ we let $\Rep(G)$ denote the category of its finite dimensional complex representations.} The following list collects further key notation, together with the number of the section where it is introduced.

\smallskip

\begin{center}
\begin{tabular}{r||l|l}
  $\cA, \cZ$ & algebra and its complement \uri{with respect to the trace form} & \S\ref{fieldext} \\ 
 $K^i=\gZ^i \gA^i$ & congruence kernels $\gZ^i=1+\cZ^i, \gA^i=1+\cA^i$, $\cA^i=\pi^i\cA$,  $\cZ^i=\pi^i\cZ$& \S\ref{sec:base.change} \\ 
 $\GR_n^{\o,\l}$, $\GR_n^{\o}$ & Grothendieck rings of $\GL_n(\o_\l)$ and $\GL_n(\o)$& \S\ref{sec:GR}\\
$\GR^\o$ & the sum of Grothendieck rings $\oplus_{n \ge 0} \GR_n^{\o}$ & \S\ref{sec:GR}\\
$\GR^{\o,\l}_{[\ol{\MM}]}$ & the submodule of $\GR^{\o,\l}$ with associated matrix $\ol{\MM}$ &\S\ref{sec:GR-gradings}\\
$\GR^{\o,\l}(\ol{f})$ & the subring associated to an irreducible polynomial $\ol{f}$ & \S\ref{sec:GR-gradings} \\
$\nmult$, $\Delta$   & multiplication and comultiplication & \S\ref{subsec:main.results} \\
$\phi: \mathfrak{f} \to \C^\times$ & a fixed additive character of $\mathfrak{f}=\mathrm{frac}(\o)$ & \S\ref{subsec:Pontryagin.Dual}\\
      $\xi \mapsto \phi_\xi$  & the duality $\Mat_n(\o_\l) \to \Mat_n(\o_\l)^\vee$ & \S\ref{subsec:Pontryagin.Dual}\\
      $[\Mat_n]=[\Mat_n^{\o,i}]$  & the set of similarity classes in $\Mat_n(\o_i)$ & \S\ref{sec:GR-gradings} \\
      $[\Mat]$ & the disjoint union $\bigsqcup_{n\geq 0} [\Mat_n]$ & \S\ref{sec:GR-gradings}  \\
      $\charpoly:[\Mat]\to \o_1[t]$ & the map sending a matrix to its characteristic polynomial &\S\ref{sec:GR-gradings}  \\
  \end{tabular}
\end{center}


\section{Preliminaries}\label{sec:preliminaries} 

\subsection{A variant of Harish-Chandra functors.}\label{subsec:VHC}  
The following is a summary of results from~\cite{CMO1} that are essential for the present paper. The focus in {\em loc. cit.} is on profinite groups but here we restrict attention to finite groups.      
\begin{definition} Let $G$ be a finite group, and suppose that $U$, $L$ and $V$ are subgroups of $G$ such that $L$ normalises~$U$ and $V$. We say that $(U,L,V)$ is a {\em virtual Iwahori decomposition} of $G$ if the map
\[
U\times L\times V \hookrightarrow G
\]
given by multiplication in $G$ is injective. If that map is a bijection we call the triple $(U,L,V)$ an {\em Iwahori decomposition}. 
\end{definition}

Let $e_U$ and $e_V$ denote the idempotents in the complex group algebra~$\C G$ associated to the trivial representations of $U$ and $V$, and consider the $\C G$-$\C L$ bimodule~$\C G e_U e_V$. Let $\pind_{U,V}$ be the functor from the category of complex representations of $L$ to the category of complex representations of $G$ defined by tensoring with this bimodule:
\[
\pind_{U,V}:\Rep(L) \to \Rep(G), \qquad X \mapsto \C Ge_U e_V \otimes_{\C L}X.
\]
Similarly, define 
\[
\pres_{U,V}:\Rep(G) \to \Rep(L), \qquad Y \mapsto e_Ue_V \C G \otimes_{\C G}Y.
\]

In \cite{CMO1} we prove, among other things, that whenever $(U,L,V)$ is a virtual Iwahori decomposition of $G$ the functors $\pind_{U,V}$ and $\pres_{U,V}$ are biadjoints and that $\pind_{U,V} \cong \pind_{V,U}$ and similarly $\pres_{U,V} \cong \pres_{V,U}$. The latter isomorphisms follow from the $\C L$-$\C G$ bimodule isomorphism 
\begin{equation}\label{commutativity}
e_Ue_V\C G \to e_Ve_U\C G, \qquad f \mapsto e_V f,
\end{equation} 
see \cite[Theorem 2.15]{CMO1}. Essential to the present paper are the compatibility with Clifford theory proved in \cite[Section 3]{CMO1} and summarised in \ref{subsec:clifford} below.

\subsection{Compatibility with Clifford theory and natural operations}\label{subsec:clifford} 
Let $(U,L,V)$ be a virtual Iwahori decomposition of a finite group $G$ and let $G_0 \lhd G$ be a normal subgroup such that $G_0=U_0 L_0V_0$, where we write $H_0=H \cap G_0$ for every $H \le G$. 

Let $\psi$ be an irreducible representation of $L_0$, and let $\uri{{\eta}}=\pind_{U_0,V_0}(\psi)$ be the corresponding (irreducible) induced representation of $G_0$. Let $L(\psi)$ and $G(\uri{{\eta}})$ denote the inertia groups of $\psi$ and $\uri{{\eta}}$. Then the following diagram for induction is commutative (and there is a similar diagram for restriction):
\[
 \xymatrix@C=80pt{ \Rep(L)_\psi \ar[r]^-{\pind_{U,V}}  & \Rep(G)_{\uri{\eta}} \\
 \Rep (L(\psi))_\psi \ar[r]^-{\pind_{U(\uri{\eta}),V({\uri{\eta}})}} \ar[u]_{\ind}^{\cong}   & \Rep(G({\uri{\eta}}))_{\uri{{\eta}}}  \ar[u]_{\ind}^{\cong} \\
 \Rep^{\gamma } (L(\psi)/L_0) \ar[r]^-{\pind_{U({\uri{\eta}})/U_0, V({\uri{\eta}})/V_0}} \ar[u]^-{\cong}   & \Rep^{\gamma }(G(\uri{\eta})/G_0)  \ar[u]^-{\cong} 
 }
\] 
where $\Rep(H)_\theta$ stands for the representations of $H$ whose restriction to $H_0\lhd H$ consists entirely of $H$-conjugates of the irreducible representation $\theta$; and $\Rep^{\gamma}(H(\theta)/H_0)$ stands for \uri{the category of projective representations of $H(\theta)/H$ with the cocycle $\gamma \in Z^2(H(\theta)/H_0, \C^\times)$ associated to $\theta$; see \cite[Chapter 11]{Isaacs}.}       

We recall here Theorem 2.18 (5) from \cite{CMO1}. It will be used in the course of the proof of Theorem \ref{thm:base.change.iso} in \S\ref{sec:base.change}. 
\begin{theorem}\label{thm:inflation.coherence}
Let $G$ be a finite group with a virtual Iwahori decomposition $(U,L,V)$.
Let $\gZ$ be a normal subgroup of $G$ such that $(U_\gZ,L_\gZ,V_\gZ):=(U\cap \gZ, L\cap \gZ, V\cap \gZ)$ is an Iwahori decomposition of $\gZ$. 
The diagrams
\[
\xymatrix@C=60pt{
\Rep(L) \ar[r]^-{\pind_{U,V}} & \Rep(G) \\
\Rep(L/L_\gZ) \ar[u]^-{\infl} \ar[r]^-{\pind_{U/U_\gZ,V/V_\gZ}} & \Rep(G/ \gZ) \ar[u]_-{\infl} 
}
\qquad\text{and}\qquad
 \xymatrix@C=60pt{
\Rep(G) \ar[r]^-{\pres_{U,V}} & \Rep(L) \\
 \Rep(G/\gZ) \ar[u]^-{\infl} \ar[r]^-{\pres_{U/U_\gZ,V/V_\gZ}} & \Rep(L/L_\gZ)\ar[u]_-{\infl} 
}
\]
commute up to natural isomorphism. (Here $\infl$ denotes inflation.)
\end{theorem}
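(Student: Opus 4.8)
The plan is to realise all four functors appearing in the two squares as functors of tensoring with an explicit bimodule, and then to show that the two composite bimodules produced in each square coincide.

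To set up, I would first record that $\pind_{U,V}(X)=\C Ge_Ue_V\otimes_{\C L}X$ and $\pres_{U,V}(Y)=e_Ue_V\C G\otimes_{\C G}Y$, where $\C Ge_Ue_V$ is a $(\C G,\C L)$-bimodule because $L$ normalises $U$ and $V$. Since $\gZ$ is normal in $G$ the idempotent $e_\gZ$ is central in $\C G$, so $\C Ge_\gZ=e_\gZ\C Ge_\gZ$ is a subalgebra canonically isomorphic to $\C(G/\gZ)$ and inflation $\Rep(G/\gZ)\to\Rep(G)$ is tensoring with the $(\C G,\C(G/\gZ))$-bimodule $\C Ge_\gZ$; likewise $e_{L_\gZ}$ is central in $\C L$ and $\infl\colon\Rep(L/L_\gZ)\to\Rep(L)$ is tensoring with $\C Le_{L_\gZ}$. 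The single preliminary fact I would check is that under $\C(G/\gZ)\cong\C Ge_\gZ$ the idempotent of the subgroup $U/U_\gZ\cong U\gZ/\gZ$ of $G/\gZ$ corresponds to $e_Ue_\gZ$ (equivalently to $e_{U\gZ}$, the trivial-representation idempotent of the subgroup $U\gZ$), and similarly for $V$; this drops out of expanding $e_{U/U_\gZ}$ as an average over $U\gZ/\gZ$ and lifting coset representatives from $U$.

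With this in hand, the induction square unwinds as follows. Composing $\pind_{U/U_\gZ,V/V_\gZ}$ then $\infl$ gives the $(\C G,\C(L/L_\gZ))$-bimodule $\C Ge_\gZ\otimes_{\C(G/\gZ)}\C(G/\gZ)e_{U/U_\gZ}e_{V/V_\gZ}$, which by the preliminary fact and centrality of $e_\gZ$ simplifies to $\C Ge_Ue_Ve_\gZ$; composing $\infl$ then $\pind_{U,V}$ gives $\C Ge_Ue_V\otimes_{\C L}\C Le_{L_\gZ}=\C Ge_Ue_Ve_{L_\gZ}$. So the whole induction square comes down to the identity $e_Ue_Ve_\gZ=e_Ue_Ve_{L_\gZ}$ in $\C G$, together with a routine check that the two resulting right $\C(L/L_\gZ)$-actions on this common left ideal agree (both end up being $w\mapsto \ell\cdot w$ for a lift $\ell\in L$, since $e_Ue_V\ell=\ell e_Ue_V$). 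This identity is precisely where the \emph{genuineness} of the Iwahori decomposition of $\gZ$ is used: unique factorisation $\gZ=U_\gZ L_\gZ V_\gZ$ forces $|\gZ|=|U_\gZ|\,|L_\gZ|\,|V_\gZ|$, hence the multiplication map $U\times L_\gZ\times V_\gZ\to U\gZ=U\cdot\gZ$ is a bijection and $e_{U\gZ}=e_Ue_{L_\gZ}e_{V_\gZ}$; combining this with $e_Ue_\gZ=e_{U\gZ}$, with $e_{V_\gZ}e_V=e_V$ (as $V_\gZ\subseteq V$) and with $e_{L_\gZ}e_V=e_Ve_{L_\gZ}$ (as $L_\gZ$ normalises $V$) yields $e_Ue_Ve_\gZ=(e_Ue_\gZ)e_V=e_{U\gZ}e_V=e_Ue_{L_\gZ}e_{V_\gZ}e_V=e_Ue_{L_\gZ}e_V=e_Ue_Ve_{L_\gZ}$. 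For the restriction square I would run the mirror argument; there the two composite bimodules both collapse directly to $e_Ue_V\C Ge_\gZ$ just from centrality of $e_\gZ$, so no analogue of the displayed identity is needed and that square is strictly easier.

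The main obstacle is conceptual rather than computational: recognising that the theorem reduces to the identity $e_Ue_Ve_\gZ=e_Ue_Ve_{L_\gZ}$ and that this is exactly what genuineness of the Iwahori decomposition of $\gZ$ buys — for a merely virtual Iwahori decomposition of $\gZ$ one has $U\gZ\supsetneq UL_\gZ V_\gZ$, the identity fails, and the theorem is false. The only remaining friction is bookkeeping: keeping track of which of the several $(\C G,\C L)$-, $(\C G,\C(L/L_\gZ))$-, $(\C(G/\gZ),\C(L/L_\gZ))$- etc.\ bimodule structures is in force at each step, and checking that the right-module structures (not merely the underlying left $\C G$-modules) match up after the identifications $\C(H/N)\cong\C He_N$.
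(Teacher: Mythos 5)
Your proposal is correct, and it is self-contained, which matters here because the paper itself gives no proof of this statement: it simply quotes it as Theorem 2.18(5) of \cite{CMO1} (whose proof is likewise a bimodule/idempotent computation in the group algebra, so your route is in the same spirit as the cited source rather than a genuinely different one). The reduction of the induction square to the single identity $e_Ue_Ve_{\gZ}=e_Ue_Ve_{L_\gZ}$ in $\C G$ is exactly right, and your derivation of it is sound: $e_Ue_\gZ=e_{U\gZ}$ needs only that fibres of $U\to U\gZ/\gZ$ have constant size; $e_{U\gZ}=e_Ue_{L_\gZ}e_{V_\gZ}$ uses surjectivity of $U_\gZ\times L_\gZ\times V_\gZ\to\gZ$ (genuineness of the Iwahori decomposition of $\gZ$) together with injectivity inherited from the virtual Iwahori decomposition of $G$; and the remaining steps $e_{V_\gZ}e_V=e_V$, $e_{L_\gZ}e_V=e_Ve_{L_\gZ}$ are immediate from $V_\gZ\subseteq V$ and $L\supseteq L_\gZ$ normalising $V$. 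Your identifications $\infl\cong \C Ge_\gZ\otimes_{\C(G/\gZ)}(\argument)$, $e_{U/U_\gZ}\leftrightarrow e_Ue_\gZ$, and the collapses of the two composites to $\C Ge_Ue_Ve_\gZ$ and $\C Ge_Ue_Ve_{L_\gZ}$ (using centrality of $e_\gZ$ in $\C G$ and of $e_{L_\gZ}$ in $\C L$) are all correct, as is your observation that the restriction square needs only the preliminary fact. Two trivial points: the matching right $\C(L/L_\gZ)$-actions are $w\mapsto w\ell$ (right, not left, multiplication by a lift $\ell$, well defined because $e_\gZ l_0=e_\gZ$ and $e_{L_\gZ}l_0=e_{L_\gZ}$ for $l_0\in L_\gZ$); and your closing claim that the theorem is actually \emph{false} when $\gZ$ has only a virtual Iwahori decomposition is not established by the argument (only that this proof route breaks), though nothing in the proof depends on it.
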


\subsection{Setup for general linear groups}The virtual Iwahori decompositions occurring in this paper are the following coherent family of subgroups of $\GL_n$ for all $\nn$. Whenever $(\nn_1,\ldots,\nn_r)$ is a composition of $\nn$ we take $L=\GL_{n_1} \times \cdots \times \GL_{\nn_r}$ as the subgroup of invertible block-diagonal matrices in a natural way. The subgroup $U_{\nn_1,\ldots,\nn_r}$ is the upper uni-triangular subgroup with block sizes $\nn_i \times \nn_j$ and $V_{\nn_1,\ldots,\nn_r}$ is its transpose. Whenever the composition is clear from the context we omit it from the notation, writing simply $U$ and $V$.

\subsection{Pontryagin duals}\label{subsec:Pontryagin.Dual}We fix an additive character ${\phi}:\mathfrak{f}=\mathrm{frac}(\o) \to \C^\times$ with ${\phi}_{\mid \o}\equiv1$ and ${\phi}_{\mid \pi^{-1}\o} \notequiv 1$, as follows. We first define $\phi':\pi^{-1}\o/\o\to \C^{\times}$ as the composition $\pi^{-1}\o/\o\cong \o/\pi\o\cong \kk\stackrel{\text{Tr}}{\to}\F_p\to \C^{\times}$ where $\text{Tr}$ is the trace of the Galois extension $\kk/\F_p$ and $\F_p\to \C^{\times}$ is given by choosing a primitive $p$-th root of unity. We then extend $\phi'$ to a homomorphism $\phi:\mathfrak{f}\to \C^{\times}$ by using the fact that $\C^{\times}$ is an injective abelian group.

For every $\l, n \in \N$ we define  a $\GL_n(\o_\l)$-equivariant  isomorphism $\xi \mapsto \phi_\xi$ from  $\Mat_n(\o_\l)$ to its Pontryagin dual  $\Mat_n(\o_\l)^\vee=\Hom_\Z\left(\Mat_n(\o_\l), \C^\times\right)$, defined as follows: fix a set-theoretical \uri{section} $\sigma:\o_\l\to\o$ and define 
\[
\phi_\xi(a)=\phi(\pi^{-\l}\tr(\sigma(\xi a))), \quad a \in \Mat_n(\o_\ell).
\]
Note that since $\phi$ vanishes on $\o$ this is well defined and does not depend on the particular set-theoretical \uri{section}~$\sigma$. The fact that this map is an isomorphism is a consequence of the fact that $\o$ is a local ring. The $\GL_n(\o_\l)$-equivariance is immediate.
 
In our applications the level $\l$ and the dimension $n$ will be clear from the context and they are omitted from the definition of the isomorphism. Whenever $i \ge \l$ we have an isomorphism $\Mat_n(\o_\l) \xrightarrow{\cong} K^i/K^{i+\l}$  given by $a \mapsto 1+\pi^i a$, which allows us to identify $\Mat_n(\o_\l)$ with the Pontryagin dual of $K^i/K^{i+\l}$ as well by $\xi \mapsto \phi^\star_\xi$, where
\begin{equation}\label{eq:phi.star}
\phi^\star_\xi(1+\pi^i a)=\phi_\xi(a).
\end{equation}

\subsection{Some linear algebra}\label{fieldext}
\subsubsection{Linear algebra over fields}
Let $F$ be a field and  let $f(t)\in F[t]$ be an irreducible polynomial of degree $d$. Let $\xx \in \Mat_{d}(F)$ be a $d \times d$ matrix with characteristic polynomial $f(t)$; this condition characterizes $\xx$ up to conjugation. Let $E=F[\xx]$ be the $F$-subalgebra of $\Mat_{d}(F)$ generated by $\xx$. Since $E\simeq F[t]/(f(t))$, it is a field extension of $F$ of degree  $d$ inside $\Mat_d(F)$. The trace induces a non-degenerate symmetric bilinear form on $\Mat_d(F) \times \Mat_d(F)$, namely, $(\aa, \bb) \mapsto \tr (\aa\bb)$. Let 
\[
E^\perp=\{\bb \in \Mat_{d}(F) \mid \forall \aa\in E, \, \tr(\aa \bb)=0\, \}.
\]
\begin{lemma}\label{directsum} Let $F$, $E$ and $\xx$ be as above. Then $\Mat_{d}(F) = E^\perp \oplus E$, and the map $\ad_\xx:\Mat_{d}(F)\rightarrow \Mat_{d}(F)$ given by $\ad_\xx(\yy) = \xx\yy-\yy\xx$ satisfies $\Ker(\ad_\xx)=E$ and $\Ima(\ad_\xx) = E^\perp$. In particular, the restriction of $\ad_\xx$ to $E^\perp$ induces an isomorphism $\ad_\xx:E^\perp\rightarrow E^\perp$.
\end{lemma}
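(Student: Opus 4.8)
The plan is to derive everything from one structural property of $x$: since $f$ is irreducible, the minimal polynomial of $x$ divides $f$ and is non-constant, hence equals $f$, so the minimal and characteristic polynomials of $x$ coincide --- that is, $x$ is \emph{non-derogatory}, and $F^{d}$ is a cyclic $F[x]$-module, $F^{d}\cong F[t]/(f)$. Consequently any $y\in\Mat_{d}(F)$ commuting with $x$ is an $F[x]$-module endomorphism of this cyclic module, hence is multiplication by an element of $F[x]$; in other words the centraliser of $x$ in $\Mat_{d}(F)$ equals $F[x]=E$. Since $\Ker(\ad_x)$ is exactly this centraliser, $\Ker(\ad_x)=E$, which has dimension $d$ over $F$.

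Next I would establish $\Ima(\ad_x)\subseteq E^{\perp}$: for $y\in\Mat_{d}(F)$ and $a\in E$, cyclicity of the trace together with $ax=xa$ gives $\tr\big(a(xy-yx)\big)=\tr(axy)-\tr(xay)=0$. The reverse inclusion is a dimension count: the symmetric bilinear form $(u,v)\mapsto\tr(uv)$ is non-degenerate on $\Mat_{d}(F)$, so $\dim_{F}E^{\perp}=d^{2}-\dim_{F}E=d^{2}-d$, while rank--nullity gives $\dim_{F}\Ima(\ad_x)=d^{2}-\dim_{F}\Ker(\ad_x)=d^{2}-d$; hence $\Ima(\ad_x)=E^{\perp}$. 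Since $\dim_{F}E+\dim_{F}E^{\perp}=d^{2}$, to obtain $\Mat_{d}(F)=E^{\perp}\oplus E$ it remains only to check $E\cap E^{\perp}=0$. Here I would use that the matrix trace restricted to the subfield $E$ agrees with the field trace $\tr_{E\mid F}$ --- because $F^{d}\cong F[t]/(f)$ is a one-dimensional $E$-vector space, so multiplication by $a\in E$ acts on $F^{d}$ with $F$-trace $\tr_{E\mid F}(a)$ --- together with separability of $E\mid F$ (automatic in the applications, $F$ being finite), which makes $\tr_{E\mid F}$ non-degenerate on $E$: if $0\neq a\in E\cap E^{\perp}$ then $b\mapsto ab$ is a bijection of $E$, forcing $\tr_{E\mid F}\equiv 0$ on $E$, a contradiction. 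Finally $E^{\perp}=\Ima(\ad_x)$ is $\ad_x$-stable, and $E^{\perp}\cap\Ker(\ad_x)=E^{\perp}\cap E=0$, so $\ad_x\colon E^{\perp}\to E^{\perp}$ is injective, hence an isomorphism.

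No single step is deep; the only input about $F$ beyond ``$f$ irreducible'' is the vanishing $E\cap E^{\perp}=0$, where separability of $E\mid F$ is used, so that is the point to phrase carefully. A slicker alternative that absorbs it automatically: $\ad_x=L_x-R_x$ is the difference of the commuting operators of left and right multiplication by $x$, which are semisimple because $x$ is ($f$ squarefree, $F$ perfect); hence $\ad_x$ is semisimple, so $\Mat_{d}(F)=\Ker(\ad_x)\oplus\Ima(\ad_x)$ and $\ad_x$ restricts to an automorphism of its image for free, reducing the lemma to the two identifications $\Ker(\ad_x)=E$ and $\Ima(\ad_x)=E^{\perp}$ made above.
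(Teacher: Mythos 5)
Your proof is correct and follows essentially the same route as the paper's: the centraliser of $x$ equals $E$ because $F^{d}$ is one-dimensional over $E$ (equivalently, cyclic over $F[x]$), $\Ima(\ad_x)\subseteq E^{\perp}$ by trace symmetry, equality and the decomposition $\Mat_d(F)=E\oplus E^{\perp}$ by dimension counts, and $E\cap E^{\perp}=0$ from non-degeneracy of the trace form on $E$. The only real difference is that you make explicit the identification of the matrix trace on $E$ with the field trace $\tr_{E\mid F}$ and the separability needed for its non-degeneracy (automatic here, the residue field being finite), points the paper leaves implicit.
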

\begin{proof}
Since $E/F$ is a finite field extension, we know that the trace form is non-degenerate on $E$ and therefore $E\cap E^\perp=0$. 
As $\dim_{F}\!E + \dim_{F}\! E^\perp= \dim_{F}\! \Mat_{d}(F)$ we have a direct sum decomposition.
A matrix is in the kernel of $\ad_\xx$ if and only if it commutes with $\xx$. 
If a matrix $\yy$ commutes with $\xx$, we can view $\yy$ as a linear transformation of $F^{d}$ viewed as a vector space over $E$.
But since $\xx$ is irreducible, the only endomorphisms of this space are those in $E$, so $\Ker(\ad_\xx)=E$.
On the other hand, since $\tr(\aa \bb)=\tr(\bb\aa)$, $E^\perp$ is an $E$-bimodue, therefore, $\Ima(\ad_\xx)\subseteq E^\perp$. 
Since their dimensions agree, we have an equality.
\end{proof}

We consider next matrices whose characteristic polynomial is $f^m$ for some $m$.
Our goal is to show that for such matrices a weaker version of Lemma~\ref{directsum} holds (where $m=1$). 
Let $\MM$ be an $n\times n$ matrix with characteristic polynomial $f^m$ (where $n=md$). Let $\cA=\Mat_m(E)$ and $\cZ=\Mat_m(E^\perp)$.
By passing to a conjugate matrix if necessary, we may assume that $\MM$ lies in~$\cA$,
and has the form $\MM=\sigma+\nu$, where $\s$ is the semisimple part of~$\MM$, specifically the block diagonal matrix with copies of $\xx$ on the diagonal, 
$\n \in \cA$ is the nilpotent part of $\MM$, and $\s$ and $\n$ commute.
\begin{lemma}\label{kernelQ}
Let $m, f, \cA=\Mat_m(E)$ and $\cZ=\Mat_m(E^\perp)$ be as above. Assume that $\MM \in A$ has characteristic polynomial~$f^m$ and rational semisimple-nilpotent decomposition $\MM=\s+\n$. Then the centralizer of $\MM$ in $\Mat_n(F)$ is contained in the subalgebra $\cA$, the image of the map $\ad_\MM$ contains the subspace $\cZ$, and there is a direct sum decomposition $\Mat_n(F)=\cA \oplus \cZ$.
\end{lemma}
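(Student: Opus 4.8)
The plan is to reduce Lemma~\ref{kernelQ} to the field case, Lemma~\ref{directsum}, by exploiting the block structure $\cA=\Mat_m(E)$ and $\cZ=\Mat_m(E^\perp)$. First I would observe that the decomposition $\Mat_d(F)=E\oplus E^\perp$ of Lemma~\ref{directsum} immediately tensors up to $\Mat_n(F)=\Mat_m\!\left(\Mat_d(F)\right)=\Mat_m(E)\oplus\Mat_m(E^\perp)=\cA\oplus\cZ$, so the direct sum statement is free. The content is in the two assertions about $\ad_\MM$. For the centralizer: if $\yy\in\Mat_n(F)$ commutes with $\MM=\s+\n$, then in particular I want to show $\yy$ commutes with $\s$, the block-scalar matrix $\diag(\xx,\dots,\xx)$. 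A matrix commutes with $\s$ exactly when each of its $d\times d$ blocks commutes with $\xx$, i.e.\ lies in $E=\Ker(\ad_\xx)$ by Lemma~\ref{directsum}; hence the centralizer of $\s$ alone is already $\cA=\Mat_m(E)$, and a fortiori the centralizer of $\MM$ lies in $\cA$. The slight subtlety is that $\yy$ need not commute with $\s$ a priori — it commutes with the sum $\s+\n$ — so I would invoke that $\s$ and $\n$ are polynomials in $\MM$ (the rational Jordan decomposition), whence anything commuting with $\MM$ commutes with both $\s$ and $\n$ separately.

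Next, for the image: I want $\cZ\subseteq\Ima(\ad_\MM)$. Decompose $\ad_\MM=\ad_\s+\ad_\n$ as operators on $\Mat_n(F)$. Both $\ad_\s$ and $\ad_\n$ preserve the decomposition $\Mat_n(F)=\cA\oplus\cZ$: indeed $\cA$ is a subalgebra containing $\MM$, hence $\ad_\MM$-stable, so $\ad_\n=\ad_\MM-\ad_\s$ stabilizes $\cA$ once $\ad_\s$ does; and $\ad_\s$ stabilizes $\cZ=\Mat_m(E^\perp)$ because blockwise it is $\ad_\xx$ acting on each entry, and $\ad_\xx(E^\perp)=E^\perp$ by Lemma~\ref{directsum}. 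Restricted to $\cZ$, the operator $\ad_\s$ is $\ad_\xx$ applied entrywise, which is invertible on $E^\perp$ (Lemma~\ref{directsum}), so $\ad_\s|_\cZ$ is an invertible operator on $\cZ$. Meanwhile $\ad_\n|_\cZ$ is nilpotent, since $\n$ is nilpotent and $\ad_\n=L_\n-R_\n$ is a difference of commuting nilpotents. Therefore $\ad_\MM|_\cZ=\ad_\s|_\cZ+\ad_\n|_\cZ$ is invertible on $\cZ$ (invertible plus nilpotent commuting? — no, they need not commute, so instead argue: $\ad_\MM|_\cZ = \ad_\s|_\cZ\circ(\id + \ad_\s|_\cZ^{-1}\ad_\n|_\cZ)$, and the second factor is unipotent-ish, or more simply note $\ad_\MM|_\cZ$ is invertible because... hmm). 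Let me rephrase: $\ad_\MM|_\cZ$ has the same characteristic polynomial considerations — actually the cleanest route is that $\ad_\MM|_\cZ$ is invertible iff it has trivial kernel, and its kernel is contained in the centralizer of $\MM$, which by the first part lies in $\cA$, so the kernel of $\ad_\MM|_\cZ$ is $\cZ\cap\cA=0$. Hence $\ad_\MM|_\cZ$ is injective, so surjective, so $\cZ\subseteq\Ima(\ad_\MM)$.

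The main obstacle I anticipate is the bookkeeping in verifying that $\ad_\s$ and $\ad_\n$ genuinely respect the block decomposition and that $\ad_\s$ acts entrywise as $\ad_\xx$ on $\cZ$; this requires being careful that $\s=\diag(\xx,\dots,\xx)$ is a scalar in the $\Mat_m$-direction, so that conjugation by it acts diagonally on $\Mat_m(-)$ and coefficient-wise by $\ad_\xx$. Once that is pinned down, invertibility of $\ad_\MM|_\cZ$ follows from the kernel computation $\cZ\cap\mathrm{Cent}(\MM)\subseteq\cZ\cap\cA=0$, which reuses the first assertion and is clean. The direct sum decomposition and the containment $\mathrm{Cent}(\MM)\subseteq\cA$ are then immediate, completing the proof. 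Note that, unlike Lemma~\ref{directsum}, we only get containments $\mathrm{Cent}(\MM)\subseteq\cA$ and $\cZ\subseteq\Ima(\ad_\MM)$ rather than equalities, which is exactly the advertised ``weaker version''.
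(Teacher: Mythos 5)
Your proof is correct in substance but takes a genuinely different route from the paper's. The paper works with the semisimple--nilpotent decomposition of the operator $\ad_\MM=\ad_{\s}+\ad_{\n}$ itself: since the two summands commute and $\ad_{\n}$ is nilpotent of degree at most $2m$, the operator $\ad_{\s}^{2m}-\ad_{\n}^{2m}=\ad_{\s}^{2m}$ factors through $\ad_\MM$, whence $\Ker(\ad_\MM)\subseteq\Ker(\ad_{\s}^{2m})=\Ker(\ad_{\s})=\cA$ and $\Ima(\ad_\MM)\supseteq\Ima(\ad_{\s}^{2m})=\Ima(\ad_{\s})=\cZ$, using semisimplicity of $\ad_{\s}$. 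You instead obtain the centralizer statement from the fact that $\s$ is a polynomial in $\MM$, so the centralizer of $\MM$ lies in the centralizer of $\s$, which blockwise is $\Mat_m(\Ker\ad_\xx)=\cA$; and you obtain the image statement from injectivity of $\ad_\MM$ on $\cZ$, since its kernel there is $\cZ\cap\Ker(\ad_\MM)\subseteq\cZ\cap\cA=0$ by the first part. This is arguably more elementary: it avoids the nilpotency-degree bookkeeping and the semisimplicity of $\ad_{\s}$ (note that the latter, just like your polynomial-in-$\MM$ fact, implicitly uses separability of $f$, which is automatic for the finite residue fields of the intended application and is already built into Lemma \ref{directsum} through the non-degeneracy of the trace form).

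The one step you assert but never justify, and which your image argument genuinely needs, is that $\ad_\MM$ (equivalently $\ad_{\n}$) maps $\cZ$ into $\cZ$: without this, injectivity of $\ad_\MM$ on $\cZ$ only shows that $\ad_\MM(\cZ)$ has the right dimension, not that it equals $\cZ$, so the conclusion $\cZ\subseteq\Ima(\ad_\MM)$ would not follow. You verify $\ad_{\s}(\cZ)\subseteq\cZ$ blockwise, but the justification you give for $\ad_{\n}$ concerns only $\cA$. The missing line comes from the proof of Lemma \ref{directsum}: since $\tr(\aa\bb)=\tr(\bb\aa)$, the space $E^\perp$ is an $E$-bimodule, hence $\cZ=\Mat_m(E^\perp)$ is an $\cA$-bimodule and is therefore stable under $\ad_{\yy}$ for every $\yy\in\cA$, in particular under $\ad_{\n}$ and $\ad_\MM$. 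With that one observation inserted, your argument is complete.
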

\begin{proof}
We have $\ad_\MM = \ad_{\s} + \ad_{\n}$. The operators $\ad_{\s}$ and $\ad_{\n}$ commute, $\ad_{\s}$ is semisimple and $\ad_{\n}$ is nilpotent.
Therefore, $\ad_\MM = \ad_{\s} + \ad_{\n}$ is the semisimple-nilpotent decomposition of $\ad_\MM$, considered as a linear operator on $\Mat_n(F)$.
Using the geometric expansion, and the fact that $\ad_{\n}$ is nilpotent of nilpotancy degree at most~$2m$, we see that the kernel of $\ad_\MM$ is contained in the kernel of $\ad_{\s}^{2m} - \ad_{\n}^{2m} = \ad_{\s}^{2m}$.
But the operator $\ad_{\s}$ is semisimple. So the kernel of $\ad_\MM$ (which is the centraliser of $\MM$) is contained in the kernel 
of $\ad_{\s}^{2m}$ which is equal to the kernel of $\ad_{\s}$ (by semisimplicity) which is $\cA$.

In a similar fashion, the \uri{i}mage of $\ad_\MM$ contains the image of $\ad_{\s}^{2m} - \ad_{\n}^{2m} = \ad_{\s}^{2m}$.
By semisimplicity of $\ad_{\s}$ this is the same as the image of $\ad_{\s}$, which is $\cZ$. 

Finally, it is clear that $\cZ \subset \cA^\perp$ and equality follows from dimension consideration.
\end{proof}

\begin{definition}\label{def:coprime} We say that two matrices $\MM_1 \in \Mat_{\nn_1}(F)$ and $\MM_2 \in \Mat_{\nn_2}(F)$ are \textit{coprime} if their characteristic polynomials are coprime. 
\end{definition}

For $\MM_1\in \Mat_{\nn_1}(F)$ and $\MM_2 \in \Mat_{\nn_2}(F)$ we write $\MM_1\oplus \MM_2$ for the matrix 
\[
\left(\begin{matrix} \MM_1 & 0 \\ 0& \MM_2 \end{matrix}\right) \in \Mat_{\nn_1+\nn_2}(F). 
\]
\begin{lemma}\label{lem:coprime} Let  $\MM_1\in \Mat_{\nn_1}(F)$ and $\MM_2\in \Mat_{\nn_2}(F)$ be coprime. Then the only solution $\yy \in \Mat_{\nn_1 \times \nn_2}(F)$ to the equation $\MM_1 \yy=\yy \MM_2$ is $\yy=0$. In particular, the centraliser in $\Mat_{\nn_1+\nn_2}(F)$ of $\MM_1 \oplus \MM_2$ consists of block diagonal matrices with blocks sizes $\nn_1 \times \nn_1$ and $\nn_2 \times \nn_2$.  
\end{lemma}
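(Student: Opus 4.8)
The plan is to reduce the claim to the Sylvester-equation fact that the operator $\yy \mapsto \MM_1\yy - \yy\MM_2$ on $\Mat_{\nn_1\times\nn_2}(F)$ is invertible precisely when $\MM_1$ and $\MM_2$ share no eigenvalue, here phrased over an arbitrary field via coprimality of characteristic polynomials rather than via eigenvalues in an algebraic closure. Concretely, let $f_1 = \charpoly(\MM_1)$ and $f_2 = \charpoly(\MM_2)$; by hypothesis $\gcd(f_1,f_2)=1$, so there exist $a,b \in F[t]$ with $a f_1 + b f_2 = 1$. Now suppose $\yy$ satisfies $\MM_1\yy = \yy\MM_2$. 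The key observation is that this intertwining relation propagates to polynomials: for any $g \in F[t]$ one has $g(\MM_1)\yy = \yy\, g(\MM_2)$, proved by linearity and induction on $\deg g$ using $\MM_1^{k}\yy = \MM_1^{k-1}(\MM_1\yy) = \MM_1^{k-1}\yy\MM_2 = \cdots = \yy\MM_2^{k}$.

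First I would apply this with $g = f_1$: since $f_1(\MM_1) = 0$ by Cayley--Hamilton, we get $\yy\, f_1(\MM_2) = 0$. Next, $f_2(\MM_2) = 0$ as well, so from $a f_1 + b f_2 = 1$ we obtain, evaluating at $\MM_2$, that $a(\MM_2)f_1(\MM_2) = a(\MM_2)f_1(\MM_2) + b(\MM_2)f_2(\MM_2) = \id$, i.e. $f_1(\MM_2)$ is invertible in $\Mat_{\nn_2}(F)$. Combining $\yy\, f_1(\MM_2) = 0$ with the invertibility of $f_1(\MM_2)$ forces $\yy = 0$, which is the first assertion.

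For the second assertion, write an element of the centralizer of $\MM_1\oplus\MM_2$ in block form $\left(\begin{smallmatrix} A & B \\ C & D\end{smallmatrix}\right)$ with $A \in \Mat_{\nn_1}(F)$, $D\in\Mat_{\nn_2}(F)$, and $B,C$ the off-diagonal rectangular blocks. The commutation relation with $\diag(\MM_1,\MM_2)$ says exactly that $\MM_1 A = A\MM_1$, $\MM_2 D = D\MM_2$, $\MM_1 B = B\MM_2$, and $\MM_2 C = C\MM_1$. The first assertion (applied to the pair $(\MM_1,\MM_2)$ for $B$, and to the pair $(\MM_2,\MM_1)$, which are equally coprime, for $C$) gives $B = 0$ and $C = 0$, so the centralizing matrix is block diagonal of the stated shape. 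Since every step is a short formal manipulation, I do not anticipate a genuine obstacle; the only point requiring a little care is to invoke Cayley--Hamilton and the polynomial-propagation identity over the base field $F$ directly, rather than passing to $\overline{F}$, so that the argument stays inside $\Mat(F)$ as the statement demands.
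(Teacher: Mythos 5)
Your proposal is correct and follows essentially the same route as the paper: propagate the intertwining relation to $g(\MM_1)\yy = \yy\,g(\MM_2)$ for all polynomials $g$, specialize to the characteristic polynomial of $\MM_1$, deduce invertibility of $f_1(\MM_2)$ from coprimality, and then handle the centralizer blockwise. You merely spell out the B\'ezout/Cayley--Hamilton detail that the paper leaves implicit, which is fine.
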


\begin{proof}
Let $p_i$ be the characteristic polynomial of $\MM_i$ and assume that $\MM_1\yy=\yy \MM_2$. Using the latter relation repeatedly and taking linear combinations we deduce that for every polynomial $g \in F[t]$ we have $g(\MM_1)\yy=\yy g(\MM_2)$. In particular, for $g=p_1$ we get that $0=p_1(\MM_1)\yy=\yy p_1(\MM_2)$. Thus the claim would follow if we show that $p_1(\MM_2)$ is invertible. But this follows by our assumption that the $p_1$ and $p_2$ are coprime. Similarly, the only solution $\zz \in \Mat_{\nn_2 \times \nn_1}(F)$ to  $\zz \MM_1=\MM_2 \zz$ is $\zz=0$. It follows that  
\[
\left(\begin{matrix} \yy_1 & \yy_3 \\ \yy_4 & \yy_2 \end{matrix}\right)\left(\begin{matrix} \MM_1 & 0 \\ 0& \MM_2 \end{matrix}\right) =\left(\begin{matrix} \MM_1 & 0 \\ 0& \MM_2 \end{matrix}\right)\left(\begin{matrix} \yy_1 & \yy_3 \\ \yy_4 & \yy_2 \end{matrix}\right) 
\]
 implies that $\yy_1 \oplus \yy_2$ centralizes $\MM_1\oplus \MM_2$ and that $\yy_3$ and $\yy_4$ are zero.
\end{proof}

\begin{lemma}\label{lem:duality.between.U.and.V.over.F} Let $\MM_1 \in \Mat_{\nn_1}(F)$ and $\MM_2 \in \Mat_{\nn_2}(F)$ be coprime and set $\MM=\MM_1 \oplus \MM_2$. Then the bilinear map
\begin{equation}\label{eq:perfect.pairing.U.and.V.over.F}
\Mat_{\nn_1 \times \nn_2}(F) \times \Mat_{\nn_2 \times \nn_1}(F) \to F, \quad (\aa,\bb) \mapsto \tr(\MM_1\aa\bb)-\tr(\bb\aa\MM_2) 
\end{equation}
is a perfect pairing. 
\end{lemma}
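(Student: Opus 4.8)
The plan is to reduce the asserted perfect pairing to the standard trace pairing on rectangular matrices together with Lemma~\ref{lem:coprime}. First I would use cyclicity of the trace to put the pairing in a factored form. For $\aa \in \Mat_{\nn_1 \times \nn_2}(F)$ and $\bb \in \Mat_{\nn_2 \times \nn_1}(F)$ one has $\tr(\bb\aa\MM_2) = \tr(\aa\MM_2\bb)$, so
\[
\tr(\MM_1\aa\bb) - \tr(\bb\aa\MM_2) = \tr\!\big((\MM_1\aa - \aa\MM_2)\bb\big).
\]
Hence the pairing \eqref{eq:perfect.pairing.U.and.V.over.F} is the composition of the $F$-linear map $T\colon \aa \mapsto \MM_1\aa - \aa\MM_2$ on $\Mat_{\nn_1\times \nn_2}(F)$ with the standard trace pairing $\langle \cc,\bb\rangle = \tr(\cc\bb)$ between $\Mat_{\nn_1\times \nn_2}(F)$ and $\Mat_{\nn_2\times \nn_1}(F)$.

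Second, I would record that this standard trace pairing is perfect (in the bases of elementary matrices its Gram matrix is a permutation matrix). Consequently the left radical of the pairing in the lemma equals $\Ker T = \{\aa \mid \MM_1\aa = \aa\MM_2\}$. Since $\MM_1$ and $\MM_2$ are coprime, Lemma~\ref{lem:coprime} gives $\Ker T = 0$, so the left radical is trivial.

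Third, because $\dim_F \Mat_{\nn_1\times \nn_2}(F) = \dim_F \Mat_{\nn_2\times \nn_1}(F) = \nn_1\nn_2$ is finite and the two dimensions agree, a bilinear pairing between these spaces with trivial left radical is automatically perfect, which finishes the proof. (If one prefers a symmetric argument, the same identity can be rewritten as $\tr\!\big(\aa(\bb\MM_1 - \MM_2\bb)\big)$, and the second conclusion of Lemma~\ref{lem:coprime} shows the right radical also vanishes.)

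I do not anticipate a genuine obstacle here; the only points requiring care are the bookkeeping of matrix shapes when applying cyclicity of the trace (the products $\MM_1\aa$, $\aa\MM_2$, and $(\MM_1\aa-\aa\MM_2)\bb$ are all legitimate rectangular products) and invoking the correct one of the two statements packaged together in Lemma~\ref{lem:coprime}.
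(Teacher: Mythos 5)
Your proof is correct and follows essentially the same route as the paper's: rewrite the form via cyclicity of the trace as a composition of $\aa\mapsto \MM_1\aa-\aa\MM_2$ (the paper factors out $\aa$ and works with $\bb\MM_1-\MM_2\bb$ instead) with the nondegenerate trace pairing, kill the radical using Lemma~\ref{lem:coprime}, and conclude perfectness by equality of dimensions. The only difference is cosmetic — which side's radical you check first — and you note the symmetric variant yourself.
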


\begin{proof} We have that
\[
\tr(\MM_1\aa\bb)-\tr(\bb\aa\MM_2)=\tr\left(\aa(\bb \MM_1-\MM_2 \bb)\right).
\]
Consequently, if the form vanishes for every $\aa \in \Mat_{\nn_1 \times \nn_2}(F)$ we must have $\bb \MM_1-\MM_2 \bb=0$ by the non-degeneracy of the trace form. Lemma~\ref{lem:coprime} now implies that $\bb=0$, and the assertion follows.   
\end{proof}

\subsubsection{Linear algebra over discrete valuation rings}\label{subsec:lin.alg.rings} Let $\o$ be a complete discrete valuation ring with finite residue field. Let $\pi$ be a uniformiser in $\o$ and let $\o_\l = \o/(\pi^\l)$.  Let $f(t) \in \o[t]$ be a monic polynomial of degree~$d$ whose \uri{reduction} modulo $\pi$, which we denote by $\ol{f}$, is a monic irreducible polynomial over the finite field $\o_1$. 
Let $x\in \Mat_d(\o)$ be a root of the polynomial $f$. 
Then $\O = \o[x]$ is an unramified extension of $\o$ with uniformiser $\pi$ and we set $\O_\l=\O/(\pi^\l)$ for $\l \in \N$.
We write $\ol{x}$ for the image of $x$ in $\O_1$, so that $\O_1 = \o_1(\ol{x})$. We may view $\Mat_m(\O_\l)$ as a subalgebra of $\Mat_{md}(\o_\l)$ for $\l \in \N \cup {\infty}$ in a natural way. 
 The following proposition deals with lifting matrices in $\Mat_n(\o_1)$ with characteristic polynomial $\ol{f}^m$ to matrices over $\Mat_n(\o_\l)$.
This proposition will be essential in the proof of Theorem~\ref{thm:base.change}.
\begin{proposition}\label{intoT}
Let $\MM \in \Mat_n(\o_\l)$ be a matrix where $n=md$. Assume that $\ol{\MM}$, the reduction of $\MM$ modulo $\pi$, is contained in $\Mat_m(\O_1)\subseteq \Mat_n(\o_1)$ and has characteristic polynomial $\uri{\ol{f}}^m$.
Then $\MM$ is conjugate to a matrix in $\Mat_m(\O_\l)$ whose centraliser is also contained in $\Mat_m(\O_\l)$. 
Moreover, if it is conjugate to two such matrices $\bb$ and $\cc$, then $\bb$ and $\cc$ are also conjugate under the action of the smaller group $\GL_m(\O_\l)$.
\end{proposition}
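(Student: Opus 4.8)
The plan is to treat the existence of a conjugate in $\Mat_m(\O_\l)$ by an explicit successive-approximation (Hensel-type) argument, lifting the conjugating element one $\pi$-adic step at a time, and then to deduce the uniqueness statement from Lemma \ref{kernelQ} applied over the residue field.

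First, for the existence part, I would argue by induction on $\l$. The base case $\l=1$ is the hypothesis: $\ol{\MM}$ already lies in $\Mat_m(\O_1)$, and by Lemma \ref{kernelQ} (applied with $F=\o_1$, $E=\O_1$, and the characteristic polynomial $\ol f^m$) its centralizer in $\Mat_n(\o_1)$ is contained in $\cA=\Mat_m(\O_1)$. For the inductive step, suppose $\MM \in \Mat_n(\o_{\l+1})$ has reduction $\MM' \in \Mat_n(\o_\l)$ already conjugate, say $g' \MM' g'^{-1} = \bb' \in \Mat_m(\O_\l)$; lift $g'$ arbitrarily to $g \in \GL_n(\o_{\l+1})$ and replace $\MM$ by $g\MM g^{-1}$, so that now $\MM \equiv \bb' \pmod{\pi^\l}$ with $\bb'$ coming from $\Mat_m(\O_\l)$. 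Writing $\MM = \bb'' + \pi^\l w$ for some lift $\bb''$ of $\bb'$ inside $\Mat_m(\O_{\l+1})$ and some $w \in \Mat_n(\o_1)$, I want to find $y \in \Mat_n(\o_1)$ with $(1+\pi^\l y)\MM(1+\pi^\l y)^{-1} \in \Mat_m(\O_{\l+1})$; modulo $\pi^{\l+1}$ this conjugation changes $\MM$ by $\pi^\l[\,y, \ol{\MM}\,] = \pi^\l[\,y,\ol{\bb'}\,]$, so the requirement becomes $w + [\,y, \ol{\bb'}\,] \in \Mat_m(\O_1) + (\text{anything we are free to absorb})$. More precisely $\ol{\bb'}$ is a matrix over $\o_1$ with $\ol{\bb'} \in \Mat_m(\O_1)$ and characteristic polynomial $\ol f^m$, so by Lemma \ref{kernelQ} we have $\Mat_n(\o_1) = \Mat_m(\O_1) \oplus \cZ$ with $\cZ \subseteq \operatorname{Im}(\ad_{\ol{\bb'}})$; hence we may choose $y$ so that $[\,y,\ol{\bb'}\,]$ cancels the $\cZ$-component of $w$, leaving $\MM$ conjugate into $\Mat_m(\O_{\l+1})$. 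The statement about the centralizer of the resulting matrix then follows from Lemma \ref{kernelQ} once more: a matrix over $\Mat_m(\O_\l)$ with residue class having characteristic polynomial $\ol f^m$ has its centralizer in $\Mat_n(\o_\l)$ contained in $\Mat_m(\O_\l)$, by reducing mod $\pi$ and then filtering the principal congruence subgroups $K^i/K^{i+1} \cong \Mat_n(\o_1)$, at each step using that the commutator action lands outside $\Mat_m(\O_1)$ only in the $\cZ$-direction.

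For the uniqueness part, suppose $\bb, \cc \in \Mat_m(\O_\l)$ are conjugate in $\GL_n(\o_\l)$, say $g \bb g^{-1} = \cc$ with $g \in \GL_n(\o_\l)$. I want to modify $g$ on the right by the centralizer $\GL_n(\o_\l)(\bb)$ so as to land inside $\GL_m(\O_\l)$. The key point is that $g$ conjugates $\Mat_m(\O_\l) = \o_\l[\bb] + \cZ$-type structure appropriately; concretely, since $g\bb g^{-1}=\cc$, conjugation by $g$ carries the $\o_\l$-subalgebra generated by $\bb$ to that generated by $\cc$, both equal to a copy of $\O_\l[\nu\text{-part}]$ inside $\Mat_m(\O_\l)$, and carries the centralizer $\Mat_n(\o_\l)(\bb)$ onto $\Mat_n(\o_\l)(\cc)$; by the centralizer statement just proved, both centralizers lie in $\Mat_m(\O_\l)$. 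Now I would run the same congruence-filtration argument: modulo $\pi$, the element $\ol g$ conjugates $\ol{\bb} \in \Mat_m(\O_1)$ to $\ol{\cc} \in \Mat_m(\O_1)$, two matrices which (being conjugate with the same characteristic polynomial $\ol f^m$) are conjugate by an element of $\GL_m(\O_1)$ — here one uses that over the field $\o_1$ any two conjugate matrices landing in $\Mat_m(\O_1)$ whose centralizers lie in $\Mat_m(\O_1)$ are $\GL_m(\O_1)$-conjugate, which again is Lemma \ref{kernelQ} together with the decomposition $\GL_n(\o_1) = \GL_m(\O_1)\cdot(\text{unipotent-type factor from }\cZ)$ — and so after multiplying $g$ by a centralizing element we may assume $\ol g \in \GL_m(\O_1)$; then inductively, writing $g = g_0(1+\pi^i y)$ with $g_0 \in \GL_m(\O_i)$ lifted and $y \in \Mat_n(\o_1)$, the conjugation relation forces $[\,y, \ol{\bb}\,] \in \Mat_m(\O_1) \pmod{\cZ}$-adjustments, and the $\cZ$-component of $y$ can be absorbed into the centralizer (which lies in $\Mat_m(\O_\l)$), while its $\Mat_m(\O_1)$-component is already what we want; iterating up to level $\l$ produces the required element of $\GL_m(\O_\l)$.

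The main obstacle, I expect, is bookkeeping the two distinct "defects" that live in $\cZ = \Mat_m(E^\perp)$ at each congruence layer: the defect of $\MM$ (or of $g$) failing to lie in the subalgebra $\Mat_m(\O_\l)$, and the defect of the centralizer adjustments, and keeping straight that the adjustment group element used at level $i$ (a commutator $[\,y,\ol{\bb}\,]$, respectively a centralizer element) acts on exactly the complementary summand — this is precisely where the splitting $\Mat_n = \cA \oplus \cZ$ with $\cZ \subseteq \operatorname{Im}(\ad)$ from Lemma \ref{kernelQ} does all the work, but one has to apply it to the nilpotent-part-twisted matrix $\ol{\bb}$ rather than just to the semisimple $\s$, and verify that the relevant $\ad$ map is still surjective onto $\cZ$ in the presence of the congruence filtration. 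Everything else is a routine $\pi$-adic induction once the one-step lemma is isolated.
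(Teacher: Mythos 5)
Your existence argument and your treatment of the centralizer are essentially the paper's proof: the same successive $\pi$-adic approximation, conjugating by elements $1+\pi^{i}y$ and using Lemma \ref{kernelQ} to cancel the $\cZ$-component of the defect at each congruence layer, followed by the same mod-$\pi$ induction (write a centralizing element as $\aa+\zz$, reduce, iterate) for the centralizer statement. The worry you raise at the end about whether $\ad_{\ol{\MM}}$ still surjects onto $\cZ$ for the full (non-semisimple) $\ol{\MM}$ is already settled by Lemma \ref{kernelQ} itself, which is stated for $\MM=\s+\n$ with characteristic polynomial $\ol{f}^m$, so no extra verification is needed there.

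The uniqueness half, however, has two genuine gaps. First, your residue-level claim -- that any two matrices in $\Mat_m(\O_1)$ with centralizers inside $\Mat_m(\O_1)$ which are conjugate under $\GL_n(\o_1)$ are already $\GL_m(\O_1)$-conjugate -- is not supported by what you cite: the factorization $\GL_n(\o_1)=\GL_m(\O_1)\cdot(\text{a factor coming from }\cZ)$ is nowhere established, and the claim as you state it fails in general (take $m=1$: $\ol{x}$ and its Frobenius conjugate are $\GL_d(\o_1)$-conjugate with centralizer $\O_1$, but $\GL_1(\O_1)$ is abelian). The argument only goes through when $\ol{\bb},\ol{\cc}$ are $(t-\ol{x})$-primary over $\O_1$, which is the setting in which the paper compares rational canonical forms to reduce to $\ol{\bb}=\ol{\cc}$ before lifting. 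Second, once the conjugator is reduced to the form $1+\pi y$, your plan to ``absorb the $\cZ$-component of $y$ into the centralizer'' does not parse: the centralizer lies in $\cA=\Mat_m(\O_\l)$, so it contains no $\cZ$-directions, and you give no reason why discarding $\zz$ still leaves a conjugating element. The paper's mechanism here is the actual crux: writing $y=\aa+\zz$, one projects the equation $(1+\pi\aa+\pi\zz)\bb=\cc(1+\pi\aa+\pi\zz)$ onto $\cA$, using that $\cA$ is a subalgebra, that $\cZ$ is an $\cA$-bimodule, and that $\bb,\cc\in\cA$, to conclude $(1+\pi\aa)\bb=\cc(1+\pi\aa)$ in a single step (equivalently, $(1+\pi\aa)^{-1}(1+\pi y)$ centralizes $\bb$, hence lies in $\cA$, forcing the $\cZ$-defect to vanish). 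Without this bimodule-projection argument your layer-by-layer ``absorption'' is unjustified, so the uniqueness part needs to be reworked along these lines.
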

\begin{proof}
We consider the $\O_\l$-subalgebra $\cA=\Mat_m(\O_\l) \subseteq \Mat_n(\o_\l)$ and the $\O_\l$-submodule $\cZ=\cA^{\perp}$,
the submodule perpendicular to $\cA$ with respect to the trace pairing into $\O_\l$. Notice that in case $\l=1$ the submodules $\cA$ and $\cZ$ coincide with the vector spaces $\cA$ and $\cZ$ from the previous subsection. Since in the present setup $\cA$ is a free $\O_\l$-module we have a direct sum decomposition $\Mat_n(\o_\l) = \cA\oplus \cZ$ following the same argument as in Lemma~\ref{directsum} for the case $\l=1$. Writing $\cA^i = \cA\cap \pi^i \Mat_n(\o_\l)$ and $\cZ^i = \cZ\cap \pi^i\Mat_n(\o_\l)$, we also have $\pi^i \Mat_n(\o_\l) = \cA^i \oplus \cZ^i$.

We shall prove the first assertion of the lemma by induction. 
For $i=1,\ldots, \l$ we prove that the matrix $\MM_i:=\MM \modd \pi^i$ is conjugate to a matrix in $\cA/\cA^i$.
For $i=1$ we know that this is true by the assumption of the lemma.
We assume by induction that the assertion holds for~$\MM_{i-1}$.
By conjugating $\MM_i$, we can assume that $\MM_i \in \cA/\cA^i \oplus \cZ^{i-1}/\cZ^i$.
For a matrix $\yy \in \Mat_n(\o_\l)$ we have
$$(1+\pi^{i-1}\yy)(\MM_i)(1+\pi^{i-1}\yy)^{-1} = \MM_i + \pi^{i-1}(\yy\MM_i-\MM_i\yy)\modd\pi^{i}.$$
Notice that the expression $\yy \MM_i-\MM_i \yy\modd\pi$ depends only on $\ol{\MM}$ and $\ol{\yy}$.
It follows from Lemma \ref{kernelQ} that the image of the $\ad_{\ol{\MM}}$ contains $\cZ^{i-1}/\cZ^i$.
This implies that by conjugating $\MM_i$ again we can assume that it is contained in~$\Mat_m(\cA/\cA^i)$, as desired.

We thus assume that $\MM \in \cA$.
We next prove that its centraliser is contained in $\Mat_m(\O_\l)$.
So let $\yy \in \Mat_\nn(\o_\l)$ be in the centraliser $\Mat_\mm(\o_\l)(\MM)$. We write $\yy=\aa + \zz$ where $\aa \in \cA$ and $\zz \in \cZ$.
Since we assume that $\MM \in \cA$ the direct sum decomposition $\Mat_\nn(\o_\l)=\cA \oplus \cZ$ is stable under $\ad_\MM$ and it follows that $\aa$ and $\zz$ also commute with~$\MM$.
We therefore need to prove that if $\zz \in \cZ$ commutes with $\MM$ then $\zz=0$.
By reducing the equation~$\zz \MM = \MM \zz$ modulo $\pi$ we get that $\ol{\zz} \in \ol{\cA}  \cap \ol{\cZ} = 0$ and so $\ol{\zz}=0$, again from Lemma~\ref{kernelQ}.
So $\zz=\pi \zz'$ for some matrix~$\zz' \in \cZ$. But then $\zz' \MM = \MM \zz'$ in $\Mat_\nn(\o_{\l-1})$,
so we conclude that $\zz' = 0 \modd \pi$. Continuing by induction, we get that~$\zz=0$. 

Finally, we prove the uniqueness statement. 
Assume therefore that $\bb, \cc\in \Mat_m(\O_\l)$ are conjugate to one another by an element $g\in \GL_n(\o_\l)$ and that 
their characteristic polynomial mod $\pi$ is $\ol{f}^m$.
By considering the rational canonical form of their reductions $\ol{b}$ and $\ol{\cc}$ we conclude that $\ol{\bb}$ and $\ol{\cc}$ 
are conjugate under the action of $\GL_m(\O_1)$. We can thus assume without loss of generality that $\ol{\bb}=\ol{\cc}$. 

We need to prove that $\bb$ and $\cc$ are also conjugate inside $\GL_m(\O_\l)$.
If we reduce the equation $g \bb g^{-1} = \cc$ modulo $\pi$, 
we get that $\ol{g}$ centralizes $\ol{\bb}$, and is therefore contained in $\GL_\nn(\o_1)(\ol{\bb})\subseteq \GL_\mm(\O_1)$.
We can thus write $g=\ti{g}(1+\pi y)$ where $\ti{g}$ is a lift of $\ol{g}$ in $\GL_\mm(\O_\l)$ 
 and $\yy \in \Mat_\nn(\o_\l)$.
We therefore need to prove that if~$\bb$ and~$\cc$ in~$\Mat_\mm(\O_\l)$ are conjugate by an element of the form $1+\pi \yy$,
then they are conjugate by an element in $\GL_\mm(\O_\l)$.
For this, we write $\yy=\aa+\zz$ where $\aa \in \cA$ and $\zz \in \cZ$.
We then get the equation
$$(1+\pi \aa + \pi \zz) \bb = \cc (1 + \pi \aa + \pi \zz).$$
Since $\cA$ is closed under multiplication, $\cZ$ is an $\cA$-bisubmodule and $\cA\oplus \cZ = \Mat_\nn(\o_\l)$, 
we look at the projection on $\cA$ of the above equation and we get that $(1+\pi \aa) \bb = \cc (1+\pi \aa )$.
The matrices $\bb$ and $\cc$ are thus conjugate by the matrix $(1+\pi \aa ) \in \GL_\mm(\O_\l)$, as desired.
\end{proof}
\begin{remark}
Notice that this implies that if $g\in \GL_n(\o_k)$ satisfies $gBg^{-1}=C$ then $g\in \GL_m(\O_k)$. 
This follows from the fact that the stabiliser is contained in $\GL_m(\O_k)$. 
\end{remark}


\section{One ring to rule them all}\label{sec:GR}
We fix $\l\geq 1$ and a \uri{complete} discrete valuation ring $\o$ with a uniformiser $\pi$ and a finite residue field $\o_1=\o/\pi$.
\subsection{Basic structure} For each $n\geq 1$ we write $G_n$ for the finite group $\GL_n(\o_\l)$. Following Zelevinsky, who considered the case $\l=1$, we are going to assemble the representations of all of these groups, for varying $n$, into a single algebraic structure. 

For each composition $n=n_1+n_2$ we identify the product $G_{n_1} \times G_{n_2} $ with the subgroup $G_{n_1,n_2} \subset G_{n} $ of block-diagonal matrices:
\[
G_{n_1,n_2}  \coloneq \left\{ \begin{bmatrix} a_{n_1\times n_1} & 0_{n_1\times n_2} \\ 0_{n_2 \times n_1} & d_{n_2\times n_2} \end{bmatrix} \in G_{n}  \right\}.
\]
We also consider the subgroups $U_{n_1,n_2} ,\ V_{n_1,n_2}\subset G_{n}$ of block-triangular matrices
\[
U_{n_1,n_2}  \coloneq \left\{ \begin{bmatrix} 1_{n_1\times n_1} & b_{n_1\times n_2} \\ 0_{n_2\times n_1} & 1_{n_2\times n_2}\end{bmatrix} \in G_{n}   \right\}
\quad \textrm{and} \quad
V_{n_1,n_2}  \coloneq \left\{ \begin{bmatrix} 1_{n_1\times n_1} & 0_{n_1\times n_2} \\ c_{n_2\times n_1} & 1_{n_2\times n_2}\end{bmatrix} \in G_{n}  \right\}.
\]
The triple $(U_{n_1,n_2} , G_{n_1,n_2} , V_{n_1,n_2} )$ is a virtual Iwahori decomposition of $G_n $. We use the abbreviated notation $\pind_{n_1,n_2} \coloneq \pind_{U_{n_1,n_2} , V_{n_1,n_2} }$ and $\pres_{n_1,n_2}\coloneq \pres_{U_{n_1,n_2} , V_{n_1,n_2} }$ for the associated induction and restriction functors $\Rep(G_{n_1,n_2} ) \leftrightarrow \Rep(G_n )$. Similar notation will be used for compositions $n=n_1+ \cdots + n_r$ with more than two parts.

For the sake of notational convenience we also define $G_0 \coloneq\{1\}$, the trivial group;  $G_{n,0}  = G_{0,n}  \coloneq G_n $; and $U_{n,0}  = U_{0,n}  = V_{n,0}  = V_{0,n}  \coloneq \{1\in G_n \}$. Thus the functors $\pind_{0,n}$, $\pind_{n,0}$, $\pres_{0,n}$, and $\pres_{n,0}$ are all the identity functor on $\Rep(G_n )$.

\begin{definition}
For each $n\geq 0$ let $\GR_n $ (or $\GR_n^{\o,\l}$ if $\o$ or $\l$ are not clear from the context) denote the Grothendieck group of $\Rep(G_n^{\o,\l})$. Let $\GR $ (or $\GR^{\o,\l}$) denote the direct sum
$\GR \coloneq \bigoplus_{n\geq 0} \GR_n $.
\end{definition}

Since $\Rep(G_n )$ is semisimple, $\GR $ is a free abelian group with basis $\bigsqcup_{n\geq 0} \Irr(G_n )$, \uri{where $\Irr(G)$ stands for isomorphism classes of irreducible representations of $G$}. This basis will be called the \emph{standard basis} for $\GR $. In the next few paragraphs, leading up to Proposition \ref{prop:GR-basics}, we shall equip $\GR $ with several additional structures.

For each pair of \uri{non-negative} integers $n,m$ the functor
\[
\Rep(G_n )\times \Rep(G_m ) \xrightarrow[\cong]{(\rho,\sigma)\mapsto \rho\boxtimes_{\C} \sigma} \Rep(G_{n,m} ) \xrightarrow{\pind_{n,m}} \Rep(G_{n+m} )
\]
commutes with direct sums, and hence defines a bilinear map $\GR_n  \otimes_{\Z} \GR_m  \to \GR_{n+m} $. These maps assemble into a bilinear map  $\GR \otimes_{\Z} \GR  \to \GR $, which we shall denote by $\rho\otimes \sigma \mapsto \rho\nmult \sigma$.

Similarly, for each $n\geq 0$ and each composition $n=n_1+n_2$ (with $n_i \geq 0$), the restriction functor  $\pres_{n_1,n_2}:\Rep(G_{n} ) \to \Rep(G_{n_1,n_2} )$ induces a linear map $\GR_{n}  \to \GR_{n_1}  \otimes_{\Z} \GR_{n_2} $. Taking the sum of these maps over all $n$ and all compositions $n=n_1+n_2$ gives a linear map $\Delta:\GR  \to \GR   \otimes_{\Z} \GR$.

Let $\langle \argument,\argument \rangle: \GR  \times \GR  \to \Z$ be the bilinear form induced on Grothendieck groups by the pairing
\begin{equation}\label{def.inner.prod}
\langle \rho, \sigma \rangle \coloneq \begin{cases} \dim \Hom_{G_n }(\rho,\sigma) & \textrm{if } \rho,\sigma\in \Rep(G_n ), \\ 0 & \textrm{if }\rho\in \Rep(G_n ),\ \sigma\in \Rep(G_m ),\ n\neq m.
\end{cases}
\end{equation}
Extend this form to a bilinear form on $\GR \otimes_{\Z} \GR $ by setting 
\begin{equation}\label{def.inner.prod.tensor}
\langle \rho_1\otimes \rho_2, \sigma_1\otimes \sigma_2 \rangle \coloneq \langle \rho_1,\sigma_1\rangle \langle \rho_2,\sigma_2\rangle.
\end{equation}

\begin{proposition}\label{prop:GR-basics}
\begin{enumerate}[\rm(1)]
\item The product $\nmult$ makes $\GR $ into an  associative, commutative, unital $\Z$-algebra.
\item The coproduct $\Delta$ makes $\GR $ into a  coassociative, cocommutative, counital $\Z$-coalgebra.
\item For all $\rho,\sigma,\tau\in \GR $ one has $\langle \rho , \sigma \nmult \tau \rangle = \langle \Delta\rho, \sigma\otimes \tau \rangle$.
\end{enumerate}
\end{proposition}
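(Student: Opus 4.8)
The plan is to verify the three claims by reducing each to a property of the variant Harish-Chandra functors $\pind$, $\pres$ established in \cite{CMO1}, and to handle the grading bookkeeping separately. For claim (1), associativity of $\nmult$ amounts to a natural isomorphism of functors $\pind_{n_1+n_2,n_3}\circ(\pind_{n_1,n_2}\times\id)\cong \pind_{n_1,n_2+n_3}\circ(\id\times\pind_{n_2,n_3})$ on $\Rep(G_{n_1,n_2,n_3})$; this follows from transitivity of $\pind$ in stages for nested parabolic-type data, which in turn reduces to an isomorphism of the relevant $\C G$-$\C L$ bimodules $\C G\, e_{U}e_{V}$ (the idempotents for the larger unipotent pieces factor through those for the smaller ones, using that the $U$'s and $V$'s for a refined composition sit inside those for the coarser one). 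Commutativity of $\nmult$ is the nontrivial point: it comes from \eqref{commutativity}, the bimodule isomorphism $e_Ue_V\C G\cong e_Ve_U\C G$, together with the fact that conjugation by the block permutation matrix $w=\begin{pmatrix}0&1\\1&0\end{pmatrix}$ carries $(U_{n_1,n_2},G_{n_1,n_2},V_{n_1,n_2})$ to $(V_{n_2,n_1},G_{n_2,n_1},U_{n_2,n_1})$ and intertwines $\rho\boxtimes\sigma$ with $\sigma\boxtimes\rho$; one then uses $\pind_{U,V}\cong\pind_{V,U}$ from \cite[Theorem 2.15]{CMO1}. The unit is $1\in\Z=\GR_0$, and unitality is immediate from the convention that $\pind_{0,n}$, $\pind_{n,0}$ are the identity functor.

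Claim (2) is formally dual. Coassociativity is transitivity of $\pres$ in stages, which is the adjoint (or the direct bimodule-tensor) version of the statement used for $\pind$, again using $e_Ue_V\C G$ in place of $\C G e_Ue_V$ and the nesting of unipotent subgroups for refined compositions. Cocommutativity follows from the same permutation-matrix conjugation together with $\pres_{U,V}\cong\pres_{V,U}$. The counit is the projection $\GR\to\GR_0=\Z$, and the counit axiom holds because $\pres_{0,n}$ and $\pres_{n,0}$ are the identity functor. Throughout, the grading (the decomposition $\GR=\bigoplus_n\GR_n$) is respected by construction: $\nmult$ sends $\GR_{n}\otimes\GR_m$ into $\GR_{n+m}$ and $\Delta$ sends $\GR_n$ into $\bigoplus_{n_1+n_2=n}\GR_{n_1}\otimes\GR_{n_2}$, so associativity/coassociativity only need to be checked on homogeneous components, where they become the functorial statements above.

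Claim (3) — the adjointness identity $\langle\rho,\sigma\nmult\tau\rangle=\langle\Delta\rho,\sigma\otimes\tau\rangle$ — is the heart of the matter, and I expect it to be the main (though still routine) obstacle, since it is where the bi-adjointness of the variant functors is actually invoked. Fix $\rho\in\Rep(G_n)$ and $\sigma\in\Rep(G_{n_1})$, $\tau\in\Rep(G_{n_2})$; by bilinearity it suffices to treat this homogeneous case, and the form vanishes unless $n=n_1+n_2$, which matches the only surviving summand of $\Delta\rho$. Then
\[
\langle\rho,\sigma\nmult\tau\rangle=\dim\Hom_{G_n}\!\bigl(\rho,\pind_{n_1,n_2}(\sigma\boxtimes\tau)\bigr)
=\dim\Hom_{G_{n_1,n_2}}\!\bigl(\pres_{n_1,n_2}\rho,\ \sigma\boxtimes\tau\bigr),
\]
using that $\pres_{n_1,n_2}$ is left adjoint to $\pind_{n_1,n_2}$ (bi-adjointness of the variant Harish-Chandra functors for a virtual Iwahori decomposition, \cite[Section 2]{CMO1}, applicable since $(U_{n_1,n_2},G_{n_1,n_2},V_{n_1,n_2})$ is such a decomposition). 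The right-hand side equals $\langle\Delta\rho,\sigma\otimes\tau\rangle$ by the definition of $\Delta$ as the sum of the $\pres_{n_1,n_2}$ and of the extended pairing on $\GR\otimes\GR$ as the product of the component pairings. I would also remark that (3) already forces $\nmult$ and $\Delta$ to be adjoint with respect to $\langle\ ,\ \rangle$, so that — given that the standard basis of irreducibles is self-dual for $\langle\ ,\ \rangle$ with nonnegative structure constants — the positivity and self-duality parts of the PSH axioms that are not at issue here follow formally; but since the proposition only asserts (1)–(3), I would stop at verifying the displayed identity.
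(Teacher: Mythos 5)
Your proposal is correct and takes essentially the same route as the paper: claim (3) from the bi-adjointness of $\pind_{U,V}$ and $\pres_{U,V}$ established in \cite{CMO1}, associativity by factoring the idempotents through $U_{n_1,n_2}U_{n_1+n_2,n_3}=U_{n_2,n_3}U_{n_1,n_2+n_3}$ (and likewise for the $V$'s), the unit/counit from the conventions on $\pind_{0,n}$ and $\pres_{0,n}$, and (2) by duality with (1). The only cosmetic difference is that you re-derive commutativity via conjugation by the block permutation matrix combined with $\pind_{U,V}\cong\pind_{V,U}$, whereas the paper simply cites this from \cite[Theorem 2.18]{CMO1}.
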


\begin{proof}
The third assertion follows from \cite[Theorem 2.18]{CMO1}. 
Indeed, it is proved there that for every virtual Iwahori decomposition $(U,L,V)$ the functors $\pres_{U,V}$ and $\pind_{U,V}$ are biadjoints. Since $\langle \argument,\argument\rangle$ is defined using Hom-spaces we get $\langle \rho , \sigma \nmult \tau \rangle = \langle \Delta\rho, \sigma\otimes \tau \rangle$. This also implies that the first assertion is equivalent to the second one. 
The commutativity of $\GR$ follows from the first assertion of \cite[Theorem 2.18]{CMO1}. The definition of~$\nmult$ and the assertions about $\pind_{0,n}$ and $\pind_{n,0}$ above makes it clear that the trivial representation of the trivial group $G^{\o,\l}_0$ is the unit element for $\nmult$. To prove associativity, we proceed as follows: let $n=n_1+n_2+n_3$ and let $W_i\in \Rep(G_{n_i})$ for $i=1,2,3$. Writing $\nmult$ explicitly with the idempotents corresponding to the different subgroups gives
\[
\begin{split}
(W_1\nmult &W_2)\nmult W_3 = \Big(\C G_{n_1+n_2}e_{U_{n_1,n_2}}e_{V_{n_1,n_2}}\ot_{G_{n_1,n_2}}(W_1\boxtimes W_2)\Big)\nmult W_3 \\
& \cong \C G_{n_1+n_2+n_3}e_{U_{n_1+n_2,n_3}}e_{V_{n_1+n_2,n_3}}\ot_{G_{n_1+n_2,n_3}}\big(
\C G_{n_1+n_2}e_{U_{n_1,n_2}}e_{V_{n_1,n_2}}\ot_{G_{n_1,n_2}}(W_1\boxtimes W_2)\big)\boxtimes W_3 \\
& \cong \C G_{n_1+n_2+n_3}e_{U_{n_1,n_2}}e_{U_{n_1+n_2,n_3}}e_{V_{n_1,n_2}}e_{V_{n_1+n_2,n_3}}\ot_{G_{n_1,n_2,n_3}}(W_1\boxtimes W_2\boxtimes W_3)
\end{split}
\]
where $G_{n_1,n_2,n_3}\cong G_{n_1}\times G_{n_2}\times G_{n_3}$ is defined in the obvious way. 
We have used here the fact that $G_{n_1+n_2,n_3}$ normalizes $V_{n_1+n_2,n_3}$ \uri{and that $e_{U_{n_1,n_2}}$ commutes with $e_{V_{n_1+n_2,n_3}}$ because $V_{n_1+n_2,n_3}$ is normalized by $U_{n_1,n_2}$}. 
A similar calculation shows that 
\[
W_1\nmult (W_2\nmult W_3)\cong 
\C G_{n_1+n_2+n_3}e_{U_{n_2,n_3}}e_{U_{n_1,n_2+n_3}}e_{V_{n_2,n_3}}e_{V_{n_1,n_2+n_3}}\ot_{G_{n_1,n_2,n_3}}(W_1\boxtimes W_2\boxtimes W_3).
\]
But since
\[
U_{n_1,n_2}U_{n_1+n_2,n_3} = U_{n_2,n_3}U_{n_1,n_2+n_3}\text{ and } V_{n_1,n_2}V_{n_1+n_2,n_3} = V_{n_2,n_3}V_{n_1,n_2+n_3}\]
the two representations are isomorphic, as required.
\end{proof}

\subsection{Gradings}\label{sec:GR-gradings}

The algebra $\GR$ is $\N$-graded, by definition. We shall consider refinements of this grading. For each $i,n\geq 1$ let $[\Mat^i_n]$ denote the set of similarity classes of $n\times n$ matrices over $\o_i$, that is, $\GL_n(\o_i)$-orbits under the conjugation action.  Letting $[\Mat_0^i]\coloneq\{\ast\}$, we define $[\Mat^i]\coloneq \bigsqcup_{n\geq 0} [\Mat_n^i]$. The block-sum operation $$[\MM_1]\oplus [\MM_2] \coloneq  \left[\begin{smallmatrix} \MM_1 & 0 \\ 0 & \MM_2 \end{smallmatrix} \right]$$ makes $[\Mat^i]$ into a commutative monoid, with neutral element~{$\ast\in [\Mat_0^i]$}. In the sequel we will also consider a similar monoid for an unramified extension $\O/\o$, and we will denote it by $[\Mat^{\O,i}]$.

For each $n\geq 1$ and $1\leq i\leq \lfloor \l/2 \rfloor$ Clifford theory (see Section 6 of \cite{Isaacs}) gives a decomposition
\[
\Rep(G_n) \cong \prod_{[\MM]\in [\Mat_n^i]} \Rep(G_n)_{[\MM]},
\]
where by definition a representation $\rho$ of $G_n$ lies in $\Rep(G_n)_{[\MM]}$ if the restriction $\rho\restrict_{K^{\l-i}}$ is a sum of $G_n$-conjugates of the character $\phi^\star_{\MM}$ defined in \eqref{eq:phi.star}. These decompositions, together with the trivial decomposition for $n=0$, give a decomposition at the level of Grothendieck groups
\[
\GR = \bigoplus_{[\MM]\in [\Mat^i]} \GR_{[\MM]}.
\]
Note that we have natural surjective maps $[\Mat^{i+1}]\to [\Mat^i]$ and the grading by $[\Mat^{i+1}]$ is a refinement of the grading by $[\Mat^i]$. 
The distinct summands in the decomposition are orthogonal to one another with respect to the form $\langle \argument,\argument \rangle$. 
Notice that this grading is indeed a refinement of the $\N$-grading because the $\N$-grading can be recovered as the size of the matrix $[\MM]$. 

\begin{proposition}\label{prop:GR-Mat-grading}
\begin{enumerate}[\rm(1)]
\item For all $[\MM_1],[\MM_2]\in [\Mat^i]$ \uri{with $i \le \ell/2$} one has
\[
\GR_{[\MM_1]}\nmult \GR_{[\MM_2]} \subseteq \GR_{[\MM_1\oplus \MM_2]}.
\]
\item For all $[\MM]\in [\Mat^i]$ one has
\[
\Delta(\GR_{[\MM]}) \subseteq \bigoplus_{[\MM]=[\MM_1]\oplus [\MM_2]} \GR_{[\MM_1]}\otimes \GR_{[\MM_2]}.
\]
\end{enumerate}
\end{proposition}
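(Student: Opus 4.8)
The plan is to reduce both assertions to a single statement about how the relevant unipotent idempotents interact with the congruence subgroup $K^{\l-i}$, and then invoke the compatibility of the functors $\pind$ and $\pres$ with Clifford theory as recorded in \S\ref{subsec:clifford}. First I would set up notation: fix $i$ with $1\leq i\leq \l/2$, write $K=K^{\l-i}$, and recall that $K^{\l-i}/K^{\l}\cong \Mat_n(\o_i)$ via $1+\pi^{\l-i}a\mapsto a$, with the characters of this quotient parametrized by $\phi^\star_\MM$. The key combinatorial fact is that for the standard block embedding $G_{n_1,n_2}\hookrightarrow G_{n_1+n_2}$, one has $K_{n_1+n_2}\cap G_{n_1,n_2} = K_{n_1}\times K_{n_2}$, and moreover $U_{n_1,n_2}$ and $V_{n_1,n_2}$ meet $K_{n_1+n_2}$ in $U_{n_1,n_2}\cap K$ and $V_{n_1,n_2}\cap K$, whose images in $\Mat_{n_1+n_2}(\o_i)$ are the strictly-upper and strictly-lower block triangular matrices respectively. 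The direct-sum decomposition of $\Mat_{n_1+n_2}(\o_i)$ into diagonal blocks plus off-diagonal blocks then matches the decomposition $K_{n_1+n_2} = (U\cap K)(L\cap K)(V\cap K)$ at the level of the abelian quotient.

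For assertion (1): let $\rho_j\in \Rep(G_{n_j})_{[\MM_j]}$ for $j=1,2$, so $\rho_1\boxtimes\rho_2$ restricted to $K_{n_1}\times K_{n_2}$ is a sum of conjugates of $\phi^\star_{\MM_1}\boxtimes\phi^\star_{\MM_2} = \phi^\star_{\MM_1\oplus\MM_2}$, using additivity of $\xi\mapsto\phi_\xi$ and block-diagonality. I then need to compute the restriction of $\pind_{n_1,n_2}(\rho_1\boxtimes\rho_2) = \C G_{n_1+n_2}e_Ue_V\otimes_{\C G_{n_1,n_2}}(\rho_1\boxtimes\rho_2)$ to $K_{n_1+n_2}$. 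The cleanest route is Mackey/geometric: decompose $\C G_{n_1+n_2}$ as a $K_{n_1+n_2}$-$G_{n_1,n_2}$ bimodule over double cosets, but since $U(L\cap K)V$-type considerations are already handled by the virtual Iwahori structure, I expect it is cleaner to use the Clifford-theoretic diagram of \S\ref{subsec:clifford} directly with the normal subgroup $G_0 = K^{\l}$ (so that everything factors through a group where $K/K^\l$ is an elementary abelian normal subgroup), or to argue as follows: the idempotents $e_U, e_V$ are supported on $U,V$, which together with the fact that $U\cap K$, $V\cap K$ surject onto the off-diagonal part of $\Mat_{n_1+n_2}(\o_i)$ and that $L$ normalizes $U$ and $V$, forces every $K_{n_1+n_2}$-isotypic component appearing in the induced module to have $K$-character lying in the $G_{n_1+n_2}$-orbit of $\phi^\star_{\MM_1\oplus\MM_2}$. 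Concretely: any $g e_Ue_V\otimes v$ with $v$ in the $\phi^\star_{\MM_1\oplus\MM_2}$-isotypic part transforms under $k\in K_{n_1+n_2}$ according to a character obtained from $\phi^\star_{\MM_1\oplus\MM_2}$ by conjugation, because $e_U e_V$ absorbs the unipotent directions and the torus-like directions are governed by $L\cap K$. This shows $\pind_{n_1,n_2}(\rho_1\boxtimes\rho_2)\in\GR_{[\MM_1\oplus\MM_2]}$.

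Assertion (2) is dual and should follow by the same mechanism applied to $\pres_{n_1,n_2}$, or more slickly by adjunction: using part (3) of Proposition \ref{prop:GR-basics}, for $\chi\in\GR_{[\MM]}$ and basis elements $\sigma\in\GR_{[\MM_1]}$, $\tau\in\GR_{[\MM_2]}$ we have $\langle\Delta\chi,\sigma\otimes\tau\rangle = \langle\chi,\sigma\nmult\tau\rangle$, which by part (1) is zero unless $[\MM]=[\MM_1\oplus\MM_2]$; since the bilinear form is a perfect pairing in the standard basis and the $\GR_{[\MM']}$ are mutually orthogonal, this pins down $\Delta\chi$ in the claimed summand. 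The main obstacle I anticipate is the first assertion: making rigorous the claim that induction through $U e_U V e_V$ sends the $[\MM_j]$-components to the $[\MM_1\oplus\MM_2]$-component requires carefully tracking the $K$-action on $\C G_{n_1+n_2}e_Ue_V\otimes_{\C G_{n_1,n_2}}(\rho_1\boxtimes\rho_2)$ and in particular showing no "other" orbits of $K$-characters can appear. I would handle this by reducing modulo $K^\l$ (legitimate since everything in sight is trivial on $K^\l$, because $i\leq\l/2$ forces $\l-i\geq i$ and the relevant characters have conductor in $K^{\l-i}$, vanishing on $K^\l$), applying the inflation-compatibility of Theorem \ref{thm:inflation.coherence} with $\gZ = K^\l$, and then invoking the Clifford-theory diagram of \S\ref{subsec:clifford} for the normal subgroup $K^{\l-i}/K^\l\lhd G_{n_1+n_2}/K^\l$, for which the off-diagonal-block decomposition gives exactly an Iwahori decomposition $(U\cap K, L\cap K, V\cap K)$ of $K/K^\l$ in the required sense.
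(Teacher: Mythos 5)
Your plan is correct and essentially the paper's own argument: the paper likewise reduces assertion (2) to assertion (1) via the duality of Proposition \ref{prop:GR-basics}, and proves (1) by applying the Clifford-theory compatibility of \S\ref{subsec:clifford} (i.e.\ \cite[Lemma 3.3, Theorem 3.4]{CMO1}) to the normal subgroup $K^{\l-i}$, whose intersections with $U_{n_1,n_2}$, $G_{n_1,n_2}$, $V_{n_1,n_2}$ form an Iwahori decomposition and for which the restricted induction sends $[(\MM_1,\MM_2)]$ to $[\MM_1\oplus\MM_2]$. The only cosmetic difference is that the paper works directly with $K^{\l-i}\lhd \GL_{n}(\o_\l)$ rather than first inflating through $K^{\l}$.
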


\begin{proof}
By Proposition \ref{prop:GR-basics} the multiplication is dual to the comultiplication. It will thus be enough to prove the first assertion.
For $k=1,2$ write $n_k$ for the size of the matrix $\MM_k$, that is $[\MM_k]\in [\Mat^i_{n_k}]$. 
We follow \cite[Section 3.2]{CMO1}. We consider the normal subgroup $K^{l-i}$ (we avoid the terminology $G_0$ used in \cite[Section 3.2]{CMO1} to avoid confusion with the group $G_0$ we have here). 
It is easy to show that $(U_{n_1,n_2}\cap K^{l-i},G_{n_1,n_2}\cap K^{l-i},V_{n_1,n_2}\cap K^{l-i})$ is an Iwahori decomposition of $K^{l-i}$. 
The functor $\pind_0:\Irr(G_{n_1,n_2}\cap K^{l-i})\to \Irr(G_{n_1+n_2}\cap K^{l-i})$ of \cite[Lemma 3.3]{CMO1} sends the irreducible representation $[(\MM_1,\MM_2)]$ to $[\MM_1\oplus\MM_2]$\uri{; indeed, since the relevant group is abelian, the Iwahori decomposition is in fact a product of these groups and the functor $\pind_0$ is extension by \lq zeros\rq, namely the trivial representation}. Theorem 3.4 of \cite{CMO1} now gives us that 
\[\pind(\GR(G_{n_1,n_2})_{[(\MM_1,\MM_2)]})\subseteq \GR(G_{n_1+n_2})_{[\MM_1\oplus \MM_2]}
\]
where for a finite group $G$ we denote by $\GR(G)$ the Grothendieck group of $\Rep(G)$. 
But this just means that 
\[
\GR_{[\MM_1]}\nmult \GR_{[\MM_2]}\subseteq \GR_{[\MM_1\oplus \MM_2]}
\] as desired.
\end{proof}

We use now the $[\Mat]=[\Mat^1]$-grading on $\GR$ to construct a collection of rings and corings which are indexed by the irreducible polynomials in $\o_1[t]$. For each irreducible $\ol{f}\in \o_1[t]$ we define 
\[
\GR (\ol{f})\coloneq \bigoplus_{m\geq 0}\bigoplus_{\mathrm{char}[\uri{\ol{\MM}}]=\ol{f}^m} \GR _{[\ol{\MM}]}.
\]
Proposition \ref{prop:GR-Mat-grading} immediately implies that for every irreducible $\ol{f}$, $\GR(\ol{f})$ is a subring and a subcoring of $\GR$. 
The polynomial $\ol{f}$ also defines a submonoid $[\Mat^i(\ol{f})]\subseteq [\Mat^i]$ for every $i\geq 1$ in the following way:
$$[\Mat^i(\ol{f})] = \{[\MM]\in [\Mat^i] \mid \mathrm{char}(\ol{\MM}) = \ol{f}^m\text{ for some } m\in \N\}$$ where $\ol{\MM}$ is the reduction of $\MM$ mod $\pi$. 
For $1\leq i\leq \l/2$ we then get the following grading on $\GR(\ol{f})$:
$$\GR(\ol{f}) = \bigoplus_{[\MM]\in [\Mat^i(\ol{f})]}\GR_{[\MM]}.$$
This grading will be used in Section \ref{sec:base.change}.

\subsection{Changing $\l$}

The construction of the (co)ring $\GR^{\o_\l}$ is compatible with changing the level $\l$, in a very transparent way:

\begin{proposition}\label{prop:GR-changing-l}
For each $\l\geq 1$, \uri{t}he map $\GR^{\o,\l} \to \GR^{\o,\l+1}$ induced by the pullback functors $\Rep(\GL_n(\o_\l)) \to \Rep(\GL_n(\o_{\l+1}))$ restricts to an isomorphism of rings and corings $$\GR^{\o,\l} \xrightarrow{\cong} \bigoplus_{n\geq 0}\GR^{\o,{\l+1}}_{[\uri{0}_{n\times n}]}.$$
\end{proposition}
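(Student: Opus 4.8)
The plan is to show that the pullback functors $\Rep(\GL_n(\o_\l))\hookrightarrow\Rep(\GL_n(\o_{\l+1}))$ induce an injective map on Grothendieck groups whose image is exactly $\bigoplus_{n\geq 0}\GR^{\o,\l+1}_{[\ol 0_{n\times n}]}$, and that this map respects $\nmult$ and $\Delta$. First I would recall that an irreducible representation of $\GL_n(\o_{\l+1})$ is inflated from $\GL_n(\o_\l)$ if and only if it is trivial on $K^\l_n = \Ker(\GL_n(\o_{\l+1})\to\GL_n(\o_\l))$. Taking $i=1$ in the notation of \S\ref{sec:GR-gradings} (so the level is $\l+1$ and $K^{(\l+1)-1}=K^\l$), a representation $\rho$ of $\GL_n(\o_{\l+1})$ lies in $\Rep(G_n)_{[\ol 0]}$ precisely when $\rho\restrict_{K^\l}$ is a sum of conjugates of $\phi^\star_{\ol 0}=\mathbf 1$, i.e. precisely when $K^\l$ acts trivially. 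Hence the inflated representations are exactly the summand $\Rep(G_n)_{[\ol 0_{n\times n}]}$, which gives the identification of the image on the level of $\Z$-modules; injectivity is clear since inflation is fully faithful on the semisimple categories involved.

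Next I would check compatibility with the ring structure. The key point is that the congruence subgroups are compatible with the block decompositions: one has $K^\l_n \cap U_{n_1,n_2} = U_{n_1,n_2}(\o_{\l+1})\cap K^\l$, and similarly for $L$ and $V$, so that $(U_{n_1,n_2}\cap K^\l,\ G_{n_1,n_2}\cap K^\l,\ V_{n_1,n_2}\cap K^\l)$ is an Iwahori decomposition of $K^\l$ (this was already used in the proof of Proposition~\ref{prop:GR-Mat-grading}). Theorem~\ref{thm:inflation.coherence}, applied with $\gZ = K^\l_n$ inside $G = \GL_n(\o_{\l+1})$ and with the virtual Iwahori decomposition $(U_{n_1,n_2},G_{n_1,n_2},V_{n_1,n_2})$, says precisely that $\pind_{U_{n_1,n_2},V_{n_1,n_2}}$ and $\pres_{U_{n_1,n_2},V_{n_1,n_2}}$ commute (up to natural isomorphism) with inflation along the quotient $\GL_n(\o_{\l+1})\to\GL_n(\o_\l)$. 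Passing to Grothendieck groups, this says exactly that the map $\GR^{\o,\l}\to\GR^{\o,\l+1}$ intertwines $\nmult_\l$ with $\nmult_{\l+1}$ and $\Delta_\l$ with $\Delta_{\l+1}$, so it is a homomorphism of rings and of corings, and its image $\bigoplus_n\GR^{\o,\l+1}_{[\ol 0_{n\times n}]}$ is a subring and subcoring. Since the map is a bijection onto that image, it is an isomorphism of rings and corings.

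The only genuine thing to verify carefully — and the step I expect to be the main obstacle — is the hypothesis of Theorem~\ref{thm:inflation.coherence}, namely that $(U\cap\gZ,L\cap\gZ,V\cap\gZ)$ is an honest Iwahori decomposition (not merely a virtual one) of $\gZ=K^\l_n$. This amounts to the assertion that every element of $K^\l_n$ factors uniquely as $u\ell v$ with $u\in U_{n_1,n_2}\cap K^\l$, $\ell\in G_{n_1,n_2}\cap K^\l$, $v\in V_{n_1,n_2}\cap K^\l$. Writing a block matrix $g = 1+\pi^\l x \in K^\l_n$ with $x\in\Mat_n(\o_{\l+1})$ and $x = \left(\begin{smallmatrix} x_{11}&x_{12}\\ x_{21}&x_{22}\end{smallmatrix}\right)$, one has $\pi^\l x_{21}\cdot\pi^\l x_{12}\equiv 0$ and more generally all cross terms $\pi^{2\l}(\cdots)$ vanish in $\o_{\l+1}$ since $2\l\geq \l+1$; hence the naive factorization with $u = 1+\pi^\l\left(\begin{smallmatrix}0&x_{12}\\0&0\end{smallmatrix}\right)$, $v = 1+\pi^\l\left(\begin{smallmatrix}0&0\\x_{21}&0\end{smallmatrix}\right)$, $\ell = 1+\pi^\l\left(\begin{smallmatrix}x_{11}&0\\0&x_{22}\end{smallmatrix}\right)$ multiplies out correctly, and uniqueness follows by the same vanishing of cross terms. (Note this uses $\l\geq 1$, which is in force; for $\l=0$, i.e. reducing from $\o_1$ to $\o_0$, the statement degenerates and $\GR^{\o,0}=\Z$ trivially.) Once this Iwahori-decomposition bookkeeping is in place, the rest is a formal application of Theorem~\ref{thm:inflation.coherence} together with the Clifford-theoretic identification of the image.
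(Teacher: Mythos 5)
Your proposal is correct and follows essentially the same route as the paper: identify the image of inflation with the $[\ol{0}_{n\times n}]$-graded summands via Clifford theory, then invoke Theorem~\ref{thm:inflation.coherence} (i.e.\ \cite[Theorem 2.18(5)]{CMO1}) with $\gZ=K^\l_n$ to get compatibility with $\pind$ and $\pres$. Your explicit check that $(U\cap K^\l,\,L\cap K^\l,\,V\cap K^\l)$ is a genuine Iwahori decomposition of $K^\l$ (using $2\l\geq\l+1$) is a hypothesis the paper leaves implicit, and it is verified correctly.
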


\begin{proof}
Since $\GL_n(\o_{\l+1})/K^\l\cong \GL_n(\o_\l)$ inflation provides an isomorphism between the Grothendieck group of $\GL_n(\o_\l)$ and the subgroup of the Grothendieck group of $\GL_n(\o_{\l+1})$ of all representation with trivial $K^l$ action. We need to show that this is an isomorphism of rings and corings. Again, since multiplication is dual to comultiplication (Proposition \ref{prop:GR-basics}) it is enough to prove this for the multiplication. 

Write $U=U_{n_1,n_2},V=V_{n_1,n_2}$ for the $U$ and $V$ subgroups of $\GL_{n_1+n_2}(\o_{\l})$ and write $\ol{U} = U/U\cap K^\l$ and $\ol{V} = V/V\cap K^\l$. By describing $\nmult$ using the functors $\pind_{U,V}$ and $\pind_{\ol{U},\ol{V}}$ we reduce to proving that the diagram 
\[
\xymatrix@C=60pt{
\Rep(\GL_{n_1}(\o_{\l+1})\times \GL_{n_2}(\o_{\l+1})) \ar[r]^-{\pind_{U,V}} & \Rep(\GL_{n_1+n_2}(\o_{\l+1})) \\
\Rep(\GL_{n_1}(\o_{\l})\times \GL_{n_2}(\o_{\l})) \ar[u]^-{\infl} \ar[r]^-{\pind_{\ol{U},\ol{V}}} & \Rep(\GL_{n_1+n_2}(\o_{\l})) \ar[u]_-{\infl} 
}
\] is commutative. But this is known to be true by \cite[Theorem 2.18 (5)]{CMO1}, where $K=K^\l$ and $L=\GL_{n_1}(\o_{\l+1})\times \GL_{n_2}(\o_{\l+1})$ so we are done. 
\end{proof}

\begin{remark}
We can also define a ring and coring $\GR^{\o}$, built from smooth representations of the profinite groups $\GL_n(\o)$ using the appropriate smooth versions of the functors $\pind$ and $\pres$. Using \cite[Theorem 2.18 (5)]{CMO1}, as in the proof of Proposition \ref{prop:GR-changing-l}, one can show that $\GR^{\o} \cong \varinjlim_{\l} \GR^{\o_\l}$ as rings and corings. Hence, in order to understand $\GR^{\o}$ it is enough to consider the finite quotients $\o_\l$, as we shall do for the rest of this paper.
\end{remark}


\section{Decomposition to primary components}\label{sec:primary.decomposition}

Fix $\o$ and $\l\geq 2$, and write $G_n\coloneq \GL_n(\o_\l)$, $\GR = \GR^{\o,\l}$, etc., as explained in Section \ref{sec:GR}. The main result in this section is the following. 

\begin{theorem}\label{thm:primary.decomposition.of.reps}
Suppose that $[\ol{\MM}_1]\in [\Mat_{n_1}]$ and $[\ol{\MM}_2]\in [\Mat_{n_2}]$ are coprime. Then the functor $\pind_{n_1,n_2}$ restricts to an equivalence of categories
\[
\pind_{n_1,n_2}:\Rep(G_{n_1} )_{[\ol{\MM}_1]} \times \Rep(G_{n_2} )_{[\ol{\MM}_2]} \xrightarrow{\cong} \Rep(G_{n_1+n_2} )_{[\ol{\MM}_1\oplus \ol{\MM}_2]}.
\]
\end{theorem}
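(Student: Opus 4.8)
The plan is to prove that $\pind_{n_1,n_2}$ restricts to the asserted equivalence by exhibiting $\pres_{n_1,n_2}$ as a two-sided inverse on the relevant blocks, using the biadjunction between these functors (established in \cite{CMO1} and quoted in the discussion before Proposition~\ref{prop:GR-basics}) together with a Mackey-style calculation of the composites that collapses because of the coprimality hypothesis. Concretely, both $\pind_{n_1,n_2}$ and $\pres_{n_1,n_2}$ are given by tensoring with the bimodule $\C G_{n} e_U e_V$ (resp.\ $e_U e_V \C G_{n}$), where $n=n_1+n_2$ and $(U,V)=(U_{n_1,n_2},V_{n_1,n_2})$, and the key point is that when we decompose $\C G_n e_U e_V \otimes_{\C G_{n_1,n_2}} e_U e_V \C G_n$ into double cosets, only the cosets supported near the identity survive after projecting to the blocks indexed by $[\ol{\MM}_1]$ and $[\ol{\MM}_2]$; all the \lq cross terms\rq{} are killed because a representation in $\Rep(G_{n_1+n_2})_{[\ol{\MM}_1\oplus\ol{\MM}_2]}$ forces the restriction to $K^{\l-1}$ to be a character associated to $\ol{\MM}_1\oplus\ol{\MM}_2$, and by Lemma~\ref{lem:coprime} any $g\in G_n$ conjugating such a character into one compatible with a different block decomposition must already be block-diagonal.

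First I would set up the Clifford-theoretic reduction: restrict everything to the normal subgroup $K^{\l-1}\lhd G_n$, which has the Iwahori decomposition $(U\cap K^{\l-1}, G_{n_1,n_2}\cap K^{\l-1}, V\cap K^{\l-1})$, and invoke the compatibility of $\pind_{U,V}$ with Clifford theory recalled in~\S\ref{subsec:clifford}, specifically the commuting diagram there relating $\pind_{U,V}$ on $\Rep(L)_\psi$ to $\pind$ on inertia groups and to projective representations of the quotient. The block $\Rep(G_n)_{[\ol{\MM}_1\oplus\ol{\MM}_2]}$ corresponds, via this diagram, to the character $\phi^\star_{\ol{\MM}_1\oplus\ol{\MM}_2}$ of $K^{\l-1}/K^\l\cong \Mat_n(\kk)$, and by Lemma~\ref{lem:coprime} (applied over the residue field $\kk$) the $G_n$-stabiliser of this character is contained in $G_{n_1,n_2}$; in fact the inertia group of $\phi^\star_{\ol{\MM}_1\oplus\ol{\MM}_2}$ in $G_n$ meets $G_{n_1,n_2}$ in precisely the product of the inertia groups of $\phi^\star_{\ol{\MM}_1}$ and $\phi^\star_{\ol{\MM}_2}$, and the relevant $U$ and $V$ parts intersect the inertia group in the expected way. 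This reduces the claim to the corresponding statement for the inertia groups, and then—by the third row of the diagram in \S\ref{subsec:clifford}—to a statement about ordinary (projective) representations of the quotient groups, where $\pind$ and $\pres$ become honest Harish-Chandra-type functors between $G_{n_1,n_2}(\phi)/K^{\l-1}$ and $G_n(\phi)/K^{\l-1}$; but because of coprimality the relevant Levi is the whole group and these functors are both the identity.

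The second main step is to make the above precise by a direct double-coset computation of $\pres_{n_1,n_2}\circ\pind_{n_1,n_2}$ and $\pind_{n_1,n_2}\circ\pres_{n_1,n_2}$ on the blocks. For the first composite one expands $e_U e_V\C G_n e_U e_V$ as a sum over $G_{n_1,n_2}$-double cosets in $G_n$; the trivial double coset contributes the identity functor (up to the isomorphism \eqref{commutativity} and the fact that $e_U e_V e_U e_V$ behaves well, which is the content of the biadjunction), and every non-trivial double coset, after applying the projection to $\Rep(G_{n_1,n_2})_{[\ol{\MM}_1]}\times\Rep(G_{n_2})_{[\ol{\MM}_2]}$, lands in a block indexed by a similarity class strictly different from $[\ol{\MM}_1\oplus\ol{\MM}_2]$—this is exactly where Lemma~\ref{lem:coprime} and the coprimality of $[\ol{\MM}_1],[\ol{\MM}_2]$ are used, since a non-block-diagonal representative would twist the character $\phi^\star_{\ol{\MM}_1\oplus\ol{\MM}_2}$ off the coprime-decomposed locus. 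Hence the non-trivial terms are annihilated after projection, and the composite is naturally isomorphic to the identity; symmetrically for $\pind\circ\pres$. Combined with Proposition~\ref{prop:GR-Mat-grading}(1), which already tells us $\pind_{n_1,n_2}$ maps the source block into the target block, this yields the equivalence.

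I expect the main obstacle to be the bookkeeping in the Mackey/double-coset expansion: one must carefully track how the idempotents $e_U,e_V$ interact with double-coset representatives $g$ (the intersections $U\cap {}^g(U_{n_1,n_2}G_{n_1,n_2}V_{n_1,n_2})$ etc.), and verify that for every non-identity double coset the resulting contribution genuinely lies outside the block $[\ol{\MM}_1\oplus\ol{\MM}_2]$ rather than merely in a sum of blocks that happens to include it. The coprimality hypothesis is what forces the \lq off-diagonal\rq{} behaviour at the level of the character $\phi^\star$, via Lemma~\ref{lem:coprime}, but pinning down that a non-block-diagonal $g$ necessarily changes the similarity class of the associated matrix—as opposed to fixing it—requires the observation that the similarity class of $\ol{\MM}_1\oplus\ol{\MM}_2$ under $G_n$-conjugation, intersected with the block-diagonal matrices, is a single $G_{n_1,n_2}$-orbit precisely because the characteristic polynomials are coprime. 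Once that rigidity is in hand the rest is formal, so I would organise the write-up to isolate this one linear-algebra fact and then let the Clifford-theoretic machinery of \S\ref{subsec:clifford} do the remaining work.
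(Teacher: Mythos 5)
Your first step (Clifford reduction to inertia groups, with Lemma~\ref{lem:coprime} identifying $U_{n_1,n_2}(\ol{\MM})=U^1$, $V_{n_1,n_2}(\ol{\MM})=V^1$) agrees with the paper, but the two ways you then try to finish both have genuine gaps. First, your claim that the stabiliser of $\phi^\star_{\ol{\MM}_1\oplus\ol{\MM}_2}$ is contained in $G_{n_1,n_2}$, and that after passing to the quotient ``the relevant Levi is the whole group and the functors are the identity,'' is false for $\l>2$: the inertia group is $G_n(\ol{\MM})=U^1\bigl(G_{n_1}(\ol{\MM}_1)\times G_{n_2}(\ol{\MM}_2)\bigr)V^1$, and modulo $K^{\l-1}$ the unipotent parts $U^1/U^{\l-1}$ and $V^1/V^{\l-1}$ survive, so the image of the Levi is a \emph{proper} subgroup of $G_n(\ol{\MM})/K^{\l-1}$ and the induced functor is a nontrivial two-idempotent induction, not the identity. (Your reduction is correct only in the special case $\l=2$.) Second, the double-coset ``Mackey-style'' computation of $e_Ue_V\,\C G_n\,e_Ue_V$ that you propose as the substitute is not available: $e_Ue_V$ is not an idempotent, $UVL$ is not a group, and the absence of a Mackey formula for these functors is precisely the main open problem of the paper (it is equivalent to Conjecture~\ref{conjecture:main}); calling this ``bookkeeping'' understates it. Moreover, even granting a decomposition, the assertion that the trivial coset ``contributes the identity up to the biadjunction'' is circular: the biadjunction gives unit and counit maps but says nothing about them being isomorphisms on these blocks, and that is exactly what must be proved.

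What your proposal is missing is the actual content of the paper's argument after the Clifford reduction: because coprimality makes $(U^1, G_{n_1}(\ol{\MM}_1)\times G_{n_2}(\ol{\MM}_2), V^1)$ a genuine Iwahori decomposition of the inertia group, \cite[Theorem 2.20]{CMO1} gives full faithfulness of $\pind_{U^1,V^1}$ and reduces essential surjectivity to showing $e_{U^1}e_{V^1}X\neq 0$ for every irreducible $X$ in the target block. The paper proves this by two nonformal steps: the idempotent-reduction Lemma~\ref{reduction}, which together with the perfect pairing of Lemma~\ref{lem:duality.between.U.and.V.over.F} yields $e_{V^1}e_{U^1}X=e_{V^1}X$, and a separate downward induction (again using that pairing) showing $X^{V^1}\neq 0$. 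Neither of these appears in your outline, and without them (or a genuine replacement) the equivalence does not follow from adjunction formalities plus coprimality of the characters.
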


Before we prove the theorem we derive its consequences for the Grothendieck group $\GR$.

\begin{proof}[Proof of Theorem~\ref{thm:primary}] We need to prove that the multiplication $\nmult$ induces an isomorphism of rings and corings 
\[
\Phi:\bigotimes_{\substack{\ol{f}  \in \o_1[t]\\ \text{irreducible}}} \GR(\ol{f})\longrightarrow \GR
\]
that preserves the $\Z$-valued bilinear forms and the standard bases on either side. The tensor product of infinitely many unital rings means here the direct limit of the finite tensor products, see \cite[Chapter 2]{Zelevinsky}. 
Since we are considering commutative rings, the multiplication from a tensor product of subrings to the ring is always a ring homomorphism. Once we prove that this is a unitary isomorphism (i.e. it preserves the inner product) the duality between the multiplication and comultiplication will imply immediately that this is a homomorphism of corings as well. 
Let $\{\ol{f_0},\ol{f_1},\ldots\}$ be an enumeration of all the monic irreducible polynomials in $\o_1[t]$. 
From linear algebra we know that if the characteristic polynomial $f$ of a matrix $\MM$ over $\o_1$ splits as $f=\prod_i\ol{f_i}^{m_i}$, where almost all $m_i$ are zero, then $[\MM]$ splits uniquely as 
\[
[\MM]=\bigoplus_i [\MM_i]
\]
where for each $i$ the matrix $\MM_i$ has characteristic polynomial $\ol{f_i}^{m_i}$ (so almost all of the matrices $\MM_i$ are the $0\times 0$ matrix).
The grading of $\GR$ by $[\Mat]$ gives us then a grading on $\GR$ by the set 
\[
N:=\{(m_i)\in \N^{\N} \mid m_i=0\text{ for almost all } i\}
\]
We write this grading as
\[
\GR = \bigoplus_{(m_i)\in N}\GR_{(m_i)}.
\]
Similarly, $\GR(\ol{f_i})$ is graded by $\N$, where $\GR(\ol{f_i})_n$ contains all representations for which the associated matrix has characteristic polynomial $\ol{f_i}^n$. 

Proposition \ref{prop:GR-Mat-grading} implies that for every $(m_i)\in N$, 
\[
\Phi\big(\bigotimes_i \GR(\ol{f_i})_{m_i}\big)\subseteq \GR_{(m_i)}.
\]
By applying Theorem \ref{thm:primary.decomposition.of.reps} iteratively and using the fact that the Grothendieck group of the product of categories is the tensor product of the Grothendieck groups of the categories we get that 
\[
\Phi_{(m_i)}:\bigotimes_i \GR(\ol{f_i})_{m_i}\to \GR_{(m_i)}
\] is an isomorphism. 
Since $\Phi=\bigoplus_{(m_i)\in N}\Phi_{(m_i)},$ $\Phi$ is an isomorphism as well. Since $\Phi$ sends irreducible elements to irreducible elements it is also unitary as required. 
\end{proof}

To prove Theorem~\ref{thm:primary.decomposition.of.reps} we need the following lemma.  
 
 \begin{lemma}\label{reduction} Let $G$ be a finite group and let $L, V' \subseteq V, U' \subseteq U$ be subgroups of $G$ such that $(U,L,V)$ is a virtual Iwahori decomposition and $L$ normalises $U'$. 
Let $Y$ be an irreducible representation of $G$. Assume that the following conditions hold.
\begin{enumerate}

\item $L':=[V',U] \subseteq L$ and acts on $Y$ by a linear character $\psi$.

\item $V'$ and $U$ centralise $L'$.

\item $U'= \bigcap _{v \in V'} \mathrm{Ker}(\psi_v)$, where $\psi_v(u)=\psi(uvu^{-1}v^{-1})$.

\item $U$ is abelian and the map $V' \to (U/U')^\vee$ given by $v \mapsto \psi_v$ is surjective.

\end{enumerate}  

Then $e_Ve_UY=e_{V}e_{U'} Y$. Dually, if $X$ is an irreducible $L$-representation on which $L'$ acts via a linear character then $\C G e_U e_V \ot_L X \simeq \C G e_{U'} e_V \ot_L X$.  

\end{lemma}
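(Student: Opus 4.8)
The plan is to work directly with the idempotents $e_U$, $e_V$, $e_{U'}$ and the group algebra, reducing everything to a statement about how $V'$ acts. Since $U' \subseteq U$ and $L$ normalizes both, the quotient $U/U'$ is a well-defined abelian group, and I would first record that by hypotheses (1)--(2) the subgroup $U$ acts on $Y$ through the (abelian) quotient $U/L'$, more precisely that the $L'$-action by $\psi$ is ``central'' enough that the relevant computations take place inside $Y$ with $U$ acting as an abelian group. The key linear-algebra fact is: as a representation of $U$, the space $Y$ decomposes into $U$-isotypic components indexed by characters of $U$, and $e_U Y$ is exactly the component for the trivial character. The goal $e_V e_U Y = e_V e_{U'} Y$ will follow once I show that $e_V$ kills all the ``extra'' isotypic components that appear in $e_{U'} Y$ but not in $e_U Y$ — namely those components corresponding to nontrivial characters of $U$ that are trivial on $U'$.

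The heart of the argument is the following: let $\chi$ be a character of $U$, trivial on $U'$ but nontrivial on $U$, and let $Y_\chi \subseteq Y$ be the corresponding isotypic piece. By hypothesis (3), $U' = \bigcap_{v \in V'} \operatorname{Ker}(\psi_v)$, so $\chi$ being nontrivial on $U$ but trivial on $U'$ forces $\chi$ to fail to be trivial on $\operatorname{Ker}(\psi_v)$ for... no — rather, $\chi$ factors through $U/U'$, and by hypothesis (4) the map $v \mapsto \psi_v$ from $V'$ onto $(U/U')^\vee$ is surjective, so there exists $v \in V'$ with $\psi_v = \chi$ (viewing both as characters of $U/U'$). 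Now I would use the commutator relation: for $v \in V'$ and $u \in U$, writing $L' = [V',U]$ and using that $V'$ and $U$ centralize $L'$ (hypothesis (2)) together with $L'$ acting by $\psi$ on $Y$ (hypothesis (1)), one gets in $\operatorname{End}(Y)$ the identity $v u v^{-1} = \psi_v(u)\, u \cdot (\text{element of } L')$, hence conjugation by $v$ sends the $\chi$-isotypic component $Y_\chi$ to the $(\chi \cdot \psi_v^{-1})$-isotypic... I must be careful with which character shifts. The upshot is that conjugation by an appropriate $v \in V'$ permutes the $U$-isotypic components of $Y$ by translation by $\psi_v$, and in particular moves $Y_\chi$ (for $\chi$ trivial on $U'$, nontrivial on $U$) into the sum of components on which $v$ — and a fortiori averaging over $V$ via $e_V$ — acts without invariants. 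Concretely, $e_V \cdot v = e_V$, so if $v$ acts nontrivially on a vector (in the precise sense that $v$ shifts its $U$-isotype to a different isotype), that vector is annihilated by $e_V$. Summing over the relevant $\chi$ shows $e_V(e_{U'} Y) \subseteq e_V(e_U Y)$; the reverse inclusion is trivial since $e_U = e_U e_{U'}$ (as $U' \subseteq U$), giving equality.

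For the dual statement about $\C G e_U e_V \otimes_L X$, I would either run the mirror-image argument with the roles of $U$ and $V$ and left/right modules interchanged, or — cleaner — deduce it by adjunction: by \cite[Theorem 2.15]{CMO1} and Proposition~\ref{prop:GR-basics}(3) the functors $\pres_{U,V}$ and $\pind_{U,V}$ are biadjoint, so $\Hom_G(\C G e_U e_V \otimes_L X, Y) \cong \Hom_L(X, e_V e_U Y)$ naturally in $X$ and $Y$ (over all $G$-reps $Y$ on which $L'$ acts by $\psi$, which is all we need since the source and target of the claimed isomorphism are supported there), and likewise $\Hom_G(\C G e_{U'} e_V \otimes_L X, Y) \cong \Hom_L(X, e_V e_{U'} Y)$; since the right-hand sides agree by the first part, Yoneda gives the module isomorphism. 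The main obstacle I anticipate is bookkeeping the commutator identity precisely — tracking that $[v,u] \in L'$, that it acts by the scalar $\psi([v,u]) = \psi_v(u)$, and that the centralizing hypotheses make $v u v^{-1}$ equal to $\psi_v(u) u$ up to a scalar on $Y$ (rather than a genuine group element whose own action on $Y$ must be understood), so that conjugation by $v$ really does implement a clean translation on the $U$-character decomposition of $Y$. Once that identity is nailed down, the averaging argument with $e_V$ is routine.
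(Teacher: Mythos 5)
The decisive step in your argument is false. You reduce the lemma to the claim that $e_V$ annihilates every $U$-isotypic component $Y_\chi$ whose character $\chi$ is nontrivial on $U$ but trivial on $U'$, deducing this from ``$e_Vv=e_V$, so if $v$ shifts the $U$-isotype of a vector then $e_V$ kills that vector.'' That deduction does not work: $e_V$ does not commute with the $U$-isotypic projections, so from $e_Vy=e_V(vy)$ with $y$ and $vy$ lying in different (linearly independent) isotypic components one cannot conclude $e_Vy=0$. Concretely, let $G$ be the Heisenberg group of order $p^3$, with $U$ and $V$ the two off-diagonal coordinate subgroups, $L=L'$ the centre, $V'=V$, and $Y$ the $p$-dimensional irreducible representation with nontrivial central character $\psi$; then $U'=\bigcap_v\Ker(\psi_v)=\{1\}$ and hypotheses (1)--(4) all hold. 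In the Schr\"odinger model, where $U$ acts on functions on $\F_p$ by translation and $V$ by multiplication by characters, one has $e_Vf=f(0)\delta_0$, hence $e_VY_\chi=\C\,\delta_0\neq 0$ for \emph{every} character $\chi$ of $U$. The conclusion of the lemma still holds there ($e_Ve_UY=\C\,\delta_0=e_VY$); it is only your intermediate annihilation claim that fails, so your proof has a genuine gap at its core.

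The repair is to run your translation argument in the opposite direction, and it then lands essentially on the paper's proof. Given $y\in Y_\chi$ with $\chi$ trivial on $U'$, hypotheses (3) and (4) let you choose $v\in V'$ with $\psi_v=\chi^{-1}$ (up to the sign convention you were worrying about); the commutator bookkeeping you outline, using that $L'=[V',U]$ is centralized by $U$ and $V'$ and acts by $\psi$, shows that $vy$ lies in the \emph{trivial} isotype $Y_1=e_UY$, whence $e_Vy=e_V(vy)\in e_Ve_UY$. Together with the trivial inclusion $e_Ve_UY\subseteq e_Ve_{U'}Y$ this gives the equality. The paper carries out exactly this in averaged operator form: it proves $v\,e_Ue_V=e_U^{\psi_v}e_V$ as operators on $Y$ (where $e_U^{\psi_v}=\tfrac{1}{|U|}\sum_u\psi_v(u^{-1})u$), sums over $v\in V'$ to get $e_{V'}e_Ue_V=\tfrac{|U'|}{|U|}e_{U'}e_V$ on $Y$, and then moves $e_V$ across using the bimodule isomorphism \eqref{commutativity}. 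Your adjunction treatment of the dual statement is a reasonable idea but also needs care: the first part applies only to irreducible $Y$ on which $L'$ acts by the character $\psi$, and not every irreducible constituent of $\C Ge_Ue_V\ot_LX$ need be of that form, so the Yoneda comparison is not quite immediate as stated.
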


\begin{proof} For every $v \in V'$ let $\gamma_v:U \to L'$ be the map $\gamma_v(u)=uvu^{-1}v^{-1}$. We note that the assumption that~$U$ centralises~$L'$ implies that~$\gamma_v$ is a group homomorphism and therefore $\psi_v=\psi\circ \gamma_v$ is a linear character of~$U$ for every~$v \in V'$. The assumption that $V'$ centralises $L'$ implies that the map $v \mapsto \psi_v$ defines a homomorphism~$V' \to U^\vee$. For $v \in V'$ let $e_{U}^{\psi_v}$ denote the idempotent in $\mathbb{C}U$ which corresponds to $\psi_v$. 

We claim that for every $v \in V'$ the operators $ve_Ue_V$ and $e_{U}^{\psi_v}$ coincide on~$Y$. Indeed, for every $y \in Y$ we have 
\[
\begin{split}
ve_Ue_V(y)&=ve_Uv^{-1}e_V(y)=\frac{1}{|U|}\sum_{u \in U}vuv^{-1}e_V(y)\\
&=\frac{1}{|U|}\sum_{u \in U}u\gamma_v(u^{-1})e_V(y)=\frac{1}{|U|}\sum_{u \in U}ue_V\gamma_v(u^{-1})(y)\\
&=\frac{1}{|U|}\sum_{u \in U}\psi_v(u^{-1})ue_V(y)=e_U^{\psi_v}e_V(y).
\end{split}
\]
It follows that as operators on $Y$ we have
\[
\begin{split}
e_{V'}e_Ue_V&=\frac{1}{|V'|}\sum_{v \in V'}ve_Ue_V=\frac{1}{|V'|}\sum_{v \in V'}e_U^{\psi_v}e_V\\
&=\frac{1}{|V'||U|}\sum_{u\in U}\sum_{v \in V'}\psi_v(u^{-1})ue_V\\
&=\frac{1}{|V'||U|}\Biggl(\sum_{u\in U'}\sum_{v \in V'}\underbrace{\psi_v(u^{-1})}_{=1}ue_V+\sum_{u\in U \smallsetminus U'}\underbrace{\Bigl(\sum_{v \in V'}\psi_v(u^{-1})\Bigr)}_{=0}ue_V\Biggr)\\
&=\frac{|U'|}{|U|}e_{U'}e_V.
\end{split}
\]
Multiplying the last equality by $e_V$ on the left and applying both sides to $Y$ gives
\begin{equation}\label{eqn:isom.of.spaces.induced.by.idemp1}
e_Ve_{U'}Y \simeq e_V\left(e_{U'}e_V Y\right)=e_V\left(e_{V'}e_Ue_VY\right)=e_Ve_{U}e_VY \simeq e_Ve_{U}Y.
\end{equation}
The first and last isomorphisms follow from~\eqref{commutativity}, the second equality arises from the calculation we have done here, and the third equality follows from the fact that $e_Ve_{V'}=e_V$ because $V'\subseteq V$. Since $e_Ve_U Y=e_Ve_{U'}(e_U Y) \subseteq e_Ve_{U'}Y$ we must have equality between the two subspaces.
\end{proof}

\begin{proof}[Proof of Theorem~\ref{thm:primary.decomposition.of.reps}] To simplify the notation we write $n\coloneq n_1+n_2$ and $\ol{\MM}\coloneq \ol{\MM}_1\oplus \ol{\MM}_2$. By \cite[Theorem 3.6]{CMO1}, the functor $\pind_{n_1,n_2}$ fits into a commuting diagram
\begin{equation}\label{eq:primary-diag}
\xymatrix@C=80pt{
\Rep(G_{n_1})_{[\ol{\MM}_1]} \times \Rep(G_{n_2})_{[\ol{\MM}_2]} \ar[d]_-{\cong} \ar[r]^-{\pind_{n_1,n_2}} & \Rep(G_{n })_{[\ol{\MM}]} \ar[d]^-{\cong} \\
\Rep(G_{n_1}(\ol{\MM}_1))_{[\ol{\MM}_1]} \times \Rep(G_{n_2}(\ol{\MM}_2))_{[\ol{\MM}_2]} \ar[r]^-{\pind_{U_{n_1,n_2}(\ol{\MM}),V_{n_1,n_2}(\ol{\MM})}} & \Rep(G_n(\ol{\MM}))_{[\ol{\MM}]}
}
\end{equation}
in which the vertical arrows are equivalences. So it will suffice to prove that the functor $\pind\coloneq \pind_{U_{n_1,n_2}(\MM), V_{n_1,n_2}(\MM)}$ appearing in the bottom row of \eqref{eq:primary-diag}  is an equivalence.

For $1 \le i < \l$, let $U^i = U_{n_1,n_2} \cap K^i_{n}$ and similarly $V^i=V_{n_1,n_2} \cap K^i_{n}$.
By Lemma~\ref{lem:coprime}, the assumption that $\ol{\MM}_1$ and $\ol{\MM}_2$ are coprime implies that $U_{n_1,n_2}(\ol{\MM})=U^1$,  $V_{n_1,n_2}(\ol{\MM})=V^1$, and  \uri{that}
\[
G_n(\ol{\MM})=U^1(G_{n_1}(\ol{\MM}_1)\times G_{n_2}(\ol{\MM}_2))V^1
\] 
\uri {is} an Iwahori decomposition. Now \cite[Theorem 2.23]{CMO1} implies that the functor $\pind$ is fully faithful; and also that this functor is essentially surjective if and only if $\pres_{U^1,V^1} X \neq 0$ for each  irreducible $X\in \Rep(G_n(\ol{\MM}))_{[\ol{\MM}]}$. We shall prove that the latter holds by showing that:
\begin{enumerate} 
\item[(i)] $e_{V^{i}} e_{U^{j}} {X} = e_{V^{i}} e_{U^{j+1}} {X}$ for all $i,j \ge 1$ with $i+j=\l-1$. In particular 
\[
e_{V^1} e_{U^1} {X}=e_{V^1} e_{V^2} \cdots e_{V^{\l-2}} e_{U^1} {X} = e_{V^1} {X},
\] 
as $U^{\l-1}\subset \ker \xi$ acts trivially on $X$.
\item[(ii)] $e_{V^1}{X} \ne 0$, that is, ${X}$ has a nontrivial $V^1$-invariant vector.
\end{enumerate}

\smallskip

\noindent {\em Proof of} (i): We shall use Lemma \ref{reduction}, with $G\coloneq G_n(\ol{\MM})$, $U'\coloneq U^{j+1}$, $U\coloneq U^{j}$, $V' \coloneq V \coloneq V^{i}$, and $L\coloneq G_{n_1}(\ol{\MM}_1)\times G_{n_2}(\ol{\MM}_2)$. We verify the hypotheses (1)--(4) of Lemma \ref{reduction}:  

For (1), we have $L'\coloneq [V^{i},U^{j}] = K^{\l-1}_n \cap L$, so clearly $L'\subseteq L$. On the other hand, since by assumption $K^{\l-1}_n$ acts on $X$ through the character $\psi\coloneq \psi_{\ol{\MM}}$, the same is true of $L'$. 

For (2), since $V^i$ and $U^j$ are contained in $K^1_n$, and $K^1_n$ centralizes $K^{\l-1}_n$, it is certainly true that $V^{i}$ and $U^j$ centralize $L'$. 

For (3) and (4), Lemma~\ref{lem:duality.between.U.and.V.over.F} ensures that $U^j/U^{j+1}$ and $V^i/V^{i+1}$ are Pontryagin duals of one another, with the duality realized by the pairing 
$(u,v)\mapsto \psi_v(u)$. This implies both (3) and (4).  

Thus Lemma \ref{reduction} applies, and establishes claim (i).

\smallskip

\noindent {\em Proof of} (ii): We will prove that $X$ contains a nonzero $V^i$-invariant vector,  for each $1\leq i \leq \l-1$, by induction on $-i$. For $i=\l-1$ it is in fact true that $V^{\l-1}$ acts trivially on $X$, since we assumed that $K_n^{\l-1}$ acts on $X$ through the character $\psi=\psi_{\ol{\MM}}$, which is trivial on the subgroup $V^{\l-1}$. 

Now assume that ${X}^{{V}^{i+1}}\neq 0$ and take a nonzero $w\in {X}^{{V}^{i+1}}$ upon which ${V}^i$ acts by a linear character. By the above-noted duality between $V^i/V^{i+1}$ and $U^j/U^{j+1}$ (where $i+j=\l-1$), there exists an element $u\in U^j$ (not unique) such that $v(w) = \psi_v(u)\cdot w = [u,v](w)$ for every $v\in V^i$. Now for each $v\in V^i$ we have
\[
vu(w) = [v,u] uv(w) = \psi([v,u])u\psi([u,v])(w) = u(w),
\]
showing that the nonzero vector $u(w)\in W$ is fixed by $V^i$. This proves (ii)  and completes the proof of the Theorem.
\end{proof} 
 
\begin{remark}\label{rem:CMO-vs-Hill1}
The bijection between the sets of isomorphism classes of irreducible representations induced by the equivalence
\[
\pind_{n_1,n_2}:\Rep(G_{n_1} )_{[\ol{\MM}_1]} \times \Rep(G_{n_2} )_{[\ol{\MM}_2]} \xrightarrow{\cong} \Rep(G_{n_1+n_2} )_{[\ol{\MM}_1\oplus \ol{\MM}_2]}
\]
of Theorem \ref{thm:primary.decomposition.of.reps} is the same as the bijection established by Hill in \cite[Corollary 2.6]{Hill_Jord}. This is most easily seen by considering the inverse of the bottom horizontal arrow in the diagram \eqref{eq:primary-diag}, which we showed in the course of proving Theorem \ref{thm:primary.decomposition.of.reps} to be given on irreducibles by $X\mapsto e_{U^1} e_{V^1} X$. On the other hand, examining the Hecke algebra isomorphism from \cite[Chapter 2, Theorem 4.1]{Howe-Moy} that is used by Hill in \cite[Corollary 2.6]{Hill_Jord}, one finds that the corresponding map for Hill's bijection is given by
\[
X\mapsto \begin{cases} e_{U^j}e_{V^j} X & \textrm{if }\l=2j, \\ 
e_{U^{j}}e_{V^{j+1}} X &\textrm{if } \l=2j+1. \end{cases}
\]

Using the claim (i) established in the proof of Theorem \ref{thm:primary.decomposition.of.reps} it is easy to see that the two maps coincide up to isomorphism. For example, in the case $\l=2j$ we can first use the equality from claim (i)  to obtain for all $1<  k \leq j$ that
\[
e_{U^k}e_{V^k} X = e_{U^k} e_{U^{\l-k}} e_{V^{k}} X = e_{U^k} e_{U^{\l-k}} e_{V^{k-1}} X = e_{U^k} e_{V^{k-1}} X \cong e_{V^{k-1}} e_{U^k} X,
\]
where the last isomorphism is \eqref{commutativity}. Then we can use the analogue of claim (i) with the roles of $U$ and $V$ interchanged to get
\[
e_{V^{k-1}} e_{U^k} X = e_{V^{k-1}} e_{V^{\l-k}}e_{U^k} X = e_{V^{k-1}} e_{V^{\l-k}} e_{U^{\uri{k}-1}} X = e_{V^{k-1}} e_{U^{k-1}} X \cong e_{U^{k-1}} e_{V^{k-1}} X,
\]
showing that indeed $e_{U^j} e_{V^j}X \cong e_{U^1} e_{V^1} X$. \uri{The case $\l=2j+1$ follows as well because $e_{U^j}e_{V^{j+1}} X = e_{U^j} e_{V^{j}} X$  using the case $i=j$ in claim (i).}

\end{remark}

\section{Base change}\label{sec:base.change}

In this section we study in detail the ring and coring $\GR^{\o,\l}(\ol{f})$, where $\ol{f}\in \o_1[t]$ is a monic irreducible polynomial of degree $d$, and prove Theorem~\ref{thm:base.change.iso}. Recall that $\GR^{\o,\l}(\ol{f})$ is the subgroup of $\GR^{\o,\l}$ generated (as an abelian group) by all irreducible representations that lie over orbits of characters of $K^{\l-1}$ represented by~$\ol{\MM} \in \Mat_n(\o_1)$ where $\ol{\MM}$ is $\ol{f}$-primary and $\nn=md$ with $m \in \N$. When $d=1$, that is $\ol{f}=t-\ol{x}$ with $\ol{x} \in \o_1$, the character $g \mapsto \phi\left(\pi^{-\ell}\ol{x}(g-1)\right)$ of~$K^{\l-1}_\nn$ extends to a linear character of $\GL_n(\o_\l)$ for all $\nn$, and tensoring with this character gives an isomorphism~$\GR^{\o,\l}(t-\ol{x}) \cong \GR^{\o,\l}(t)$ of rings and of corings. This enables us to reduce the study to representations with associated nilpotent matrices.

Our goal is to prove that a similar result holds also when $d>1$. 
We recall the setting mentioned in the introduction: we assume that $f\in \o[t]$ is a monic lifting of $\ol{f}$, that $x\in \Mat_d(\o)$ is a root of $f$ and that $\O=\o[x]$. This is an unramified extension of $\o$, and $\O_1 = \O/(\pi)$ is a field extension of $\o_1$ of degree $d$. 
We then consider the representation ring $\GR^{\O,\l}(t-\ol{x})$ which corresponds to $\O$. 

Our main result in this section is the following base change result (Theorem~\ref{thm:base.change.iso} of the introduction). 
\begin{theorem}\label{thm:base.change} Let $\o$ and $\O=\o[x]$ be as above. Then there exists an isomorphism $\GR^{\O,\l}(t-\ol{x}) \cong \GR^{\o,\l}(\ol{f})$ of rings and corings.   
\end{theorem}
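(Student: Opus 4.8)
The plan is to construct the isomorphism $\GR^{\O,\l}(t-\ol{x}) \to \GR^{\o,\l}(\ol{f})$ grading piece by grading piece, using the refinement of the $[\Mat]$-grading by similarity classes of matrices over $\o_i$ for $1 \le i \le \l/2$. On the $\o$-side, Proposition~\ref{intoT} says precisely that every irreducible representation of $\GL_{md}(\o_\l)$ whose associated mod-$\pi$ matrix is $\ol{f}$-primary of size $md$ lies over a character $\phi^\star_\MM$ for some $\MM\in\Mat_m(\O_\l)$, uniquely up to $\GL_m(\O_\l)$-conjugacy; and the remark after that proposition says the $\GL_{md}(\o_\l)$-stabiliser of $\MM$ is contained in $\GL_m(\O_\l)$. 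On the $\O$-side, an irreducible representation of $\GL_m(\O_\l)$ in $\GR^{\O,\l}(t-\ol{x})$ lies over a character indexed by a matrix $\MM' \in \Mat_m(\O_\l)$ whose reduction has characteristic polynomial $(t-\ol x)^m$, i.e. an arbitrary matrix over $\O_\l$ with nilpotent reduction (after a translation, but here we keep the $t-\ol x$ normalisation). So the two indexing sets $[\Mat^i(\ol f)]$ and $[\Mat^{\O,i}(t-\ol x)]$ should be canonically identified, and this identification is compatible with block sums.

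First I would establish, for each $i\le \l/2$, a bijection $[\Mat^{\O,i}(t-\ol x)] \xrightarrow{\cong} [\Mat^i(\ol f)]$ sending a $\GL_m(\O_i)$-class to the $\GL_{md}(\o_i)$-class of the same matrix viewed in $\Mat_{md}(\o_i)$; injectivity and well-definedness are exactly the content of Proposition~\ref{intoT} (applied at level $i$), and surjectivity is the first assertion of that proposition. This bijection is a monoid isomorphism because block sum in $\Mat_m(\O_i)$ becomes block sum in $\Mat_{md}(\o_i)$ after a permutation conjugation. Next, using the Clifford-theory decomposition of Section~\ref{subsec:clifford}, I would identify, for a fixed class $[\MM]$ with $\MM\in\Mat_m(\O_\l)$, the block $\Rep(\GL_{md}(\o_\l))_{[\MM]}$ with $\Rep(\GL_m(\O_\l))_{[\MM]}$: both are equivalent, via the $\ind$ functors of the diagram in Section~\ref{subsec:clifford}, to projective representations $\Rep^\gamma(H/H_0)$ of the relevant quotient of the stabiliser; since the stabiliser of $\phi^\star_\MM$ in $\GL_{md}(\o_\l)$ lies in $\GL_m(\O_\l)$ (the remark after Proposition~\ref{intoT}) and $K^{\l-1}_{md}\cap\GL_m(\O_\l) = K^{\l-1}_m(\O)$, the inertia quotients and the cocycles $\gamma$ match. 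This gives an equivalence of categories $\Rep(\GL_{md}(\o_\l))_{[\MM]} \cong \Rep(\GL_m(\O_\l))_{[\MM']}$, hence an isomorphism of the graded pieces of the two Grothendieck groups, which assembles to a $\Z$-module isomorphism $\GR^{\o,\l}(\ol f)\cong\GR^{\O,\l}(t-\ol x)$ preserving standard bases and inner products.

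It then remains to check compatibility with $\nmult$ and $\Delta$. By the duality of multiplication and comultiplication (Proposition~\ref{prop:GR-basics}(3)), which the module isomorphism above respects, it suffices to treat $\nmult$. For $\MM_1\in\Mat_{m_1}(\O_\l)$, $\MM_2\in\Mat_{m_2}(\O_\l)$ with block sum $\MM$, I would compare the two induction functors: $\pind^\o_{m_1 d, m_2 d}$ between the $\o$-groups and $\pind^\O_{m_1,m_2}$ between the $\O$-groups, restricted to the relevant Clifford blocks. The key point is that the virtual Iwahori decomposition $(U^\o_{m_1d,m_2d}, \GL_{m_1d}\times\GL_{m_2d}, V^\o)$ of $\GL_{md}(\o_\l)$, intersected with $\GL_m(\O_\l)$, is exactly the corresponding Iwahori-type decomposition $(U^\O_{m_1,m_2}, \GL_{m_1}(\O)\times\GL_{m_2}(\O), V^\O)$ — this uses the linear-algebra facts of Section~\ref{fieldext} (e.g. Lemma~\ref{directsum} and its $\o_\l$-analogue in Proposition~\ref{intoT}) to see that the off-diagonal blocks split as $\cA\oplus\cZ$ compatibly with the group decomposition. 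One then invokes the Clifford-compatibility diagram of Section~\ref{subsec:clifford} on both sides: the bottom rows of both diagrams are $\pind$-functors between the same projective-representation categories of inertia quotients, since by the remark after Proposition~\ref{intoT} the inertia group $\GL_{md}(\o_\l)(\phi^\star_{\MM_1}\boxtimes\phi^\star_{\MM_2})$ already sits inside $\GL_{m}(\O_\l)$. Thus both $\pind$'s induce the same map on Grothendieck groups under our identification, proving that the isomorphism is a ring map.

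\medskip

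The main obstacle I expect is not the bijection of indexing sets (that is Proposition~\ref{intoT}) but the bookkeeping needed to see that the two Clifford-theoretic cocycles $\gamma$ agree and, crucially, that the unipotent subgroups and their idempotents match correctly under the decomposition $\Mat_{md}(\o_\l) = \cA\oplus\cZ$. In particular, for odd level $\l=2j+1$ the inertia group is strictly larger than $K^{\l-1}_{md}$ and one must verify that the intermediate stages of the Clifford correspondence — the step passing from $\Rep(L)_\psi$ to $\Rep^\gamma(L(\psi)/L_0)$ — are compatible with base change, which amounts to checking that the cocycle arising from the Heisenberg-type extension of $K^j/K^{\l}$ restricts correctly to the $\O$-subgroup; this is presumably where the "coherence of trivialisations of two-cocycles" mentioned in the introduction (and the odd-level analysis of \S\ref{subsec:odd}) is needed. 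Handling that coherence uniformly across all $m$ simultaneously — so that the piecewise isomorphisms glue into a single ring-and-coring isomorphism — is the technical heart of the argument.
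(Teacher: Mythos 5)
Your overall strategy coincides with the paper's: decompose both sides into Clifford blocks indexed by classes $[\MM]$ with $\MM\in\Mat_m(\O_k)$, use Proposition~\ref{intoT} to match the indexing sets, establish block-by-block equivalences, and then check compatibility with $\nmult$ via the Clifford/Iwahori compatibility of \S\ref{subsec:clifford}. But there is a genuine gap at the step you state as ``the inertia quotients and the cocycles $\gamma$ match.'' This is not true as written: the inertia group of $\phi^\star_\MM$ in $\GL_{md}(\o_\l)$ is \emph{not} contained in $\GL_m(\O_\l)$ (only the centraliser of $\MM$ at level $k$ is, by the remark after Proposition~\ref{intoT}); the inertia group contains the whole congruence kernel, so on the $\o$-side the inertia quotient is larger than on the $\O$-side, the difference being the image of $\gZ=1+\cZ$-type elements. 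For even $\l=2k$ this discrepancy is harmless because $\gZ^k$ is then a normal complement inside $K^k$ acting trivially, so the two Clifford data genuinely agree (and even there the multiplicativity needs Theorem~\ref{thm:inflation.coherence}, which you implicitly use). For odd $\l=2k+1$, however, $\gZ^k$ is not even a subgroup, and the $\o$-side inertia quotient is $G\ltimes N$ with $N\cong(\ol\cZ,+)$ carrying a cocycle $\beta$ whose restriction to $N$ is \emph{non-degenerate}. Passing from $\Rep^{\beta}(GN)$ to $\Rep^{\beta}(G)$ is exactly the content of the theorem and requires: Skolem--Noether elements $\LL_g$ intertwining the $G$-action on $\C^{\beta}N$ (Lemma~\ref{lem:Sg.formula}), the resulting auxiliary cocycle $\gamma$ on $G$, and a genuine proof that $\gamma$ is cohomologically trivial --- in the paper this is done by showing its order is a $p$-power (determinants) and then constructing a $P$-stable Lagrangian in $N$ for a $p$-Sylow $P$. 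None of this is supplied or replaced by an alternative argument in your proposal.

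The same issue propagates to the ring-homomorphism step. Because the block equivalence in the odd case depends on a choice of trivialisation of $\gamma_\MM$ for each class $[\MM]$, compatibility with $\nmult$ is not automatic from matching Iwahori decompositions: one must show that the idempotents attached to the off-diagonal $\cZ$-parts are absorbed (Lemma~\ref{lem:reducingelements}, $\TL^{\MM}_u e_{12}=e_{12}$), that the trivialisation for $\MM=\MM_1\oplus\MM_2$ is compatible with those for $\MM_1,\MM_2$ (Lemma~\ref{lem:coherent.Q}, Proposition~\ref{prop:local.odd.mult}), and finally that trivialisations can be chosen \emph{coherently for all classes simultaneously}, which the paper achieves by a compactness (Tychonoff) argument. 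You correctly identify all of this as ``the technical heart'' in your closing paragraph, but you do not provide any argument for it, and the preceding assertion that the cocycles simply match would, if taken literally, be false for odd $\l$. So the proposal reproduces the paper's framework but leaves the essential odd-level analysis (triviality of $\gamma$ and coherence of its trivialisations) unproved.
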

Combining the theorem with the discussion above we obtain  
\begin{corollary}\label{cor.reduction.nilpotent} The ring $\GR^{\o,\l}(\ol{f})$ is isomorphic to $\GR^{\O,\l}(t)$.
\end{corollary}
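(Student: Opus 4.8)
Granting Theorem~\ref{thm:base.change}, the corollary is immediate: $\O$ is again a complete discrete valuation ring with finite residue field and $\ol{x}\in\O_1$, so, exactly as in the discussion opening this section, tensoring every representation of $\GL_m(\O_\l)$ by a suitable one-dimensional character gives an isomorphism of rings and corings $\GR^{\O,\l}(t-\ol{x})\cong\GR^{\O,\l}(t)$; composing it with the isomorphism of Theorem~\ref{thm:base.change} yields the corollary. So the substance is Theorem~\ref{thm:base.change}, whose proof I now sketch. Since $\nmult$ and $\Delta$ are adjoint for $\langle\cdot,\cdot\rangle$ (Proposition~\ref{prop:GR-basics}(3)), it suffices to produce a $\Z$-module isomorphism $\GR^{\O,\l}(t-\ol{x})\to\GR^{\o,\l}(\ol{f})$ that sends irreducibles to irreducibles and intertwines the two multiplications $\nmult$; preservation of $\langle\cdot,\cdot\rangle$, and hence the coring statement, then follows formally. (The case $\l=1$ recovers the classical primary decomposition for general linear groups over finite fields, so assume $\l\ge 2$ and set $i:=\lfloor \l/2\rfloor$.)

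The plan is to build the isomorphism one primary block at a time. Use the $[\Mat^i(\ol{f})]$-grading of $\GR^{\o,\l}(\ol{f})$ and the $[\Mat^{\O,i}(t-\ol{x})]$-grading of $\GR^{\O,\l}(t-\ol{x})$; Proposition~\ref{intoT}, applied over $\o_i$, together with its Remark, provides a monoid isomorphism $[\Mat^{\O,i}(t-\ol{x})]\xrightarrow{\cong}[\Mat^i(\ol{f})]$, $[\MM']\mapsto[\MM]$, where $\MM$ is $\MM'$ regarded as an $md\times md$ matrix over $\o_i$, and by Proposition~\ref{prop:GR-Mat-grading} it is enough to match the corresponding graded pieces $\GR^{\O,\l}_{[\MM']}$ and $\GR^{\o,\l}_{[\MM]}$ compatibly with block sums. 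Fix such a pair and apply Clifford theory to the abelian normal subgroup $K^{\l-i}$ (it is abelian because $2(\l-i)\ge\l$) on both sides, using the compatibility of the variant Harish--Chandra functors with Clifford theory recalled in \S\ref{subsec:clifford}, which already drove the proof of Theorem~\ref{thm:primary.decomposition.of.reps}. The stabiliser of $\phi^\star_{\MM}$ in $\GL_{md}(\o_\l)$ is the preimage of the centraliser $\GL_{md}(\o_i)(\MM)$, and the stabiliser of $\phi^\star_{\MM'}$ in $\GL_m(\O_\l)$ is the preimage of $\GL_m(\O_i)(\MM')$; by the Remark after Proposition~\ref{intoT} these two centralisers are the \emph{same} finite group, call it $C$. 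Clifford theory then identifies each of $\GR^{\o,\l}_{[\MM]}$ and $\GR^{\O,\l}_{[\MM']}$ with the Grothendieck group of $\gamma$-projective representations of $C$, for suitable classes $\gamma_\o,\gamma_\O\in H^2(C,\C^\times)$ (an extra layer over $C$ intervenes for odd $\l$, see below), and under this dictionary $\nmult$ becomes the parabolic induction between $\gamma$-projective representations attached to the block embeddings $\GL_{m_1}(\O_i)(\MM'_1)\times\GL_{m_2}(\O_i)(\MM'_2)\hookrightarrow\GL_{m_1+m_2}(\O_i)(\MM'_1\oplus\MM'_2)$. The whole problem is thereby reduced to identifying $\gamma_\o$ with $\gamma_\O$ naturally in these embeddings.

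When $\l=2j$ the inertia quotient is literally $C$, and $\gamma_\o,\gamma_\O$ are the Schur-multiplier classes produced by Clifford theory from the $C$-invariant characters $\phi^\star_{\MM},\phi^\star_{\MM'}$ of $K^{j}$; I would compare them directly, tracing both transgression cocycles to the commutator structure of the congruence subgroups, which already lives inside $\Mat_m(\O_1)\subseteq\Mat_{md}(\o_1)$. When $\l=2j+1$ there is an additional Stone--von Neumann layer: $K^{j}/K^{2j+1}$ is a Heisenberg group with centre $K^{j+1}/K^{2j+1}$ and symplectic quotient $\Mat_{md}(\o_1)$ (resp.\ $\Mat_m(\O_1)$) carrying the alternating form $(\aa,\bb)\mapsto\phi^\star_{\MM}\bigl([1+\pi^{j}\aa,\,1+\pi^{j}\bb]\bigr)$, and $C$ acts on the attached Heisenberg representation through a metaplectic cocycle. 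The key linear-algebra input is Lemma~\ref{kernelQ}: over $\o_1$ one has a $C$-equivariant orthogonal decomposition $\Mat_{md}(\o_1)=\cA\oplus\cZ$ with $\cA=\Mat_m(\O_1)$ and $\cZ=\Mat_m(E^\perp)$, and, for the correct normalisation of the additive character of $\O$ (using that $\O/\o$ is unramified), the $\o$-side form restricts on $\cA$ to exactly the $\O$-side form; hence $\gamma_\o$ and $\gamma_\O$ differ only by the metaplectic cocycle of the summand $\cZ$, the Schur-multiplier parts agreeing as in the even case. It then remains to split that last cocycle, which is achieved by exhibiting a $C$-stable Lagrangian in $\cZ$ --- a block-triangular polarisation of $\Mat_m(E^\perp)$ whose two halves are put in perfect duality by Lemma~\ref{lem:duality.between.U.and.V.over.F}.

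The main obstacle is precisely this splitting in the odd case: a suitable Lagrangian exists for each individual $C$, but to obtain a bijection that commutes with $\nmult$ one has to choose the Lagrangians --- equivalently, the trivialisations of the metaplectic cocycles --- \emph{simultaneously for all $m$}, compatibly with the block-sum embeddings; this is the ``coherence of trivialisations of certain two-cocycles across all linear groups'' announced in the introduction. I would handle it by fixing once and for all the block-upper-triangular polarisation of each $\Mat_m(E^\perp)$ and checking that this choice behaves additively under direct sums of matrices, so that all the ensuing identifications commute with the embeddings. Granting this, the graded isomorphisms assemble over all $[\MM']$ into a $\Z$-module isomorphism $\GR^{\O,\l}(t-\ol{x})\to\GR^{\o,\l}(\ol{f})$; being a composite of equivalences of categories and Clifford bijections it sends irreducibles to irreducibles, it is multiplicative by the parabolic compatibility above, and it preserves $\langle\cdot,\cdot\rangle$ since it matches standard bases, whence it is an isomorphism of corings as well by Proposition~\ref{prop:GR-basics}(3). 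This proves Theorem~\ref{thm:base.change}, and with it the corollary.
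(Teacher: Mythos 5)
Your first paragraph is exactly the paper's proof of the corollary: Theorem~\ref{thm:base.change} gives $\GR^{\o,\l}(\ol{f})\cong\GR^{\O,\l}(t-\ol{x})$, and since $\O$ is again a complete discrete valuation ring with finite residue field and $t-\ol{x}$ has degree one, tensoring with a one-dimensional character extending $g\mapsto\ffbar{\ol{x}(g-1)}$ identifies $\GR^{\O,\l}(t-\ol{x})$ with $\GR^{\O,\l}(t)$ as rings and corings. For the statement at hand your argument is therefore correct and identical in approach to the paper's.

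Since you go on to sketch Theorem~\ref{thm:base.change} itself, two steps of that sketch would not go through as written. First, in the odd case you propose to split the metaplectic cocycle by exhibiting a $C$-stable Lagrangian in $\cZ$; this is too strong. The paper's remark following the triviality lemma points out that when the degree $d$ is even there are examples where no Lagrangian stable under the whole stabiliser exists. The paper instead conjugates so that the unipotent part $1+\ol{\nu}$ lies in a $p$-Sylow subgroup $P$ of upper-triangular type, builds a $P$-stable Lagrangian, trivialises $\gamma$ on $P$, and then uses that $\gamma$ has $p$-power order (via determinants of the elements $\LL_g$) together with $\mathrm{cores}^G_P\mathrm{res}^G_P=|G:P|$ to conclude that $\gamma$ is trivial on all of $G$. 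Second, the coherence of the trivialisations across all block sizes is not obtained by fixing a block-upper-triangular polarisation ``once and for all'': a trivialisation is only a torsor under $\Hom(G(\MM),\C^{\times})$, and the upper-triangular normalisation depends on the chosen conjugacy representative of $\MM$, so additivity under $\oplus$ is not automatic. The paper proves the local compatibility statements (Lemma~\ref{lem:coherent.Q} and Proposition~\ref{prop:local.odd.mult}) and then extracts a globally coherent family of trivialisations by a Tychonoff compactness argument over the countable set of classes in $[\Mat^k(\ol{f})]$. (Also, in the even case no cocycle comparison is needed at all: there $\gZ^k$ is a normal complement to $\GL_m(\O_\l)(\MM)$ acting trivially, so plain inflation, together with Theorem~\ref{thm:inflation.coherence}, already yields the multiplicative identification.) None of this affects the corollary, which only uses the theorem as stated.
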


The rest of this section is devoted to the proof of Theorem~\ref{thm:base.change}.  
Throughout we write $\l=2k+\epsilon$ with $\epsilon \in \{0,1\}$. We will treat the two cases separately. The proof for $\epsilon=0$ will be considerably simpler than the proof for $\epsilon=1$. The group $K_\nn^{k+\epsilon} \cong \left(\Mat_n(\o_k),+\right)$ is the maximal abelian congruence kernel of $\GL_n(\o_\l)$ and we can stratify $\Rep(\GL_n(\o_\l))$ according to their $K_\nn^{i}$-isotypic components for $k+\epsilon \le i \le \l$.
It follows from Clifford Theory (see \S\ref{sec:GR-gradings}) that we have the following splitting of the Grothendieck group \begin{equation}\label{eq:local.decomp.o}
\GR^{\o,\l}(\ol{f})=\bigoplus_{[\MM]\in [\Mat^k(\ol{f})]} \GR^{\o,\l}_{[\MM]},
\end{equation}
Note that 
over $\O_1$ the polynomial splits as 
$\ol{f}(t)=\prod_{\sigma \in \Gal(\O_1/\o_1)} (t-\sigma(\ol{x}))$, and we get a similar decomposition  
\begin{equation}\label{eq:local.decomp.O}
\GR^{\O,\l}(t-\ol{\xx})=\bigoplus_{[\MM]\in [\Mat^{\O,k}(t-\ol{x})]} \GR^{\O,\l}_{[\MM]},
\end{equation}
By \uri{Proposition}~\ref{intoT} we know that the inclusion $\Mat_m(\O_k) \subseteq \Mat_{\mm \dd}(\o_k)$ induces a bijection of orbits 
\[
\GL_{\mm}(\O_k)\backslash \Mat_{\mm}(\O_k)_{t-\ol{x}}  \simeq \GL_{\mm\dd}(\o_k)\backslash \Mat_{\mm \dd}(\o_k)_{\ol{f}},
\]
between orbits on the left whose reduction modulo $\pi$ is $(t-\ol{x})$-primary and orbits on the right whose reduction modulo $\pi$ is $\ol{f}(t)$-primary. Therefore, it is enough to prove that for every such $\MM$ we have $$\GR^{\o,\l}_{[\MM]}\cong \GR^{\O,\l}_{[\MM]}.$$ We separate now the proof for the case where $\l$ is even and where $\l$ is odd.

\begin{subsection}{Even level $\l=2k$.}\label{subsec:even}
Let $[\MM]\in [\Mat^k_{md}(\ol{f})]$. We first establish an equivalence 
\[
\cF: \Rep(\GL_{\mm\dd}(\o_\l))_{[\MM]}\to \Rep(\GL_{\mm}(\O_\l))_{[\MM]}  
\]
and then prove that it induces an isomorphism of rings and corings $\GR^{\o,\l}_{[\MM]} \to \GR^{\O,\l}_{[\MM]}$.

Let $W$ be an irreducible representation in $\Rep(\GL_{\mm\dd}(\o_\l))_{[\MM]}$. By \uri{Proposition}~\ref{intoT} $\GL_{\mm\dd}(\o_k)(\MM) = \GL_m(\O_k)(\MM)$ and we thus have a short exact sequence 
\[
1\to K^k \to \GL_{md}(\o_\l)(\MM)\to \GL_m(\T_k)(\MM)\to 1.
\]
By Clifford Theory the representation $W$ is induced from a representation $X$ of $\GL_{md}(\o_\l)(\MM)$ whose restriction to $K^k$ is represented by~$\MM$. Let $\cA=\Mat_\mm(\O_\l)$ and let $\cZ=\cA^\perp$ with respect to the trace pairing; see \S\ref{fieldext}. The direct sum decomposition $\Mat_{\mm\dd}(\o_\l) = \cZ \oplus \cA$ translates to a direct product  $K^k =\gZ^k \cdot \gA^k$, with $\gA^i=1+\cA^i$ and similarly $\gZ^i=1+\cZ^i$\uri{,  where  $\cA^i = \cA\cap \pi^i \Mat_n(\o_\l)$ and $\cZ^i = \cZ\cap \pi^i\Mat_n(\o_\l)$}. It follows that we can write $\GL_{md}(\o_\l)(\MM)= \gZ^k \cdot \GL_m(\O_\l)(\MM)$, where the intersection of the two groups is trivial.
Moreover, $\gZ^k$ is a normal subgroup of this group, and it acts trivially on $X$.
The representation $X$ can thus be considered as an irreducible representation of $\GL_\mm(\O_\l)(\MM)$
whose restriction to $\gA^k=K^k \cap \GL_\mm(\O_\l)$ is given by the character~$\MM$.
By Clifford Theory again, such irreducible representations are in one to one correspondence with
irreducible representations of $\GL_\mm(\O_\l)$ lying over the orbit of $\MM$.

In other words, we have established the following categorical equivalence:
\[
\cF: \Rep(\GL_{md}(\o_\l))_{[\MM]} \overset{e_\MM}{\longrightarrow} \Rep(\GL_{md}(\o_\l)(\MM))_\MM \overset{\inf^\star}{\longrightarrow} \Rep(\GL_\mm(\O_\l)(\MM))_\MM  \overset{\ind}{\longrightarrow}  \Rep(\GL_\mm(\O_\l))_{[\MM]}
\]
which induces an isomorphism of abelian groups $\GR^{\o,\l}_{md}(\ol{x})_{\uri{[\MM]}} \to \GR^{\O,\l}_m(\ol{x})_{\uri{[\MM]}}$. 
\uri{The middle equivalence is the quasi-inverse of the inflation functor  $\inf:\Rep(\GL_m(\O_l)(\MM))_{\MM}\to \Rep(\GL_{md}(\o_l)(\MM))_{\MM}$}. 

\begin{lemma} The functor $\cF$ induces an isomorphism of rings and corings.
\end{lemma}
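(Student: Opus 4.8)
The plan is to show that $\cF$ is a homomorphism for the product $\nmult$; compatibility with the coproduct $\Delta$ is then formal. Indeed $\cF$, being an equivalence of categories, carries irreducibles to irreducibles and is therefore an isometry for the form $\langle\cdot,\cdot\rangle$; by the duality of Proposition~\ref{prop:GR-basics}(3) together with the non-degeneracy of that form, an isometry which is a ring homomorphism is automatically a coring homomorphism — exactly the argument used in the proof of Theorem~\ref{thm:primary}. By the $[\Mat^k]$-gradings of $\GR^{\o,\l}(\ol f)$ and $\GR^{\O,\l}(t-\ol x)$ and the compatibility of $\nmult$ with them (Proposition~\ref{prop:GR-Mat-grading}), it then suffices to show that for $[\MM_1]\in[\Mat^k_{m_1 d}(\ol f)]$ and $[\MM_2]\in[\Mat^k_{m_2 d}(\ol f)]$ the square with top arrow $\pind_{m_1 d,m_2 d}$, bottom arrow the analogous functor $\pind^{\O}_{m_1,m_2}$ for the $\O$-groups, and vertical arrows $\cF\times\cF$ and $\cF$, commutes up to natural isomorphism; here $\MM_i$ is taken inside $\Mat_{m_i}(\O_\l)$ and $\MM_1\oplus\MM_2\in\Mat_{m_1+m_2}(\O_\l)$ serves as the representative for the product, by the uniqueness clause of Lemma~\ref{intoT}.

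To prove commutativity of this square I would factor $\cF=\ind\circ\inf^{\star}\circ e_\MM$ and break it into three commuting squares, each coming from Section~\ref{sec:preliminaries}. Put $n=(m_1+m_2)d$. The subgroup $K^k_n$ is normal and, since $\l=2k$, abelian, with Iwahori decomposition $(U\cap K^k_n,L\cap K^k_n,V\cap K^k_n)$ where $L=\GL_{m_1 d}(\o_\l)\times\GL_{m_2 d}(\o_\l)$ (as used already for Proposition~\ref{prop:GR-Mat-grading}). Applied with $\psi=\phi^{\star}_{\MM_1}\boxtimes\phi^{\star}_{\MM_2}$ and $\phi=\phi^{\star}_{\MM_1\oplus\MM_2}$, the Clifford-theoretic diagram of \S\ref{subsec:clifford} expresses $\pind_{m_1 d,m_2 d}$ on the $[\MM_1]\times[\MM_2]$-pieces as $\ind\circ\pind_{U(\phi),V(\phi)}\circ(e_{\MM_1}\times e_{\MM_2})$, with $U(\phi)=U\cap\GL_n(\o_\l)(\phi^{\star}_{\MM_1\oplus\MM_2})$ and similarly for $V(\phi)$, $L(\psi)$; the same holds over $\O$. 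It remains to match $\inf^{\star}\circ\pind_{U(\phi),V(\phi)}$ with $\pind^{\O}_{U^{\O}(\phi),V^{\O}(\phi)}\circ\inf^{\star}$ between the inertia groups, and this follows from Theorem~\ref{thm:inflation.coherence} applied to $\GL_n(\o_\l)(\phi^{\star}_{\MM_1\oplus\MM_2})=\gZ^{k}\cdot\GL_{m_1+m_2}(\O_\l)(\phi^{\star}_{\MM_1\oplus\MM_2})$ with its normal subgroup $\gZ^{k}=1+\pi^{k}\cZ$. Chaining the three squares — and noting that on the source $\cF$ is exactly ``$e_\MM$, then $\inf^{\star}$, then $\ind$'' — gives the commutativity of the outer square, hence the lemma.

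Applying Theorem~\ref{thm:inflation.coherence} requires two verifications, and I expect the second to be the main obstacle. First, $(U(\phi)\cap\gZ^k,\,L(\psi)\cap\gZ^k,\,V(\phi)\cap\gZ^k)$ is an Iwahori decomposition of $\gZ^k$; this is routine, since $\gZ^k$ is abelian, $\cZ=\cA^{\perp}$ splits along the four-block decomposition of $\Mat_n(\o_\l)$ attached to $(m_1 d,m_2 d)$, and every element of $\gZ^k$ is $\equiv1\pmod{\pi^k}$ and so fixes $\phi^{\star}_{\MM_1\oplus\MM_2}$, so that $\gZ^k$ meets $U(\phi)$, $L(\psi)$, $V(\phi)$ in the same subgroups as it meets $U$, $L$, $V$. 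Second, passing to $\GL_n(\o_\l)(\phi^{\star}_{\MM_1\oplus\MM_2})/\gZ^k\cong\GL_{m_1+m_2}(\O_\l)(\phi^{\star}_{\MM_1\oplus\MM_2})$ must carry $U(\phi)$, $L(\psi)$, $V(\phi)$ onto the $\O$-parabolic subgroups $U^{\O}(\phi)$, $L^{\O}(\psi)$, $V^{\O}(\phi)$ — equivalently $U(\phi)=(U(\phi)\cap\gZ^k)\cdot U^{\O}(\phi)$, and likewise for $V$ and $L$. Writing an element of $U(\phi)$ as $(1+b_{\cZ})(1+b_{\cA})$ with $b_{\cA}$, $b_{\cZ}$ in the upper-block parts of $\cA$ and $\cZ$, one has $1+b_{\cA}\in U^{\O}(\phi)$, while the requirement that $1+b_{\cZ}$ stabilise $\phi^{\star}_{\MM_1\oplus\MM_2}$ says exactly that $b_{\cZ}$, read modulo $\pi^k$, centralises $\MM_1\oplus\MM_2$ in $\Mat_n(\o_k)$; by Lemma~\ref{kernelQ} and the inductive lifting argument in the proof of Proposition~\ref{intoT} such a matrix lies in $\cA$ modulo $\pi^k$, hence is $0$ modulo $\pi^k$, so $b_{\cZ}\in\pi^k\cZ$ and $1+b_{\cZ}\in\gZ^k$. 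The case of $L(\psi)$ is identical, using in addition that $\MM_1$ and $\MM_2$ are each $\ol f$-primary. With both points established, Theorem~\ref{thm:inflation.coherence} applies and the lemma follows.
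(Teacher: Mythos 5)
Your proof is correct and follows essentially the same route as the paper's: reduce to the multiplication by duality, factor $\cF=\ind\circ\inf^{\star}\circ e_{\MM}$, and stack the two Clifford-compatibility squares around a middle square handled by Theorem~\ref{thm:inflation.coherence} with $\gZ=\gZ^k$. The only difference is that you spell out the verification of that theorem's hypotheses (the Iwahori decomposition of $\gZ^k$ and the factorisations $U(\phi)=(U(\phi)\cap\gZ^k)\cdot U^{\O}(\phi)$, etc., via Lemma~\ref{kernelQ} and Proposition~\ref{intoT}), which the paper leaves implicit.
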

\begin{proof}
It will be enough to prove that $\cF$ preserves multiplication, the proof for the comultiplication will follow by duality. 
Let $\MM=\MM_1\oplus \MM_2$, where $\MM_i\in [\Mat^k_{n_i}(\ol{f})]$.
Let $\nn=\nn_1+\nn_2$ and $\mm=\mm_1+\mm_2$ with  $\nn_i=\mm_i\dd$. Let $G=\GL_n(\o_\l)$ and let $L = G_1 \times G_2$ be the block diagonal subgroup in $G$ with $G_i=\GL_{\nn_i}(\o_\l)$. Let $U=U_{\nn_1,\nn_2}$ and $V=V_{\nn_1,\nn_2}$ be the corresponding upper and lower uni-triangular block matrices. Finally,  let $G'=\GL_\mm(\O_\l)$ and for every subgroup $H \leq G$ set $H'=H \cap G'$.

We need to show that the diagram
$$
\xymatrix{
\Rep(G_1)_{[\MM_1]}\times \Rep(G_2)_{[\MM_2]}\ar[rrr]^{ \qquad \qquad \pind_{U,V}}\ar[d]^{(e_{\MM_1},e_{\MM_2})} & & &  
\Rep(G)_{[\MM]}\ar[d]^{e_{\MM}} \\
\Rep((G_1(\MM_1))_{\MM_1}\times \Rep(G_2(\MM_2))_{\MM_2}\ar[rrr]^{\qquad \qquad \pind_{U(\MM),V(\MM)}}\ar[d]^{(\inf^{\star},\inf^{\star})} & & &
\Rep(G(\MM))_{\MM}\ar[d]^{\inf^{\star}} \\
\Rep(G_1'(\MM_1))_{\MM_1}\times \Rep(G_2'(\MM_2))_{\MM_2}\ar[rrr]^{\qquad \qquad \pind_{U'(\MM),V'(\MM)}}\ar[d]^{(\ind,\ind)} & & &
\Rep(G'(\MM))_{\MM}\ar[d]^{\ind}\\
\Rep(G_1')_{[\MM_1]}\times \Rep(G_2')_{[\MM_2]}\ar[rrr]^{\qquad \qquad \pind_{U',V'}}  & & &
\Rep(G')_{[\MM]}
}
$$
commutes. The upper and lower square commute due to the compatibility of Clifford Theory with the induction operations; \uri{see \S\ref{subsec:clifford}}. 
The middle square is an application of Theorem \ref{thm:inflation.coherence}, \uri{now with $G=\GL_n(\o_\l)(\xi)$ and $L=(G_1\times G_2)(\MM)$, $U=U(\MM)$, $V=V(\MM)$ and $\gZ=\gZ^k$.}
\end{proof}

\end{subsection}


\begin{subsection}{Odd level $\l=2k+1$}\label{subsec:odd}
The difficulty in the odd case arises from the fact that $K^k$ is not abelian. Let $W$ be an irreducible representation in $\Rep(\GL_{\mm\dd}(\o_\l))$ whose restriction to $K^{k+1}$ contains the character represented by $\MM \in \Mat_\mm(\O_k)$, and assume that the reduction $\ol{\MM}$ is $\ol{f}$-primary.  We can write $\GL_{md}(\o_\l)(\MM) = \GL_m(\O_\l)(\MM) \cdot K^k$, however, unlike in the even case, 
we cannot write this group as the product of $\GL_\mm(\O_\l)(\MM)$ and $\gZ^k=1+\cZ^k$, because $\gZ^k$ is not a subgroup anymore. Indeed, if $\aa, \bb \in \cZ$ then
\begin{equation}\label{calculationsZ}
(1+\pi^k \aa)(1+\pi^k \bb) = (1+\pi^k(\aa+\bb))(1+\pi^{2k}(\aa\bb)),
\end{equation}
and it may happen that $ab\notin \cZ$. Nevertheless, the image of the subset $\gZ^k$ modulo $K^{k+1}$ is a subgroup of $\GL_{md}(\o_{k+1})$ which is isomorphic to $(\ol{\cZ},+)\subseteq\Mat_{md}(\o_1)$, and the subset $\gZ^k$ is stable under conjugation by $\GL_{\mm d} (\o_l)(\MM)$.

We write
\begin{equation}\label{notation.G.N}
\begin{split}
G=&\GL_{m}(\O_{k+1})(\MM) \\
N=&\gZ^kK^{k+1}/K^{k+1} \cong  (\ol{\cZ},+).
\end{split}
\end{equation}
We have 
\[
\GL_{md}(\o_{k+1})(\MM) = \uri{ \GL_{m}(\O_{k+1})(\MM)K^k/K^{k+1} = \GL_{m}(\O_{k+1})(\MM)A^kZ^k/K^{k+1} =  }\GL_{m}(\O_{k+1})(\MM)N \cong G \ltimes \ol{\cZ}. 
\]
Note that $G$ depends on $\MM$ while $\cZ$ depends only on $\xx$, the semisimple part of $\MM$. We thus have the following commutative diagram, where the rows are short exact sequences, and the vertical arrows are inclusions:
\begin{equation}\label{troubles}
\xymatrix{
1 \ar[r] & K^{k+1} \ar[r]\ar[d]^{=} & \GL_m(\O_\l)(\MM)\ar[r]\ar@{^{(}->}[d] & G\ar[r]\ar[r]\ar@{^{(}->}[d]  & 1 \\ 
1 \ar[r] & K^{k+1} \ar[r]           & \GL_{md}(\o_\l)(\MM)\ar[r]   & GN\ar[r] & 1 
}
\end{equation}

Applying Clifford Theory (Subsection \ref{subsec:clifford}) to these exact sequences, there exist equivalences 
\begin{equation}\label{equiv.clifford.and.proj}
\begin{split}
\Rep(\GL_m(\O_\l)(\MM))_\MM &\cong \Rep^{\alpha}(G), \\
\Rep(\GL_{md}(\o_\l)(\MM))_\MM &\cong \Rep^{\beta}(GN), 
\end{split}
\end{equation}
for some two-cocycles $\alpha$ and $\beta$.  The fact that $\alpha$ is the restriction of $\beta$ follows easily from the commutativity of the above diagram.
So our goal in this section is to establish an equivalence $\Rep^{\beta}(G)\cong \Rep^{\beta}(GN)$. We first recall how exactly the cocycle $\beta$ arises from the data we have at hand. 
The character $\phi_{\MM}^{\star}:K^{k+1}\to \C^{\times}$ 
\uri{induces a morphism of short exact sequences 
\begin{equation}
\xymatrix{
1 \ar[r] & K^{k+1} \ar[r]\ar[d]^{\phi^{\star}_{\MM}} & \GL_{md}(\o_\l)(\MM)\ar[r]\ar@{..>}[d]^{\psi}   & GN\ar[r]\ar[d]^{=} & 1 \\  
1 \ar[r] & \C^{\times} \ar@{..>}[r]           & \Gamma\ar@{..>}[r]  & GN\ar[r] & 1 }
\end{equation}
which arises from applying the functor $H^2(GN,-)$ to the morphism of $GN$-modules $\phi_{\MM}^{\star}$; here $GN$ acts  on~$K^{k+1}$ by conjugation and trivially on $\C^\times$. The lower extension corresponds to the image of the cocyle which corresponds to the upper extension under the map $H^2(GN,K^{k+1}) \to H^2(GN,\C^\times)$.} The two-cocycle $\beta$ then arises by choosing a lifting $\wt{s}:GN\to\Gamma$ of the projection by the formula 
\[
\beta(g_1,g_2) = \wt{s}(g_1)\wt{s}(g_2)\wt{s}(g_1g_2)^{-1}\in \C^{\times}
\]
for $g_1,g_2 \in GN$. This is equivalent to choosing a lifting $s:GN\to \GL_{\mm d}(\o_l)(\MM)$ and defining 
\begin{equation}\label{def.of.beta}
\beta(g_1,g_2) = \phi_{\MM}^{\star}\left(s(g_1)s(g_2)s(g_1g_2)^{-1}\right).
\end{equation}
We have a certain freedom in choosing the two-cocycle $\beta$, namely, we may replace the lifting $\wt{s}$ (respectively the lifting $s$). We first choose a specific lifting, and prove that the resulting two-cocycle satisfies certain properties. As the multiplication in the twisted group algebra differs from the the group multiplication we write 
\[
\C^\beta G N = \Span_\C \left\{T_g \mid g \in  GN \right\}.
\]
We recall that the multiplication in $\C^{\beta}G N$ is given by the formula 
\[
T_{g_1}T_{g_2} = \beta(g_1,g_2)T_{g_1g_2}.
\]
The group $\Gamma$ can be realised as a subgroup of the multiplicative group of $\C^{\beta} GN$ by identifying the subgroup~$\C^{\times}$ with its natural embedding in $(\C^{\beta}GN)^{\times}$ and identifying $\wt{s}(g)$ with $T_g$.

\begin{lemma}\label{lem:choosebeta} Let $s: GN \to \GL_{\mm d}(\o_\l)(\MM)$ be a set-theoretic section such that $s(N) \subset \gZ^k$ and $s(gh)=s(g)s(h)$ for $g\in G$ and $h\in N$. Then 
\[
\beta((1+\pi^k\aa)K^{k+1} ,(1+\pi^k\bb)K^{k+1} ) = \phi_{\ol{\MM}}(\ol{\aa}\ol{\bb}), \quad \aa,\bb \in \cZ,
\]
where $(1+\pi^k\aa)K^{k+1} , (1+\pi^k\bb)K^{k+1}$ denote the images of the respective elements in $N$. In particular $\beta$ depends only on $\ol{\MM}$ and not on the particular choice of $s$. Moreover, in $\C^{\beta}G N$ the equality $T_gT_hT_g^{-1} = T_{ghg^{-1}}$ holds for all $g\in G$ and $h\in N$. This implies that the restriction of such $\beta$ to $N$ is $G$-invariant with respect to the diagonal conjugation action. 
\end{lemma}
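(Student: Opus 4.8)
The plan is to handle the three assertions separately: the explicit formula for $\beta$ on $N$ is a direct $\pi$-adic computation, and the two remaining claims then follow by formal manipulations inside the twisted group algebra $\C^{\beta}GN$. All congruences below are taken modulo $\pi^{\l}=\pi^{2k+1}$, and one may assume $k\ge1$. To compute $\beta$ on $N\times N$, fix $a,b\in\cZ$ and put $g_1=(1+\pi^k a)K^{k+1}$, $g_2=(1+\pi^k b)K^{k+1}$, so that $\beta(g_1,g_2)=\phi_{\MM}^{\star}\bigl(s(g_1)s(g_2)s(g_1g_2)^{-1}\bigr)$ by \eqref{def.of.beta}. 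Since $s(N)\subseteq\gZ^k=1+\pi^k\cZ$ one may write $s(g_1)=1+\pi^k\alpha$, $s(g_2)=1+\pi^k\gamma$, $s(g_1g_2)=1+\pi^k\delta$ with $\alpha,\gamma,\delta\in\cZ$, and $\ol{\alpha}=\ol{a}$, $\ol{\gamma}=\ol{b}$, $\ol{\delta}=\ol{\alpha}+\ol{\gamma}$, because the map $\gZ^k\to N\cong(\ol{\cZ},+)$ sends $1+\pi^k z$ to $\ol{z}$ and is additive modulo $K^{k+1}$ (the correction term in \eqref{calculationsZ} lies in $K^{2k}\subseteq K^{k+1}$). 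A short expansion using \eqref{calculationsZ} and $\pi^{3k}\equiv0$ yields
\[
s(g_1)\,s(g_2)\,s(g_1g_2)^{-1}=1+\pi^{k}(\alpha+\gamma-\delta)+\pi^{2k}\alpha\gamma.
\]
Now $\alpha+\gamma-\delta\in\cZ$ reduces to $0$ modulo $\pi$, so by Proposition~\ref{intoT} (which furnishes the $\o_{\l}$-module splitting $\Mat_n(\o_\l)=\cA\oplus\cZ$) one has $\alpha+\gamma-\delta=\pi\mu$ with $\mu\in\cZ$, and the right-hand side equals $1+\pi^{k+1}\bigl(\mu+\pi^{k-1}\alpha\gamma\bigr)$. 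Applying $\phi_{\MM}^{\star}$ and using $\phi_{\MM}^{\star}(1+\pi^{k+1}w)=\phi_{\MM}(w)$ from \eqref{eq:phi.star}, the $\mu$-contribution $\phi_{\MM}(\mu)$ is trivial, since $\MM\in\cA$ and $\mu\in\cZ=\cA^{\perp}$ give $\tr(\MM\mu)=0$; what remains is $\phi_{\MM}(\pi^{k-1}\alpha\gamma)$, which by the definition of $\phi$ and of the duality $\xi\mapsto\phi_{\xi}$ of \S\ref{subsec:Pontryagin.Dual} depends only on $\tr(\ol{\MM}\,\ol{\alpha}\,\ol{\gamma})\in\kk$ and hence equals $\phi_{\ol{\MM}}(\ol{\alpha}\,\ol{\gamma})=\phi_{\ol{\MM}}(\ol{a}\,\ol{b})$. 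Since $s$ has disappeared from the answer, this proves the stated formula and the assertion that $\beta|_{N\times N}$ depends only on $\ol{\MM}$.

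Next I would prove $T_gT_hT_g^{-1}=T_{ghg^{-1}}$ for $g\in G$ and $h\in N$, using the homomorphism $\psi:\GL_{md}(\o_{\l})(\MM)\to\Gamma\subseteq(\C^{\beta}GN)^{\times}$, under which $T_g=\psi(s(g))$ and $\psi$ restricts to $\phi_{\MM}^{\star}$ on $K^{k+1}$. Since $N$ is normal in $GN$ we have $ghg^{-1}\in N$, and $s(g)s(h)s(g)^{-1}\in\gZ^k$ because $\gZ^k$ is stable under conjugation by $\GL_{md}(\o_{\l})(\MM)$; as this element and $s(ghg^{-1})\in\gZ^k$ both map to $ghg^{-1}$ in $GN$, the ratio $w:=s(ghg^{-1})^{-1}s(g)s(h)s(g)^{-1}$ lies in $K^{k+1}$ and $T_gT_hT_g^{-1}=\psi\bigl(s(ghg^{-1})w\bigr)=\phi_{\MM}^{\star}(w)\,T_{ghg^{-1}}$. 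To see that $\phi_{\MM}^{\star}(w)=1$ I would run the previous expansion again: if $z_1=1+\pi^k\alpha_1$ and $z_2=1+\pi^k\alpha_2$ lie in $\gZ^k$ with $\ol{\alpha_1}=\ol{\alpha_2}$, then $z_1^{-1}z_2=1+\pi^{k+1}c$ with $c\in\cZ$, whence $\phi_{\MM}^{\star}(z_1^{-1}z_2)=\phi_{\MM}(c)=1$, again because $\cZ=\cA^{\perp}$ and $\MM\in\cA$. Finally, the $G$-invariance of $\beta|_{N}$ is formal: for $g\in G$ and $h_1,h_2\in N$ the conjugation identity gives $T_{gh_1g^{-1}}T_{gh_2g^{-1}}=T_gT_{h_1}T_{h_2}T_g^{-1}=\beta(h_1,h_2)\,T_{g(h_1h_2)g^{-1}}$, and comparing this with $T_{gh_1g^{-1}}T_{gh_2g^{-1}}=\beta(gh_1g^{-1},gh_2g^{-1})\,T_{g(h_1h_2)g^{-1}}$ (using $gh_1g^{-1}\cdot gh_2g^{-1}=g(h_1h_2)g^{-1}$) gives $\beta(gh_1g^{-1},gh_2g^{-1})=\beta(h_1,h_2)$.

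The one step that requires genuine care is the first $\pi$-adic expansion: it must be organised so that every error term visibly lands either in $K^{2k}$ — hence is absorbed into the next congruence kernel — or in $\cA^{\perp}$ — hence is annihilated by $\phi_{\MM}^{\star}$, which pairs against $\MM\in\cA$ — while keeping track of the fact that $\gZ^k$ is not a subgroup, so that products and inverses of its elements must be expanded and controlled modulo $\pi^{2k}$ rather than treated as group operations. Once this computation, together with its shorter reprise used in the second step, is carried out, the conjugation identity and the $G$-invariance follow immediately.
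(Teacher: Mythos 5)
Your proposal is correct and follows essentially the same route as the paper: expand $s(h_1)s(h_2)s(h_1h_2)^{-1}$ inside $\gZ^k$, kill the $\cZ$-part of the error using that $\phi^\star_{\MM}$ vanishes on $\gZ^{k+1}$ (orthogonality of $\cA$ and $\cZ$) and that its restriction to $K^{2k}$ only sees reductions mod $\pi$, then compare $s(g)s(h)s(g)^{-1}$ with $s(ghg^{-1})$ via the conjugation-stability of $\gZ^k$, and deduce $G$-invariance formally in $\C^{\beta}GN$. The only cosmetic difference is that you expand $s(h_1h_2)^{-1}$ directly where the paper factors $s(h_1)s(h_2)=(1+\pi^{k+1}c)(1+\pi^{2k}\wt{a_1}\wt{a_2})s(h_1h_2)$; the substance is identical.
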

\begin{proof} \uri{Note that such section exists because $G \cap N =\{1\}$, therefore we can choose values on $N$ and on $G$ independently and define $s(gh)=s(g)s(h)$ for $g\in G$ and $h\in N$.} The cocycle~$\beta$ is given by the formula $$\beta(h_1,h_2) = \phi^\star_\MM(s(h_1)s(h_2)s(h_1h_2)^{-1}), \quad \uri{h_1,h_2 \in GN,}$$
where $\phi^\star_\MM$ is the character of $K^{k+1} \cong \Mat_n(\o_{k})$ defined in Equation \eqref{eq:phi.star}.

For $(1+\pi^k\aa)K^{k+1}\in N$ we write $\wt{\aa}\in \cZ$ for an element which satisfies the equation $$s((1+\pi^k\aa)K^{k+1})= 1+\pi^k\wt{\aa}.$$ We use here the assumption that $s(N)\subseteq \gZ^k$. Since $s$ is a well defined function on $N$ we get that if $\aa=\bb\mod\pi$ then $\wt{\aa}=\wt{\bb}$. 

To prove the statement about the restriction of $\beta$ to $N$, let $h_i= (1+\pi^k \aa_i)K^{k+1} \in N$. 
Then $h_1h_2 = (1+\pi^k(\aa_1+\uri{\aa_2}))K^{k+1} $ in $N$.
There is a $c\in \cZ$ such that $\wt{\aa_1} + \wt{\aa_2} = \wt{\aa_1+\aa_2} + \pi c$.  
We calculate 
\[
\begin{split}
s(h_1)s(h_2) &= (1+\pi^k\wt{\aa_1})(1+\pi^k\wt{\aa_1}) =  (1+\pi^k(\wt{\aa_1}+\wt{\aa_2}) + \pi^{2k}\wt{\aa_1}\wt{\aa_2}) \\
& = (1+\pi^k(\widetilde{\aa_1+\aa_2}) + \pi^{k+1}c + \pi^{2k}\wt{\aa_1}\wt{\aa_2})\\
&= (1+\pi^{k+1}c)(1+\pi^{2k}\wt{\aa_1}\wt{\aa_2})(1+\pi^k(\wt{\aa_1+\aa_2})) \\
& = (1+\pi^{k+1}c)(1+\pi^{2k}\wt{\aa_1}\wt{\aa_2})s(h_1h_2).
\end{split}
\]
We then have 
\begin{equation}\label{eqn:beta.explicit}
\begin{split}
\beta(h_1,h_2) &= \phi_{\MM}^{\star}(s(h_1)s(h_2)s(h_1h_2)^{-1}) = \phi_{\MM}^{\star}((1+\pi^{k+1}c)(1+\pi^{2k}\wt{\aa_1}\wt{\aa_2})) \\
&= \phi_{\MM}^{\star}(1+\pi^{2k}\wt{\aa_1}\wt{\aa_2}) = \phi_{\ol{\MM}}(\ol{\aa_1}\ol{\aa_2}),
\end{split}
\end{equation}
where in the last part we have used the fact that $\phi_{\MM}^{\star}$ vanishes on $\gZ^{k+1}$ and that the restriction of $\phi_{\MM}^{\star}$ to $K^{2k}$ depends only on the reduction of $\MM$ and $\aa_i$  mod $\pi$.  
That proves the first part of the lemma.

For the second part of the lemma we need to prove the equality 
$$\psi(s(g)s(h)s(g)^{-1}) = \psi(s(ghg^{-1}))\text{  in } \Gamma$$ for $g\in G$ and $h\in N$. 
For this, we use the fact that conjugation by $s(g)$ stabilises the subset $\gZ^k$. \uri{By the choice of the section $s$, the elements $s(g)s(h)s(g)^{-1}$ and $s(ghg^{-1})$ belong to $\gZ^k$ and differ by an element of $K^{k+1}$}. Since $\phi_{\MM}^{\star}$ vanishes on $K^{k+1}\cap \gZ^k=\gZ^{k+1}$, the two elements $s(g)s(h)s(g)^{-1}$ and $s(ghg^{-1})$ have the same image in $\Gamma$, and the \uri{equality $T_gT_hT_g^{-1}=T_{ghg^{-1}}$ holds in $C^{\beta}G N$} as desired.
Finally the $G$-invariance follows from the equation
\[
\beta(gh_1g^{-1},gh_2g^{-1})T_{(gh_1g^{-1})(gh_2g^{-1})}=T_gT_{h_1}T_g^{-1}T_gT_{h_2}T_g^{-1}=T_{g}T_{h_1}T_{h_2}T_g^{-1}=\beta(h_1,h_2)T_{gh_1h_2g^{-1}},
\]
for all $h_1,h_2 \in N$.
\end{proof}
From now on we assume that $\beta=\beta(\MM,s)$ arises from a splitting $s$ which satisfies the conditions of the last lemma. \uri{This means, in particular, that $\beta$ is bimultiplicative}. 
Next, we have the following:
\begin{lemma}
The restriction of $\beta$ to the subgroup $N$ is non-degenerate. In other words, the twisted group algebra $\C^{\beta}N$ is isomorphic to a matrix algebra.
\end{lemma}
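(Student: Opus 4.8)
The plan is to reduce the statement to the non-degeneracy of the alternating bicharacter attached to $\beta|_N$, to read this bicharacter off from Lemma~\ref{lem:choosebeta}, and then to prove non-degeneracy using the linear algebra of Section~\ref{fieldext} together with the perfectness of the duality $\xi\mapsto\phi_\xi$.

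I would begin by recalling the structure theory of the twisted group algebra of a finite abelian group $N$: the algebra $\C^\beta N$ is a direct sum of matrix algebras over $\C$, and since $T_gT_hT_g^{-1}=h_\beta(g,h)T_h$ with $h_\beta(g,h):=\beta(g,h)\beta(h,g)^{-1}$, an element $\sum_h c_hT_h$ is central if and only if $c_h=0$ for every $h$ lying outside the radical of the alternating bicharacter $h_\beta$. Hence $\C^\beta N$ is a single matrix algebra precisely when $h_\beta$ is non-degenerate, and it is this non-degeneracy that I would establish. Identifying $N$ with $(\ol{\cZ},+)$, Lemma~\ref{lem:choosebeta} gives $\beta(\ol{\aa},\ol{\bb})=\phi_{\ol{\MM}}(\ol{\aa}\ol{\bb})$ for $\ol{\aa},\ol{\bb}\in\ol{\cZ}$, so that
\[
h_\beta(\ol{\aa},\ol{\bb})=\phi_{\ol{\MM}}(\ol{\aa}\ol{\bb})\,\phi_{\ol{\MM}}(\ol{\bb}\ol{\aa})^{-1}=\phi_{\ol{\MM}}(\ol{\aa}\ol{\bb}-\ol{\bb}\ol{\aa}).
\]
Using the defining formula $\phi_\xi(c)=\phi(\pi^{-1}\tr(\sigma(\xi c)))$ of Subsection~\ref{subsec:Pontryagin.Dual} together with the cyclic invariance of the trace, $\tr(\ol{\MM}\,\ol{\bb}\,\ol{\aa})=\tr(\ol{\aa}\,\ol{\MM}\,\ol{\bb})$, this rewrites as $h_\beta(\ol{\aa},\ol{\bb})=\phi_{\ad_{\ol{\MM}}(\ol{\aa})}(\ol{\bb})$ where $\ad_{\ol{\MM}}(\ol{\aa})=\ol{\MM}\,\ol{\aa}-\ol{\aa}\,\ol{\MM}$. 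Since $\ol{\MM}\in\ol{\cA}=\Mat_m(\O_1)$ and $\ol{\cZ}=\Mat_m(E^\perp)$ with $E=\O_1$ is an $\ol{\cA}$-bimodule by Lemma~\ref{directsum}, we have $\ad_{\ol{\MM}}(\ol{\aa})\in\ol{\cZ}$.

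To finish I would show that any $\ol{\aa}\in\ol{\cZ}$ in the radical of $h_\beta$ vanishes. By hypothesis $\phi_{\ad_{\ol{\MM}}(\ol{\aa})}(\ol{\bb})=1$ for all $\ol{\bb}\in\ol{\cZ}$. A short trace computation shows that $\ol{\cA}$ and $\ol{\cZ}$ are orthogonal for the symmetric perfect pairing $(\xi,c)\mapsto\phi_\xi(c)$ on $\Mat_n(\o_1)$: for $a\in\ol{\cA}$ and $z\in\ol{\cZ}$ each diagonal $d\times d$ block of the product $az$ is a sum of products $ew$ with $e\in E$ and $w\in E^\perp$, whose $\Mat_d(\o_1)$-traces vanish by the definition of $E^\perp$, so $\tr(az)=0$ and $\phi_a(z)=1$. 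As $\Mat_n(\o_1)=\ol{\cA}\oplus\ol{\cZ}$ and the pairing is perfect and symmetric, it restricts to a perfect pairing on $\ol{\cZ}$; hence $\ad_{\ol{\MM}}(\ol{\aa})\in\ol{\cZ}$ pairing trivially with all of $\ol{\cZ}$ forces $\ad_{\ol{\MM}}(\ol{\aa})=0$, i.e. $\ol{\aa}$ centralises $\ol{\MM}$. By Lemma~\ref{kernelQ}---which applies because the normalisation of $\MM$ afforded by Proposition~\ref{intoT} places $\ol{\MM}$ in $\ol{\cA}$ with $\ol{f}$-primary characteristic polynomial---the centraliser of $\ol{\MM}$ is contained in $\ol{\cA}$, whence $\ol{\aa}\in\ol{\cA}\cap\ol{\cZ}=0$. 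This proves $h_\beta$ non-degenerate, and therefore $\C^\beta N$ is a matrix algebra. I expect the only genuinely delicate inputs to be that $\ad_{\ol{\MM}}$ preserves $\ol{\cZ}$ and that the centraliser of $\ol{\MM}$ meets $\ol{\cZ}$ only in $0$; both are supplied by the structural Lemmas~\ref{directsum} and~\ref{kernelQ}, so the remainder is routine bookkeeping.
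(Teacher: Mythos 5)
Your proposal is correct and follows essentially the same route as the paper: reduce to non-degeneracy of the alternating bicharacter attached to $\beta|_N$ via the centre of $\C^{\beta}N$, compute it from Lemma~\ref{lem:choosebeta} as $\phi_{\ol{\MM}}([\ol{\aa},\ol{\bb}])$, and kill the radical using the decomposition $\Mat_n(\o_1)=\ol{\cA}\oplus\ol{\cZ}$ together with Lemma~\ref{kernelQ}. Your final step (perfectness of the trace pairing restricted to $\ol{\cZ}$) is just a slight repackaging of the paper's observation that $\ad_{\ol{\MM}}(\ol{\aa})\in\ol{\cZ}^{\perp}\cap\ol{\cZ}=0$, so the two arguments coincide in substance.
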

\begin{proof}
Since the twisted group algebra is semisimple, it is enough to prove that its centre is one-dimensional.
Let us write 
\[
\langle h_1,h_2 \rangle_\beta = \beta(h_1,h_2)/\beta(h_2,h_1), \quad h_i \in N.
\] 
Since the group $N$ is abelian, this is an alternating form which depends only on the cohomology class of $\beta$.
Moreover, in the group algebra $\C^{\beta} N$ we have
$$T_{h_1}T_{h_2}T_{h_1}^{-1} = \langle h_1,h_2\rangle_\beta T_{h_2}.$$
This implies that the centre of $\C^{\beta}N$ is spanned by all the $T_h$'s for which $h$ is contained in the radical of~$\langle \cdot,\cdot \rangle_{\beta}$.
The centre is thus one-dimensional if and only if the form is non-degenerate.
We can calculate the form explicitly here. Indeed, by the previous lemma we get that  
$$\langle 1+\pi^k \aa,1+\pi^k \bb \rangle_{\beta}=\ffbar{\ol{\MM}\ol{\aa}\ol{\bb}-\ol{\MM}\ol{\bb}\ol{\aa}}, \uri{\quad a, b \in \cZ,}$$
where $\ol{\phi}$ is the additive character
\begin{equation}
\ol{\phi}(\ol{\aa}) = \phi(\pi^{-1}\uri{\tr} (\aa)),
\end{equation} where $\aa\in \Mat_{\mm d}(\o)$ and $\ol{\aa}$ is its reduction mod $\pi$ (see \S \ref{subsec:Pontryagin.Dual}).

Assume that $(1+\pi^k\aa)$ is in the radical, that is, $\langle 1+\pi^ka,1+\pi^kb \rangle_{\uri{\beta}}=1$ for all $b\in \cZ$.
This is equivalent to $$\tr(\MM ab -\MM ba)=\tr((\MM \aa-\aa\MM)\bb)=0 \mod \pi, \quad \forall b \in \cZ.$$ 
It follows that $\ad_{\ol{\MM}}(\ol{\aa})=[\ol{\MM},\ol{\aa}] \in \ol{\cZ}^{\perp}=\uri{\ol{\cA}},$
and using the fact that $\ol{\cZ}$ is an $\ol{\cA}$-bimodule we get that $\ad_{\ol{\MM}}(\ol{a}) \in \ol{\cZ}$. This implies that $\ad_{\ol{\MM}}(\ol{a})=0$ and therefore $\ol{\aa} \in \ol{\cA} \cap \ol{\cZ} = 0$,
so the image of $1+\pi^k\aa$ in $N$ is trivial, as desired.
\end{proof}

For every $g\in G$, conjugation by $T_g$ defines an algebra automorphism of $\C^{\beta}N$. By the Skolem-Noether Theorem 
for every $g\in G$ there is an element $\LL_g\in \C^{\beta}N$  such that 
\begin{equation}\label{Mgeq} 
\LL_gT_h\LL_g^{-1}=T_gT_hT_g^{-1} = T_{ghg^{-1}}
\end{equation} for every $h\in N$.
The element $\LL_g$ is defined up to a non-zero scalar which will be fixed in Lemma \ref{lem:Sg.formula}. As a result there exists a two-cocycle $\gamma$ on $G$ such that 
$$\LL_{g_1}\LL_{g_2} = \gamma(g_1,g_2)\LL_{g_1g_2}.$$
We claim the following:
\begin{lemma}\label{isomorphismphi}
The map $\C^{\beta}G N\to \C^{\beta\gamma^{-1}}G\ot \C^{\beta}N$ given by $T_gT_h\mapsto T_g\ot \LL_gT_h$ is an algebra isomorphism.
Its inverse is given by $T_g\ot T_h\mapsto T_g\LL_g^{-1}T_h$. 
\end{lemma}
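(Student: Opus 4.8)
The plan is to verify directly that the two maps described are mutually inverse algebra homomorphisms, using only the defining relations of the twisted group algebras and the relation~\eqref{Mgeq} together with the cocycle identity for $\gamma$. First I would check that the assignment $\Phi: T_gT_h\mapsto T_g\ot \LL_gT_h$ is well defined on the spanning set $\{T_gT_h \mid g\in G,\, h\in N\}$ of $\C^\beta GN$; since $\beta$ restricted to $G$ (which I will also call $\alpha$) together with $\beta|_N$ and the "mixed" values of $\beta$ completely determine the structure constants, and since every element of $GN$ is uniquely $gh$ with $g\in G$, $h\in N$ (by the semidirect product decomposition~\eqref{notation.G.N}), the set $\{T_gT_h\}$ is a $\C$-basis of $\C^\beta GN$, so $\Phi$ extends uniquely to a linear map.

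The heart of the argument is checking multiplicativity. Given $T_{g_1}T_{h_1}$ and $T_{g_2}T_{h_2}$, I would compute their product in $\C^\beta GN$ by first moving $T_{h_1}$ past $T_{g_2}$: using~\eqref{Mgeq} and the semidirect-product relation one gets $T_{h_1}T_{g_2} = (\text{scalar})\,T_{g_2}T_{g_2^{-1}h_1g_2}$, where the scalar records the discrepancy between $\beta(h_1,g_2)$, $\beta(g_2, g_2^{-1}h_1g_2)$ and the product $\beta(g_2 g_2^{-1}h_1g_2, h_2)\cdots$; one then collects everything into a product of the form $(\text{scalar})\, T_{g_1g_2} T_{(g_1g_2)^{-1} h_1 g_1 g_2 \cdot h_2}$. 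On the other side, $\Phi(T_{g_1}T_{h_1})\Phi(T_{g_2}T_{h_2}) = (T_{g_1}\ot \LL_{g_1}T_{h_1})(T_{g_2}\ot \LL_{g_2}T_{h_2}) = T_{g_1}T_{g_2}\ot \LL_{g_1}T_{h_1}\LL_{g_2}T_{h_2}$; here I would use $T_{h_1}\LL_{g_2} = \LL_{g_2}(\LL_{g_2}^{-1}T_{h_1}\LL_{g_2}) = \LL_{g_2}T_{g_2^{-1}h_1g_2}$ by~\eqref{Mgeq}, then $\LL_{g_1}\LL_{g_2} = \gamma(g_1,g_2)\LL_{g_1g_2}$, and the multiplication rule $T_{g_1}T_{g_2} = \beta(g_1,g_2)T_{g_1g_2}$ in the first tensor factor, which carries the cocycle $\beta\gamma^{-1}$ by construction. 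Matching the two computations reduces to a bookkeeping identity among values of $\beta$, $\gamma$, and the structure constants, which holds because $\gamma$ was defined precisely as the obstruction $\LL_{g_1}\LL_{g_2}\LL_{g_1g_2}^{-1}$. I expect this scalar bookkeeping to be the main (though routine) obstacle: one must be careful that the "$N$-part" of the index in $\C^\beta GN$ is conjugated correctly and that every $\beta$-scalar picked up in the group algebra is exactly cancelled by the corresponding scalar on the tensor-product side.

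Finally I would verify that $\Psi: T_g\ot T_h\mapsto T_g\LL_g^{-1}T_h$ is a two-sided inverse. For $\Psi\circ\Phi$: applying $\Psi$ to $T_g\ot \LL_gT_h$ — after expanding $\LL_g\in\C^\beta N$ in the basis $\{T_{h'}\}$ and using linearity — returns $T_g\LL_g^{-1}\LL_gT_h = T_gT_h$, up to the scalar coming from $T_g T_{h'}$ being interpreted via $\beta$, which is consistent since $T_g\LL_g^{-1}T_h$ with $\LL_g = \sum c_{h'}T_{h'}$ gives $\sum c_{h'} T_g T_{h'} = T_g\LL_g$ viewed inside $\C^\beta GN$ after the identification $\Gamma\hookrightarrow(\C^\beta GN)^\times$. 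The cleanest way to present this is to note that $\Phi$ and $\Psi$ are both given by the single formula $T_g T_h \leftrightarrow T_g\ot\LL_gT_h$ under the identifications $T_g\LL_g^{-1}\in\C^\beta GN$ corresponding to $T_g\in\C^{\beta\gamma^{-1}}G$, so it suffices to check that $\Phi$ is an algebra map (done above) and that $\Phi$ is bijective, which is immediate since it sends the basis $\{T_gT_h\}$ bijectively to the basis $\{T_g\ot T_h\}$ after the triangular change of coordinates $T_h\mapsto \LL_gT_h$ within each $g$-block (a unipotent, hence invertible, transformation). This gives the isomorphism and completes the proof.
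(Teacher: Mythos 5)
Your proposal is correct and is essentially the paper's own argument: the paper simply records that the two maps are linear and mutually inverse and that multiplicativity is a direct verification using the properties of $\LL_g$ and $\gamma$, which is exactly the computation you outline. The only clarifications worth making are that the scalar in $T_{h_1}T_{g_2}=(\text{scalar})\,T_{g_2}T_{g_2^{-1}h_1g_2}$ is exactly $1$ because $T_gT_hT_g^{-1}=T_{ghg^{-1}}$ (Lemma~\ref{lem:choosebeta}, as quoted in~\eqref{Mgeq}), which is what makes your scalar bookkeeping close up, and that bijectivity follows simply from the invertibility of each $\LL_g$ (left multiplication by an invertible element of $\C^{\beta}N$ in each $g$-block), not from any triangularity or unipotence.
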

\begin{proof}
It is easy to see that the two maps above are linear and inverse to each other. 
The fact that they are algebra morphisms is a direct verification, using the properties of $\LL_g$ and $\gamma$.
\end{proof}
This lemma has the following corollary, which is already very close to what we want:
\begin{corollary}\label{cor:eq.categories}
We have an equivalence of categories $\Rep^{\beta}(G N)\cong \Rep^{\beta\gamma^{-1}}(G)$. 
\end{corollary}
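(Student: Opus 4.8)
The plan is to deduce the statement directly from Lemma~\ref{isomorphismphi} and the preceding non-degeneracy lemma, by a standard Morita argument. Recall that, by definition, $\Rep^{\beta}(GN)$ is the category of finite-dimensional modules over the twisted group algebra $\C^{\beta}GN$, and likewise $\Rep^{\beta\gamma^{-1}}(G)$ is the category of finite-dimensional modules over $\C^{\beta\gamma^{-1}}G$; so it suffices to exhibit an equivalence between these two module categories.

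First I would invoke Lemma~\ref{isomorphismphi}, which provides an isomorphism of $\C$-algebras
\[
\C^{\beta}GN \;\cong\; \C^{\beta\gamma^{-1}}G \ot_{\C} \C^{\beta}N .
\]
Next, by the non-degeneracy lemma established above, the restriction of $\beta$ to the abelian group $N$ is non-degenerate, so $\C^{\beta}N$ is isomorphic to a full matrix algebra $\Mat_r(\C)$, with $r^{2}=|N|=|\ol{\cZ}|$. Substituting and using $A\ot_\C \Mat_r(\C)\cong \Mat_r(A)$ for any $\C$-algebra $A$, we obtain
\[
\C^{\beta}GN \;\cong\; \C^{\beta\gamma^{-1}}G \ot_{\C} \Mat_r(\C) \;\cong\; \Mat_r\!\left(\C^{\beta\gamma^{-1}}G\right).
\]

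Finally, for any unital ring $A$ the functor $X\mapsto A^{r}\ot_{A}X$, with $A^{r}$ regarded as the $\Mat_r(A)$-$A$-bimodule of column vectors, is a Morita equivalence between $A$-modules and $\Mat_r(A)$-modules. Applying this with $A=\C^{\beta\gamma^{-1}}G$ and composing with the algebra isomorphism above yields the desired equivalence $\Rep^{\beta}(GN)\cong \Rep^{\beta\gamma^{-1}}(G)$.

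I do not expect a genuine obstacle here: this is an immediate formal consequence of Lemma~\ref{isomorphismphi} together with the semisimplicity and non-degeneracy already proved. The one point worth a word of care is that the cocycle $\gamma$ is so far pinned down only up to the scalar ambiguity in the choice of the elements $\LL_g\in \C^{\beta}N$; however, the isomorphism class of $\C^{\beta\gamma^{-1}}G$ — and hence of the category $\Rep^{\beta\gamma^{-1}}(G)$ — depends only on the cohomology class of $\gamma$, and different admissible choices of the $\LL_g$ alter $\gamma$ only by a coboundary. Choosing a convenient representative of $\gamma$, and ultimately showing that the remaining twist can be removed, is the substantive work, but that is carried out in the subsequent lemmas rather than being part of this corollary.
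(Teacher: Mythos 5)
Your argument is correct and is essentially the paper's own proof: both combine the algebra isomorphism $\C^{\beta}GN\cong \C^{\beta\gamma^{-1}}G\ot\C^{\beta}N$ from Lemma~\ref{isomorphismphi} with the non-degeneracy of $\beta|_N$ (so that $\C^{\beta}N$ is a matrix algebra) and conclude by a Morita-type equivalence, which the paper phrases as $W\mapsto W\ot I$ with $I$ the unique irreducible of $\C^{\beta}N$ and you phrase via column vectors over $\Mat_r(\C^{\beta\gamma^{-1}}G)$. Your closing remark on the scalar ambiguity in the $\LL_g$ is also consistent with how the paper defers the actual trivialisation of $\gamma$ to the subsequent lemmas.
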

\begin{proof}
Since the algebra $\C^{\beta} G N$ is isomorphic to $\C^{\beta\gamma^{-1}}G\ot \C^{\beta}N$, their representation categories are equivalent.
On the other hand, $\C^{\beta} N$ has a unique irreducible representation, and we therefore see that the representation category of the second algebra 
is equivalent to the representation category of $\C^{\beta\gamma^{-1}}G$, where the equivalence $\Rep^{\beta\gamma^{-1}}(G)\to \Rep(\C^{\beta\gamma^{-1}}G\ot \C^{\beta}N)$
is realised by mapping a representation $W$ to the representation $W\ot I$, where $I$ is the unique irreducible representation of $\C^{\beta} N$.
\end{proof}

The desired equivalence $\Rep^{\beta}(GN)\cong \Rep^{\beta}G$ would follow once we show that $\gamma$ is cohomologous to the trivial cocycle. 

\begin{lemma-definition}\label{lem:Sg.formula} For every $g\in G$ the element $\LL_g$ given by  
\begin{equation}\label{Mgformula} \LL_g = \sum_{a\in N}T_{gag^{-1}}T_a^{-1}=\sum_{a \in N} \beta([g,a],a^{-1})T_{[g,a]}.
\end{equation}
is an invertible element which satisfies Equation \eqref{Mgeq}.
\end{lemma-definition}
\begin{proof}
First notice that since $T_gT_hT_g^{-1}=T_{ghg^{-1}}$ the coefficient of $T_1$ in $S_g$ is the cardinality of the fixed point subspace of $G$ in $N$.  This implies that $\LL_g\neq 0$. We will first prove that $\LL_gT_h = T_{ghg^{-1}}\LL_g$ for all $h\in N$. This will imply that $\LL_g$ is invertible for all $g\in G$. Indeed, the Skolem-Noether Theorem implies that there are invertible elements $M_g$ such that $M_gT_hM_g^{-1}=T_{ghg^{-1}}$. The elements $M_g^{-1}\LL_g$ are then non-zero and central and therefore invertible, since $\C^{\beta}N$ is isomorphic to a matrix algebra. This implies that the elements $\LL_g$ are invertible as well. 

We continue with proving the formula $S_gT_{h} = T_{ghg^{-1}}S_g$
for $g\in G$ and $h\in N$. 
We multiply the expression $\sum_{a\in N}T_{gag^{-1}}T_{a}^{-1}$ on the right by $T_h$ and on the left by $T_{ghg^{-1}}$ and compute 
\[
\begin{split}
\Big(\sum_{a\in N}T_{gag^{-1}}T_{a}^{-1}\Big)T_h&=\sum_{a\in N}\underbrace{\beta(a,a^{-1})^{-1}\beta(a^{-1},h)}_{\beta(a^{-1},a^{-1}h)}\beta(gag^{-1},a^{-1}h)T_{gag^{-1}a^{-1}h} \\
&=\sum_{a\in N}\beta([g,a],a^{-1}h)T_{[g,a]h}, \\ \\
T_{ghg^{-1}}\Big(\sum_{a\in N}T_{gag^{-1}}T_{a}^{-1}\big)&=\sum_{a\in N}\underbrace{\beta(ghg^{-1},gag^{-1})}_{\beta(h,a)}\underbrace{\beta(a,a^{-1})^{-1}\beta(ghag^{-1},a^{-1})}_{\beta(ghag^{-1}a^{-1},a^{-1})}T_{ghag^{-1}a^{-1}} \\
&=\sum_{a\in N}\beta([g,ha],a^{-1})T_{[g,ha]h}.
\end{split}
\]
We have used the fact that $\beta$ is bimultiplicative on $N$ and $G$-invariant. This follows from the explicit formula for $\beta$ in Lemma \ref{lem:choosebeta}. The two sums are equal and~\eqref{Mgeq} follows.  The second form of $S_g$ follows directly from the first one.
\end{proof}

This enables us, in principle, also to calculate $\gamma$. Indeed, a direct calculation based on comparison of the coefficients of $T_1$ reveals the fact that 
$$\gamma(g_1,g_2) = \frac{1}{|C_N(g_1g_2)|}\sum_{\substack{a,b\in N ~\text{such that} \\ [g_1,a][g_2,b]=1}}\beta([g_1,a],a^{-1})\beta([g_2,b],b^{-1})$$
where for any $g\in G$ we write $C_N(g)=\{h\in N|ghg^{-1}=h\}$.
Unfortunately, it is not so clear from this formula why this cocycle is trivial. Instead, we shall prove that $\gamma$ is trivial by using tools from group cohomology.
We first claim the following:
\begin{lemma}
The order of $\gamma$ is a power of $p$.
\end{lemma}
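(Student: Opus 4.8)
The plan is to renormalise the Skolem--Noether lifts $\LL_g$ so that the resulting cocycle $\gamma$ takes values in the group $\mu_r$ of $r$-th roots of unity, where $r = \sqrt{|N|}$, and then to observe that $r$ is a power of $p$. The latter is immediate: $N \cong (\ol{\cZ},+)$ is the additive group of a finite-dimensional vector space over the residue field $\o_1$, so $|N|$ is a power of $p$; and by the preceding lemma $\C^\beta N \cong M_r(\C)$, so comparing $\C$-dimensions gives $r^2 = |N|$, and hence $r$ is a power of $p$.

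For the renormalisation, I would fix an isomorphism $\C^\beta N \cong M_r(\C)$ and let $\det\colon (\C^\beta N)^\times \to \C^\times$ denote the induced determinant (it is in fact the reduced norm, hence independent of the chosen isomorphism, but we will not need that). Since $\C^\times$ is divisible, for each $g \in G$ we may pick $\lambda_g \in \C^\times$ with $\lambda_g^{r} = \det(\LL_g)^{-1}$ and replace $\LL_g$ by $\lambda_g\LL_g$. This does not affect Equation~\eqref{Mgeq}, it alters $\gamma$ only by the coboundary $d\lambda$, and it achieves $\det(\LL_g) = 1$ for all $g \in G$. Applying $\det$ to the relation $\LL_{g_1}\LL_{g_2} = \gamma(g_1,g_2)\LL_{g_1g_2}$ then yields $\gamma(g_1,g_2)^{r} = 1$, so $\gamma^{r}$ is the identically-trivial cocycle; hence $r$ annihilates $[\gamma] \in H^2(G,\C^\times)$, and the order of $\gamma$ divides $r$, a power of $p$.

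I do not expect a serious obstacle: this lemma is a short preparatory step towards the real goal, proving that $\gamma$ is trivial. The only points worth a word are that conjugation by $\lambda_g\LL_g$ agrees with conjugation by $\LL_g$ (so relation~\eqref{Mgeq} survives the rescaling) and that rescaling changes $\gamma$ only within its cohomology class. One could package the same computation cohomologically: $[\gamma]$ is the image of the homomorphism $G \to \PGL_r(\C)$ describing the conjugation action on $\C^\beta N$ under the connecting map $H^1(G,\PGL_r(\C)) \to H^2(G,\C^\times)$, which factors through $H^2(G,\mu_r)$ via $\SL_r$ and is therefore $r$-torsion.
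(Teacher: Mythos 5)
Your argument is correct and is essentially the paper's: the paper likewise takes determinants in $\C^{\beta}N\cong M_{\sqrt{|N|}}(\C)$, reading off $\det(\LL_{g_1})\det(\LL_{g_2}) = \gamma(g_1,g_2)^{\sqrt{|N|}}\det(\LL_{g_1g_2})$ so that $g\mapsto\det(\LL_g)$ trivialises $\gamma^{\sqrt{|N|}}$, with $\sqrt{|N|}$ a power of $p$. Your preliminary rescaling to make $\det(\LL_g)=1$ is a harmless cosmetic variant of the same computation.
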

\begin{proof}
By taking determinants we get the equations $$\det(\LL_{g_1})\det(\LL_{g_2}) = \gamma^{\sqrt{|N|}}(g_1,g_2)\det(\LL_{g_1g_2}),$$
for every $g_1,g_2\in G$. This is a trivialisation of the cocycle $\gamma^{\sqrt{|N|}}$. 
But $\sqrt{|N|}$ is a power of $p$, and so the order of $\gamma$ is a power of $p$ as well.
\end{proof}
Next, we have the following claim:
\begin{lemma}
The cohomology class of $\gamma$ is trivial.
\end{lemma}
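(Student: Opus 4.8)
The plan is to recognise $\gamma$ as (the pullback of) a Weil, or metaplectic, cocycle, and then exploit that such cocycles trivialise on parabolic subgroups, together with the order constraint just established. First: since $\beta|_N$ is non-degenerate, the commutator pairing $\langle\,\cdot\,,\,\cdot\,\rangle_\beta$ turns $N$ into a symplectic vector space over $\F_p$, and $\C^\beta N\cong\End(I)$, where $I$ is the unique (Stone--von Neumann) irreducible representation of the Heisenberg group $\mu_p\ltimes_\beta N$ with central character the identity on $\mu_p$. The defining relation $\LL_g T_h\LL_g^{-1}=T_{ghg^{-1}}$ says exactly that the $\langle\,\cdot\,,\,\cdot\,\rangle_\beta$-preserving action of $g\in G$ on $N$ is lifted, via $g\mapsto\LL_g$, to an action on $I$; hence $I$ with this action is the Weil representation pulled back along the homomorphism $\rho\colon G\to\Sp(N)$, and $\gamma=\rho^{*}\omega$ where $\omega\in H^2(\Sp(N),\C^\times)$ is the Weil cocycle. (Any two elements of $\C^\beta N$ implementing conjugation by $T_g$ differ by a scalar, so this is well defined up to a coboundary, which is all that is needed.)

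Next I would reduce to a Sylow $p$-subgroup. By the previous lemma $\gamma$ has $p$-power order; since the composite of restriction followed by corestriction on $H^2(\,\cdot\,,\C^\times)$ is multiplication by the index, the restriction map is injective on $p$-primary parts for any subgroup containing a Sylow $p$-subgroup. Thus it suffices to show that $\gamma|_{P}=\rho^{*}(\omega)|_{P}$ is trivial for a Sylow $p$-subgroup $P\le G$. Now $\rho(P)$ is a $p$-subgroup of $\Sp(N)$, hence conjugate into a Sylow $p$-subgroup of $\Sp(N)$; the latter is the unipotent radical of a Borel subgroup and in particular lies inside a Siegel parabolic subgroup $Q$, the stabiliser of a Lagrangian $\Lambda\subset N$.

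It then remains to observe that $\omega$ restricts to a coboundary on $Q$: in the Schrödinger model $I\cong\C[\Lambda]$ the Levi factor $\GL(\Lambda)$ of $Q$ acts by the natural permutation action twisted by an explicit character of the determinant, and the abelian unipotent radical of $Q$ acts by multiplication operators $\delta_v\mapsto\ol{\phi}\big(\text{(an explicit quadratic, or in residue characteristic } 2 \text{ linear) form in } v\big)\,\delta_v$; both are honest linear actions, so the Weil representation restricts to a genuine linear representation of $Q$ and $\omega|_{Q}$ is trivial. Hence $\gamma|_{P}$ is trivial, and therefore $\gamma$ is trivial. (When $p$ is odd one can skip the Sylow reduction altogether, since the metaplectic cover of $\Sp(N)$ over $\F_p$ is a double cover while $\gamma$ has odd order.) Combined with Lemma~\ref{isomorphismphi} and Corollary~\ref{cor:eq.categories}, triviality of $\gamma$ gives the equivalence $\Rep^{\beta}(GN)\cong\Rep^{\beta}(G)$ needed to complete the odd-level case of Theorem~\ref{thm:base.change}.

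I expect the main obstacle to be the first step: carefully checking that the cocycle built from the $\LL_g$ is cohomologous to the \emph{standard} Weil cocycle (rather than a nontrivial twist of it), so that the parabolic trivialisation applies verbatim. A secondary point is making the Siegel-parabolic argument uniform in the residue characteristic — the precise shape of the unipotent radical of the Siegel parabolic of $\Sp_{2n}$ and its action on the Schrödinger model need a short separate check when $p=2$. A fully self-contained alternative, avoiding any appeal to the Weil representation, would be to use the explicit formula for $\LL_g$ in Lemma-Definition~\ref{lem:Sg.formula} to write down a splitting of $\gamma$ directly over a Borel-type subgroup of $G$ containing $\rho(P)$, at the cost of a more computational argument.
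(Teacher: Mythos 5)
Your Sylow reduction is exactly the paper's first step (the restriction--corestriction argument combined with the previous lemma on the $p$-power order of $\gamma$), and for odd residue characteristic the rest of your plan can indeed be made to work: one can choose the comparison cochain between $\beta$ and the standard $\Sp$-invariant Heisenberg cocycle to be $c(a)=\ol{\phi}\bigl(\tfrac12\ol{\MM}\,\ol{a}^2\bigr)$, which is conjugation-invariant, so the projective representation $g\mapsto[\LL_g]$ really is the pullback of the finite Weil representation along $G\to\Sp(N)$, and then either the Siegel-parabolic linearisation or, more cheaply, the fact that the metaplectic class of $\Sp_{2M}(\F_p)$ ($p$ odd) is trivial (or at worst of order $2$, against your odd $p$-power order bound) finishes the argument. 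In that regime your route is genuinely different from, and shorter than, the paper's, which instead constructs an explicit $P$-stable Lagrangian $\ol{\cL}\subset N\cong\ol{\cZ}$ (upper-triangular off-diagonal blocks plus chosen Lagrangians in the diagonal blocks) and a $P$-stable primitive idempotent whose one-dimensional image linearises $\LL_g$ on $P$.

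The genuine gap is the case of residue characteristic $2$, which the paper does not exclude and which your proposal dismisses as "a short separate check". For $p=2$ the object you pull back does not exist: the group acting on the Heisenberg-type extension determined by $\beta$ is not $\Sp(N)$ but the orthogonal group of the quadratic form $q(h)=\beta(h,h)$ (the $Q_8$/$D_4$ phenomenon), there is no canonical projective Weil representation of $\Sp_{2M}(\F_2)$ and hence no class $\omega\in H^2(\Sp(N),\C^\times)$ to restrict to a Siegel parabolic; nor can you repair this by dividing the pairing by $2$. Moreover, conjugating $\rho(P)$ into a Sylow subgroup of $\Sp(N)$ uses elements that do not preserve $\beta$, so the transported group no longer acts on $\C^{\beta}N$; and the Schr\"odinger-model linearisation on the stabiliser of a Lagrangian $\Lambda$ requires $\beta|_{\Lambda}$ (not merely the commutator pairing) to be trivial, i.e.\ $\Lambda$ totally singular for $q$ --- such a Lagrangian exists only when $q$ is of plus type, which is not a general fact but exactly what the paper's explicit block construction of $\ol{\cL}$ establishes for this particular $\beta$ and this particular $P$. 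So for $p=2$ your argument, as stated, has no starting point, and any fix essentially forces you back to the paper's concrete construction of a $P$-stable, $\beta$-trivial Lagrangian and the attendant eigenvector argument (your final "computational alternative" is, in effect, the paper's proof).
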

\begin{proof} We prove that the restriction of $\gamma$ to some $p$-Sylow subgroup $P$ of $G$ is cohomologically trivial.
By using the fact that $\mathrm{cores}^G_P\mathrm{res}^G_P = |G:P|$ \uri{(cf. \cite[Chapter III, \S9]{Brown1994})} and the fact that the order of $\gamma$ is a power of~$p$, the lemma would follow.

We first notice that $\gamma$ is, in fact, inflated from the first congruence quotient $$\ol{G}:=G/(G\cap (K^1/K^{k+1})).$$ 
Indeed, this follows from the triviality of the action of $G\cap K^1/K^{k+1}$ on $\C^{\beta}N$ which implies that $S_g$ is a scalar matrix for all $g\in G\cap K^1/K^{k+1}$.
Let us write $\ol{\MM}=\ol{\s}+\ol{\n}$ for the semisimple-nilpotent decomposition of the reduction of $\MM$ modulo $\pi$.
Consider the subgroup $\langle 1+\ol{\nu}\rangle\subseteq \ol{G}$. 
This is a $p$-subgroup, and is therefore contained in a $p$-Sylow subgroup \uri{$\ol{P}$} of $\ol{G}$.
The group $\ol{G}$ acts naturally on the $\O_1$ vector space $\O_1^m$.
Since \uri{$\ol{P}$}  is a $p$-group we may assume, by choosing a suitable basis of $\O_1^m$, that \uri{$\ol{P}$}  is contained in the group of unipotent upper triangular matrices\uri{, which is a maximal $p$-Sylow subgroup of $\GL_m(\O_1)$}. \uri{The $p$-Sylow subgroup $P$ that we look at is the inverse image of $\ol{P}$, but as its action on $N$ factors through $\ol{P}$ and the two-cocycle $\gamma$ is inflated from $\ol{P}$, from now on we work with $\ol{P}$ and continue to denote the two-cocycle by $\gamma$.}  

\uri{Our strategy for proving that $\gamma|_{\ol{P}}$ is trivial is to} show that there exists a primitive idempotent $e \in \C^{\beta}N$ which commutes with $\LL_g$ for all $g\in \ol{P}$. Since $e$ is primitive, the image of $e$ in the unique irreducible representation $I$ of $\C^{\beta}N$ is one-dimensional. If we denote by $u$ a basis vector for the image of $e$, we  get that for every $g\in \ol{P}$, $\LL_g u= \lambda(g)u$ for some non-zero scalar~$\lambda(g)$. This will imply that $\partial\lambda = \gamma$ on $\ol{P}$, and therefore that $\gamma|_{\ol{P}}$ is trivial, as desired.


To do so, we find a \textit{Lagrangian} subgroup $L$ inside $N$ which is stable under the action of $\uri{\ol{P}}$. 
\uri{By definition, and more conveniently phrased in terms of $\ol{\cZ} \cong N$, this is a subgroup $\ol{\cL}$ of $\ol{\cZ}$ such that $|\ol{\cL}|^2=|\ol{\cZ}|$ and $\ol{\beta}|_{\ol{\cL}}$ is trivial, where $\ol{\beta}$ is the pullback of $\beta$ along the isomorphism between $\ol{\cZ} \times \ol{\cZ}$ and $N \times N$.   
We write $\ol{\cZ}=\oplus_{i,j=1}^m \ol{\cZ}_{i,j} \subset \Mat_{md}(\o_1)$,} where $\ol{\cZ}_{i,j}$ stands for a copy of $\O_1^\perp \subset \Mat_d(\o_1)$ in the $(i,j)$-block of the matrix.
The restriction of $\uri{\ol{\beta}}$ to $\ol{\cZ}_{i,i}$ is non-degenerate (and is actually independent of $i$).
Indeed, the restriction of $\uri{\ol{\beta}}$ to $\ol{\cZ}_{i,i}\cong \O_1^{\perp}$ is given by 
$$(\ol{\aa},\ol{\bb})\mapsto \ol{\phi}(\ol{x}\ol{\aa}\ol{\bb})$$ 
and this gives a non-degenerate two-cocycle.  
This follows from the fact that $1+\ol{\n}$ is in $\uri{\ol{P}}$ and $\ol{\n}$ is therefore an upper triangular matrix and therefore does not affect the value of $\uri{\ol{\beta}}|_{\cZ_{i,i}}$. For every $i$ we choose a Lagrangian $\ol{\cL}_i\subseteq \ol{\cZ}_{i,i}$.
We then define 
\begin{equation}\label{eq:decomposition.L}
\ol{\cL} = \Big(\bigoplus_{i<j}\ol{\cZ}_{i,j}\Big)\bigoplus \Big(\bigoplus_i \ol{\cL}_i\Big).
\end{equation}
 This is a Lagrangian inside $\ol{\cZ}$. Indeed, a direct calculation using the fact that the matrix $\ol{\MM}$ is upper triangular reveals the fact that for $(z_{i,j}),(w_{i,j})\in \ol{\cL}$ we have 
\begin{equation}\label{eq:formofbeta} \uri{\ol{\beta}}((z_{i,j}),(w_{i,j})) = \prod_i\uri{\ol{\beta}}(z_{i,i},w_{i,i}),\end{equation}
and as $z_{i,i}$ and $w_{i,i}$ are contained in the Lagrangian $\ol{\cL}_i$ we get the desired result.
Since the group $\uri{\ol{P}}$ is contained in the subgroup of block  upper triangular matrices, it follows that \uri{$\ol{\cL}$} is stable under conjugation by~$\uri{\ol{P}}$.

\uri{For $i < j$, let $E_{i,j} = \frac{1}{|\cZ_{\uri{i,j}}|}\sum_{z\in \ol{\cZ}_{i,j}}T_z$, and note that it generates the trivial $\C^{\ol{\beta}}\cZ_{i,j}$-module because $\ol{\beta}|_{\ol{\cZ}_{i,j}} \equiv 1$. For every $i$, let $E_{i,i}\in \C^{\uri{\ol{\beta}}}\ol{\cL}_i \uri{\cong \C \ol{\cL}_i}$ be a \uri{primitive idempotent corresponding to a one-dimensional representation}.}
Consider $e=\prod_{i\leq j} E_{i,j}$. 
We claim that $e$ is a primitive idempotent which is stable under conjugation by~$\uri{\ol{P}}$. The fact that it is a primitive idempotent follows directly from the fact that $\ol{\cL}$ is a Lagrangian subgroup and that it has the decomposition \eqref{eq:decomposition.L}, \uri{therefore $\C e$ is a one-dimensional $\C^{\ol{\beta}}\cL$-module}. To prove stability under conjugation by $\uri{\ol{P}}$ we proceed as follows:
Since for every $g\in \uri{\ol{P}}$ and $z\in \ol{\cZ}$ we have that $T_gT_zT_g^{-1}=T_{gzg^{-1}}$ and since $\uri{\ol{P}}$ is contained in the group of block upper triangular matrices we get that $g$ stabilises the idempotent $\prod_{i<j}E_{i,j}$. 
For $z\in \ol{\cL}_i$ we have $gzg^{-1} = z + \sum_{j<k} z_{j,k}$. Since \uri{$\ol{\beta}(z,w)=1$ when both $z$ and $w$} are block upper triangular matrices and at least one of them is strictly block upper triangular we get 
$$T_gT_zT_g^{-1} = T_{z+\sum_{j<k}z_{j,k}} = T_z\prod_{j<k}T_{z_{j,k}}.$$
This implies that $T_gT_z\prod_{j<k}E_{j,k}T_{g^{-1}} = T_z\prod_{j<k}E_{j,k}$. Since $E_{i,i}$ is a linear combination of $T_z$ for $z\in \ol{\cL}_i$ for every $i$ the idempotent $\prod_{i\leq j}E_{i,j}$ is stable under conjugation by $g$, as desired.
\end{proof}


\begin{remark} In case the degree $d$ of the Galois extension $\O_1/\o_1$ is odd we can find a Lagrangian in $\C^{\beta}N$ invariant under the action of the group $G$ directly, and not only for the $p$-Sylow subgroup. This will give a more direct proof of the fact that $\gamma$ is trivial, without using restriction and corestriction argument.
The Lagrangian can be found in the following way: The algebra of $d\times d$ matrices is known to be isomorphic to a crossed product: $\Mat_d(\o_1)\cong \uri{\O_1*\Gal(\O_1/\o_1)}$. The crossed product algebra has an $\O_1$ basis given by $(U_{\sigma})_{\sigma\in\Gal(\O_1/\o_1)}$, and the multiplication is given by 
$$y_1U_{\sigma}y_2U_{\tau} =  y_1\sigma(y_2)U_{\sigma\tau}.$$
Under the identification $\Mat_{md}(\o_1)\cong \Mat_m(\Mat_d(\o_1))$ the algebra $\cA$ corresponds to $\O_1U_1$ and the $\cA$-bimodule $\cZ$ corresponds to $\oplus_{\sigma\neq 1}\O_1U_{\sigma}$. 
Moreover, the trace pairing in $\Mat_d(\o_1)$ is trivial on $\O_1U_{\sigma}\times \O_1U_{\tau}$ if and only if $\sigma\tau\neq 1$. 
If $d$ is odd, write $\sigma$ for a generator of the Galois group of $\O_1/\o_1$, and define $$\ti{\cL}= \bigoplus_{i=1}^{(d-1)/2}\O_1U_{\sigma^i}.$$ 
Let now $\cL$ be the image of $\Mat_m(\ti{\cL})$ in $N$. Then the above discussion shows that $\cL$ is a Lagrangian in $N$ which is stable under the action of the group $\GL_m(\O_1)$ by conjugation, and therefore also under the action of $G$. In case the degree $d$ is even, however, there are examples where a $G$-stable Lagrangian in $N$ does not exist.
\end{remark}

Since $\gamma$ is trivial, we can choose scalars $\lambda(g)$ for every $g\in G$ such that the elements $\TL_g:=\lambda(g)\LL_g$ satisfy 
$\TL_{g_1}\TL_{g_2} = \TL_{g_1g_2}$ for every $g_1,g_2\in G$.
We have some liberty in the choice of $\lambda:G\to \C^{\times}$. Indeed, we can translate $\lambda$ by a one-dimensional character of $G$. The possible choices of $\lambda$ thus form a torsor over $\Hom_{Grp}(G,\C^{\times})$. 
For a specific $\lambda$ we will denote by $\Phi_\MM^{\lambda}$ the resulting isomorphism $\C^{\beta}G N\to \C^{\beta} G \ot \C^{\beta} N$ of Lemma \ref{isomorphismphi}, suitably adjusted. Translating $\lambda$ by a one-dimensional character of $G$ will alter the above categorical equivalence by tensoring with the one-dimensional representation. We will call $(\TL_g)$ a \textit{trivialisation} of $\gamma$. In the next pages we would like to emphasize that $\TL_g$ and $\gamma$ arise from the matrix $\MM$ and we will write $(\TL_g)=(\TL^{\MM}_g)$ and $\gamma=\gamma_{\MM}$. 

Summing everything up, we have the following proposition:
\begin{proposition}\label{prop:iso.abelian}
We have an isomorphism $\GR^{\o,\l}(\ol{f}) \cong \GR^{\O,\l}(t-\ol{x})$ of abelian groups.
\end{proposition}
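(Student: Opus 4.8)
The plan is to assemble the equivalences built up in this subsection and then take a direct sum over the $[\Mat^k]$-grading. By \eqref{eq:local.decomp.o} and \eqref{eq:local.decomp.O}, together with the orbit bijection of Lemma~\ref{intoT}, it suffices to produce, for each $\MM \in \Mat_m(\O_k)$ whose reduction $\ol{\MM}$ is $\ol{f}$-primary, an isomorphism of abelian groups $\GR^{\o,\l}_{[\MM]} \cong \GR^{\O,\l}_{[\MM]}$; summing over the classes $[\MM]$ of a given degree then yields the proposition.

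Fix such an $\MM$, and choose $\beta = \beta(\MM,s)$ coming from a section $s$ as in Lemma~\ref{lem:choosebeta}, so that the explicit formula for $\beta|_N$, its non-degeneracy, the elements $\LL_g$ of Lemma-Definition~\ref{lem:Sg.formula}, and the associated cocycle $\gamma = \gamma_{\MM}$ on $G$ are all at hand. Passing to Grothendieck groups, the Clifford-theoretic dictionary \eqref{equiv.clifford.and.proj} applied to the two rows of \eqref{troubles} identifies $\GR^{\o,\l}_{[\MM]}$ with the free abelian group on the simple objects of $\Rep^{\beta}(GN)$, and $\GR^{\O,\l}_{[\MM]}$ with the one on the simple objects of $\Rep^{\alpha}(G)$; since the commutativity of \eqref{troubles} forces $\alpha$ to be the restriction of $\beta$, the latter is $\Rep^{\beta}(G)$. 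Thus it is enough to give a bijection between the isomorphism classes of simple $\C^{\beta}GN$-modules and of $\C^{\beta}G$-modules.

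This is exactly what the remaining results provide. Corollary~\ref{cor:eq.categories} --- via the algebra isomorphism $\C^{\beta}GN \cong \C^{\beta\gamma^{-1}}G \otimes \C^{\beta}N$ of Lemma~\ref{isomorphismphi} and the fact that $\C^{\beta}N$ is a matrix algebra --- gives $\Rep^{\beta}(GN) \simeq \Rep^{\beta\gamma^{-1}}(G)$. Since the cohomology class of $\gamma$ was shown above to be trivial, one may fix a trivialisation $(\TL_g) = (\TL^{\MM}_g)$ as described after Lemma-Definition~\ref{lem:Sg.formula}; then $T_g \mapsto \TL_g$ is an isomorphism of twisted group algebras $\C^{\beta\gamma^{-1}}G \xrightarrow{\cong} \C^{\beta}G$, whence $\Rep^{\beta\gamma^{-1}}(G) \simeq \Rep^{\beta}(G)$ (the resulting equivalence being $\Phi^{\lambda}_{\MM}$). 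Composing these equivalences produces the desired isomorphism $\GR^{\o,\l}_{[\MM]} \cong \GR^{\O,\l}_{[\MM]}$.

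Since the genuinely hard work --- the formula for $\beta|_N$ and its non-degeneracy, the construction of the $\LL_g$, and above all the triviality of $\gamma$ --- has already been done, the only point requiring care here is bookkeeping: choosing, for each class $[\MM]$, a representative and a compatible set of choices ($s$, the scalar normalising each $\LL_g$, and $\lambda$) so that the equivalences compose and the graded pieces glue. At the level of abelian groups this is routine; the more delicate issue of a \emph{coherent} choice of the trivialisations $\TL^{\MM}_g$ across all the groups $\GL_n$ at once, which is what will be needed to upgrade this to an isomorphism of rings and corings, is deferred and is not used for the present statement.
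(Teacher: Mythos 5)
Your proposal is correct and follows essentially the same route as the paper: reduce via the $[\Mat^k]$-gradings \eqref{eq:local.decomp.o}, \eqref{eq:local.decomp.O} and the orbit bijection of Proposition~\ref{intoT} to a single class $[\MM]$, then compose the Clifford-theory equivalences $\Rep(\GL_{md}(\o_\l))_{[\MM]}\simeq\Rep^{\beta}(GN)$ and $\Rep(\GL_m(\O_\l))_{[\MM]}\simeq\Rep^{\beta}(G)$ with Corollary~\ref{cor:eq.categories} and the trivialisation of $\gamma$, and sum over $[\MM]$. Your closing remark that the coherent choice of trivialisations is only needed for the ring/coring upgrade, not for the abelian-group statement, matches the paper's treatment exactly.
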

\begin{proof}
The last lemma together with \eqref{equiv.clifford.and.proj} and Corollary \ref{cor:eq.categories} enables us to construct such an isomorphism. 
Indeed, we have the following composition of categorical equivalences 
$$\Rep(\GL_{md}(\o_\l))_{[\MM]}\stackrel{e_{\MM}}{\to} \Rep(\GL_{md}(\o_\l)(\MM))_\MM \to \Rep^{\beta}(G N) \to \Rep^{\beta\gamma^{-1}}(G)\to $$ $$\Rep^{\beta}(G)\to \Rep(\GL_m(\O_\l)(\MM))_\MM\stackrel{\ind}{\to} \Rep(\GL_m(\O_{\l}))_{[\MM]}$$
where the first, second, fifth and sixth equivalences arise from Clifford Theory, the third equivalence is Corollary \ref{cor:eq.categories}, and the fourth equivalence uses the trivialisation of $\gamma$, and depends on the particular trivialisation. 
By considering this equivalence on the level of Grothendieck groups we get $\GR^{\o,\l}_{[\MM]} \cong \GR^{\O,\l}_{[\MM]}$.
Taking the direct sum of all these isomorphisms together, we get the desired isomorphism $\GR^{\o,\l}(\ol{f})\cong \GR^{\O,\l}(t-\ol{x})$.
\end{proof}
The isomorphism $\GR^{\o,\l}(\ol{f})\cong \GR^{\O,\l}(t-\ol{x})$ depends on a choice of trivialisations $(\TL^{\MM}_g)$ of $\gamma_{\MM}$ for every $[\MM]\in [\Mat^k(\ol{f})]$. We claim the following: 

\begin{theorem}\label{oddhomomorphism}
The trivialisations $(\TL^{\MM}_g)$ can be chosen in such a way that the resulting map $$\GR^{\o,\l}(\ol{f}) \to \GR^{\O,\l}(t-\ol{x})$$ is an isomorphism of rings and corings.
\end{theorem}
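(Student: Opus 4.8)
The plan is to follow the blueprint of the even-level argument of \S\ref{subsec:even}, replacing the group-theoretic inflation step there by the twisted-group-algebra machinery of this subsection. By Proposition~\ref{prop:iso.abelian} the underlying map of abelian groups respects the gradings of $\GR^{\o,\l}(\ol{f})$ and $\GR^{\O,\l}(t-\ol{x})$ by the monoids $[\Mat^k(\ol{f})]$ and $[\Mat^{\O,k}(t-\ol{x})]$, which are matched by Lemma~\ref{intoT}, and by Proposition~\ref{prop:GR-Mat-grading} the multiplications on both sides are graded for these monoids. Hence it suffices to fix $\MM=\MM_1\oplus\MM_2$ with $\MM_i\in\Mat_{m_i}(\O_k)$ of $\ol{f}$-primary reduction and to show that the square formed by the chain of equivalences of Proposition~\ref{prop:iso.abelian} for $\MM_1$, $\MM_2$, $\MM$ on one side, and the parabolic induction functors $\pind_{U,V}$ on $\GR^{\o,\l}(\ol{f})$ and $\pind_{U',V'}$ on $\GR^{\O,\l}(t-\ol{x})$ on the other, commutes for a suitable coherent choice of the trivialisations $(\TL^{\MM}_g)$ of the $\gamma_{\MM}$ (equivalently, of the scalars $\lambda_\MM$ with $\TL^{\MM}_g=\lambda_\MM(g)\LL^{\MM}_g$). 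The assertion for the comultiplications then needs no extra work: the resulting map takes irreducibles to irreducibles, hence is an isometry for $\langle\,,\rangle$, so Proposition~\ref{prop:GR-basics}(3) upgrades a ring isomorphism to a ring and coring isomorphism exactly as in the proof of Theorem~\ref{thm:primary}. Moreover, as in \S\ref{subsec:even}, the two outer equivalences $e_\MM$ and $\ind$ in the chain are compatible with parabolic induction by the compatibility of $\pind$ with Clifford theory recalled in \S\ref{subsec:clifford}; so the problem is concentrated in the middle portion: the passage to $\Rep^\beta(GN)$, the factorisation of $\C^\beta GN$ in Lemma~\ref{isomorphismphi}, and the trivialisation of $\gamma$.

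Next I would analyse how the data $(GN,\beta,N)$ attached to $\MM$ decomposes along the Levi. Writing $\cZ=\Mat_m(E^\perp)$ in $m_1,m_2$-block form gives $N\cong\ol{\cZ}=N^{\mathrm{d}}\oplus N^{+}\oplus N^{-}$, where $N^{\mathrm{d}}=N_1\times N_2$ is the block-diagonal part and $N^{\pm}$ are the strictly block-upper and block-lower parts. Since $\ol{\MM}=\ol{\MM}_1\oplus\ol{\MM}_2$ is block-diagonal and $\ol{\MM}_1,\ol{\MM}_2$ are coprime, the explicit formula for $\beta$ in Lemma~\ref{lem:choosebeta} together with Lemma~\ref{lem:duality.between.U.and.V.over.F} shows that $\beta$ restricts to $N^{\mathrm{d}}$ as the product cocycle $\beta_1\beta_2$, that $N^{+}$ and $N^{-}$ are isotropic for the alternating form $\langle\,,\rangle_\beta$ and in perfect duality with one another, and that $N^{\mathrm{d}}$ is $\langle\,,\rangle_\beta$-orthogonal to $N^{+}\oplus N^{-}$. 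Hence there is an algebra isomorphism $\C^\beta N\cong \C^{\beta_1}N_1\otimes\C^{\beta_2}N_2\otimes\C^{\beta}(N^{+}\oplus N^{-})$, the last factor being a matrix algebra with ``highest weight line'' cut out by the idempotent $e^{+}=|N^{+}|^{-1}\sum_{z\in N^{+}}T_z$ attached to the isotropic subgroup $N^{+}$. Under Lemma~\ref{isomorphismphi}, parabolic induction on the $GN$-side is realised through the block-diagonal subgroup $G_1N_1\times G_2N_2\subseteq GN$ and the unipotent idempotents coming from $U(\MM),V(\MM)$, so the task becomes to match the isomorphisms $\Phi_\MM^{\lambda}$ and $\Phi_{\MM_1}^{\lambda_1}\otimes\Phi_{\MM_2}^{\lambda_2}$ through these data.

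The heart of the matter is the behaviour of the canonical elements $\LL^{\MM}_g=\sum_{a\in N}T_{gag^{-1}}T_a^{-1}$ of \eqref{Mgformula} on the block-diagonal subgroup $G_1\times G_2\subseteq G$. Because such a $g$ preserves each of $N^{\mathrm{d}}$, $N^{+}$, $N^{-}$, one computes — splitting the sum along these three summands, the cross $\beta$-terms cancelling by the $G$-invariance from Lemma~\ref{lem:choosebeta} — that $\LL^{\MM}_{g}=\LL^{\MM_1}_{g_1}\cdot\LL^{\MM_2}_{g_2}\cdot\LL^{\pm}_g$, a product of commuting elements living in the three tensor factors, with $\LL^{\pm}_g=\sum_{a\in N^{+}\oplus N^{-}}T_{gag^{-1}}T_a^{-1}\in\C^{\beta}(N^{+}\oplus N^{-})$. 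Comparing products of these three-term factorisations yields the cocycle identity $\gamma_\MM\restrict_{G_1\times G_2}=\gamma_{\MM_1}\cdot\gamma_{\MM_2}\cdot\gamma^{\pm}$, where $\gamma^{\pm}$ is the Schur multiplier of the projective action $g\mapsto\LL^{\pm}_g$ of $G_1\times G_2$ on the unique irreducible module of $\C^{\beta}(N^{+}\oplus N^{-})$. Since $\gamma_\MM$, $\gamma_{\MM_1}$, $\gamma_{\MM_2}$ are all cohomologically trivial by the work in \S\ref{subsec:odd}, so is $\gamma^{\pm}$; write $\gamma^{\pm}=\partial\mu_{\MM_1,\MM_2}$. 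It then remains to choose trivialising scalars $\lambda_\MM\colon G\to\C^\times$ of $\gamma_\MM$ for all $\MM$, together with the auxiliary functions $\mu_{\MM_1,\MM_2}$, so that $\lambda_\MM\restrict_{G_1\times G_2}=\lambda_{\MM_1}\cdot\lambda_{\MM_2}\cdot\mu_{\MM_1,\MM_2}$ for \emph{every} decomposition $\MM=\MM_1\oplus\MM_2$ at once; this is the ``coherence of trivialisations across all linear groups simultaneously'' promised in the introduction. I would carry it out by induction on the number of indecomposable summands of $\MM$ (the monoid $[\Mat^k(\ol{f})]$ being free commutative on indecomposable classes by Krull--Schmidt over the finite ring $\o_k$), the point to check being that the $\mu_{\MM_1,\MM_2}$ can be chosen to satisfy the associativity relation $\mu_{\MM_1\oplus\MM_2,\MM_3}\cdot\mu_{\MM_1,\MM_2}=\mu_{\MM_1,\MM_2\oplus\MM_3}\cdot\mu_{\MM_2,\MM_3}$ on $G_1\times G_2\times G_3$, which follows from the canonicity of \eqref{Mgformula} and the associativity of $\nmult$ from Proposition~\ref{prop:GR-basics}.

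I expect the main obstacle to lie in the last two steps. First, making the identification $\LL^{\MM}_g=\LL^{\MM_1}_{g_1}\LL^{\MM_2}_{g_2}\LL^{\pm}_g$ and the structure of $\LL^{\pm}_g$ precise amounts to understanding a Weil-representation-type action of the Levi on the oscillator module of the symplectic space $N^{+}\oplus N^{-}$ twisted by $\MM$, and tracking exactly which coboundary $\mu_{\MM_1,\MM_2}$ trivialises its multiplier. Second, solving the global coherence system is delicate because the restriction map $\Hom(G,\C^\times)\to\Hom(G_1\times G_2,\C^\times)$ need not be surjective (already for $\GL_m$ versus its Levi), so the inductive construction of the $\lambda_\MM$ must be based on a rigid, for instance determinant-normalised, choice of the $\LL^{\pm}_g$ and of the $\mu_{\MM_1,\MM_2}$ rather than arbitrary ones. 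Everything else — compatibility of the Clifford-theoretic steps with $\pind$, the tensor factorisation of $\C^\beta N$, passage from the ring statement to the coring statement by duality, and the bookkeeping for three or more tensor factors — is formal once the two-factor case is settled.
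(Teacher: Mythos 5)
Your reduction to the middle, twisted-group-algebra step and your factorisation of $\LL^{\MM}_{\diag(g_1,g_2)}$ along the decomposition of $N$ into block-diagonal and off-diagonal parts is exactly the paper's starting point (it is the computation in the proof of Lemma \ref{lem:coherent.Q}), but two essential points are left unproven. First, coherence of the trivialisations does not ``formally'' give compatibility with $\pind$: in $\C^{\beta}GN$ the parabolic idempotents are $e_{U(\MM)}=e_{U'(\MM)}e_{12}$ and $e_{V(\MM)}=e_{V'(\MM)}e_{21}$, and under the isomorphism of Lemma \ref{isomorphismphi} the element $e_{U(\MM)}$ goes to $\sum_{u\in U'(\MM)}T_u\otimes \TL^{\MM}_u e_{12}$; to identify this with $e_{U'(\MM)}\otimes e_{12}$ one needs the nontrivial fact that $\TL^{\MM}_u e_{12}=e_{12}$ for all $u\in U'(\MM)$ (Lemma \ref{lem:reducingelements}), whose proof uses the coprimality of $\ol{\MM}_1,\ol{\MM}_2$ in an explicit character-sum computation and the identity $[(\GL_{m_1}\times\GL_{m_2})(\MM),U'(\MM)]=U'(\MM)$ to kill the resulting character. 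Your proposal never addresses the unipotent part of $U(\MM)$ at all, yet this is precisely what makes the analogue of Proposition \ref{prop:local.odd.mult} work; it is not subsumed by ``Weil-representation-type'' bookkeeping.

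Second, your global step goes in the wrong direction and hits an obstruction you flag but do not resolve. You fix trivialisations on indecomposables and try to build $\lambda_{\MM}$ for composite $\MM$ by induction, but the coherence constraints only prescribe the restriction of $\lambda_{\MM}$ to Levi-type subgroups of $G(\MM)$, and (especially when $\MM$ has repeated summands, so that $G(\MM)$ is much larger than a product of centralisers) there is no reason a trivialisation of $\gamma_{\MM}$ with that prescribed restriction exists: the needed character of the Levi may fail to extend, and ``determinant-normalisation'' is not shown to cure this. The paper avoids the extension problem entirely by arguing big-to-small: Lemma \ref{lem:coherent.Q} shows that a trivialisation for $\MM=\MM_1\oplus\MM_2$ canonically induces trivialisations for $\MM_1$ and $\MM_2$ (the off-diagonal contribution is the explicit scalar $|C_{N_{21}}(g)|$, so no auxiliary functions $\mu_{\MM_1,\MM_2}$ enter), and then a coherent system for all $[\MM]\in[\Mat^k(\ol{f})]$ simultaneously is produced by a Tychonoff compactness argument on the product of the finite torsors of trivialisations, embedding any finite collection of classes into a single large direct sum and using the Weyl-group symmetry of that ambient centraliser to force consistency. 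To make your inductive scheme work you would have to prove the missing extension statement, which is exactly the hard content; as written, the proposal is an outline of the right local structure with the two decisive steps (Lemma \ref{lem:reducingelements}/Proposition \ref{prop:local.odd.mult} and the global coherence) left open.
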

The rest of the section is devoted to proving Theorem~\ref{oddhomomorphism}. We start with a few lemmas.
Assume that we have two matrices $\MM_i\in \Mat_{\mm_i}(\O_k)$ ($i=1,2$) whose reductions modulo $\pi$ have $\ol{f}$-primary characteristic polynomials.
Let $\MM=\MM_1\oplus \MM_2$ and let $\nn_i=\mm_i d$. We keep the same notation as in~\eqref{notation.G.N}, adding subscript $i$ for the groups related to $\MM_i$. We decompose the subgroup~$N$ of $\GL_{md}(\o_{k+1})(\MM)$ where $n=md = m_1d+m_2d$ to the direct product $N=N_{1}\times N_{12}\times N_{21}\times N_{2}$
with respect to the block decomposition $\nn=\nn_1+\nn_2$\uri{; the subgroups $N_1$ and $N_2$ stand for the diagonal blocks, and the subgroups $N_{12}$ and $N_{21}$ for the off-diagonal blocks}. 

\begin{lemma} \label{lem:splittingN}
Let $N$ be a finite abelian group, and let $\beta$ be a non-degenerate two-cocycle on $N$. Assume that $N=N_E\times N_O$,
and that $\langle N_E,N_O\rangle_{\beta} = 1$. Then $\C^{\beta}N\cong \C^{\beta}N_E\ot \C^{\beta}N_O$. If $\Theta: \C^{\beta}N\to \C^{\beta}N$ 
is an algebra automorphism such that $\Theta(\C^{\beta}N_i)\subseteq \C^{\beta}N_i$ for $i\in\{O,E\}$
then $\Theta$ is given by conjugation by an invertible element of the form $M_E M_O$ where $M_i\in \C^{\beta}N_i$ for $i\in\{E,O\}$. 
The elements $M_E$ and $M_O$ are unique up to multiplication by a non-zero scalar.
\end{lemma}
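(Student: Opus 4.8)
The plan is to convert the cohomological hypothesis $\langle N_E, N_O\rangle_\beta = 1$ into a statement inside the twisted group algebra, and then run Skolem--Noether on the two tensor factors. Since $N$ is abelian we have $T_h T_{h'} = \langle h, h'\rangle_\beta \, T_{h'} T_h$ for all $h,h' \in N$, so the hypothesis says precisely that $T_h$ and $T_{h'}$ commute in $\C^\beta N$ whenever $h \in N_E$ and $h' \in N_O$. Granting this, the linear bijection $\C^\beta N_E \ot \C^\beta N_O \to \C^\beta N$ sending $T_h \ot T_{h'} \mapsto T_h T_{h'}$ is checked to be an algebra map by sliding an $N_O$-element past an $N_E$-element at no cost, which gives the first assertion $\C^\beta N \cong \C^\beta N_E \ot \C^\beta N_O$.

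I would then record that each $\C^\beta N_i$ (for $i \in \{E,O\}$) is a full matrix algebra. Because the alternating form $\langle\cdot,\cdot\rangle_\beta$ is bimultiplicative and trivial between $N_E$ and $N_O$, its radical on $N$ is the product of its radicals on $N_E$ and on $N_O$; non-degeneracy of $\beta$ on $N$ forces non-degeneracy on each factor, whence, by the same reasoning as in the preceding lemma, $\C^\beta N_E$ and $\C^\beta N_O$ are matrix algebras, and in particular every algebra automorphism of either is inner.

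For the second assertion: since $\Theta$ is bijective and the $\C^\beta N_i$ are finite-dimensional, the inclusions $\Theta(\C^\beta N_i) \subseteq \C^\beta N_i$ are equalities, so $\Theta$ restricts to automorphisms of $\C^\beta N_E$ and of $\C^\beta N_O$; choose by Skolem--Noether invertible $M_i \in \C^\beta N_i$ with $\Ad(M_i) = \Theta|_{\C^\beta N_i}$. The element $M_E M_O$ is invertible, and I claim $\Ad(M_E M_O) = \Theta$: for $x \in \C^\beta N_E$ the factor $M_O$ commutes with $x$ (by the commuting relation above), so $\Ad(M_E M_O)(x) = \Ad(M_E)(x) = \Theta(x)$, and symmetrically for $x \in \C^\beta N_O$; since $\C^\beta N_E$ and $\C^\beta N_O$ generate $\C^\beta N$ the two automorphisms agree everywhere. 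For uniqueness, if two invertible elements of $\C^\beta N_E$ both induce $\Theta|_{\C^\beta N_E}$ by conjugation then their ratio is central in the matrix algebra $\C^\beta N_E$, hence a scalar, and likewise for $\C^\beta N_O$; moreover in any factorisation $\Theta = \Ad(M_E' M_O')$ with $M_E' \in \C^\beta N_E$ and $M_O' \in \C^\beta N_O$ one has $\Ad(M_E')|_{\C^\beta N_E} = \Theta|_{\C^\beta N_E}$ automatically (as $M_O'$ centralises $\C^\beta N_E$), so $M_E'$ and $M_O'$ are scalar multiples of $M_E$ and $M_O$.

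The only point carrying any real content is the translation of $\langle N_E, N_O\rangle_\beta = 1$ into the commuting of the relevant $T_h$, together with its two consequences---the tensor decomposition, and the fact that $M_O$ centralises $\C^\beta N_E$ and conversely. Everything downstream is routine bookkeeping on top of Skolem--Noether, so I do not anticipate a genuine obstacle.
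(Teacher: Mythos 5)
Your proof is correct and follows essentially the same route as the paper: the trivial pairing $\langle N_E,N_O\rangle_\beta=1$ yields the algebra isomorphism $\C^{\beta}N_E\ot\C^{\beta}N_O\cong\C^{\beta}N$, and Skolem--Noether applied to the restrictions $\Theta|_{\C^{\beta}N_i}$ produces $M_E$ and $M_O$. You merely spell out details the paper leaves implicit (non-degeneracy of $\beta$ on each factor, the check that $\Ad(M_EM_O)=\Theta$ on generators, and the scalar uniqueness), all of which are fine.
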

\begin{proof}
The fact that $\langle N_E,N_O\rangle_{\beta}=1$ implies that the multiplication map $\C^{\beta} N_E\ot \C^{\beta}N_O\to \C^{\beta}N$ is an algebra homomorphism. It is also surjective, so by dimension considerations it is an isomorphism of algebras. If $\Theta$ is an automorphism of $\C^{\beta}N$ which satisfies the above condition then Skolem-Noether Theorem applied to $\Theta|_{\C^{\beta} N_i}$ for $i\in\{O,E\}$ gives us the invertible elements $M_E$ and $M_O$.
\end{proof}

The following lemma is crucial in proving the compatibility of the isomorphism we have with the product $\nmult$. 


\begin{lemma} \label{lem:reducingelements}
Let $u\in U'(\MM) = U_{\nn_1,\nn_2}(\MM)\cap \GL_{m}(\O_{k+1})$ and $e_{12} = \frac{1}{|N_{12}|}\sum_{h\in N_{12}}T_h\in \C^{\beta}N_{12}$. Then $\TL^{\MM}_u e_{12} = e_{12}$. A similar claim holds for the $V$-subgroup and the idempotent $e_{21}$.  
\end{lemma}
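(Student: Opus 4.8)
The plan is to compute the action of $\TL^{\MM}_u$ on the idempotent $e_{12}$ directly from the formula for $\LL_g$ in Lemma-Definition~\ref{lem:Sg.formula}, and to exploit the fact that $u$ is block-upper-triangular in a way that makes the commutator $[u,a]$ for $a \in N$ land in the ``strictly upper'' part $N_{12}$, where the cocycle $\beta$ is trivial. First I would recall that $\LL_u = \sum_{a \in N} \beta([u,a],a^{-1}) T_{[u,a]}$, and that by the block decomposition $N = N_1 \times N_{12} \times N_{21} \times N_2$, conjugation by $u \in U'(\MM)$ fixes $N_1$, $N_{12}$, $N_2$ pointwise modulo $N_{12}$-corrections; more precisely, writing $a = a_1 a_{12} a_{21} a_2$, one has $u a u^{-1} = a \cdot (\text{something in } N_{12})$ because $u - 1$ has entries only in the strictly upper block. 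Hence $[u,a] \in N_{12}$ for every $a$, and since $\beta$ restricted to $N_{12}$ is trivial (as $N_{12}$ is an isotropic block — indeed $\langle N_{12}, N_{12}\rangle_\beta = 1$ by the argument around \eqref{eq:formofbeta}, the relevant product of blockwise pairings being empty), the scalars $\beta([u,a],a^{-1})$ simplify. Combined with the normalisation of $\lambda(u)$ this should give $\TL^{\MM}_u = |C_N(u)|^{-1}\sum_{a\in N} T_{[u,a]}$ up to the chosen scalar; but $\{[u,a] : a \in N\}$, with multiplicities, is exactly the image of the homomorphism $a \mapsto [u,a]$ from $N$ onto a subgroup of $N_{12}$, each fibre of size $|C_N(u)|$, so $\TL^{\MM}_u$ is (proportional to) $\sum_{h \in [u,N]} T_h = |[u,N]|\cdot e_{[u,N]}$, an idempotent supported on a subgroup of $N_{12}$.

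The key step is then to observe that $e_{12} = |N_{12}|^{-1}\sum_{h \in N_{12}} T_h$ absorbs any such partial sum: since $[u,N] \subseteq N_{12}$ and $\beta$ is trivial on $N_{12}$, for any subgroup $H \le N_{12}$ we have $\bigl(\sum_{h \in H} T_h\bigr)\bigl(\sum_{h' \in N_{12}} T_{h'}\bigr) = |H| \sum_{h' \in N_{12}} T_{h'}$, because multiplication by $T_h$ for $h \in H \subseteq N_{12}$ simply permutes the basis $\{T_{h'} : h' \in N_{12}\}$ of $\C^\beta N_{12}$ with trivial cocycle. Dividing by the appropriate cardinalities gives $\TL^{\MM}_u e_{12} = e_{12}$, once we fix the trivialising scalar $\lambda(u)$ so that the leading coefficient works out; this is where the ``suitable choice of trivialisation $(\TL^{\MM}_g)$'' from the paragraph before Proposition~\ref{prop:iso.abelian} is used, namely $\lambda$ is chosen so that $\TL^{\MM}_g$ has coefficient $1$ on $T_1$ for $g$ in the unipotent radical, or equivalently so that $\TL^{\MM}_g$ is a genuine idempotent-like element on that part. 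The claim for $V'(\MM)$ and $e_{21}$ is identical after transposing the roles of the $(1,2)$- and $(2,1)$-blocks, using \eqref{commutativity}.

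The main obstacle I anticipate is bookkeeping the normalising scalar $\lambda(u)$ consistently: the element $\LL_u$ from \eqref{Mgformula} is only canonical up to scalar, and the statement $\TL^{\MM}_u e_{12} = e_{12}$ (with equality, not just proportionality) forces a specific normalisation that must be compatible with the global trivialisation of $\gamma_\MM$ chosen for Theorem~\ref{oddhomomorphism}. I would handle this by checking that the subgroup of unipotent upper-triangular elements $U'(\MM)$ lies in a $p$-group on which $\gamma_\MM$ was trivialised ``canonically'' in the proof that $\gamma$ is cohomologically trivial — there the trivialising scalars came from eigenvalues $\lambda(g)$ of $\LL_g$ acting on a one-dimensional image $\C u$ of a $P$-stable primitive idempotent $e$, and one can arrange $e$ to be divisible by $e_{12}$ (the construction of the Lagrangian $\ol{\cL}$ in \eqref{eq:decomposition.L} already includes all of $\bigoplus_{i<j}\ol{\cZ}_{i,j} \supseteq N_{12}$), so that $\LL_u$ acts on $\C u$ by the scalar forcing exactly $\TL^{\MM}_u e = e$, hence $\TL^{\MM}_u e_{12} = e_{12}$ after multiplying through by the complementary idempotent factors. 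The rest is the routine permutation-of-basis computation sketched above.
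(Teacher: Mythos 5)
Your proposal breaks at its very first step. You claim that for $u\in U'(\MM)$ and every $h\in N$ the commutator $[u,h]$ lies in $N_{12}$, so that $\LL_u$ is supported on (a subgroup of) $N_{12}$ and the rest reduces to a permutation-of-basis absorption by $e_{12}$. This is false: writing $h=(1+\pi^k z)K^{k+1}$ with $z=\left(\begin{smallmatrix} z_{11} & z_{12}\\ z_{21} & z_{22}\end{smallmatrix}\right)\in\cZ$ and $u=\left(\begin{smallmatrix}1 & \aa\\ 0 & 1\end{smallmatrix}\right)$, a direct computation (this is exactly what the paper's proof does) gives
\[
[u,h] \;=\; 1+\pi^k\begin{pmatrix} \aa z_{21} & -z_{11}\aa-\aa z_{21}\aa+\aa z_{22}\\ 0 & -z_{21}\aa\end{pmatrix}\ \modd K^{k+1},
\]
whose diagonal blocks $\aa z_{21}$ and $-z_{21}\aa$ lie in $\cZ$ (as $\cZ$ is an $\cA$-bimodule) and are generically nonzero. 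So $\LL_u$ has components along $N_1\times N_2$, your ``isotropic block'' absorption does not apply, and the element $\LL_u e_{12}$ is not a priori a multiple of $e_{12}$. The actual content of the lemma is the cancellation that your sketch skips: after using $T_{[u,h]}e_{12}=T_{f(h)}e_{12}$ to discard the $N_{12}$-component, one must sum the cocycle values $\beta([u,h],h^{-1})=\ol{\phi}(\ol{z_{11}(\aa z_{21}\MM_1-\MM_1\aa z_{21})})\,\ol{\phi}(\ol{z_{22}(\MM_2 z_{21}\aa-z_{21}\MM_1\aa)})$ over $z_{11}$ and $z_{22}$; by non-degeneracy of the pairing $(\ol{\aa},\ol{\bb})\mapsto\ol{\phi}(\ol{\MM\aa\bb})$ on $\ol{\cZ}$, these character sums vanish unless $\ol{\aa z_{21}}=0$ and $\ol{z_{21}\aa}=0$ (the latter using $\ol{\MM_1\aa}=\ol{\aa\MM_2}$, i.e.\ $u\in U'(\MM)$), which forces $f(h)=1$ and yields $\TL^{\MM}_u e_{12}=c(u)e_{12}$. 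None of this is recoverable from your argument.

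Your handling of the scalar is also off target. You propose to force $c(u)=1$ by a bespoke normalisation of $\lambda(u)$, tied to the Lagrangian/primitive-idempotent construction from the triviality proof of $\gamma$. But the lemma is invoked later (Lemma \ref{lem:coherent.Q}, Proposition \ref{prop:local.odd.mult}, and the Tychonoff coherence argument) for trivialisations $(\TL^{\MM}_g)$ that are \emph{not} at your disposal to renormalise, so the statement must hold for an arbitrary trivialisation; moreover it is not clear that your preferred normalisation is even compatible with multiplicativity of $(\TL^{\MM}_g)$ on all of $G$. The paper's resolution is different and cleaner: once $\TL^{\MM}_u e_{12}=c(u)e_{12}$ is known, $c$ is a homomorphism $U'(\MM)\to\C^{\times}$ which is invariant under conjugation by $(\GL_{m_1}(\O_{k+1})\times\GL_{m_2}(\O_{k+1}))(\MM)$, and since $[(\GL_{m_1}(\O_{k+1})\times\GL_{m_2}(\O_{k+1}))(\MM),\,U'(\MM)]=U'(\MM)$ (seen already with diagonal matrices), $c$ is forced to be trivial — with no choice of normalisation needed.
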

\begin{proof}
Since the restriction of $\beta$ to $N_{12}$ is trivial as a function (this follows easily from general properties of the trace), we see that $e_{12}$ is an idempotent. 
We will first prove that $\TL^{\MM}_u e_{12} = c(u)e_{12}$ for some non-zero scalar $c(u)$ and for every $u\in U'(\MM)$. 
Since $\TL^{\MM}_u$ is proportional to $$\sum_{h\in N}\beta([u,h],h^{-1})T_{[u,h]}$$ it will be enough to prove that statement for the last element.
We write a general element $h$ of $N$ as a $2\times 2$ block matrix of the form $h=(1+\pi^k z)K^{k+1}$ where $$z=\begin{pmatrix} z_{11} & z_{12} \\ z_{21} & z_{22}\end{pmatrix}\in \cZ.$$
We have 
\begin{equation}\label{eqn:explicit.commutator}
\begin{split}
[u,h] & = \begin{pmatrix} 1 & \aa \\ 0 & 1 \end{pmatrix}\left(1+\pi^k\begin{pmatrix} z_{11} & z_{12} \\ z_{21} & z_{22}\end{pmatrix}\right)\begin{pmatrix} 1 & -a \\ 0 & 1\end{pmatrix}\left(1-\pi^k\begin{pmatrix}z_{11} & z_{12} \\ z_{21} & z_{22}\end{pmatrix}\right) \\
& = 1+\pi^k\begin{pmatrix}az_{21} & -z_{11}a - az_{21}a + az_{22} \\ 0 & -z_{21}a\end{pmatrix} \mod K^{k+1}.
\end{split}
\end{equation}
\uri{Substituting \eqref{eqn:explicit.commutator} in the explicit formula for $\beta$ given in \uri{\eqref{eqn:beta.explicit}}, we get}
$$\beta([u,h],h^{-1}) = \ol{\phi}(\ol{\MM}\ol{b}),$$
where 
$$b=\begin{pmatrix} -az_{21}z_{11} + z_{11}az_{21} + az_{21}az_{21} - az_{22}z_{21} & -az_{21}z_{12} + z_{11}az_{22} + az_{21}az_{22} - az_{22}^2 \\ 
z_{21}az_{21} & z_{21}az_{22}\end{pmatrix}.$$
Using the fact that $\MM=\MM_1\oplus \MM_2$ we get 
$$\beta([u,h],h^{-1}) = \phi_{\ol{\MM_1}}(\ol{-az_{21}z_{11} + z_{11}az_{21} + az_{21}az_{21} - az_{22}z_{21}})\phi_{\ol{\MM_2}}(\ol{z_{21}az_{22}}).$$
Using the definition of $\phi_{\ol{\MM_i}}$, the last expression can be written as 
$$\ol{\phi}(\ol{z_{11}(az_{21}\MM_1-\MM_1az_{21})})\ol{\phi}(\ol{z_{22}(\MM_2z_{21}a - z_{21}\MM_1a)})\uri{\ol{\phi}(\ol{\MM_1az_{21}az_{21}})}.$$

\uri{Denoting $N_E:=N_1 \times N_2$ and $N_O:=N_{12} \times N_{21}$ we now note that if $\beta(h_1,h_2)=1$ if $h_1\in N_E$ and $h_2\in N_O$ or vice versa. This follows from the explicit form \uri{\eqref{eqn:beta.explicit}} combined with fact that $\xi N_E \subset N_E$ and  $N_E N_O \subset N_O$. That implies that $N_E$ is orthogonal to $N_O$ with respect to~$\langle \cdot, \cdot \rangle_\beta$ and therefore the conditions of Lemma~\ref{lem:splittingN} are satisfied. It also implies that }
$$T_{[u,h]}e_{12} = T_{f(h)}e_{12}$$ 
where $$f(h) = \big(1+\pi^k\begin{pmatrix} az_{21} & 0 \\ 0 & -z_{21}a\end{pmatrix}\big)K^{k+1}.$$
We calculate 
\begin{equation}\label{eq:reducing.beta}
\begin{split}  
\sum_{h\in N}\beta([u,h],h^{-1})&T_{[u,h]}e_{12}  \\ 
=\sum_{\substack{z_{11},z_{12},\\z_{21},z_{22}}}&\ol{\phi}(\ol{z_{11}(az_{21}\MM_1-\MM_1az_{21})})\ol{\phi}(\ol{z_{22}(\MM_2z_{21}a - z_{21}\MM_1a)})\uri{\ol{\phi}(\ol{\MM_1az_{21}az_{21}})}T_{f(h)}e_{12}.
\end{split}
\end{equation}
Notice that only $z_{21}$ appears in $f(h)$. We consider now the above sum where $z_{21}$ is fixed and we sum over $z_{11}$ and $z_{22}$. 
Since $\cZ$ is an $\cA$-bimodule the element $az_{21}\MM_1-\MM_1az_{21}$ lies in $\cZ$.
If the reduction of this element mod~$\pi$ is not trivial, the summation  
in \eqref{eq:reducing.beta} will vanish. This follows from the fact that the pairing $\ol{\cZ}\times \ol{\cZ}\to \C^{\times}$ \uri{defined by} $(\ol{\aa},\ol{\bb})\mapsto \ol{\phi}(\ol{\MM \aa \bb})$ is non-degenerate \uri{since} if $\ol{\aa}$ is in the radical of the form, then $\ol{\MM\aa}$ is in $\ol{\cZ}^{\perp}=\ol{\cA}$. But since $\ol{\MM}\in \cA$ is invertible and $\cA$ is an algebra, this implies that $\aa\in \ol{\cA}\cap \ol{\cZ}=0$. A similar argument holds for $\bb$. It follows that $\ol{az_{21}}\in \ol{\cZ}$ commutes with $\ol{\MM_1}$. But this is possible only if $\ol{\aa z_{21}} = 0$.

Considering now the other instance of $\ol{\phi}$ in \eqref{eq:reducing.beta} we similarly get that $\ol{\MM_2z_{21}\aa} = \ol{z_{21}\MM_1\aa}$. Using the fact that $\left(\begin{smallmatrix} 1 & \aa \\ 0 & 1\end{smallmatrix}\right)\in U'(\MM)$ we get that $\ol{\MM_1\aa} = \ol{\aa\MM_2}$. This implies as before that $\ol{z_{21}\aa}$ commutes with $\ol{\MM_2}$, and that $\ol{z_{21}\aa}=0$. 

\smallskip

Summing up this all we get that $f(h)=1$, and 
$$\TL^{\MM}_ue_{12} = c(u)e_{12}$$ for some non-zero scalar $c(u)$.

The fact that $(\TL^{\MM}_g)$ is a trivialisation of $\gamma$ implies that $c$ is a group homomorphism from $U'(\MM)$ to $\C^{\times}$.
Assume now that $g\in (\GL_{m_1}(\O_{k+1})\times \GL_{m_2}(\O_{k+1}))(\MM)$. \uri{We have 
$$c(gug^{-1})e_{12} = \TL^{{\MM}}_{gug^{-1}}e_{12} = \TL^{{\MM}}_g\TL^{{\MM}}_u(\TL^{{\MM}}_g)^{-1}e_{12} =  \TL^{{\MM}}_g \TL^{{\MM}}_ue_{12}(\TL^{{\MM}}_{g})^{-1} = c(u)e_{12}$$
so $c(u) = c(gug^{-1})$. The penultimate equality holds because such $g$ normalises $N_{12}$ and therefore commutes with $e_{12}$. Observing that $[(\GL_{m_1}(\O_{k+1})\times \GL_{m_2}(\O_{k+1}))(\MM),U'(\MM)] = U'(\MM)$, for example by considering diagonal matrices in the first group,
we get that $c$ must be the trivial homomorphism.}
\end{proof}

\begin{lemma}\label{lem:coherent.Q}
Let $N=N_1\times N_2\times N_{12}\times N_{21}$ be a decomposition of $N$ as before and let $(\TL_g^{\MM})$ be a trivialisation of $\gamma_{\MM}$. 
Then for $g_i\in \GL_{\mm_i}(\O_{k+1})(\MM_i)$ ($i=1,2$) we get unique elements $\TL^{\MM_i}_{g_i}$ such that the following equation holds:
$$\TL^{\MM}_{\diag(g_1,g_2)}e_{12}e_{21} = \TL^{\MM_1}_{g_1}\TL^{\MM_2}_{g_2}e_{12}e_{21}.$$
Moreover the elements $\TL^{\MM_i}_{g_i}$ provide a trivialisation of $\gamma_{\MM_i}$ for $i=1,2$. 
\end{lemma}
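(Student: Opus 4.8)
The plan is to disentangle the various twisted group algebras block-by-block and then invoke the uniqueness part of Lemma \ref{lem:splittingN}. First I would observe that the block decomposition $N = N_1 \times N_2 \times N_{12} \times N_{21}$ satisfies the hypothesis of Lemma \ref{lem:splittingN} relative to the splitting $N_E := N_1\times N_2$ and $N_O := N_{12}\times N_{21}$: indeed the explicit formula for $\beta$ in Lemma \ref{lem:choosebeta} together with the identity $\MM = \MM_1\oplus\MM_2$ shows that $\langle h_1,h_2\rangle_\beta = 1$ whenever $h_1\in N_E$ and $h_2\in N_O$, because the products of the relevant off-diagonal and diagonal blocks contribute zero trace. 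Hence $\C^\beta N \cong \C^\beta N_E \otimes \C^\beta N_O$, and moreover $\C^\beta N_E \cong \C^\beta N_1 \otimes \C^\beta N_2$ by the same argument applied to the decomposition $N_E = N_1\times N_2$ (again using $\MM=\MM_1\oplus\MM_2$, so $\langle N_1,N_2\rangle_\beta=1$). The idempotent $e_{12}e_{21}\in \C^\beta N_O$ is primitive in $\C^\beta N_O$ (this is the Lagrangian-type computation: $\beta$ restricted to each of $N_{12}$, $N_{21}$ is trivial as a function, and $N_{12}\times N_{21}$ is a polarisation of the non-degenerate alternating form on $N_O$), so $e := e_{12}e_{21}$ cuts $\C^\beta N_O$ down to a copy of $\C$ and $\C^\beta N\cdot e \cong \C^\beta N_E$.

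Next I would analyse the action of $\TL^\MM_{\diag(g_1,g_2)}$ for $g_i \in \GL_{m_i}(\O_{k+1})(\MM_i)$. Conjugation by $T_{\diag(g_1,g_2)}$ preserves each block subgroup $N_1,N_2,N_{12},N_{21}$ of $N$, hence by \eqref{Mgeq} conjugation by $\LL_{\diag(g_1,g_2)}$ — equivalently by $\TL^\MM_{\diag(g_1,g_2)}$ — preserves each of $\C^\beta N_1$, $\C^\beta N_2$, and $\C^\beta N_O$. In particular it preserves $\C^\beta N_O$ and fixes its centre, so it fixes the primitive idempotent $e$ up to the uniqueness in Lemma \ref{lem:splittingN}; combined with Lemma \ref{lem:reducingelements}, applied to the unipotent radical elements, one checks that $\TL^\MM_{\diag(g_1,g_2)}$ actually commutes with $e$. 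Therefore $\TL^\MM_{\diag(g_1,g_2)}e$ is an invertible element of $\C^\beta N\cdot e \cong \C^\beta N_E \cong \C^\beta N_1\otimes \C^\beta N_2$ which conjugates $\C^\beta N_i$ into itself for $i=1,2$. By the uniqueness statement of Lemma \ref{lem:splittingN} applied to $N_E = N_1\times N_2$, it factors uniquely (up to a scalar, which we can pin down using the normalisation of $\TL$) as $\TL^\MM_{\diag(g_1,g_2)}e = M_1 M_2 e$ with $M_i\in \C^\beta N_i$; we \emph{define} $\TL^{\MM_i}_{g_i} := M_i$. The defining equation $\TL^\MM_{\diag(g_1,g_2)}e_{12}e_{21} = \TL^{\MM_1}_{g_1}\TL^{\MM_2}_{g_2}e_{12}e_{21}$ then holds by construction.

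Finally I would verify that $(\TL^{\MM_i}_{g_i})_{g_i}$ is a trivialisation of $\gamma_{\MM_i}$. One first checks that $\TL^{\MM_i}_{g_i}$ satisfies Equation \eqref{Mgeq} for the group $N_i$ and the cocycle $\beta|_{N_i}$: this follows because $\TL^\MM_{\diag(g_1,g_2)}$ implements conjugation by $\diag(g_1,g_2)$ on $\C^\beta N_i$, $M_{3-i}$ commutes with $\C^\beta N_i$, and $e$ is central in $\C^\beta N_O$, so $M_i$ alone must implement the conjugation. Hence $M_i = \TL^{\MM_i}_{g_i}$ is, up to scalar, the Skolem–Noether element $\LL^{\MM_i}_{g_i}$ of \eqref{Mgformula}. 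Then multiplicativity: from $\TL^\MM_{\diag(g_1,g_2)}\TL^\MM_{\diag(g_1',g_2')} = \TL^\MM_{\diag(g_1g_1',g_2g_2')}$ (as $(\TL^\MM_g)$ is a trivialisation of $\gamma_\MM$) we multiply by $e$ on the right, push $e$ through using its fixedness under the $\TL^\MM$'s, and invoke the uniqueness of the factorisation to conclude $\TL^{\MM_i}_{g_i}\TL^{\MM_i}_{g_i'} = \TL^{\MM_i}_{g_ig_i'}$ for $i=1,2$. So each family trivialises $\gamma_{\MM_i}$. The main obstacle I anticipate is bookkeeping: showing cleanly that $\TL^\MM_{\diag(g_1,g_2)}$ really commutes with the idempotent $e$ (rather than merely normalising it up to a scalar), since this is what allows the clean factorisation — this is where Lemma \ref{lem:reducingelements} and the uniqueness in Lemma \ref{lem:splittingN} have to be combined carefully, and where the normalisation of the scalars $\lambda(g)$ in the trivialisation needs to be tracked.
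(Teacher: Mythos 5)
Your structural route — cut with $e:=e_{12}e_{21}$, identify the corner $e\,\C^{\beta}N\,e$ with $\C^{\beta}(N_1\times N_2)$, and factor the image of $\TL^{\MM}_{\diag(g_1,g_2)}$ using the uniqueness in Lemma \ref{lem:splittingN} — is viable and is in effect an abstract repackaging of the paper's explicit computation (the paper instead computes $\LL^{\MM}_{\diag(g_1,g_2)}e_{12}e_{21}=|C_{N_{21}}(g)|\,\LL^{\MM_1}_{g_1}\LL^{\MM_2}_{g_2}e_{12}e_{21}$ directly, see \eqref{eq:Sg.seperation}). But two of your justifications are off. First, the commutation of $\TL^{\MM}_{\diag(g_1,g_2)}$ with $e$: the centre of $\C^{\beta}N_O$ is trivial (it is a matrix algebra), so "fixes its centre" gives nothing, and Lemma \ref{lem:reducingelements} is about elements of $U'(\MM)$, not block-diagonal elements, so it is not the right tool here. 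The correct (and easy) reason is that $\diag(g_1,g_2)$ normalises $N_{12}$ and $N_{21}$, so by \eqref{Mgeq} conjugation by $\TL^{\MM}_{\diag(g_1,g_2)}$ permutes $\{T_h: h\in N_{12}\}$ and $\{T_h: h\in N_{21}\}$ and hence fixes each of the averaging idempotents $e_{12}$, $e_{21}$. (Also, $e_{12}e_{21}$ is only a scalar multiple of a primitive idempotent, since $e_{12}$ and $e_{21}$ do not commute, and the relevant algebra is the corner $e\,\C^{\beta}N\,e$, not the left ideal $\C^{\beta}N\cdot e$; these are harmless slips.)

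The genuine gap is in the definition of the individual factors. Lemma \ref{lem:splittingN} only determines $M_1,M_2$ up to reciprocal scalars, and "pin down using the normalisation of $\TL$" is not a recipe: the normalisation of the family $(\TL^{\MM}_g)$ does not tell you how to split a scalar between the two blocks, nor why your $M_1$ depends on $g_1$ alone rather than on the pair $(g_1,g_2)$. This matters: without a concrete normalisation, the uniqueness asserted in the lemma fails as stated, and your closing argument only yields $\TL^{\MM_i}_{g_i}\TL^{\MM_i}_{g_i'}=\lambda\,\TL^{\MM_i}_{g_ig_i'}$ for an undetermined scalar $\lambda$ shared between the two blocks, which is not yet a trivialisation of $\gamma_{\MM_i}$. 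The paper closes exactly this hole by defining $\TL^{\MM_1}_{g_1}$ and $\TL^{\MM_2}_{g_2}$ through the equations for $\diag(g_1,1)$ and $\diag(1,g_2)$ separately (note that $\LL^{\MM_2}_{1}$ is a scalar, so the corresponding factor can be normalised to lie in $\C^{\beta}N_1$ alone); with that choice, well-definedness, uniqueness, the displayed identity for general $\diag(g_1,g_2)$, and multiplicativity all follow, the last by the specialised computation with $g=\diag(g_1,1)$, $g'=\diag(g_1',1)$ together with the tensor decomposition of Lemma \ref{lem:splittingN}. Incorporating this device would complete your proof; as written, the final step does not close.
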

\begin{proof}
Write $g=diag(g_1,g_2)$, $N_E:=N_1\times N_2$ and $N_O:= N_{12}\times N_{21}$. 
We begin by considering the equation for $S_g$ which we write here as $\LL^{\MM}_g$ to distinguish the different elements $S_g$ arising from the different groups.  
$$\LL^{\MM}_g=\sum_{h\in N} T_{ghg^{-1}}T_h^{-1}.$$
Since $\beta(N_E,N_O)=\beta(N_O,N_E)=1$ and since $N\cong N_E\times N_O$ and $gN_ig^{-1}=N_i$ for $i \in \{O,E\}$ 
we get $$\LL^{\MM}_g = \sum_{h\in N_E}T_{ghg^{-1}}T_h^{-1}\sum_{h\in N_O}T_{ghg^{-1}}T_h^{-1}.$$
The fact that $\beta(N_1,N_2)=\beta(N_2,N_1)=1$ implies that the first sum is just the product $\LL^{\MM_1}_{g_1}\LL^{\MM_2}_{g_2}$. We write $\LL^O_g$ for the second sum.

Next, we use the fact that $N_O=N_{12}\times N_{21}$. We rewrite
\[
\begin{split}
\LL^O_g &= \sum_{h_{12}\in N_{12},h_{21}\in N_{21}}T_{gh_{12}h_{21}g^{-1}}T_{h_{12}h_{21}}^{-1} \\
&= \sum_{h_{12}\in N_{12},h_{21}\in N_{21}}T_{gh_{12}g^{-1}} \beta(gh_{12}g^{-1},gh_{21}g^{-1})^{-1}\beta(h_{12},h_{21}) T_{gh_{21}g^{-1}}T_{h_{21}}^{-1}T_{h_{12}}^{-1} \\
&=\sum_{h_{12}\in N_{12},h_{21}\in N_{21}}T_{gh_{12}g^{-1}}  T_{gh_{21}g^{-1}}T_{h_{21}}^{-1}T_{h_{12}}^{-1}
\end{split}
\]

where we have used the fact that $\beta$ is $G$-invariant. 
Since the action of the group $G$ stabilises $N_{12}$ it holds that $\LL^O_g$ commutes with $e_{12}$. We compute:
\[
\begin{split}
\LL^O_ge_{12}e_{21} &= e_{12}\LL^O_ge_{21} = e_{12}\sum_{h_{12}\in N_{12},h_{21}\in N_{21}}T_{gh_{12}g^{-1}}  T_{gh_{21}g^{-1}}T_{h_{21}}^{-1}T_{h_{12}}^{-1}e_{21} \\
&= e_{12}\sum_{h_{12}\in N_{12},h_{21}\in N_{21}}  T_{gh_{21}g^{-1}}T_{h_{21}}^{-1}T_{h_{12}}^{-1}e_{21}=  e_{12}\sum_{h_{21}\in N_{21}}  T_{gh_{21}g^{-1}}T_{h_{21}}^{-1}e_{12}e_{21}. 
\end{split}
\]
For every $h\in N_{21}$ \uri{conjugation by $T_h$ maps $e_{12}$ to another primitive idempotent} 
$$T_he_{12}T_h^{-1}=e_{12}^{\psi_h}$$
\uri{which corresponds to a character} $\psi_h\in N_{12}^{\vee}$. \uri{By the non-degeneracy of the} pairing $\langle\cdot,\cdot\rangle_{\beta}:N_{21}\times N_{12}\to \C^{\times}$
\uri{we get in this way all of the primitive idempotents} in $\C^{\beta}N_{12}=\C N_{12}$.
This means that $$e_{12}\sum_{h_{21}\in N_{21}}T_{gh_{21}g^{-1}}T_{h_{21}}^{-1}e_{12}e_{21}$$ will be non-zero exactly when $gh_{21}g^{-1}=h_{21}$ (this follows from the non-degeneracy of $\langle\cdot,\cdot\rangle_{\beta}$), and it will then be equal to $e_{12}e_{21}$. The above calculation now implies that 
$\LL^O_ge_{12}e_{21} = |C_{N_{21}}(g)|e_{12}e_{21}$.
This implies that 
\begin{equation}\label{eq:Sg.seperation}\LL^{\MM}_ge_{12}e_{21} = \LL^{\MM_1}_{g_1}\LL^{\MM_2}_{g_2}\LL^O_ge_{12}e_{21} = 
|C_{N_{21}}(g)|\LL^{\MM_1}_{g_1}\LL^{\MM_2}_{g_2}e_{12}e_{21}\end{equation}

Assume now that $(\TL^{\MM}_g)$ is a trivialisation of $\gamma$. 
By considering also the elements $diag(g_1,1)$ and $diag(1,g_2)$ and by using Equation \ref{eq:Sg.seperation} above we get unique invertible elements $\TL^{\MM_i}_{g_i}\in \C^{\beta}N_i$ which satisfy the equation:
\begin{equation}
\TL^{\MM}_ge_{12}e_{21} = 
\TL^{\MM_1}_{g_1}\TL^{\MM_2}_{g_2}e_{12}e_{21}\end{equation}

We claim that $(\TL^{\MM_i}_g)$ is a trivialisation for $\gamma_{\MM_i}$ for $i=1,2$. Indeed, equation \eqref{eq:Sg.seperation} already implies that $\TL^{\MM_i}_g$ is proportional to $\LL^{\MM_i}_g$. The fact that this is a trivialisation of $G_i$ follows from the fact that $(\TL^{\MM}_g)$ is a trivialisation of $\gamma_{\MM}$ and the following computation: if $g_1,g_1'\in G_1$ we write $g = diag(g_1,1)$ and $g'=diag(g_1',1)$ and calculate
\[
\begin{split}
\TL^{\MM}_{gg'}e_{12}e_{21} &= \TL^{\MM}_g\TL^{\MM}_{g'}e_{12}e_{21} = \TL^{\MM}_ge_{12}e_{21}\TL^{\MM}_{g'} \\
&=\TL^{\MM_1}_{g_1}e_{12}e_{21}\TL^{\MM}_{g'}=\TL^{\MM_1}_{g_1}\TL^{\MM}_{g'}e_{12}e_{21} \\ 
&= \TL^{\MM_1}_{g_1}\TL^{\MM_1}_{g'_1}e_{12}e_{21}.
\end{split}
\]

On the other hand 
$$\TL^{\MM}_{gg'}e_{12}e_{21} =\TL^{\MM_1}_{g_1g_1'}e_{12}e_{21}.$$ 
By considering again the decomposition of $\C^{\beta}N$ arising from Lemma \ref{lem:splittingN} this implies that
$$\TL^{\MM_1}_{g_1g_1'} = \TL^{\MM_1}_{g_1}\TL^{\MM_1}_{g'_1},$$
which means exactly that $(\TL^{\MM_1}_g)$ is a trivialisation for $\gamma$ on $G_1$. The proof that $(\TL^{\MM_2}_g)$ is a trivialisation for $G_2$ is the same. 
\end{proof}

We are now ready to prove that the isomorphisms of abelian groups $\GR^{\o,\l}(\ol{f})\to \GR^{\O,\l}(t-\ol{x})$ are compatible with the multiplication. We do this in two steps. In the first step we consider only the orbits of $\MM_1,\MM_2$ and $\MM=\MM_1\oplus\MM_2$. In the second step we will consider all orbits in $[\Mat^k(\ol{f})]$. 
\begin{proposition}\label{prop:local.odd.mult}
Assume that $\MM=\MM_1\oplus\MM_2$ and all the groups are as before. 
Assume also that the trivializations $(\TL^{(\mm_1+\mm_2)}_g), (\TL^{\MM_1}_g), (\TL^{\MM_2}_g)$ arising from the categorical equivalences
$$\Rep^{\beta}(G)\cong \Rep^{\beta\gamma^{-1}}(GN), \Rep^{\beta}(G_1)\cong \Rep^{\beta\gamma^{-1}}(G_1N_1),\text{ and } \Rep^{\beta}(G_2)\cong \Rep^{\beta\gamma^{-1}}(G_2N_2)$$
satisfy the conclusion of Lemma \ref{lem:coherent.Q}. Then the equivalences $$\Rep(\GL_{md}(\o_l))_{\MM}\cong \Rep(\GL_{m}(\O_l)_{\MM}), 
\Rep(\GL_{\mm_1d}(\o_\l))_{\MM_1}\cong \Rep(\GL_{\mm_1}(\O_\l)_{\MM_1}) \text{ and } $$ 
$$\Rep(\GL_{\mm_2d}(\o_\l))_{\MM_2}\cong \Rep(\GL_{\mm_2}(\O_\l)_{\MM_2})$$
are compatible with the product $\nmult$ in the sense that the diagram 
$$\xymatrix{
\GR^{\o,\l}_{\MM_1}\ot  \GR^{\o,\l}_{\MM_2}\ar[rr]^{\qquad\nmult}\ar[d] & & \GR^{\o,\l}_{\MM}\ar[d] \\ 
\GR^{\O,\l}_{\MM_1}\ot  \GR^{\O,\l}_{\MM_2}\ar[rr]^{\qquad \nmult}& & \GR^{\O,\l}_{\MM}
}
$$
commutes.
\end{proposition}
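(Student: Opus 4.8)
The plan is to trace the composite categorical equivalence $\cF$ through the commutative diagram structure, reducing the statement to a compatibility that follows from the already-established lemmas, in particular Lemma~\ref{lem:reducingelements} and Lemma~\ref{lem:coherent.Q}. First I would recall that the equivalence $\Rep(\GL_{md}(\o_\l))_{[\MM]}\cong \Rep(\GL_m(\O_\l))_{[\MM]}$ factors as the seven-step composition displayed in the proof of Proposition~\ref{prop:iso.abelian}: Clifford descent to the stabiliser $e_\MM$, passage to the twisted group algebra $\Rep^\beta(GN)$, the algebra isomorphism $\C^\beta GN\cong \C^{\beta\gamma^{-1}}G\ot \C^\beta N$ of Lemma~\ref{isomorphismphi}, the trivialisation of $\gamma$ via $(\TL^\MM_g)$, and then Clifford induction back up. The multiplication $\nmult$ on the $\o$-side and on the $\O$-side is, by Theorem~\ref{thm:primary.decomposition.of.reps} and diagram~\eqref{eq:primary-diag}, computed at the level of stabilisers by $\pind_{U(\MM),V(\MM)}$, i.e. by tensoring with $\C G(\MM) e_{U^1}e_{V^1}$ over $L(\MM)=G_1(\MM_1)\times G_2(\MM_2)$. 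So the whole statement reduces to checking that the isomorphism $\C^\beta GN\cong \C^{\beta\gamma^{-1}}G\ot\C^\beta N$ intertwines the Harish-Chandra bimodules on the two sides.

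The key point is the following: on the $\o$-side the relevant idempotents are $e_{U'(\MM)}$ and $e_{V'(\MM)}$ living inside $\C^\beta GN$, where $U'(\MM)\subseteq \GL_m(\O_{k+1})(\MM)\cdot N$, and I would decompose $U'(\MM) = U^\cA \cdot U^N$ where $U^\cA\subseteq G$ is the part in the $\cA$-block and $U^N\subseteq N_{12}$ is the part in the $\cZ$-block (and dually $V'(\MM)=V^\cA\cdot V^N$ with $V^N\subseteq N_{21}$). Under this decomposition $e_{U'(\MM)}$ factors as a product of the idempotent $e_{U^\cA}$ pulled from $\C^\beta G$ and the idempotent $e_{12}$ (up to the precise form of $U^N$ inside $N_{12}$) in $\C^\beta N$. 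Now Lemma~\ref{lem:reducingelements} says precisely that for $u\in U'(\MM)$ one has $\TL^\MM_u e_{12}=e_{12}$, which means that after applying the isomorphism of Lemma~\ref{isomorphismphi} the image of $e_{U'(\MM)}$ is $e_{U^\cA}\ot e_{12}$, i.e. it splits as (the $\O$-side Harish-Chandra idempotent on the $G$-factor) $\ot$ (an idempotent on the $N$-factor killed by the unique irreducible of $\C^\beta N$). The same reasoning applies to $V'(\MM)$ and $e_{21}$. Combining with Lemma~\ref{lem:coherent.Q}, which guarantees that the trivialisations $(\TL^{\MM_i}_{g_i})$ of $\gamma_{\MM_i}$ agree with the restriction of $(\TL^\MM_g)$ on the product group when read against $e_{12}e_{21}$, one obtains that the diagram of bimodule functors commutes.

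Concretely I would organise the proof as a diagram chase analogous to the even-level lemma preceding \S\ref{subsec:odd}: write down the $4\times 3$ (or $3\times 2$) grid whose columns are $\Rep$ of the relevant groups, Clifford-reduced to stabilisers, then to twisted group algebras, then split off the $\C^\beta N$ factor; the top and bottom squares commute by compatibility of Clifford theory with $\pind$ (as in Proposition~\ref{prop:GR-Mat-grading} and~\cite[Section 3]{CMO1}); the middle square — which is the new content — commutes because of the three lemmas just cited. I would spell out that $\pind_{U'(\MM),V'(\MM)}$ translates under Lemma~\ref{isomorphismphi} into $\pind_{U^\cA,V^\cA}$ on the $\C^{\beta\gamma^{-1}}G$-side (using that $e_{12},e_{21}$ act as the identity after projecting to the unique simple module of $\C^\beta N$) together with the trivialising scalars matching up via Lemma~\ref{lem:coherent.Q}, and that after passing to $\Rep^\beta(G_i)$ and inducing back to $\GL_{m_i}(\O_\l)$ this is exactly the $\O$-side multiplication $\nmult$.

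The main obstacle I expect is bookkeeping the idempotent $e_{U'(\MM)}$ correctly: one must verify that the ``$N$-part'' of $U'(\MM)$ — which after reduction mod $K^{k+1}$ lands in $N_{12}$ — is actually all of $N_{12}$ (or at least that its idempotent is absorbed by $e_{12}$ in the relevant sense), and likewise for $V'(\MM)$ and $N_{21}$; this is a linear-algebra statement about how the block-upper-triangular subgroup $U_{\nn_1,\nn_2}(\MM)$ meets $\cZ$, which should follow from Lemma~\ref{lem:duality.between.U.and.V.over.F} and the coprimality-type arguments already in \S\ref{fieldext}, but needs to be stated carefully. Once that identification is in place, Lemmas~\ref{lem:reducingelements} and~\ref{lem:coherent.Q} do all the real work, and the rest is a routine naturality chase.
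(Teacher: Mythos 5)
Your proposal follows essentially the same route as the paper's proof: reduce everything to the middle equivalence between twisted group algebras, observe that under the isomorphism $\C^{\beta}GN\cong\C^{\beta\gamma^{-1}}G\ot\C^{\beta}N$ the Harish--Chandra idempotent factors as $e_{U'(\MM)}e_{V'(\MM)}\ot e_{12}e_{21}$ via Lemma~\ref{lem:reducingelements}, match the trivialisations with Lemma~\ref{lem:coherent.Q}, and dispose of the remaining squares by compatibility of Clifford theory with $\pind$. The only ingredient you leave implicit is the explicit identification $\C^{\beta}Ne_{12}e_{21}\ot_{N_1\times N_2}(I_{\MM_1}\boxtimes I_{\MM_2})\cong I_{\MM}$ used in the paper's coinvariant computation, but your outline is otherwise the same argument.
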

\begin{proof}
The equivalences between the categories is given as the composition of the equivalences 
$$\Rep(\GL_{md}(\o_\l))_{\MM}\cong \Rep(\GL_{md}(\o_\l)(\MM))_{\MM}\cong \Rep^{\beta}(\GL_{md}(\o_{k+1})(\MM))_{\MM}\cong $$
$$\Rep^{\beta}(\GL_{m}(\O_{k+1})(\MM))_{\MM}\cong 
\Rep(\GL_{m}(\O_\l)(\MM))_{\MM}\cong 
\Rep(\GL_{m}(\O_\l))_{\MM} $$
and similarly for the equivalence for the groups $G_1$ and $G_2$.
All the equivalences except for the middle arise from Clifford Theory and are compatible with $\nmult$ \uri{by \cite[Theorem 3.6]{CMO1}}. We will prove here that the middle equivalence, for projective representations, is compatible with $\nmult$ as well; \uri{see \cite[Theorem 3.14]{CMO1} for a comprehensive treatment of the induction functors in the context of projective representations}.

Let then $W_i\in \Rep^{\beta}(\GL_{\mm_i}(\O_{k+1})(\MM_i))_{\MM_i}$ be a projective representation for $i=1,2$. We have
$$W_1\nmult W_2 = \C^{\beta} \GL_{m}(\O_{k+1})(\MM)e_{U'(\MM)}e_{V'(\MM)}\ot_{G_1\times G_2}(W_1\boxtimes W_2).$$
The corresponding representation of 
$\C^{\beta} \GL_{md}(\o_{k+1})(\MM)$ is then given by 
\begin{equation}\big(\C^{\beta} \GL_{m}(\O_{k+1})(\MM)e_{U'(\MM)}e_{V'(\MM)}\ot_{G_1\times G_2}(W_1\boxtimes W_2)\big)\ot I_{\MM}\end{equation}
where as before $I_{\MM}$ is the simple representation of $\C^{\beta}N$. The action of $g\in G$ is given by the action with $T_g\ot \TL^{\MM}_g$ and the action of $h\in N$ is given by $1\ot T_h$. 

On the other hand, if we first apply the categorical equivalence to $W_1$ and $W_2$ we get the representations 
$$W_1\ot I_{\MM_1}\text{ and } W_2\ot I_{\MM_2}$$
where the action of $g\in G_i\subseteq G_iN_i$ on $W_i\ot I_{\MM_i}$ is given by $T_g\ot \TL^{\MM_i}_g$ and the action of $h\in N_i\subseteq G_iN_i$ is given by $1\ot T_h$. Applying $\nmult$ we get 
$$(W_1\ot I_{\MM_1})\nmult (W_2\ot I_{\MM_2}) = $$
\begin{equation}\label{eq:secondrep}\C^{\beta}GN e_{U'(\MM)N_{12}}e_{V'(\MM)N_{21}}\ot_{(G_1\times G_2)\cdot (N_1\times N_2)}(W_1\ot I_{\MM_1})\boxtimes (W_2\ot I_{\MM_2}).\end{equation}
Where we have used the fact that $U(\MM)=U'(\MM)N_{12}$ and $V(\MM)=V'(\MM)N_{21}$. 
We next use the isomorphism of rings $\C^{\beta}GN\cong \C^{\beta}G\ot \C^{\beta}N$. Under this isomorphism the element 
$$e_{U'(\MM)N_{12}}e_{V'(\MM)N_{21}} = e_{U'(\MM)}e_{12}e_{V'(\MM)}e_{21}$$ 
where $e_{12}$ is the idempotent which corresponds to $N_{12}$ and similarly for $e_{21}$ becomes
$$\sum_{u\in U'(\MM), v\in V'(\MM)}
T_uT_v\ot \TL^{\MM}_u e_{12}\TL^{\MM}_ve_{21} = e_{U'(\MM)}e_{V'(\MM)}\ot e_{12}e_{21}$$ by Lemma \ref{lem:reducingelements}.
The representation \ref{eq:secondrep} then gives the $\C^{\beta}G\ot \C^{\beta}N$ representation 
\begin{equation}\label{eq:thirdrepresentation}\big(\C^{\beta}Ge_{U'(\MM)}e_{V'(\MM)}\ot \C^{\beta}Ne_{12}e_{21}\big)\ot_{(G_1\times G_2)\cdot (N_1\times N_2)} (W_1\ot I_{\MM_1})\boxtimes ( W_2\ot I_{\MM_2}).\end{equation}

We fix an isomorphism $$\Psi:\C^{\beta}Ne_{12}e_{21}\ot_{(N_1\times N_2)}(I_{\MM_1}\boxtimes I_{\MM_2})\cong I_{\MM}$$
of $\C^{\beta}N$ representations. \uri{The existence of such an isomorphism follows, for example, from dimension count. Indeed, $\dim(I_{\MM_i})=\sqrt{|N_i|}$ and the dimension of the representation $\C^{\beta}Ne_{21}\ot_{(N_1\times N_2)}(I_{\MM_1}\boxtimes I_{\MM_2})$ is therefore $|N_{12}|\sqrt{|N_1N_2|}=\sqrt{|N|}=\dim(I_\MM)$, as $I_\MM$ is the unique irreducible representation of $\C^{\beta}N$. Finally, $\C^{\beta}Ne_{12}e_{21}\ot_{(N_1\times N_2)}(I_{\MM_1}\boxtimes I_{\MM_2})$ is then a non-zero subrepresentation, which must be equal to it. }

We rewrite Equation \eqref{eq:thirdrepresentation} in the following way:
we will write $(-\ot - )_{(G_1\times G_2)\cdot (N_1\times N_2)}$, the coinvariants with respect to the action of the group $(G_1\times G_2)\cdot (N_1\times N_2)$, instead of $- \ot_{(G_1\times G_2)\cdot (N_1\times N_2)}- $. This will make it easier to analyse the representation. Using the isomorphism $\Psi$ we get
\[
\begin{split}
\big(\C^{\beta}Ge_{U'(\MM)}e_{V'(\MM)}&\ot \C^{\beta}Ne_{12}e_{21}\big)\ot_{(G_1\times G_2)\cdot (N_1\times N_2)}(W_1\ot I_{\MM_1})\boxtimes ( W_2\ot I_{\MM_2}) \\
&\cong\big((\C^{\beta} Ge_{U'(\MM)}e_{V'(\MM)}\ot (W_1\boxtimes W_2))\ot (\C^{\beta}N e_{12}e_{21}\ot (I_{\MM_1}\boxtimes I_{\MM_2}))\big)_{(G_1\times G_2)\cdot (N_1\times N_2)}\\
&\cong \big((\C^{\beta} Ge_{U'(\MM)}e_{V'(\MM)}\ot (W_1\boxtimes W_2))\ot (\C^{\beta}N e_{12}e_{21}\ot_{(N_1\times N_2)} (I_{\MM_1}\boxtimes I_{\MM_2}))\big)_{(G_1\times G_2)}.
\end{split}
\]
The action of $g=diag(g_1,g_2)\in G_1\times G_2$ on the last vector space is given by 
\[
\begin{split}
g\cdot (T_{g'}&e_{U'(\MM)}e_{V'(\MM)}\ot w_1\ot w_2\ot T_h e_{12}e_{21}\ot v_1\ot v_2) \\
&=(T_{g'}T_g^{-1}e_{U'(\MM)}e_{V'(\MM)}\ot T_{g_1}\cdot w_1\ot T_{g_2}\cdot w_2\ot T_h e_{12}e_{21}(\TL^{\MM}_g)^{-1}\ot \TL^{\MM_1}_{g_1}v_1\ot \TL^{\MM_2}_{g_2}v_2) \\ 
&=(T_{g'}T_g^{-1}e_{U'(\MM)}e_{V'(\MM)}\ot T_{g_1}\cdot w_1\ot T_{g_2}\cdot w_2\ot T_h e_{12}e_{21}(\TL^{\MM_1}_{g_1})^{-1}(\TL^{\MM_2}_{g_2})^{-1}\ot \TL^{\MM_1}_{g_1}v_1\ot \TL^{\MM_2}_{g_2}v_2)\\ 
&=(T_{g'}T_g^{-1}e_{U'(\MM)}e_{V'(\MM)}\ot T_{g_1}\cdot w_1\ot T_{g_2}\cdot w_2\ot T_h e_{12}e_{21}\ot v_1\ot v_2),
\end{split}
\]
where we have used Lemma \ref{lem:coherent.Q} to relate $\TL_g^{\MM}$ and $\TL_{g_i}^{\MM_i}$, the fact that $\TL^{\MM_i}_{g_i}\in \C^{\beta}N_i$ and the  relevant tensor product is over $N_1\times N_2$. 
This shows that the above representation is in fact isomorphic with
$$\big(\C^{\beta} G e_{U'(\MM)}e_{V'(\MM)}\ot_{G_1\times G_2} (W_1\boxtimes W_2)\big)\ot \big(\C^{\beta}Ne_{12}e_{21}\ot_{N_1\times N_2}(I_{\MM_1}\boxtimes I_{\MM_2})\big).$$ 
Using the isomorphism $\Psi$ above, this representation is isomorphic to 
$$\big(\C^{\beta} G e_{U'(\MM)}e_{V'(\MM)}\ot_{G_1\times G_2} (W_1\boxtimes W_2)\big)\ot I_{\MM}$$ as desired.  
\end{proof}

The last thing we need to do is to show that we can choose all the isomorphism $\GR^{\O,l}_{[\MM]}\cong \GR^{\o,l}_{[\MM]}$ in a way which will give us a ring homomorphism. We know this is locally true, for a particular triple $(\MM_1,\MM_2,\MM=\MM_1\oplus\MM_2)$ and we want to show that we can also make a global choice of trivialisations which will give us a global ring isomorphism. For this we will use Tychonoff's Theorem.

For every $[\MM]\in[\Mat^k(\ol{f})]$ we write $$X_{\MM}=\{(\TL^{\MM}_g)|(\TL^{\MM}_g)\text{ is a trivialisation of }\gamma_{\MM}\}.$$ 
Since the trivialisations form a torsor over the finite group $Hom_{\Z}(G(\MM),\C^{\times})$ this set is finite. Let $$X=\prod_{\MM} X_{\MM}.$$ We will call an element $x=(Q^{\MM}_g)_{\MM}$ \textit{coherent} if whenever $\MM=\MM_1\oplus \MM_2$ and $g=diag(g_1,g_2)$ for $g_i\in G(\MM_i)$ the conclusion of Lemma \ref{lem:coherent.Q} holds, that is $$\TL^{\MM}_ge_{12}e_{21}= \TL^{\MM_1}_{g_1}\TL^{\MM_2}_{g_2}e_{12}e_{21}$$ in the twisted group algebra $\C^{\beta}N$. By Proposition \ref{prop:local.odd.mult} a  coherent element in $X$ will give an isomorphism of rings as desired.

We thus want to prove that $X$ contains a coherent element. For this, consider $X$ as a topological space with the product topology, where the $X_{\MM}$ are finite discrete topological spaces. The space $X$ is then compact by Tychonoff's Theorem, as a product of compact spaces. 

Since $[\Mat^k(\ol{f})]$ is a countable union of finite sets it is countable. Choose an enumeration $([\MM_i])$ of $[\Mat_k(\ol{f})]$, and define $$\mu_i = \bigoplus_{j=1}^i\MM_j.$$ For every $i$ choose a trivialisation $(\TL^{\mu_i}_g)$ of $\gamma_{\mu_i}$. By Lemma \ref{lem:coherent.Q} we get a trivialisation $(\TL^{i,j}_g)$ for $\gamma_{\MM_j}$ for every~{$j\leq i$}.

Choose now a sequence of elements $(x_i)$ in $X$ which satisfy the following condition:
If $j\leq i$ then $(x_i)_{\MM_j} = (\TL^{i,j}_g)$.
The compactness of $X$ implies that $(x_i)$ has a convergent subsequence $x_{i_k}$. Write $x=\lim_k x_{i_k}$. We claim that $x$ is a coherent element of $X$. 
Indeed, Assume that $\MM_i\oplus\MM_j=\MM_k$ for some $i,j,k\in \N$. Then since all the spaces $X_{\MM}$ are finite we get that 
$x_{\MM_i} = (x_{i_{r}})_{\MM_i}, x_{\MM_j} = (x_{i_{r}})_{\MM_j}$ and 
$x_{\MM_k} = (x_{i_{r}})_{\MM_k}$ for $r \uri{\gg}0$.
For $r$ \uri{large} enough we get that $i,j,k< i_r$. But for \uri{such $r$,  
$\mu_{i_r}$} will contain the two identical matrices $\MM_i\oplus \MM_j$ and $\MM_k$ as direct summands. By considering the action of the relevant Weyl group we get that the resulting trivialisations for $\gamma_{\MM_i\oplus \MM_j}$ and for $\gamma_{\MM_k}$ are equal. But the trivialisation we get for $\gamma_{\MM_i\oplus \MM_j}$ is compatible with the trivialisations we get for $\gamma_{\MM_i}$ and $\gamma_{\MM_j}$ in the sense of Lemma \ref{lem:coherent.Q}. This concludes the proof of Theorem~\ref{oddhomomorphism} and hence of Theorem~\ref{thm:base.change.iso}. 

\end{subsection}


\section{The principal series and proofs of theorems  \ref{thm:atom.psh.subalgebra}   and  \ref{thm:large.psh.subalgebra}}\label{sec:principal.series} 
 
\subsection{The principal series of $\GL_n(\o)$}  Essential for the multiplication and comultiplication on $\GR^\o$ to commute, thereby endowing $\GS^\o$ with a structure of a Hopf algebra, is the existence of a Mackey formula. \uri{A general Mackey formula should involve two \lq Levi subgroups\rq~$L_1$ and $L_2$ and a combinatorial formula of the form        
\begin{equation}\label{eq:Mackey_intro}
\pRes^G_{L_2} \circ \pInd_{L_1}^G \cong \bigoplus_{w\in W_{L_2}\backslash W_G / W_{L_1}} \pInd_{L_2\cap wL_2w^{-1}}^{L_2} \circ \Ad_w \circ \pRes^{L_1}_{w^{-1}L_1w\cap L_1}
\end{equation}}
keeping track of the commutation of induction and restriction functors (in a broad sense), \uri{$\pInd$ and $\pRes$},  through the combinatorics of \uri{appropriate \lq Weyl groups\rq~$W$}. See \cite[\S5]{DM} for the precise general formulation and proof for the case of Harish-Chandra functors associated with reductive groups over finite fields. The existence of such formula is a key for the development of the theory, in particular for the existence of a PSH-algebra structure; see \cite{Zelevinsky}, in particular Appendix~3. We remark that a naive generalisation of Harish-Chandra functors to the context of rings, namely taking only one idempotent rather than two, does not lead to any tractable relation in between induction and restriction. This can be seen by the growing complexity of the permutation representation of $\GL_n(\o)$ over the flag variety; see \cite{Casselman} for $n=2$ and \cite{OnnSingla} for $n=3$. 

While we do not know whether a Mackey formula of the sort \eqref{eq:Mackey_intro} holds in general for $\GL_n(\o)$, it does hold for the \lq maximal torus\rq~as we now explain.  Let $T \cong \GL_1(\o_\l)^n$ denote the group of diagonal matrices in $G=\GL_n(\o_\l)$. Let $U$ denote the subgroup of upper unitriangular matrices in $\GL_n(\o_\l)$ and let $V$ be its transpose. Let $\pind=\C G e_ue_V\otimes_T:\Rep(T) \to \Rep(G)$ and $\pres = e_Ue_V\C G \ot_G:\Rep(G)\to \Rep(T)$. 

\begin{theorem}[Decomposition of the principal series \cite{CMO2}]\quad
\begin{enumerate}
\item There is a natural isomorphism $\pres \pind \cong \sum_{\sigma \in S_n} \Ad(\sigma)$.
\item For each linear character \uri{$\chi=(\chi_1,\ldots,\chi_n)$} of $T$ there is an isomorphism of algebras $\End_G(\pind \chi)\simeq \C \mathrm{Stab}_{S_n}(\chi)$. In particular, if $\chi={\bf 1}$ is the trivial representation we have $\End_G\left(\pind({\bf 1})\right) \cong \C S_n$. 
\end{enumerate}
\end{theorem}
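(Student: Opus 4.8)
The plan is to imitate, for $\GL_n(\o_\l)$, Zelevinsky's treatment of the maximal torus over a finite field: decompose the $\C T$-bimodule through which $\pres\pind$ is realised, and then read off the endomorphism algebra by adjunction. Since $T$ normalises both $U$ and $V$, every $t\in T$ commutes with the idempotents $e_U$ and $e_V$, so $B:=e_Ue_V\,\C G\,e_Ue_V$ is a $\C T$-bimodule and $\pres\pind$ is the functor $X\mapsto B\otimes_{\C T}X$. I would decompose $B$ by pulling back the Bruhat decomposition of $\GL_n(\o_1)$: fixing for each $\sigma\in S_n$ the honest permutation matrix $\dot\sigma\in\GL_n(\o_\l)$ and letting $X_\sigma\subseteq G$ be the set of elements whose reduction modulo $\pi$ lies in the Bruhat cell of $\sigma$, one has $G=\bigsqcup_{\sigma\in S_n}X_\sigma$, hence $B=\sum_\sigma B_\sigma$ with $B_\sigma:=e_Ue_V\,\C X_\sigma\,e_Ue_V$. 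The key claim is that this sum is \emph{direct} and that each $B_\sigma$ is a free rank-one left $\C T$-module on the generator $b_\sigma:=e_Ue_V\dot\sigma e_Ue_V$ (in particular $b_\sigma\neq 0$). Granting this, the right action is forced: for $t\in T$, $b_\sigma t=e_Ue_V\dot\sigma t\,e_Ue_V=(\dot\sigma t\dot\sigma^{-1})b_\sigma$ since $\dot\sigma t\dot\sigma^{-1}\in T$ commutes with $e_U,e_V$, so $B_\sigma\cong\C T$ with right action twisted by $\Ad(\dot\sigma)$; tensoring over $\C T$ with $X\in\Rep(T)$ gives $B_\sigma\otimes_{\C T}X\cong\Ad(\sigma)X$ (the choice of representative being immaterial since $T$ is abelian), and summing over $\sigma$ yields the natural isomorphism $\pres\pind\cong\bigoplus_{\sigma\in S_n}\Ad(\sigma)$ of part~(1).

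I expect the main obstacle to be exactly the directness of $B=\bigoplus_\sigma B_\sigma$ together with $B_\sigma=\C T\cdot b_\sigma$. This is where the \emph{second} idempotent is essential: for the one-sided functor $\C G e_U\otimes_{\C T}-$ the analogous summand $e_U\,\C X_\sigma\,e_U$ is strictly larger than $\C T\cdot e_U\dot\sigma e_U$ once $\l>1$ (the number of $U$-$U$ double cosets in $\GL_n(\o_\l)$ grows with $\l$), which is precisely the failure of a naive Mackey formula over rings recalled in the introduction to this section. I would prove the claim by induction on $\l$. For $\l=1$, $\o_1$ is a field and the statement is Zelevinsky's computation, our functors coinciding with the usual Harish--Chandra functors by \cite[Theorem~2.4]{Howlett-Lehrer_HC}. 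For the step from $\l-1$ to $\l$, write $g\in X_\sigma$ as $\dot\sigma$ times an element congruent to $1$ modulo $\pi$, and use the Iwahori decomposition of each principal congruence subgroup $K^i_n$ with respect to $(U,T,V)$ (as employed in Section~\ref{sec:GR}), together with the identities $e_Uu=e_U$ ($u\in U$), $v e_V=e_V$ ($v\in V$) and the bimodule isomorphism \eqref{commutativity} of \cite[Theorem~2.15]{CMO1}, to strip off layer by layer the contributions annihilated by $e_U$ on the left or by $e_V$ on the right, the residue of each stripping landing in the $T$-part and being absorbed into the $\C T$-coefficient; $b_\sigma\neq 0$ is visible from its coefficient at $\dot\sigma$, a positive rational number. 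An alternative organisation of the inductive step, worth attempting in parallel, is to split along the $K^{\l-1}$-grading and invoke the compatibility of $\pind,\pres$ with Clifford theory and with inflation (\cite[Theorems~2.18, 3.4, 3.6]{CMO1} and Proposition~\ref{prop:GR-changing-l}) to reduce each graded piece to $\GL_n(\o_{\l-1})$ and to the stabiliser of a diagonal character of the relevant congruence quotient.

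For part~(2): by the bi-adjointness of $\pind$ and $\pres$ (\cite[Theorem~2.18]{CMO1}) one has $\End_G(\pind\chi)\cong\Hom_T(\pres\pind\chi,\chi)$, which by part~(1) equals $\Hom_T\!\bigl(\bigoplus_{\sigma\in S_n}\Ad(\sigma)\chi,\ \chi\bigr)$; since $\chi$ is a linear character, $\Ad(\sigma)\chi\cong\chi$ if and only if $\sigma\in\mathrm{Stab}_{S_n}(\chi)$, so $\End_G(\pind\chi)$ has dimension $\lvert\mathrm{Stab}_{S_n}(\chi)\rvert$ and a distinguished basis $\{T_\sigma\mid\sigma\in\mathrm{Stab}_{S_n}(\chi)\}$, with $T_\sigma$ the intertwiner associated to the bimodule summand $B_\sigma$. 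To determine the algebra structure I would compute the products $T_\sigma T_\tau$ using the decomposition $B=\bigoplus_\rho B_\rho$ of part~(1): this presents $\End_G(\pind\chi)$ as an Iwahori--Hecke algebra of the Young subgroup $\mathrm{Stab}_{S_n}(\chi)\cong S_{\lambda_1}\times\cdots\times S_{\lambda_r}$, with parameters positive powers of $q=\lvert\kk\rvert$ (as for $\GL_n$ over a field) and possibly twisted by a $2$-cocycle valued in roots of unity coming from $\chi$ evaluated on the products $\dot\sigma\dot\tau\dot{\sigma\tau}^{-1}\in T$. Such an algebra is semisimple (characteristic zero, $q$ not a root of unity), so by the Tits deformation theorem it is abstractly isomorphic to the twisted group algebra $\C^{c}[\mathrm{Stab}_{S_n}(\chi)]$; finally the cocycle $c$ is a coboundary --- one may choose the $\dot\sigma$ so that $\sigma\mapsto\dot\sigma$ is a homomorphism on $\mathrm{Stab}_{S_n}(\chi)$, for which the honest permutation matrices already suffice --- whence $\End_G(\pind\chi)\cong\C[\mathrm{Stab}_{S_n}(\chi)]$, and for $\chi=\mathbf{1}$, where $\mathrm{Stab}_{S_n}(\mathbf{1})=S_n$, this gives $\End_G(\pind\mathbf{1})\cong\C S_n$. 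Here too the only genuine work is the bimodule computation of part~(1) together with the identification of the Hecke parameters and the triviality of the twist.
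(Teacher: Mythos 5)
This theorem is not proved in the present paper at all: it is imported from \cite{CMO2}, so there is no internal argument to compare yours against. Judged on its own terms, your outline has the right architecture --- realise $\pres\,\pind$ by the $\C T$-bimodule $B=e_Ue_V\,\C G\,e_Ue_V$, try to exhibit $B$ as a direct sum of rank-one pieces $\C T\,b_\sigma$ with $b_\sigma=e_Ue_V\dot\sigma e_Ue_V$, and deduce part (2) by the bi-adjointness of $\pind$ and $\pres$ --- and the easy steps you actually carry out are correct: $b_\sigma\neq 0$, the twist $b_\sigma t=(\dot\sigma t\dot\sigma^{-1})b_\sigma$, independence of the representative because $T$ is abelian, and the dimension count $\dim\End_G(\pind\chi)=\lvert\mathrm{Stab}_{S_n}(\chi)\rvert$ once part (1) is granted.

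However, the two statements that carry all the weight are left unproved, and one of the devices you propose does not work as stated. First, the claim $B=\bigoplus_{\sigma}\C T\,b_\sigma$: defining $B_\sigma=e_Ue_V\,\C X_\sigma\,e_Ue_V$ from the preimages of Bruhat cells does not get you started, because the map $a\mapsto e_Ue_Vae_Ue_V$ does not respect that partition (multiplying a cell on the left by $V$ or on the right by $U$ moves you into other cells), so neither the spanning statement $B_\sigma=\C T\,b_\sigma$ nor the directness of $\sum_\sigma B_\sigma$ follows from $G=\bigsqcup_\sigma X_\sigma$; and the inductive ``strip off layer by layer'' step is a hope rather than an argument --- this is exactly the point where the naive one-idempotent Mackey formula fails over $\o_\l$ (as the paper itself stresses), and it is the main technical content of \cite{CMO2}. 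Second, part (2) needs more than part (1) plus semisimplicity: an algebra that is semisimple of dimension $\lvert\mathrm{Stab}_{S_n}(\chi)\rvert$ need not be the group algebra, so you must actually compute the products $T_\sigma T_\tau$. You only conjecture their shape (an Iwahori--Hecke algebra of the Young subgroup with parameters powers of $q$, possibly twisted); whether any $q$-deformation or $2$-cocycle appears at all for $\l\geq 2$ is precisely what has to be determined, the braid and quadratic relations are not checked, and your dismissal of the cocycle only addresses the twist coming from the lift $\sigma\mapsto\dot\sigma$, not one coming from the normalisation of the intertwiners. So the proposal is a reasonable plan whose decisive steps --- the bimodule decomposition and the intertwiner multiplication table --- are missing.
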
 

The isomorphism in part (2) of the theorem coincides with the Hecke algebra isomorphism associated to the flag representation of $\GL_n(\F_q)$ and should be contrasted with the highly complicated decomposition of the flag representation of $\GL_3(\o_\l)$; see \cite[\S6.3]{OnnSingla}.

\subsection{Proof of Theorem~\ref{thm:atom.psh.subalgebra}} 
{Let $\rho$ be a strongly cuspidal representation of level $\l$. In case $\l=0$ the result follows from Sections 2 and 9 in \cite{Zelevinsky}. Assume that $\l>0$. The restriction of $\rho$ to $K^{\l}/K^{\l+1} \cong \Mat_n(\kk)$ consists of irreducible matrices with common irreducible characteristic polynomial $\ol{f}$ and therefore $\GS(\rho)\subseteq \GR^{\o,\l}(\ol{f})$.

By Theorem \ref{thm:base.change.iso} we have an isomorphism of rings and corings $$\GR^{\o,\l}(\ol{f})\cong \GR^{\O,\l}(t-\ol{x})$$ where $\O=\o[x]$, and the minimal polynomial of $x$ over $\o$ is $f$. Under this isomorphism $\rho$ corresponds to a representation of $\GL_1(\O_\l)$ whose restriction to $K^{\l}/K^{\l+1}\cong \Mat_1(\O_1)$ is the $1\times 1$ matrix $(\ol{x})$; see the discussion following the statement of Theorem~\ref{thm:atom.psh.subalgebra} in the introduction. Notice that \uri{now} $K^{\l}$ and $K^{\l+1}$ are the congruence subgroups inside \uri{$\GL_m(\O)$ rather than $\GL_n(\o)$}.

We now proceed by induction on $\l$. 
Translation by the one-dimensional character which corresponds to the image of $x$ in $\O_\l$ gives an isomorphism $\GR^{\O,\l}(t-\ol{x})\cong \GR^{\O,\l}(t)$. Under this isomorphism the image of~$\rho$ is a representation of $\GL_1(\O_\l)$ which is trivial on the $\l$-th congruence subgroup. We can thus consider it as a representation $\wt{\rho}$ of $\GL_1(\O_{\l-1})$. The original algebra generated by $\rho$ is a PSH-algebra if and only if the algebra generated by this representation of $\GL_1(\O_{\l-1})$ is a PSH-algebra because translation by one-dimensional character is compatible with multiplication and comultiplication. We can proceed inductively to translate by one-dimensional characters until we reach the level $\l=0$. But then the claim is clear, because all representations of $\GL_1(\kk)$ are cuspidal for any finite field $\kk$.  \hfill \qed
}
\subsection{Proof of Theorem~\ref{thm:large.psh.subalgebra}}
{
We know that if $\rho$ is a strongly cuspidal representation then $\GS(\rho)$ is a PSH algebra. We just need to make sure that the Hopf axiom is still valid when we multiply representations from different~$\GS(\rho)$s.
In other words, we need to show that if we have $\nu_i\in \GS(\rho_i)$ for $i=1,\ldots,n$ then $$\Delta(\nu_1\circ\cdots\circ \nu_n) = \Delta(\nu_1)\circ\cdots\circ\Delta(\nu_n).$$
By Corollary \ref{cor:hopf.axiom.primary} we can reduce to the case where all the representations $\rho_i$ are of the same level $\l$ and are all contained in $\GR^{\o,\l}(\ol{f})$ for some irreducible polynomial $\ol{f}\in\o_1[t]$. Using the base change isomorphism from Theorem \ref{thm:base.change.iso} we can further assume, by passing to the relevant extension, that this polynomial has degree 1. By translating by a one-dimensional character we can assume that this polynomial is $t$, and therefore that the restriction of all $\nu_i$ to the $\l$-th congruence subgroups is trivial. We can thus consider all the $\nu_i$ as (virtual) representations in $\GR^{\o,\l-1}$. We can now continue to reduce $\l$ inductively using the isomorphism of Theorem \ref{thm:base.change.iso} and translation by a one-dimensional character. For $\l=0$ we know that the theorem is true by Section 9 in~\cite{Zelevinsky}, so we are done.} \hfill \qed


\section{Some nilpotent representations}\label{sec:nilpotent}


Corollary \ref{cor.reduction.nilpotent} reduces the study of $\GR^{\o,\l}(\ol{f})$ to the nilpotent subalgebra $\GR^{\O,\l}(t)$.
In this section we consider the case $\l=2$, 
and focus on representations whose associated (nilpotent) matrix
has only one non-zero Jordan block. We thus fix an integer $r \ge 2$ and consider the $r \times r$ matrix $\eta$ given by \uri{
\[
\eta = E_{1,2} + E_{2,3} + \cdots + E_{r-1,r}
\]}
where $E_{i,j}$ is the matrix with $1$ in the $(i,j)$ entry and zero otherwise. 
We study the subgroup $\GN_\eta=\GN^{\o,2}_\eta$ of $\GR^{\o,2}(t)$ 
spanned by all irreducible representations whose associated matrix is of the form $\eta_n:=\eta \oplus 0_{n-r}$ for some~$n$. \uri{The inner products on the corresponding $\Z$-modules are the restrictions of \eqref{def.inner.prod} and \eqref{def.inner.prod.tensor}}.
The map that assigns to an irreducible representation the orbit of its associated matrix commutes with the multiplication~$\nmult$,
and we get that $\GN_\eta \nmult \GR^{\o,1} \subseteq \GN_\eta$ and that $\Delta(\GN_\eta)\subseteq R^{\o,1}\ot \GN_\eta + \GN_\eta \ot \GR^{\o,1}$.
The second statement follows from the fact that the matrix $\eta$ is indecomposable. 

We write $\kk$ for the residue field $\o_1$. We also omit the ring $\o$ and simply write $\GR^\ell=\GR^{\o,\l}$. First we determine the structure of the centraliser of $\eta_n$ inside $\GL_n(\kk)$.
We then use Clifford Theory to study its representations, and finally we will explain the relation with the multiplication $\nmult$.

\subsection{The centraliser of $\eta_n$}
The following lemma follows easily from linear algebra.

\begin{lemma}
	The centraliser of $\eta_n$ in $\Mat_n(\kk)$ is isomorphic to the endomorphism ring of the $\kk[t]$-module $\kk[t]/t^r \oplus \kk^{n-r}$. It consists of all matrices of the form
	\begin{equation}\label{eqone}
		M = 
		\begin{pmatrix}
			M_{11} & M_{12} \\
			M_{21} & M_{22} 
		\end{pmatrix}
		=
		\begin{pmatrix}
			 a_0 & a_1 & \ldots & a_{r-1} &  b_1 & \ldots & b_{n-r} \\
			0 & a_0 & \ldots & a_{r-2} &  0 & \ldots & 0 \\
			 \vdots & \vdots & \ddots &  \vdots & \vdots\ & \ddots & \vdots \\
			 0 & 0 & \ldots & a_{0} &  0 & \ldots & 0 \\
			 0 & 0 & \ldots & c_1 &  d_{11} & \ldots & d_{1,n-r}\\
			 \vdots & \vdots & \ddots & \vdots &  \vdots\ & \ddots & \vdots \\
			 0 & 0 & \ldots & c_{n-r} &  d_{n-r,1}& \ldots & d_{n-r,n-r}
		\end{pmatrix}
	\end{equation}
where $M_{11}\in \Mat_r(\kk)$ is an element of the subalgebra of $\Mat_r(\kk)$ generated by $\eta$, 
	$M_{12}\in \Mat_{r\times n-r}(\kk)$ consists of row vectors of the form $b_1\uri{E}_{1,r+1}+b_2\uri{E}_{1,r+2}\cdots +b_{n-r}\uri{E}_{1,n}$, $M_{21}\in \Mat_{\uri{(n-r) \times}r}(\kk)$ 
	consists of column vectors of the form $c_1\uri{E}_{r+1,r}+\cdots +c_{n-r}\uri{E}_{n,r}$ and $M_{22}\in \Mat_{n-r}(\kk)$ is arbitrary. Moreover, $M$ is invertible if and only if $M_{11}$ and $M_{22}$ are invertible.  

\uri{The centraliser of $\eta_n$ is isomorphic to the automorphism group of the $\kk[t]$-module $\kk[t]/t^r \oplus \kk^{n-r}$, which is given explicitly by}
 	\[
		C_{r,n}=\begin{pmatrix}
			A_r & V\\
			V^* & \GL_{n-r}(\kk) 
		\end{pmatrix}
		=
		\begin{pmatrix}
			\GL_1(\kk[t]/t^r) & t^{r-1}\Mat_{1 \uri{\times} (n-r)}(\kk)\\
			\Mat_{(n-r) \uri{\times}1}(\kk) & \GL_{n-r}(\kk) 
		\end{pmatrix}.
	\]
\end{lemma}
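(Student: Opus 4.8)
The plan is to pass to the language of $\kk[t]$-modules. Identifying $\kk^n$ with the $\kk[t]$-module on which $t$ acts as $\eta_n$, a matrix in $\Mat_n(\kk)$ commutes with $\eta_n$ precisely when it is $\kk[t]$-linear; and since $\eta_n$ has Jordan type ``one block of size $r$ and $n-r$ blocks of size $1$'', this module is isomorphic to $M := \kk[t]/t^r \oplus \kk^{n-r}$ (with $t$ acting as $0$ on the second summand, so that $\kk = \kk[t]/t$ there). Thus the centraliser is $\End_{\kk[t]}(M)$. I would then fix the basis $e_1,\dots,e_r$ of $\kk[t]/t^r$ with $e_i = t^{r-i}$, for which multiplication by $t$ sends $e_j\mapsto e_{j-1}$ and $e_1\mapsto 0$, i.e.\ acts exactly as $\eta$; together with the standard basis of $\kk^{n-r}$ this realises $\End_{\kk[t]}(M)$ as a subalgebra of $\Mat_n(\kk)$.

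The next step is to compute the four blocks $\Hom_{\kk[t]}(\kk[t]/t^a,\kk[t]/t^b)$ for $a,b\in\{r,1\}$, using the standard fact that such a homomorphism is determined by the image $g$ of $1$, subject only to $t^a g = 0$, i.e.\ $g\in t^{\max(0,\,b-a)}\kk[t]/t^b$. Reading this off in the chosen bases yields exactly the shape in \eqref{eqone}: the $(1,1)$-block is the image of multiplication by $\kk[t]/t^r$, namely the upper-triangular Toeplitz matrices $a_0 I + a_1\eta+\dots+a_{r-1}\eta^{r-1}$, which is the subalgebra generated by $\eta$; the $(1,2)$-block comes from $\Hom(\kk[t]/t,\kk[t]/t^r)=t^{r-1}\kk[t]/t^r$, so each column lands in $\kk e_1$ and $M_{12}$ is supported on its first row; the $(2,1)$-block comes from $\Hom(\kk[t]/t^r,\kk[t]/t)$, whose elements annihilate $t\cdot(\kk[t]/t^r)=\lspan(e_1,\dots,e_{r-1})$, so $M_{21}$ is supported on its $r$-th column; and the $(2,2)$-block is all of $\Mat_{n-r}(\kk)$.

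To get the invertibility criterion I would compute $\det M$ directly. When $a_0\neq 0$ the block $M_{11}$ is invertible and $M_{11}^{-1}$ is again upper triangular, so its $(r,1)$-entry vanishes (here $r\geq 2$ is used); since $M_{12}$ is supported on its first row and $M_{21}$ on its $r$-th column, the only potentially nonzero term of each entry of the product $M_{21}M_{11}^{-1}M_{12}$ is the one through that $(r,1)$-entry, so $M_{21}M_{11}^{-1}M_{12}=0$, and the Schur complement formula gives $\det M = \det(M_{11})\det(M_{22}) = a_0^{\,r}\det(M_{22})$. When $a_0=0$ the matrix $M_{11}$ is strictly upper triangular, hence has zero first column, and the first column of $M_{21}$ also vanishes (it is supported on the $r$-th column, $r\geq 2$); so the first column of $M$ is zero and $\det M = 0 = a_0^{\,r}\det(M_{22})$. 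In either case $M$ is invertible iff $a_0\neq 0$ and $\det(M_{22})\neq 0$, i.e.\ iff $M_{11}$ and $M_{22}$ are both invertible.

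Finally, translating the description of the units back to module language identifies $C_{r,n}$ with the displayed matrix group: the $(1,1)$-corner is $\GL_1(\kk[t]/t^r)=A_r$, the $(1,2)$-corner is $\Hom(\kk^{n-r},\kk[t]/t^r)=t^{r-1}\Mat_{1,n-r}(\kk)$, the $(2,1)$-corner is $\Hom(\kk[t]/t^r,\kk^{n-r})=\Mat_{n-r,1}(\kk)$ (each such map factoring through $\kk[t]/t^r\onto\kk$), and the $(2,2)$-corner is $\GL_{n-r}(\kk)$, with group operation induced by composition of endomorphisms, i.e.\ matrix multiplication. There is no genuine obstacle here; the only points that need care are keeping the source/target conventions for the two off-diagonal Hom-spaces straight, so that $M_{12}$ ends up supported on the first row and $M_{21}$ on the $r$-th column (and not the reverse), and separating the cases $a_0\neq 0$ and $a_0=0$ in the determinant computation.
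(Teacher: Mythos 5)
Your proof is correct and takes exactly the route the paper intends: the paper simply asserts that the lemma ``follows easily from linear algebra'' and gives no argument, and your identification of the centraliser with $\End_{\kk[t]}\bigl(\kk[t]/t^r\oplus\kk^{n-r}\bigr)$ together with the block-by-block computation of the four Hom-spaces reproduces precisely the description in \eqref{eqone}. Your invertibility argument via the Schur complement, giving $\det M=a_0^{\,r}\det M_{22}$ (where the vanishing of $M_{21}M_{11}^{-1}M_{12}$ uses $r\geq 2$), is a valid and complete way to fill in the one point the statement leaves implicit.
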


\uri{We write $A=A_r=\GL_1(\kk[t]/t^r)$ and let $A^i_j$ denote the $i$-th congruence subgroup modulo the $j$-th congruence subgroup of $A$. The notation of the $\kk$-vector spaces $V$ and $V^*$ reflects the fact that they are in natural duality.}  We shall frequently move between the centraliser $\GL_n(\kk)(\eta_n)$ and its isomorphic copy $C_{r,n}$. The action of $A_r \times \GL_{n-r}(\kk)$ on $V$ and on $V^*$ is given as follows: 
an element $\sum_i a_i t^i \in A$ acts on $V$ by multiplication with the scalar $a_0$ and on $V^*$ by multiplication with $a_0^{-1}$. 
The group $\GL_{n-r}(\kk)$ acts in the standard way on $V$ and on $V^*$ via the inverse.  

It is not difficult to see that we have an Iwahori decomposition of $C_{r,n}$ given by $(V^*, A_r \times \GL_{n-r}(\kk), V)$. Here and in the sequel we slightly abuse notation and identify the vector spaces $V$ and $V^*$ with the unipotent subgroups they define. 

By Clifford Theory we know that irreducible representations of $\GL_n(\o_2)$ that lie over the orbit $[\eta_n]$ are in bijection   
with representations of $\C ^{\beta}C_{r,n}$ for some cocycle $\beta$. A direct calculation \uri{similar to Lemma~\ref{lem:choosebeta}} shows that we can choose $\beta$ so that $\beta(c,d) = \ol{\phi}\left(\eta_n c d\right)$.
Since $\eta_n$ is nilpotent and commutes with~$c$ and with~$d$, we get that the matrix $\eta_ncd$ is nilpotent as well, and therefore $\beta$ is trivial.
This fact generalises easily to all nilpotent matrices.
Therefore, representations of $\GL_n(\o_2)$ lying over the character ${\eta_n}$
are in bijection with representations of $C_{r,n}$. 

\subsection{Analysis of the irreducible representations of $C_{r,n}$ by Clifford Theory}
We will now write $C_{r,n}$ as an extension and use Clifford Theory repeatedly to classify its irreducible representations.
We then proceed to see how this interacts with the multiplication. The next lemma is straight forward. 

\begin{lemma}
	The group $C_{r,n}$ fits into the following short exact sequence of groups
	$$ 1\rightarrow H\rightarrow C_{r,n}\rightarrow A_{r-1} \times \GL_{n-r}(\kk) \rightarrow 1$$
	where $H$ is a generalised Heisenberg group given by $H=V \cdot V^* \cdot A_r^{r-1}$,
	and $A_r^{r-1} \simeq (\kk,+)$ is the subgroup of $A_r$ consisting of all elements of the form $1+a t^{r-1}$ for some $a\in \kk$.
\end{lemma}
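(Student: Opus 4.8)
The plan is to argue directly from the explicit matrix model of $C_{r,n}$ recorded in the previous lemma. Inside $C_{r,n}$ sit the abelian unipotent subgroups $V=\{1+M_{12}\}$ (with $M_{12}$ the first row of the $(1,2)$-block) and $V^{*}=\{1+M_{21}\}$ (with $M_{21}$ the last column of the $(2,1)$-block), together with the subgroup $A_{r}^{r-1}=\{1+at^{r-1}\mid a\in\kk\}\subseteq A_{r}$. A short computation in the local ring $\kk[t]/t^{r}$, using $2(r-1)\ge r$ for $r\ge 2$, shows that $A_{r}^{r-1}\cong(\kk,+)$, that $A_{r}^{r-1}$ acts trivially on $V$ and on $V^{*}$ (its leading coefficient is $1$), and that it commutes with $\GL_{n-r}(\kk)$; hence $A_{r}^{r-1}$ is central in $C_{r,n}$. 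Multiplying a matrix of $V$ by one of $V^{*}$ one computes that their commutator lies in the socle $t^{r-1}\kk[t]/t^{r}$, that is, $[V,V^{*}]\subseteq A_{r}^{r-1}$, and that the induced bilinear map $V\times V^{*}\to A_{r}^{r-1}\cong\kk$ is, up to sign, the standard dot product on $\kk^{n-r}$ and therefore perfect. It follows that $\langle V,V^{*}\rangle$ is two-step nilpotent with centre containing $A_{r}^{r-1}$ and equals $H=V\cdot V^{*}\cdot A_{r}^{r-1}$, which is precisely the structure of a generalised Heisenberg group over $\kk$ with polarisation $V\oplus V^{*}$ and centre $\kk$.

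To produce the quotient I would write down the reduction map
\[
\kappa\colon C_{r,n}\longrightarrow A_{r-1}\times\GL_{n-r}(\kk),\qquad
\begin{pmatrix}M_{11}&M_{12}\\ M_{21}&M_{22}\end{pmatrix}\longmapsto\bigl(M_{11}\bmod t^{r-1},\ M_{22}\bigr).
\]
The only place the non-standard multiplication of $C_{r,n}$ intervenes is in checking that $\kappa$ is a homomorphism: in a product of two such matrices the $(1,1)$-entry picks up the cross term $M_{12}M_{21}'$, which is a multiple of $t^{r-1}$ and so vanishes modulo $t^{r-1}$, while the $(2,2)$-entry picks up $M_{21}M_{12}'$, which is already zero because $t^{r-1}$ annihilates the $\kk^{n-r}$-summand. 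The map $\kappa$ is visibly surjective, with kernel the set of matrices with $M_{11}\in A_{r}^{r-1}$ and $M_{22}=1$. Finally I would identify this kernel with $H$ by the explicit factorisation
\[
\begin{pmatrix}\alpha&b\\ c&1\end{pmatrix}=\begin{pmatrix}1&b\\ 0&1\end{pmatrix}\begin{pmatrix}1&0\\ c&1\end{pmatrix}\begin{pmatrix}\alpha(1+bc)^{-1}&0\\ 0&1\end{pmatrix},
\]
valid because $bc$ is a multiple of $t^{r-1}$ (so $1+bc$, and hence $\alpha(1+bc)^{-1}$, lie in $A_{r}^{r-1}$) and because the bottom-left block of the product stays equal to $c$ (again as $t^{r-1}$ kills $\kk^{n-r}$); the reverse inclusion $H\subseteq\ker\kappa$ is immediate. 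This gives the short exact sequence, and in particular $H\lhd C_{r,n}$.

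As the authors indicate, the lemma is straightforward; there is no real obstacle, only the bookkeeping of the twisted multiplication of $C_{r,n}$. The two facts one must keep straight are that $V^{*}\cdot V=1$ and that $V\cdot V^{*}$ is not abelian but acquires central corrections in $A_{r}^{r-1}$, and both reduce to the single observation that $t^{r-1}$ annihilates the $\kk^{n-r}$-part of the module $\kk[t]/t^{r}\oplus\kk^{n-r}$ while any product landing in the $(1,1)$-block lands in its socle $t^{r-1}\kk[t]/t^{r}$. Once these are in hand, exactness of the sequence, normality of $H$ (alternatively immediate from the Iwahori decomposition $(V^{*},A_{r}\times\GL_{n-r}(\kk),V)$ of $C_{r,n}$, since $A_{r}$ and $\GL_{n-r}(\kk)$ normalise both $V$ and $V^{*}$), and the Heisenberg structure of $H$ all follow formally.
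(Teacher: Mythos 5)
Your proof is correct, and it is essentially the argument the authors intend: the paper states the lemma without proof (calling it straightforward), and your direct verification from the explicit matrix/module model — the reduction map $(M_{11},M_{22})\mapsto (M_{11}\bmod t^{r-1},M_{22})$, the vanishing of the cross terms since $r\ge 2$, the factorisation of kernel elements into $V\cdot V^*\cdot A_r^{r-1}$, and the commutator pairing $[v,f]=1+f(v)t^{r-1}$ giving the Heisenberg structure — is exactly the routine bookkeeping being left to the reader. One phrasing slip in your closing paragraph: ``$V^*\cdot V=1$'' should say that products $v^*v$ acquire no correction term (i.e.\ the composite $cb$ vanishes), as your earlier computations correctly show.
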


We recall the construction of the irreducible representations of $H$.
The group $H$ is a Heisenberg group that fits into the central extension 
\[
1\rightarrow A_r^{r-1} \rightarrow H\rightarrow V\times V^*\rightarrow 1.
\]

Let $W$ be an irreducible representation of $H$. 
Assume first that the restriction of $W$ to $A_r^{r-1}$ is given by a nontrivial character $x$ 
(where $x\in \kk^\times$, and we identify $\kk$ and its dual group as in \S\ref{subsec:Pontryagin.Dual}).
That already determines $W$ uniquely up to an isomorphism.
Indeed, it is easy to see that $W$ has a basis $\{w_v\}_{v\in V}$, the action of $V$ is given by 
$v_1\cdot w_{v_2}= w_{v_1+v_2}$ and the action of $f \in V^*$ is given by $f\cdot w_v = \ol{\phi}({x\langle v,f \rangle)}w_v$.
Such a representation, which we denote by $W_x$, has dimension $|V|=q^{n-r}$, where $q$ is the cardinality of $\kk$. 
If the restriction of $W$ to $A_r^{r-1}$ is trivial, then $W$ is a pull-back of a representation of the abelian group $V\times V^*$.
The dual group of this group is $V^*\times V$, and therefore such a representation is one-dimensional and is given by a pair $(f,v)\in V^*\times V$.
We denote the corresponding representation of $H$ by $W_{(f,v)}$. Thus
\begin{equation}\label{irr.of.H}
\Irr(H)=\left\{W_x \mid x \in \kk^\times \right\} \coprod \left\{ W_{(f,v)} \mid (f,v) \in V^* \times V \right\}.
\end{equation}

We now determine the orbits of the quotient $C_{r,n}/H \simeq A_{r-1}\times \GL_{n-r}(\kk)$ on $\Irr(H)$. The action of $C_{r,n}/H$ on $\Irr(H)$ is as follows. On $W_x$ the action is trivial. The action of $(a,g) \in A_{r-1} \times \GL_{n-r}(\kk)$ on $W_{(f,v)}$ is given by
\begin{equation}\label{action.Crn.V}
(a,g)\cdot W_{(f,v)}= W_{(a_0^{-1} g\cdot f,  v \cdot a_0 g^{-1})},
\end{equation}
where $a_0$ is the constant term of $a$.

\uri{Before stating the next lemma we require some notation. We let $e_i=(0,\ldots,,0,1,0,\ldots,0) \in \Mat_{1 \times (n-r)}(\kk)$ be the all zero vector except $1$ in the $i$-th place, $ e^i = e_i^T \in \Mat_{(n-r) \times 1}(\kk)$, and let
	\begin{equation}\label{def.of.C12n-r}
	C^1_{2,n-r}=\begin{pmatrix}
		1 & * & * &\ldots & *\\
		0 & 1 & 0 &\ldots & 0\\
		0 & * & * &\ldots & *\\
		\vdots &\vdots &\vdots &\ddots &\vdots &\\
		0 & * & * &\ldots & *
	\end{pmatrix} \subset \GL_{n-r}(\kk),
\end{equation}
be the subgroup of $C_{2,n-r}$ consisting of all elements in $C_{2,n-r}$ in which $a_0=1$.}

\begin{lemma}\label{C.orbits.on.H}
	Using the parametrisation \eqref{irr.of.H}, the orbits of $C_{r,n}/H$  on $\Irr(H)$ are given by 
	\begin{enumerate}
	\item The singleton $\{W_{(0,0)}=\mathbf{1}_H\}$ with stabiliser $A_{r-1} \times \GL_{n-r}(\kk)$.
	
	\item Singletons $\{W_x \}$ for $x \in \kk^\times$ with stabiliser $A_{r-1} \times \GL_{n-r}(\kk)$.
 	
	\item $\{ W_{(f,v)} \mid f(v)=y\}$ for every $y \in \kk^\times$, represented by $W_{(e^1,ye_1)}$ whose stabiliser is  $A_{r-1} \times \GL_{n-r-1}(\kk)$.

	\item  $\{ W_{(f,0)} \mid 0 \ne f \in V^*\}$ represented by $W_{(e^1,0)}$, and $\{ W_{(0,v)} \mid 0 \ne v \in V\}$ represented by $W_{(0,e_1)}$. 

	\noindent The stabiliser in both cases is $A_{r-1} \times \left(\GL_{n-r-1}(\kk) \ltimes \kk^{n-r-1}\right)$.
	\item $\{ W_{(f,v)} \mid 0 \ne f \in V^*, 0 \ne v \in V, f(v)=0\}$ represented by $W_{(e^2,e_1)}$ whose stabiliser is $A_{r-1} \times C^1_{2,n-2}$. 
	 
	\end{enumerate}
\end{lemma}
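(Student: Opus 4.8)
The plan is to analyse the action of $C_{r,n}/H\cong A_{r-1}\times\GL_{n-r}(\kk)$ on $\Irr(H)$ through the parametrisation \eqref{irr.of.H}. The representations $W_x$, $x\in\kk^\times$, are fixed by hypothesis, which accounts for family (2), and $W_{(0,0)}=\mathbf{1}_H$ is obviously fixed, which accounts for family (1). Everything thus reduces to the action on $\{W_{(f,v)}\mid(f,v)\in V^*\times V\}$ given in \eqref{action.Crn.V}; the key point is that $A_{r-1}$ intervenes only through its constant term $a_0\in\kk^\times$, so this is really an action of $\kk^\times\times\GL_{n-r}(\kk)$ on $V^*\times V$, with $\kk^\times$ scaling $f$ by $a_0^{-1}$ and $v$ by $a_0$ and $\GL_{n-r}(\kk)$ acting in the standard way.

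First I would pin down the invariants separating the orbits. The natural pairing $f(v)$ is $\GL_{n-r}(\kk)$-invariant for the diagonal action on $V^*\times V$, and the scalars $a_0^{-1}$ and $a_0$ cancel in it, so $(f,v)\mapsto f(v)$ is constant on orbits; the conditions $f=0$ and $v=0$ are likewise invariant. This forces the five families in the statement. Transitivity within each family is then elementary linear algebra: $\GL_{n-r}(\kk)$ by itself is transitive on nonzero covectors, on nonzero vectors, and (when $n-r\ge2$) on $\{(f,v):f(v)=y\}$ for each fixed $y$, including $y=0$ with $f$ and $v$ both nonzero; and distinct values of $y$ give distinct orbits because $f(v)$ is invariant. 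Choosing the stated representatives completes the enumeration. The degenerate cases $n-r\in\{0,1\}$, where some families are empty, should be noted separately.

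It then remains to compute the stabilisers in $A_{r-1}\times\GL_{n-r}(\kk)$. Families (1) and (2) are handled by construction. For $W_{(e^1,ye_1)}$ an element $(a,g)$ fixes it iff $g$ preserves the line $\kk e_1$ and the complementary hyperplane $\ker e^1$ with both associated scalars equal to $a_0$; writing $g=\diag(\mu,h)$ relative to $\kk^{n-r}=\kk e_1\oplus\ker e^1$, the only constraint is $\mu=a_0$, so the stabiliser is $\{(a,\diag(a_0,h))\}\cong A_{r-1}\times\GL_{n-r-1}(\kk)$. For $W_{(e^1,0)}$ the sole condition is $g\cdot e^1=a_0e^1$, so $g$ runs over the stabiliser of the line $\kk e^1\subseteq V^*$, a maximal parabolic with Levi $\GL_1(\kk)\times\GL_{n-r-1}(\kk)$ and unipotent radical $\kk^{n-r-1}$, with the $\GL_1$-factor pinned to $a_0$; this yields $A_{r-1}\times(\GL_{n-r-1}(\kk)\ltimes\kk^{n-r-1})$, and $W_{(0,e_1)}$ is symmetric. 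For $W_{(e^2,e_1)}$ with $e^2(e_1)=0$, the fixing conditions say $g$ preserves the flag $\kk e_1\subseteq\ker e^2$, acting by $a_0$ on $\kk e_1$ and by $a_0^{-1}$ on $\kk^{n-r}/\ker e^2$; the resulting group is the automorphism group of the corresponding $\kk[t]$-module configuration, which one rewrites as $A_{r-1}\times C^1_{2,n-2}$ by matching block structures against the definitions of $C_{r',n'}$ and of its first congruence subgroup.

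The main obstacle is this last identification: the first four stabilisers drop out immediately, but case (5) requires care about which outer scalar is slaved to $a_0$ and which to $a_0^{-1}$ and about the precise meaning of the superscript $1$ on $C_{2,n-2}$, so I would carry it out in the language of automorphism groups of $\kk[t]$-modules — the stabiliser being the automorphisms of a submodule-plus-quotient configuration — rather than by bare matrix manipulation, which is exactly the point of view that made the centraliser $C_{r,n}$ transparent in the first place.
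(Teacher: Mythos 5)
Your proposal is correct and takes essentially the same route as the paper: read the orbits off from the invariance of the conditions $f=0$, $v=0$ and of the pairing $f(v)$ under \eqref{action.Crn.V}, then compute each stabiliser explicitly inside $A_{r-1}\times \GL_{n-r}(\kk)$ with the $\GL_1$-eigenvalue pinned to $a_0$, exactly as the paper does via the matrix form \eqref{eqone}. One small point: carrying out your block-matching in case (5) gives $A_{r-1}\times C^1_{2,n-r}$ (as in the paper's own proof); the ``$n-2$'' appearing in the statement is a typo, so your final identification should be read with that index.
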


\begin{proof} The classification of the orbits is immediate from \eqref{action.Crn.V}. For the stabilisers we use the explicit form~\eqref{eqone}. The action of the quotient $C_{r,n}/H$ factors through $\GL_1(\kk) \times \GL_{n-r}(\kk)$ and the relation between the components $a_0$ and $g$ is dictated by \eqref{action.Crn.V} and the stabiliser is isomorphic to $A_{r-1}\times C$, where $C$
is the stabiliser inside $\GL_{n-r}(\kk)$; we need to choose a splitting where $a_0 \in \GL_1(\kk)$ maps to the scalars in $\GL_n(\kk)$ to get this isomorphism. The group $C$ for orbits of type (3) consists of elements of the form 

\begin{equation}
	\begin{pmatrix}
		1 & 0 & 0 &\ldots & 0\\
		* & * & * &\ldots & *\\
		\vdots &\vdots &\vdots &\ddots &\vdots &\\
		* & * & * &\ldots & *
	\end{pmatrix} \subset \GL_{n-r}(\kk),
\end{equation}
for $W_{(e^1,0)}$ which is a semidirect product $\GL_{n-r-1}(\kk)\ltimes \kk^{n-r-1}$; similarly we get the transpose for  $W_{(0,e_1)}$. 

\smallskip

For orbit of type (4) the stabiliser within $\GL_{n-r}(\kk)$ takes the form  
\begin{equation}
	\begin{pmatrix}
		1 & 0 & 0 &\ldots & 0\\
		0 & * & * &\ldots & *\\
		\vdots &\vdots &\vdots &\ddots &\vdots &\\
		0 & * & * &\ldots & *
	\end{pmatrix} \subset \GL_{n-r}(\kk),
\end{equation}
hence isomorphic to $\GL_{n-r-1}(\kk)$. Finally, the stabiliser of $(e^2,e_1)$ is the \uri{subgroup $C^1_{2,n-2}$ defined in \eqref{def.of.C12n-r}}. 
\end{proof}

We now turn to the classification of irreducible representations of $C_{r,n}$. Consider the following directed graph in Figure~\ref{fig:graph}.
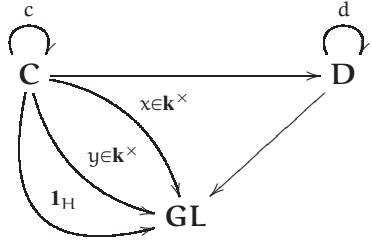
\begin{figure}[htb!]
 \centering
 \caption{Branching graph $\Gamma$}
 \label{fig:graph}
 \begin{displaymath}
   \xymatrix@+15pt{  &   \boldsymbol{C} \ar@(ul,ur)[]^c \ar[rr]              \ar@/^1.2pc/[rd]^{x \in \kk^\times}     \ar@/_1.2pc/[rd]^{y \in \kk^\times}    \ar@/_3pc/[rd]^{\mathbf{1}_H}     & & \boldsymbol{D} \ar@(ul,ur)^{d}   \ar[ld] \\
      & &  \boldsymbol{GL}     &}
   \end{displaymath}
\end{figure}
We claim that the irreducible representations of $C_{r,n}$ can be parametrised  by triples $(\gamma, \psi, Y)_{r,n}$, where $\gamma$ is a directed path in the graph $\Gamma$ from $\boldsymbol{C}$ to $\boldsymbol{GL}$ of constrained length (to be explained below), together with a character of $A_r$ (with constraints depending on the path) and a representation of $\GL_m(\kk)$, with $m$ depending on the path. The vertex $\boldsymbol{C}$ stands for the groups \uri{$C_{i,j}$ and $C^1_{i,j}$}, the vertex $\boldsymbol{D}$ stands for the affine groups $\GL_m(\kk) \ltimes \kk^m$ and the vertex~$\boldsymbol{GL}$ stands for $\GL_m(\kk)$. More precisely, we have the following. 

\begin{theorem} The irreducible representations of $C_{r,n}$ are parameterised by triples $\rho(\gamma,\psi,Y)_{r,n}$, where 
\begin{itemize}
\item $\gamma$ is one of the following paths
\[
(\mathbf{1}_H), \quad (x), \quad (y), \quad (\underbrace{c,\ldots,c}_{1 \le t \le n-r},\mathbf{1}_H), \quad (\underbrace{c,\ldots,c}_{1 \le t \le n-r},x),  \quad (\underbrace{c,\ldots,c}_{1 \le t \le n-r},y),\quad (\underbrace{\underbrace{c,\ldots,c}_{t},\underbrace{d,\ldots,d}_{s}}_{1 \leq2t+s \leq n-r}).
\] 
We call these paths {\bf \em permissible}.
\item $\psi \in \widehat{A}$ is {\bf \em compatible} with $\gamma$ (namely, its restriction to $A^{r-1}_r$ is as specified below), and $Y \in \Irr\left(\GL_{\delta(\gamma)}(\kk)\right)$. 
\[
\begin{array}{  | l    | l  | l |  l | l | l |}
\hline
 & \gamma & \delta(\gamma) & \psi_{|A^{r-1}_r} & \star & f(v)  \\
 \hline
 (i) &  (\mathbf{1}_H) & n-r & 0 & & \\
(ii) & (x)  & n-r & x  &  & \\
(iii) &  (y) & n-r-1 & 0 & & y  \\
\hline
(iv) &  ({c,\ldots,c},\mathbf{1}_H) & n-r-2t & 0 & 0 & \\
(v) &  ({c,\ldots,c},x) & n-r-2t& 0 & x &\\
 (vi) & ({c,\ldots,c},y) & n-r-2t-1& 0 &0 & y \\
\hline
 (vii)&  ({c,\ldots,c},{d,\ldots,d}) & n-r-2t-s & 0 & & \\
\hline
\end{array}
\]
Here the first column is the path $\gamma$, the second indicates the dimension, and the third is the restriction of $\psi$ to $A^{r-1}_r$.
\end{itemize}
 
\end{theorem}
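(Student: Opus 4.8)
The plan is to prove the theorem by iterating Clifford theory down the chain of normal subgroups built into $C_{r,n}$, using the branching graph $\Gamma$ of Figure~\ref{fig:graph} as a bookkeeping device. The first step is to apply Clifford theory to the extension $1\to H\to C_{r,n}\to A_{r-1}\times\GL_{n-r}(\kk)\to 1$: by Lemma~\ref{C.orbits.on.H} the orbits of $A_{r-1}\times\GL_{n-r}(\kk)$ on $\Irr(H)$ and their stabilisers are completely known, so each irreducible representation of $C_{r,n}$ is determined by a choice of orbit together with an irreducible --- a priori projective --- representation of the associated stabiliser, inflated to $C_{r,n}$. The first thing to settle is that no genuine projectivity arises. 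For the one-dimensional $W_{(f,v)}$ there is nothing to prove; for the Heisenberg representation $W_x$, whose inertia group is all of $C_{r,n}$, one uses that a lift of $A_{r-1}\times\GL_{n-r}(\kk)$ to $C_{r,n}$ can be taken inside the Levi factor $A_r\times\GL_{n-r}(\kk)$ of the Iwahori decomposition $(V^*, A_r\times\GL_{n-r}(\kk), V)$, which normalises the Lagrangian $V\le H$; hence $W_x$, realised in the Schr\"odinger model on the space with basis $\{w_v\}_{v\in V}$, extends to a genuine representation of the inertia group, so the Clifford cocycle vanishes. I expect the same argument to go through verbatim at every later stage of the recursion.

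The second step is to match the five orbit types of Lemma~\ref{C.orbits.on.H} with the edges of $\Gamma$ and recurse. The orbit of $\mathbf{1}_H$, the orbit of $W_x$, and the orbits with $f(v)=y\in\kk^\times$ terminate the walk at $\boldsymbol{GL}$ along the edges labelled $\mathbf{1}_H$, $x$, $y$ respectively, each leaving behind an irreducible representation of a general linear group ($\GL_{n-r}(\kk)$ in the first two cases, $\GL_{n-r-1}(\kk)$ in the third) together with a character of the persisting direct factor $A_{r-1}=A_r/A_r^{r-1}$. The orbit $\{W_{(f,0)}\}\cup\{W_{(0,v)}\}$ corresponds to the edge $\boldsymbol{C}\to\boldsymbol{D}$ and lands us in the affine group $A_{r-1}\times(\GL_{n-r-1}(\kk)\ltimes\kk^{n-r-1})$, while the orbit $\{W_{(f,v)}:f,v\ne 0,\ f(v)=0\}$ corresponds to the self-loop $c$ at $\boldsymbol{C}$ and lands us in $A_{r-1}\times C^1_{2,n-r}$, a group of the same shape whose $\GL$-parameter has dropped by $2$ and whose residual $A$-part ($\cong(\kk,+)$) now sits inside its own Heisenberg subgroup. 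One then recurses: $C^1_{2,m}$ again carries a Heisenberg-type normal subgroup with a stabiliser-stable Lagrangian, so Lemma~\ref{C.orbits.on.H} and the cocycle-triviality argument apply to it unchanged, and it can only contribute a further $c$-edge, $d$-edge, or terminal edge. At the vertex $\boldsymbol{D}$ the recursion is easier still: Clifford theory for the abelian normal subgroup $\kk^m\lhd\GL_m(\kk)\ltimes\kk^m$ involves the two orbits of $\GL_m(\kk)$ on $\widehat{\kk^m}\cong\kk^m$ --- the origin, with stabiliser $\GL_m(\kk)$, giving the edge $\boldsymbol{D}\to\boldsymbol{GL}$, and its complement, with stabiliser $\cong\GL_{m-1}(\kk)\ltimes\kk^{m-1}$, giving the self-loop $d$ --- and there is no cocycle because $\kk^m$ is abelian.

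Iterating until the walk reaches $\boldsymbol{GL}$, an irreducible representation of $C_{r,n}$ acquires exactly the data of a permissible walk $\gamma$ from $\boldsymbol{C}$ to $\boldsymbol{GL}$ (carrying the $\kk^\times$-labels on any $x$- and $y$-edges), a character $\psi\in\widehat{A_r}$ whose restriction to $A_r^{r-1}$ is the central character of $H$ --- which is $0$ unless $\gamma$ is a single $x$-edge, in which case it is that $x$, this being exactly the meaning of ``$\psi$ compatible with $\gamma$'' --- and an irreducible representation $Y$ of $\GL_{\delta(\gamma)}(\kk)$, where $\delta(\gamma)=n-r-2t-s$ with a further $-1$ when $\gamma$ ends in a $y$-edge, $t$ and $s$ counting the $c$- and $d$-edges of $\gamma$; the auxiliary $\star$- and $f(v)$-entries of the table record, respectively, the central character of the innermost Heisenberg subgroup reached after the $c$-loops and the choice of $y$ at a terminal $y$-edge. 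The length constraints $1\le t\le n-r$ and $1\le 2t+s\le n-r$ are precisely the conditions that all the intermediate $\GL$-parameters stay nonnegative, and a direct check reproduces every entry of the table. That distinct permissible triples give non-isomorphic representations, and that every such triple occurs, follow from the uniqueness and exhaustiveness clauses of Clifford theory applied at each stage. I expect the genuine work to be concentrated in two recurring verifications made uniform across all levels of the recursion: that the Heisenberg subgroups appearing at the $C^1_{2,\bullet}$-stages really do carry Lagrangians stable under the relevant stabilisers (so that all the cocycles vanish), and that the dimension- and central-character bookkeeping assembles precisely into the $\delta$-, $\psi_{|A_r^{r-1}}$-, $\star$- and $f(v)$-columns of the table.
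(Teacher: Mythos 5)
Your proposal is correct and follows essentially the same route as the paper: iterated Clifford theory along the extension $1\to H\to C_{r,n}\to A_{r-1}\times\GL_{n-r}(\kk)\to 1$, using the orbit/stabiliser classification of Lemma~\ref{C.orbits.on.H} to recurse through the stabilisers $C^{1}_{2,\bullet}$ and the affine groups $\GL_m(\kk)\ltimes\kk^m$, with the branching graph recording the walk. Your verification that the cocycle for $W_x$ is trivial (extending it in the Schr\"odinger model via the Levi factor acting on the Lagrangian $V$) is the same extension the paper writes down explicitly as $(a,g)\cdot w_v=\psi(a)w_{g(v)}$, so the two arguments coincide in substance.
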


\begin{proof} We split the irreducible representations of $C_{r,n}$ according to the orbits of their restrictions to $H$. The first three types of irreducible representations of $C_{r,n}$ lie above the first three orbits appearing in Lemma~\ref{C.orbits.on.H}.     

\begin{enumerate}
\item[(i)] The trivial representation $\mathbf{1}_H$ always extends and the representations above that orbit are determined by representations of the quotient.  

\item[(ii)] Representations that lie over $W_x$ for $x \in \kk^\times$ can be extended to representations of $C_{r,n}$ in the following way. The character of $A^{r-1}_r$ represented by $x$ can be extended to a (one-dimensional) character $\psi$ of the abelian group $A_r$.
We can define an action of $A_r\times \GL_{n-r}(\kk)$ on $W_x$ by setting 
$(a,g)\cdot w_v = \psi(a)w_{g(v)}$. A direct calculation shows that together with the action of $H$ on $W_x$ we get a representation of $C_{r,n}$.
Therefore, the relevant two-cocycle is trivial, and representations of $C_{r,n}$ lying over $W_x$ 
are in bijection with representations of the quotient $A_{r-1} \times \GL_{n-k}(\kk)$. The bijection is given by tensoring (an extension of) $W_x$ with (pullbacks to $C_{r,n}$ of) representations of the quotient.  

\item[(iii)]  Similar to the first two cases, with the extra choice of orbit parameterised by $y \in \kk^\times$.   

\end{enumerate}   

These types correspond to the two paths from $\boldsymbol{C}$ to $\boldsymbol{GL}$. Before we continue with the classification we digress to explain the (mathematical) inductive process. With orbits (1)-(3) excluded, we now look at the stabiliser of the orbit representatives marked in Lemma~\ref{C.orbits.on.H} for types (4)-(5). We have two types of possible stabilisers. One is $C^{1}_{2,n-r}$, associated again with the vertex $\boldsymbol{C}$ and reflecting the self-similar nature of the problem. The second is $A_{r-1} \times (\GL_{n-r-1}(\kk) \ltimes \kk^{n-r-1})$, associated with the vertex $\boldsymbol{D}$. Now, from $C^{1}_{2,n-r}$ one can similarly continue and get all five orbit types but with smaller dimensions. There is one difference though: note that from the second step the groups of type $\boldsymbol{C}$ that show up are always of the form $C^{1}_{2,j}$, but that does not change the process. The edge $c$ in the graph accounts for repetitions of that type of orbit. After exhausting these iterations $t$ times, and then  

\begin{enumerate}
\item[(iv)] branching out to vertex $\boldsymbol{GL}$ with the trivial representation of the (smaller) Heisenberg group that occurs in the (last) group of type $\boldsymbol{C}$ in the chain. 

\item[(v)] branching out to vertex $\boldsymbol{GL}$  with character $x \in \kk^\times$ of the centre of the (smaller) Heisenberg group that occurs in the (last) group of type $\boldsymbol{C}$ in the chain.

\item[(vi)] branching out to vertex $\boldsymbol{GL}$ with choice $y \in \kk^\times$ of and orbit of representations of the (smaller) Heisenberg group that occurs in the (last) group of type $\boldsymbol{C}$ in the chain.

\item[(vii)] branching out to vertex $\boldsymbol{D}$ then iterating $s$ times before branching out to vertex $\boldsymbol{GL}$. In more detail: consider the action of $\GL_m(\kk)$ on (the dual of) $\kk^m$.  We get that representations of the semidirect product $\GL_{m}(\kk) \ltimes \kk^{m}$ come in two families, those that correspond to the zero orbit or the rest. The zero orbit leads to a representation of $\GL_m(\kk)$ and the dense orbit leads to a representation of $\GL_{m-1}(\kk) \ltimes \kk^{m-1}$. We can thus brunch from a vertex of type $\boldsymbol{D}$ to a vertex of type $\boldsymbol{GL}$ or go back to $\boldsymbol{D}$  with a smaller dimension. 
\end{enumerate}
\end{proof}

\subsection{Interpretation of the representations via the multiplication}

Our next goal is to show that the product of elements from $\GN_\eta$ and $\GR^{\o,1}$ satisfies the Hopf formula.

\begin{proposition}\label{N.is.R.free} For every pair of natural numbers $n$ and $m$ with $n \ge r$, $\gamma$ a permissible path for $C_{r,n}$, $\psi \in \widehat{A}$ compatible with $\gamma$ and $W \in \Irr\left(\GL_{m}(\kk)\right)$ the following hold.  
\[
W \circ \rho(\gamma,\psi, \mathbf{1})_{r,n}=\rho(\gamma,\psi, W)_{r,n+m}.
\]
Here $\mathbf{1}$ stands for the trivial representation of the trivial group $\GL_0(\kk)$.
\smallskip

In particular, $\GN_\eta$ is a free module over $\GR^{\o,1}$ with basis 
\[
\mathcal{P}=\left\{\rho(\gamma,\psi, \mathbf{1})_{r,n} \mid n \ge r, \gamma~\text{permissible for $C_{r,n}$}, \psi~\text{compatible with $\gamma$} \right\}. 
\]
\end{proposition}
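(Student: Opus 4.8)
The plan is to deduce the identity $W\circ\rho(\gamma,\psi,\mathbf 1)_{r,n}=\rho(\gamma,\psi,W)_{r,n+m}$ by reducing, via Clifford theory, to a single variant Harish--Chandra induction inside the centraliser $C_{r,n+m}$, and then to unwind that induction along exactly the iterated Clifford analysis that produced the classification of $\Irr(C_{r,N})$; the \emph{in particular} is then a formal consequence.

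\emph{Reduction to the centraliser.} Write $n'=n+m$ and let $\MM=0_{m\times m}\oplus\eta_n\in\Mat_{n'}(\kk)$, which is conjugate to $\eta_{n'}$. By Proposition \ref{prop:GR-Mat-grading}, the product $W\circ\rho(\gamma,\psi,\mathbf 1)_{r,n}=\pind_{m,n}(W\boxtimes\rho(\gamma,\psi,\mathbf 1)_{r,n})$ lies in $\Rep(\GL_{n'}(\o_2))_{[\eta_{n'}]}$. I would apply the Clifford-compatibility diagram of \S\ref{subsec:clifford} to $G=\GL_{n'}(\o_2)$, $G_0=K^1_{n'}$, $L=\GL_m(\o_2)\times\GL_n(\o_2)$, $U=U_{m,n}$, $V=V_{m,n}$, and the character $\phi^\star_\MM$ of $K^1_{n'}$. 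Since $\MM$ is nilpotent all the two-cocycles occurring are trivial --- this is precisely the observation already used in \S\ref{sec:nilpotent} to identify $\Rep(\GL_N(\o_2))_{[\eta_N]}$ with $\Rep(C_{r,N})$ --- so the diagram degenerates to a statement about ordinary representations: under these equivalences, $W\circ\rho(\gamma,\psi,\mathbf 1)_{r,n}$ corresponds to $\pind_{\bar U,\bar V}(W\boxtimes\rho')$, where $\rho'\in\Rep(C_{r,n})$ corresponds to $\rho(\gamma,\psi,\mathbf 1)_{r,n}$ and $(\bar U,\GL_m(\kk)\times C_{r,n},\bar V)$ is the virtual Iwahori decomposition of $C_{r,n'}$ obtained by intersecting $(U_{m,n},\GL_m\times\GL_n,V_{m,n})$ with $C_{r,n'}=\GL_{n'}(\kk)(\MM)$. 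Realising $C_{r,n'}$ as $\bigl(\begin{smallmatrix}A_r&V_{r,n'}\\ V^*_{r,n'}&\GL_{n'-r}(\kk)\end{smallmatrix}\bigr)$ with $\GL_m(\kk)\times C_{r,n}$ in the evident block position, one finds that $\bar U$ consists of the $\GL_m(\kk)\!\to\!\GL_{n-r}(\kk)$ block of $\GL_{n'-r}(\kk)$ together with the $\GL_m(\kk)$-part of $V^*_{r,n'}$, and $\bar V$ of the dual directions; in particular $\bar U\cap H_{n'}$, $H_n$, $\bar V\cap H_{n'}$ form an honest Iwahori decomposition of the Heisenberg group $H_{n'}$, while $\bar U,\bar V$ project onto the unipotent radical and its opposite of the parabolic of $\GL_{n'-r}(\kk)$ with Levi $\GL_m(\kk)\times\GL_{n-r}(\kk)$.

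\emph{Unwinding the path.} I would then compute $\pind_{\bar U,\bar V}(W\boxtimes\rho')$ by running the iterated Clifford theory of the classification theorem in \S\ref{sec:nilpotent}. At the first stage, restrict to $H_{n'}$ and apply the \S\ref{subsec:clifford} diagram with $G_0=H_{n'}$; the compatibility of $\bar U,\bar V$ with the decomposition $H_{n'}=(\bar U\cap H_{n'})H_n(\bar V\cap H_{n'})$ noted above (cocycles again trivial, by the orbit computation of Lemma \ref{C.orbits.on.H}) shows that this step preserves the chosen $H$-orbit --- i.e.\ the first symbol of $\gamma$ and the attached parameter $x$ or $y$ --- and replaces the problem by the analogous induction for the stabiliser groups, $\GL_m(\kk)\times(\text{stab in }C_{r,n})$ inside $(\text{stab in }C_{r,n'})$. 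Iterating, each symbol $c$ of $\gamma$ is peeled off by a further Clifford step on a generalised Heisenberg subgroup and each symbol $d$ by a Clifford step on the $\kk^{\bullet}$-radical of a vertex-$\boldsymbol D$ group $\GL_\bullet(\kk)\ltimes\kk^\bullet$; in every case $\bar U,\bar V$ restrict to the unipotent radical and opposite of the corresponding parabolic, and the character $\psi\in\widehat A$, attached to the untouched $A_r$-part, is preserved. After the last symbol of $\gamma$ the problem has become: compute the ordinary parabolic induction of $W\boxtimes\mathbf 1$ from $\GL_m(\kk)\times\GL_0(\kk)$ to $\GL_m(\kk)$, which is $W$. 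Tracking the parameters back up the chain of equivalences yields $\pind_{\bar U,\bar V}(W\boxtimes\rho')=\rho(\gamma,\psi,W)_{r,n'}$; in particular $W\circ\rho(\gamma,\psi,\mathbf 1)_{r,n}$ is irreducible and equals $\rho(\gamma,\psi,W)_{r,n+m}$. (One also checks the trivial point that $\gamma$, being permissible for $C_{r,n}$ with trivial $\GL$-slot, stays permissible for $C_{r,n+m}$ with $\GL$-slot $\GL_m(\kk)$, which is immediate from the length constraints.)

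\emph{Freeness.} This is then formal: the assignment $W\otimes\rho(\gamma,\psi,\mathbf 1)_{r,n}\mapsto\rho(\gamma,\psi,W)_{r,n+m}$ identifies $\bigl(\bigsqcup_{m\ge 0}\Irr(\GL_m(\kk))\bigr)\times\mathcal P$ bijectively with the standard $\Z$-basis $\{\rho(\gamma,\psi,Y)_{r,N}\}$ of $\GN_\eta$, since a given $\rho(\gamma,\psi,Y)_{r,N}$ with $Y\in\Irr(\GL_{\delta(\gamma)}(\kk))$ has unique preimage $Y\otimes\rho(\gamma,\psi,\mathbf 1)_{r,N-\delta(\gamma)}$ (well defined because $\delta(\gamma)$ depends only on $\gamma$ and $N$). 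As $\bigsqcup_m\Irr(\GL_m(\kk))$ is a $\Z$-basis of $\GR^{\o,1}$ and the main identity says the $\GR^{\o,1}$-module structure on $\GN_\eta$ carries the product basis to the standard basis in exactly this way, $\GN_\eta$ is free over $\GR^{\o,1}$ with basis $\mathcal P$.

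I expect the main obstacle to be the bookkeeping in the unwinding step: one must verify, uniformly across the two kinds of branch ($\boldsymbol C$- and $\boldsymbol D$-vertices of Figure \ref{fig:graph}), that the subgroups $\bar U,\bar V$ and all of their successive ``stabiliser-intersected'' descendants really do reduce to the unipotent radical and opposite of the relevant parabolic, so that every Clifford step factors as cleanly as claimed. Once that compatibility is pinned down, the representation-theoretic content is routine.
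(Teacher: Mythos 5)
Your first reduction (passing to the centraliser $C_{r,n+m}$ via the Clifford compatibility of \S\ref{subsec:clifford}, with triviality of the cocycle coming from nilpotency of $\eta$) and your formal deduction of freeness both agree with the paper. The gap is in the ``unwinding'' step, which is exactly where the content of the proposition lies. Iterating the Clifford-compatibility diagram along the branching graph requires, at every stage and for each of the seven path types, that (a) the successive normal subgroups (first the Heisenberg group $H$, then the centre or radical pieces of the stabilisers $A_{r-1}\times(\GL_{\bullet}(\kk)\ltimes\kk^{\bullet})$ and $A_{r-1}\times C^{1}_{2,\bullet}$ of Lemma \ref{C.orbits.on.H}) factor as $\bar U_0 L_0 \bar V_0$ with respect to the successively intersected and quotiented subgroups $\bar U,\bar V$; (b) the bottom-level functor $\pind_0$ carries the orbit parameters ($\mathbf 1_H$, $x$, $y$, the $(f,v)$-orbits) of the smaller group to the \emph{same} parameters for the larger group; and (c) the relevant two-cocycles stay trivial at every nested level. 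You verify none of this beyond the first stage, and the assertion that it all works ``uniformly across the two kinds of branch'' is essentially a restatement of the proposition rather than a proof. Note also that in your scheme irreducibility of $W\nmult\rho(\gamma,\psi,\mathbf 1)_{r,n}$ is only obtained as a by-product of this unverified chain; some independent input is needed to exclude the possibility that the product is merely a proper constituent of $\rho(\gamma,\psi,W)_{r,n+m}$.

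The paper's proof avoids the nested iteration altogether: it uses that every $\rho(\gamma,\psi,Y)$ is monomial, realises $\rho(\gamma,\psi,W)_{r,n+m}$ explicitly as induced from a linear character $\ti\psi$ of an explicit subgroup $Z$ (written down block by block for the representative case (v)), computes $W\nmult\rho(\gamma,\psi,\mathbf 1)$ by eliminating and commuting idempotents via the reduction Lemma \ref{reduction} and the isomorphism \eqref{commutativity}, and then closes the argument by combining irreducibility of $\rho(\gamma,\psi,W)$ with the non-vanishing of $\pind$ from \cite[Theorem 2.23(1)]{CMO1} to promote a nonzero subrepresentation to an equality. If you wish to pursue your route, you must supply the analogues of these verifications at each vertex of Figure \ref{fig:graph} (orbit tracking for $\pind_0$, the Iwahori factorisations of the successive normal subgroups, stabiliser intersections, cocycle triviality), which amounts to work comparable to the paper's explicit case-by-case computation.
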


Before proving the proposition we shall see some corollaries. By identifying representations of $C_{r,n}$ with representations of $\GL_n(\Ow_2)$ by Clifford Theory, and using the multiplication $\nmult$, $\GN_\eta$ becomes a module and a comodule over $\GR^{\o,1}$.
Proposition~\ref{N.is.R.free} tells us that this is a free module with the prescribed basis. Since the comultiplication is dual to the multiplication, this also gives us a description of the comultiplication.
Thus within the algebra $\GR^{\o,2}$ we have $\Delta(\GN_\eta)\subseteq  \GR^{\o,1}\otimes \GN_\eta \oplus \GN_\eta\otimes \GR^{\o,1}$.

We have already proved in Proposition~\ref{prop:GR-basics} that the multiplication and comultiplication are commutative and cocommutative, respectively.
We will thus consider only the projection $\Delta_1$ of $\Delta$ to the first summand $\GR^{\o,1}\otimes \GN_\eta$ (which is orthogonal to the second and carries complete information), thus making $\GN_\eta$ into a left $\GR^{\o,1}$ comodule.
The map $\Delta_1:\GN_\eta\rightarrow \GR^{\o,1}\otimes \GN_\eta$ will be dual to the multiplication map $\GR^{\o,1}\otimes  \GN_\eta \rightarrow  \GN_\eta$.

Thus, we see that for any representation $\rho \in \mathcal{P}$ we have that $\Delta_1(\rho)=1\otimes \rho$.
Also, if $W$ is any representation in $\GR^{\o,1}$, then we can calculate $\Delta(W\nmult \rho)$.
First of all, it is clear that it will be supported on the subgroup $\GR^{\o,1} \otimes (\GR^{\o,1}\nmult\rho)$.
That is, it will contain only elements in $\GN_\eta$ spanned by $\rho$ and not by the other representations in $\mathcal{P}$.
\uri{This follows easily from the fact that if $\rho_1\ncong \rho_2$, then $W_1\nmult\rho_1$ and $W_2\nmult\rho_2$ will not have any common factor.}
Second, we have that
\[
\begin{split}
\langle\Delta(W \nmult \rho),W_1\otimes (W_2\nmult\rho)\rangle_{\GR^{\o,1}\otimes \GN_\eta} &= \langle W\nmult \rho,W_1\nmult W_2\circ\rho\rangle_{\GN_\eta} \\
&=\langle W,W_1 \nmult W_2\rangle_{\GR^{\o,1}} \\
&= \langle\Delta(W),W_1\otimes W_2\rangle_{\GR^{\o,1}\otimes \GR^{\o,1}}
\end{split}
\]
The second equality follows from the fact that we know that the map $\tau:\GR^{\o,1}\rightarrow \GN_\eta$ given by $W\mapsto W\nmult\rho$ preserves the inner product,
as it sends different irreducible representations to different irreducible representations.
We also have that 
\[
\langle \Delta_1(W)\uri{\nmult}\Delta_1(\rho),W_1\otimes (W_2\nmult\rho)\rangle_{\GR^{\o,1}\otimes \GN_\eta} = \langle \Delta_1(W),W_1\otimes W_2\rangle_{\GR^{\o,1}\otimes \GR^{\o,1}}.
\]
This follows from the explicit formula $\Delta_1(\rho)=1\otimes\rho$ for the comultiplication of $\rho$, together with the above consideration regarding the map $\tau$.
In particular, we see that $\Delta_1(W\nmult\rho)=\Delta_1(W) \uri{\nmult}\Delta_1(\rho)$, since the pairing of these two elements is the same with any basis element of $\GR^{\o,1}\otimes \GN_\eta$.
But this means that also $$\Delta_1(W_1\nmult (W_2 \nmult\rho))=\Delta_1(W_1\nmult W_2)\uri{\nmult}\Delta_1(\rho) = \Delta_1(W_1)\uri{\nmult}\Delta_1(W_2)\uri{\nmult}\Delta_1(\rho) = \Delta_1(W_1)\uri{\nmult}\Delta_1(W_2\nmult \rho),$$
so we have the following:
\begin{corollary}
	The Hopf formula is valid when we multiply a representation from $\GR^{\o,1}$ with a representation from $\GN_\eta$.
\end{corollary}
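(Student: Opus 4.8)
The plan is to deduce the statement from the description of $\GN_\eta$ as a free $\GR^{\o,1}$-module in Proposition~\ref{N.is.R.free}, from cocommutativity of $\GR^{\o,2}$, and from the fact (recalled in Section~\ref{sec:GR}) that $\GR^{\o,1}$ is Zelevinsky's PSH-algebra, so that the Hopf axiom already holds on $\GR^{\o,1}$ and (via Proposition~\ref{prop:GR-changing-l}) the coproduct of an element of $\GR^{\o,1}$ is the same whether computed there or in $\GR^{\o,2}$. Write $\Delta_1\colon\GN_\eta\to\GR^{\o,1}\ot\GN_\eta$ for the component of $\Delta$ in the first summand of $\GR^{\o,1}\ot\GN_\eta\oplus\GN_\eta\ot\GR^{\o,1}$, this being where $\Delta(\GN_\eta)$ lands because $\eta$ is indecomposable, as noted at the start of the section. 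Since $\nmult$ is commutative and $\Delta$ cocommutative (Proposition~\ref{prop:GR-basics}), the flip of $\GR^{\o,2}\ot\GR^{\o,2}$ is an algebra automorphism interchanging the two displayed summands and fixing $\Delta(x)$ for every $x$; hence a flip-symmetric element of $\GR^{\o,1}\ot\GN_\eta\oplus\GN_\eta\ot\GR^{\o,1}$ is determined by its first component. For $a\in\GR^{\o,1}$ and $b\in\GN_\eta$, both $\Delta(a\nmult b)$ and $\Delta(a)\nmult\Delta(b)$ are flip-symmetric (the latter because the flip is multiplicative and fixes both $\Delta(a)$ and $\Delta(b)$), so it suffices to prove
\[
\Delta_1(a\nmult b)=\Delta(a)\nmult\Delta_1(b),
\]
the product on the right being the factorwise action of $\GR^{\o,1}\ot\GR^{\o,1}$ on $\GR^{\o,1}\ot\GN_\eta$, which makes sense because $\GR^{\o,1}\nmult\GN_\eta\subseteq\GN_\eta$.

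By Proposition~\ref{N.is.R.free} and bilinearity we may assume $b=W'\nmult\rho$ with $\rho\in\mathcal P$ and $W'\in\GR^{\o,1}$. The crux, which I would isolate as a lemma, is the identity
\[
\Delta_1(Z\nmult\rho)=\Delta(Z)\nmult(1\ot\rho)\qquad(Z\in\GR^{\o,1},\ \rho\in\mathcal P).
\]
To prove it one pairs both sides against a standard basis vector $\sigma\ot\tau$ of $\GR^{\o,1}\ot\GN_\eta$. Writing $\tau=\sum_{\rho''\in\mathcal P}Z_{\rho''}\nmult\rho''$ in the free basis $\mathcal P$, and using the adjointness $\langle x,\sigma\nmult\tau\rangle=\langle\Delta x,\sigma\ot\tau\rangle$ of Proposition~\ref{prop:GR-basics}(3), the mutual orthogonality of the summands $\GR^{\o,1}\nmult\rho''$, the fact from Proposition~\ref{N.is.R.free} that $W\mapsto W\nmult\rho$ is an isometry of $\GR^{\o,1}$ onto $\GR^{\o,1}\nmult\rho$, and finally the adjointness inside the PSH-algebra $\GR^{\o,1}$, one finds that both $\langle\Delta_1(Z\nmult\rho),\sigma\ot\tau\rangle$ and $\langle\Delta(Z)\nmult(1\ot\rho),\sigma\ot\tau\rangle$ equal $\langle\Delta Z,\sigma\ot Z_\rho\rangle$. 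The special case $Z=1$ recovers $\Delta_1(\rho)=1\ot\rho$, as asserted in the text.

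Granting this lemma, the conclusion is immediate: applying it first with $Z=a\nmult W'$ and then with $Z=W'$, and using the Hopf axiom $\Delta(a\nmult W')=\Delta(a)\nmult\Delta(W')$ in $\GR^{\o,1}$ together with associativity of the factorwise product,
\begin{align*}
\Delta_1(a\nmult b)&=\Delta_1\bigl((a\nmult W')\nmult\rho\bigr)=\Delta(a\nmult W')\nmult(1\ot\rho)\\
&=\Delta(a)\nmult\bigl(\Delta(W')\nmult(1\ot\rho)\bigr)=\Delta(a)\nmult\Delta_1(W'\nmult\rho)=\Delta(a)\nmult\Delta_1(b).
\end{align*}

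The main obstacle here is not conceptual but a matter of careful bookkeeping: one must check that the multiplication on $\GR^{\o,2}\ot\GR^{\o,2}$ occurring in the Hopf formula really does restrict to the factorwise action used above, which requires the containment $\Delta(\GN_\eta)\subseteq\GR^{\o,1}\ot\GN_\eta\oplus\GN_\eta\ot\GR^{\o,1}$, and that the isometric-embedding clause of Proposition~\ref{N.is.R.free} is genuinely invoked in establishing $\Delta_1(\rho)=1\ot\rho$ — there is no purely formal reason for the comodule basis $\mathcal P$ to consist of grouplike elements, and this is exactly the point at which the combinatorial classification of $\Irr(C_{r,n})$ enters.
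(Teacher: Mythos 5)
Your proof is correct and follows essentially the same route as the paper: reduce to the free basis $\mathcal P$ of Proposition~\ref{N.is.R.free}, prove the key identity $\Delta_1(Z\nmult\rho)=\Delta(Z)\nmult(1\ot\rho)$ by pairing against standard basis vectors (using the orthogonality of the summands $\GR^{\o,1}\nmult\rho''$ and the isometry $W\mapsto W\nmult\rho$ supplied by that proposition, together with the adjointness of $\nmult$ and $\Delta$), and then conclude via the Hopf axiom already known in Zelevinsky's algebra $\GR^{\o,1}$. The only difference is presentational: you spell out the flip-symmetry argument reducing the full Hopf identity to its $\GR^{\o,1}\ot\GN_\eta$ component, a point the paper dispatches in a brief remark.
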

\begin{remark}
	Note that the fact that we used the projection $\Delta_1$ rather than $\Delta$ does not make any difference, and everything still holds exactly in the same way for $\Delta$ \uri{because $\Delta=\Delta_1+\sigma \Delta_1$ where $\sigma$ is the flip}. 
\end{remark}

\subsection{Proof of Proposition~\ref{N.is.R.free}}  
We now turn to prove Proposition~\ref{N.is.R.free}. We shall make a substantial use of the results described in \S\ref{subsec:clifford}, in particular the reduction of the multiplication to the level of centralisers. 
Moreover, a careful analysis reveals that all the representations in $\mathcal{P}$ are monomial, that is, induced from one-dimensional characters of certain subgroups.
The proof of the proposition is based on a careful case by case analysis (we will do only the first two cases. The rest of the cases are pretty much the same).

\subsubsection{Representations of type (ii)}
\begin{lemma}
Let $\psi$ be a character of $A_r$, with $\psi|_{A^{r-1}_r}\neq 1$, and let $W$ be an irreducible representation of $\GL_{m}(\kk)$. 
Then 
\begin{equation}\label{case.1}
W\nmult \rho((x),\psi,\mathbf{1})_{r,r} = \rho((x),\psi,W)_{r,r+m}.
\end{equation}
\end{lemma}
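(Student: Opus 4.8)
The plan is to reduce the product $W\nmult\rho((x),\psi,\mathbf{1}_H)_{r,r}$ to a parabolic‑type induction inside the finite centraliser group $C_{r,r+m}$, to evaluate that induction by hand, and to recognise the answer in the classification of \S\ref{sec:nilpotent}.

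\emph{Step 1 (Clifford reduction).} Viewing $W\in\Irr(\GL_m(\kk))$ as a representation of $\GL_m(\o_2)$ trivial on $K^1$, it lies over the orbit $[0_m]$, while $\rho((x),\psi,\mathbf{1}_H)_{r,r}$ lies over $[\eta_r]$; hence $W\nmult\rho((x),\psi,\mathbf{1}_H)_{r,r}=\pind_{m,r}\big(W\boxtimes\rho((x),\psi,\mathbf{1}_H)_{r,r}\big)$ lies in $\Rep(\GL_{r+m}(\o_2))_{[\eta_{r+m}]}$. I would run the commuting diagrams of \S\ref{subsec:clifford} with $G=\GL_{r+m}(\o_2)$, $G_0=K^1$ and $L=\GL_m(\o_2)\times\GL_r(\o_2)$, and descend the resulting induction to the residue field via Theorem~\ref{thm:inflation.coherence}; all cocycles that appear are trivial because $\eta_{r+m}$ is nilpotent (as noted in \S\ref{sec:nilpotent}). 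This identifies the restriction of $\nmult$ to the relevant isotypic pieces with an induction functor
\[
\pind^{C_{r,r+m}}_{U',V'}\colon\ \Rep\big(A_r\times\GL_m(\kk)\big)\longrightarrow\Rep\big(C_{r,r+m}\big),
\]
under which $\rho((x),\psi,\mathbf{1}_H)_{r,r}$ corresponds to the character $\psi$ of $A_r=C_{r,r}$ and $W$ to itself, regarded as a representation of the $\GL_m(\kk)$‑corner of $C_{r,r+m}$; here $U'$ (resp.\ $V'$) is the image in $C_{r,r+m}$ of $U_{m,r}$ (resp.\ $V_{m,r}$) intersected with the centraliser of $\eta_{r+m}$, after the evident conjugation matching the Jordan block and the $\GL_m(\kk)$‑corner to the conventions of \S\ref{sec:nilpotent}.

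\emph{Step 2 (identifying the unipotent radicals).} A direct matrix computation shows that $\left(\begin{smallmatrix}I&B\\0&I\end{smallmatrix}\right)$ centralises $\eta_{r+m}$ exactly when $B$ is supported on the single row (resp.\ column) adjacent to the Jordan block; comparing with the description of $C_{r,n}$ in \S\ref{sec:nilpotent} gives $U'\cong V^{*}$ and $V'\cong V$ (up to the swap $\pind_{V^{*},V}\cong\pind_{V,V^{*}}$ coming from \eqref{commutativity}), and $(V^{*},A_r\times\GL_m(\kk),V)$ is the \emph{genuine} Iwahori decomposition of $C_{r,r+m}$ recalled there. Hence, exactly as in the proof of Theorem~\ref{thm:primary.decomposition.of.reps}, the results of \cite{CMO1} on Iwahori decompositions show that $\pind^{C_{r,r+m}}_{V^{*},V}$ is fully faithful; being a fully faithful exact functor between semisimple categories, it sends the irreducible $\psi\boxtimes W$ to an irreducible representation of $C_{r,r+m}$.

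\emph{Step 3 (recognising the answer).} By the explicit description in \S\ref{sec:nilpotent} (case (ii) of the classification), $\rho((x),\psi,W)_{r,r+m}$ is obtained by tensoring an extension $\widetilde{W_x}^{\psi}$ to $C_{r,r+m}$ of the Heisenberg representation $W_x$ of $H$ — the extension determined by the character $\psi$ of $A_r$ together with the standard action of $\GL_m(\kk)$ on $V$ — with the inflation $\infl(W)$ along $C_{r,r+m}\twoheadrightarrow\GL_m(\kk)$. Since $\infl(W)$ is trivial on $V$ and on $V^{*}$, one has
\[
\pres^{C_{r,r+m}}_{V^{*},V}\big(\widetilde{W_x}^{\psi}\otimes\infl(W)\big)\;\cong\;\big(e_{V^{*}}e_{V}\,\widetilde{W_x}^{\psi}\big)\otimes W
\]
as $A_r\times\GL_m(\kk)$‑representations. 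Now $e_{V}W_x$ is the line spanned by $\sum_{v\in V}w_v$ (the group $V$ acting on the basis $\{w_v\}$ of $W_x$ by translation), and since $x\neq0$ the pairing $V\times V^{*}\to\C^{\times}$, $(v,f)\mapsto\ol{\phi}(x\langle v,f\rangle)$, is non‑degenerate, so $e_{V^{*}}$ carries the line $\C\sum_{v}w_v$ onto $\C w_0$; and $\C w_0$ is fixed by $A_r\times\GL_m(\kk)$, which acts on it through $\psi\boxtimes\mathbf{1}$. Therefore $\pres^{C_{r,r+m}}_{V^{*},V}\big(\widetilde{W_x}^{\psi}\otimes\infl(W)\big)\cong\psi\boxtimes W$, and bi‑adjointness of $\pind$ and $\pres$ yields
\[
\big\langle\,\pind^{C_{r,r+m}}_{V^{*},V}(\psi\boxtimes W),\ \widetilde{W_x}^{\psi}\otimes\infl(W)\,\big\rangle=\langle\,\psi\boxtimes W,\ \psi\boxtimes W\,\rangle=1 .
\]
As $\pind^{C_{r,r+m}}_{V^{*},V}(\psi\boxtimes W)$ is irreducible (Step~2) and $\widetilde{W_x}^{\psi}\otimes\infl(W)$ is, by the classification, precisely $\rho((x),\psi,W)_{r,r+m}$, the two coincide; unwinding the Clifford equivalences of Step~1 gives \eqref{case.1}.

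\emph{Main obstacle.} The one genuinely delicate point is Step~2: matching the block‑ordering conventions of $\pind_{m,r}$ with the coordinates used for $C_{r,n}$ in \S\ref{sec:nilpotent}, and verifying that the Clifford reduction produces not merely a virtual but a \emph{genuine} Iwahori decomposition of $C_{r,r+m}$ — this is exactly what forces $\pind$ to be fully faithful and hence collapses the induction onto a single irreducible. Everything downstream is bookkeeping, and the remaining cases (iii)--(vii) of Proposition~\ref{N.is.R.free} go through in the same way, replacing the orbit $W_x$ and its stabiliser by the corresponding item of Lemma~\ref{C.orbits.on.H} and iterating along the stabiliser chain encoded in the branching graph $\Gamma$.
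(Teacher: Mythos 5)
Your proof is correct, but after the common first step (the Clifford-theoretic reduction to the centraliser $C_{r,r+m}$, which is also how the paper begins) it takes a genuinely different route to the identification. The paper exploits the hypothesis $\psi|_{A_r^{r-1}}\neq 1$ through the commutator relation $[v,f]=1+f(v)t^{r-1}$ and the idempotent-reduction Lemma~\ref{reduction} to delete $e_V$, so that the product becomes ordinary induction of the inflation of $\psi\otimes W$ from $(A_r\times\GL_m(\kk))\cdot V^{*}$ to $C_{r,r+m}$, and it then finishes by writing down the explicit intertwiner $w_v\otimes w\mapsto v\,e_{V^{*}}\otimes w$ and checking it directly. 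You instead use that $(V^{*},A_r\times\GL_m(\kk),V)$ is a genuine Iwahori decomposition of $C_{r,r+m}$, so that, exactly as in the proof of Theorem~\ref{thm:primary.decomposition.of.reps}, the full-faithfulness result of \cite{CMO1} makes $\pind_{V^{*},V}(\psi\boxtimes W)$ irreducible, and you then pin it down by computing $\pres_{V^{*},V}$ of the explicitly constructed $\rho((x),\psi,W)_{r,r+m}$ in the Heisenberg model ($e_{V^{*}}e_V W_x=\C w_0$, which is where the nontriviality of $\psi$ on $A_r^{r-1}$, i.e.\ $x\neq 0$, enters via non-degeneracy of the pairing) and invoking bi-adjointness plus irreducibility on both sides. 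Your argument buys a cleaner, computation-free identification (no explicit intertwiner, no use of Lemma~\ref{reduction}) at the price of leaning on the external full-faithfulness theorem and on the irreducibility of the type (ii) representations from the classification; the paper's argument is more self-contained and hands-on, and its idempotent-reduction step is reused verbatim in the other cases of Proposition~\ref{N.is.R.free}, whereas your scheme would need the analogous $\pres$-computations case by case. Both are valid, and your bookkeeping of the block-ordering and of $U'\cong V^{*}$, $V'\cong V$ (harmless by \eqref{commutativity}) is accurate.
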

\begin{proof}
Notice that $\rho((x)),\psi,\mathbf{1})$ is just the one-dimensional representation $\psi$ of $A_r=C_{r,r}$. We also note that, using the compatibility with Clifford theory (see \S\ref{subsec:clifford}),  
the multiplication on the left hand side of \eqref{case.1} is isomorphic to $\Ind(\psi \circ W)$, where 
\[
\psi \circ W = \C C_{r,r+m}e_Ve_{V^*}\otimes_{A_r\times \GL_{m}(\kk)} \psi \otimes W.
\]

We use the fact that $V$ and $V^*$ satisfy the following commutation relation:
\[
[v,f] = 1+f(v)t^{r-1}
\]
(one should understand the right hand side as an element in the group, and not as a sum of group elements).
Since the restriction of $\psi$ to $A_r^{r-1}$ is nontrivial, we can use Lemma~\ref{reduction} (reduction of idempotents) here, to eliminate $e_V$.
So the multiplication is 
\[
\C C_{r,m+r}e_{V^*}\otimes_{A_r\times \GL_{m}(\kk)} \psi\otimes W.
\]
In other words, this representation is obtained by inflating the representation $\psi\otimes W$ to $A_r \times \GL_{m}(\kk) \cdot V^*$ and then inducing to $C_{r,m+r}$.
		
The representation $\rho((x),\psi,W)_{r,n}$ with $n=m+r$ is given in the following way:
As we have mentioned above, there exists an irreducible representation $W_{\psi}$ of $H$ whose restriction to $A^{r-1}_r$ is a multiple of $\psi$.
This representation has a basis $\{w_{v}\}_{v\in V}$. The subgroup $A_r$ acts on $W_{\psi}$ by the character $\psi$.
We have explained how to extend $W_{\psi}$ to a representation of $C_{r,n}$.
The representation $\rho((x),\psi,W)_{r,n}$ is given by $W_{\psi}\otimes W$, where $W$ is a representation of $C_{r,n}$ by inflation.
The isomorphism between $\rho((x),\psi,W)_{r,n}\cong W_{\psi}\otimes W$ and $W\nmult \rho((x),\psi,\mathbf{1})_{r,r}$ is then given by
\[
w_v\otimes w\mapsto ve_{V^*}\otimes w.
\]
A direct verification shows that this is an isomorphism.
\end{proof}

\subsubsection{Representations of type (v)}
The proof of Proposition~\ref{N.is.R.free} for all the other types is very similar.
To make the proof as simple as possible we concentrate on representations of type (v), but the proof will work verbatim to all other types as well.
We will need to consider a different character and induction from different subgroup, but the proof will be the same.

The idea is the following. By Clifford theory we know that the irreducible representations of $C_{r,n}$ are induced from some very specific subgroups.
In case of the representations in $\mathcal{P}$, they are induced from one-dimensional representations.
Specifically, consider the representation $\ti{W} = \rho((c,\ldots, c,(x), \psi, W)_{r,n}$ of type~(v) with $c$ occurring $t$ times and $m=\delta(\gamma)=n-r-2t$. 
After applying a permutation matrix, the representation $\ti{W}$ is induced from the subgroup $Z\leq \uri{C_{r,n} \leq}\GL_n(\kk)$ which we shall now describe:
\begin{equation} Z = \left\{
\begin{pmatrix}
M_{11} & M_{12} & M_{13} \\  
M_{21} & M_{22} & M_{23} \\
\uri{\underbrace{M_{31}}_{r}} & \uri{\underbrace{M_{32}}_{2t}} & \uri{\underbrace{M_{33}}_{m}} \\
\end{pmatrix} \right\},
\end{equation}
where: \\
- The matrix $M_{11}\in \Mat_{r\times r}(\kk)$ is in $A_r$.\\
- The matrix $M_{12}\in \Mat_{r\times 2t}(\kk)$ is of the form
\begin{equation}\label{eq:f}
 \begin{pmatrix}
  f \\ 0 \\ \vdots \\ 0
 \end{pmatrix}
\end{equation}
(only the first row is nonzero).\\
- The matrix $M_{13}\in \Mat_{r\times m}(\kk)$ is of the form 
\begin{equation}
 \begin{pmatrix}
  g \\ 0 \\ \vdots \\ 0
 \end{pmatrix}
\end{equation}(again, only the first row is nonzero).\\
- The matrix $M_{21}\in \Mat_{2t\times r}(\kk)$ 
is of the form
\begin{equation}\label{eq:v}
 \begin{pmatrix}
  0 & 0 \ldots 0 & v
 \end{pmatrix}
\end{equation}
- The matrix $M_{31}\in \Mat_{m\times r}(\kk)$  is of the form 
\begin{equation}
 \begin{pmatrix}
  0 & 0 \ldots 0 & w
 \end{pmatrix}
\end{equation}
- The matrix $M_{22}\in \Mat_{2t\times 2t}(\kk)$ is upper triangular, with ones on the diagonal.\\
- The matrix $M_{23}\in \Mat_{2t\times m}(\kk)$  is of the form: 
\begin{equation}
\begin{pmatrix}
g_1 \\
\vdots \\
g_t \\
0 \\
\vdots\\
0
\end{pmatrix}
\end{equation}
- The matrix $M_{32}\in \Mat_{m\times 2t}(\kk)$ is of the form:
\begin{equation}
\begin{pmatrix}
0 & 0 \cdots & 0 & w_{t+1} & w_{t+2} & \cdots & w_{2t}
\end{pmatrix}
\end{equation}
- The matrix $M_{33}\in \GL_m(\kk)$ is some invertible matrix. 

\smallskip
\noindent \uri{We refer to these blocks as the $(i,j)$-blocks of $Z$, with $1 \le i,j \le 3$, and denote them by $Z_{ij}$.} By projecting onto the $(3,3)$ part, the group $Z$ has $\GL_m(\kk)$ as a quotient.
We get the following split short exact sequence: 
$$1\rightarrow Z_0\rightarrow Z\rightarrow \GL_m(\kk)\rightarrow 1,$$
\uri{with $Z_0=Z_{11}Z_{12}Z_{13}Z_{21}Z_{22}Z_{23}Z_{31}Z_{32}$ which is easily seen to be normal in $Z$}. 
We can now describe explicitly the representation $\ti{W}$. Let $Z_1=\uri{Z_{12}Z_{22}Z_{21}Z_{22}}$ denote the subgroup of all block matrices in $Z$ of the form
\begin{equation}
\begin{pmatrix}
* & * & 0 \\
* & * & 0  \\
0 & 0 & I 
\end{pmatrix}
\end{equation}
Let $a_1,\ldots,a_{2t-1}$ denote the elements on the secondary diagonal of an element in $Z_{22}$, that is 
\uri{\begin{equation}
\begin{pmatrix}
1 & a_1 & * & \cdots  & * \\
0 & 1 & a_2 & \cdots & * \\
\vdots & \vdots & \ddots  & \ddots &*\\
0 & 0 & 0 & \ddots & a_{2t-1}\\
0 & 0 & 0 & \cdots & 1
\end{pmatrix}.
\end{equation}}
	
Consider the following linear character $\ti{\psi}$ of the group $Z_1$. The restriction of $\ti{\psi}$ to $A$ is just $\psi$.
The restriction of $\ti{\psi}$ to $Z_{12}$ is given by $\ol{\phi}(f_1)$, \uri{where $f_1$ is the first component in \eqref{eq:f}}.
The restriction of $\ti{\psi}$ to $Z_{21}$ is given by $\ol{\phi}(v_{2t})$, \uri{where $v_{2t}$ is the last component in \eqref{eq:v}}.
The restriction of $\ti{\psi}$ to $Z_{22}$ is given by 
\[
\ol{\phi}(a_1 + a_2 + \cdots + a_{t-2} + xa_{\uri{t-1}} + a_{t}+\cdots + a_{2t-1}).
\]
We thus have the representation $\ti{\psi}\otimes W$ of $Z_1\times \GL_m(\kk)$.
By carefully checking the definition of $\ti{W}$, we see that it can be described as the representation of $C_{r,n}$ induced from the $Z$ representation
\[
\C Z e_{Z_{23}}e_{Z_{13}}e_{Z_{32}}e_{Z_{31}}\ot_{Z_1\times \GL_m(\kk)}\ti{\psi}\ot W,
\]
so we have that 
\[
\ti{W} = \C C_{r,n}e_{Z_{23}}e_{Z_{13}}e_{Z_{32}}e_{Z_{31}}\ot_{Z_1\times \GL_m(\kk)}\ti{\psi}\ot W.
\]

On the other hand, by using the formula for the comultiplication, we get that 
\begin{equation}\label{W.circ.rho}
W\nmult \rho((c,\ldots, c,(x), \psi, \mathbf{1})_{r,r+2t} = \Ind_{C_{r,n}}^{\GL_n(\o_2)}\left(\C C_{r,n} e_{\ol{Z}_{23}}e_{Z_{13}}e_{\ol{Z}_{32}}e_{Z_{31}}\ot_{Z_1\times \GL_m(\kk)}\ti{\psi}\ot W\right),
\end{equation}
\uri{where 
\[
\ol{Z}_{23}=\left\{\left(\begin{matrix} I_r & 0 & 0 \\ 0 & I_{2t} & M_{23} \\ 0 & 0 & I_m\end{matrix}\right) \mid M_{23} \in \Mat_{2t \times m}(\kk) \right\},
\]
}
and thus $\ol{Z}_{23}$ is strictly bigger than $Z_{23}$ unless $t=0$. We have that the representation of $C_{r,n}$ occurring on the right hand side \uri{of \eqref{W.circ.rho}} is a subrepresentation of 
\begin{equation}\label{1st.rep}
\C C_{r,n} e_{Z_{23}}e_{Z_{13}}e_{\ol{Z}_{32}}e_{Z_{31}}\ot_{Z_1\times \GL_m(\kk)}\ti{\psi}\ot W.
\end{equation}
By using the commutation of idempotents for the groups $Z_{23}Z_{13}$ and $\ol{Z}_{32}Z_{31}$ we get that this representation is isomorphic to 
\[
\C C_{r,n} e_{\ol{Z}_{32}}e_{Z_{31}}e_{Z_{23}}e_{Z_{13}}\ot_{Z_1\times \GL_m(\kk)}\ti{\psi}\ot W,
\]
which is a subrepresentation of 
\[
\C C_{r,n} e_{Z_{32}}e_{Z_{31}}e_{Z_{23}}e_{Z_{13}}\ot_{Z_1\times \GL_m(\kk)}\ti{\psi}\ot W.
\]
By using the commutation of idempotents again, the last representation is isomorphic with the representation~\eqref{1st.rep}.
So $W \nmult \rho(c,\ldots, c,(x), \psi, \mathbf{1})_{r,r+2t}$ is always a subrepresentation of $\ti{W} = \rho((c,\ldots, c,(x), \psi, W)_{r,n}$.
For an irreducible $W$, we have that $\ti{W}$ is irreducible and as $W \nmult \rho(c,\ldots, c,(x), \psi, \mathbf{1})_{r,r+2t}\neq 0$ by \cite[Theorem 2.23(1)]{CMO1}.
It then follows that for $W$ irreducible the two representations are isomorphic, and therefore the same holds for a general~$W$.
This finishes the proof for representations of type (v).

In all the other cases the computation works almost the same.
Two things change. The first is the restriction of the character $\ti{\psi}$ to $Z_{22}$.
\begin{itemize}
\item  In case (iv) the size of the (2,2) block of the matrix is again $2t \times 2t$, and the character is 
\[
\ol{\phi}(a_1+a_2+ \cdots + a_{t-1}+ a_{t+1} + \cdots + a_{2t-1})
\] 
(so it is the same as before, only $x$ becomes zero).\\
\item In case (vii) the size of (2,2) block of the matrix is $(2t+s) \times (2t+s)$. The characters are given by 
\[
\ol{\phi}(a_1+ a_2+ \cdots + \uri{a_{t-2} + a_{t}}+\cdots+a_{2t+s-1})
\]
 and 
\[
\ol{\phi}(a_1+ a_2+ \cdots a_{t+s-1} + a_{t+s+1}+\cdots+a_{2t+s-1}).
\] 
\item In case (vi) the size of the (2,2) block of the matrix is $(2t+1) \times (2t+1)$, and the character is given by 
\[
\ol{\phi}(a_1+\cdots +a_{t-1}+ya_t+a_{t+1}+\cdots +a_{2t}).
\]
\end{itemize}
The second thing that changes, besides the character, are the subgroups $Z_{23}$ and $Z_{32}$.
By using Clifford Theory again, it is possible to track these subgroups without too much effort.
Notice that all the representations in $\mathcal{P}$ are induced from characters of certain unipotent subgroups.

\uri{
\subsection*{Acknowledgments}  
The third author was supported by the Australian Research Council (ARC FT160100018). 

}

\end{document}